\definecolor{skyblue}{rgb}{0.85,0.85,1}
\newtheorem{theorem}{Theorem}[section]
\newtheorem{prop}[theorem]{Proposition}
\newtheorem{cor}[theorem]{Corollary}
\newtheorem{lemma}[theorem]{Lemma}
\newtheorem{rem}[theorem]{Remark}
\newtheorem{define}[theorem]{Definition}
\def\Re{\mathop\mathrm{Re}\nolimits}    
\newcommand{\bbC}{\mathbb{C}}             
\newcommand{\bbN}{\mathbb{N}}             
\newcommand{\bbQ}{\mathbb{Q}}
\newcommand{\bbR}{\mathbb{R}}             
\newcommand{\bbZ}{\mathbb{Z}}             
\newcommand{\p}{\partial}				
\DeclareMathOperator{\Mas}{Mas}
\DeclareMathOperator{\Ind}{Ind}
\DeclareMathOperator{\Wind}{Wind}
\DeclareMathOperator{\sgn}{sgn}
\DeclareMathOperator{\tr}{tr}
\DeclareMathOperator{\spn}{span}
\DeclareMathOperator{\diag}{diag}
\DeclareMathOperator{\Hom}{Hom}
\DeclareMathOperator{\Pf}{Pf}
\newcommand{\cF}{{\mathcal{F}}}
\newcommand{\cH}{{\mathcal{H}}}
\newcommand{\cL}{{\mathcal{L}}}
\newcommand{\cM}{{\mathcal{M}}}
\newcommand{\cU}{{\mathcal{U}}}
\newcommand{\cZ}{{\mathcal{Z}}}
\newcommand{\R}{\mathbb{R}}
\newcommand{\Z}{\mathbb{Z}}
\newcommand{\C}{\mathbb{C}}
\newcommand{\Gr}{Gr}
\begin{document}

\title{Generalized Maslov indices for non-Hamiltonian systems}
\author[T. Baird]{Thomas John Baird}\email{tbaird@mun.ca}\address{Department of Mathematics and Statistics, Memorial University of Newfoundland, St. John's, NL, Canada}
\author[P. Cornwell]{Paul Cornwell}\email{paul.cornwell@jhuapl.edu}\address{The Johns Hopkins University Applied Physics Laboratory, Laurel, MD, USA}
\author[G. Cox]{Graham Cox}\email{gcox@mun.ca}\address{Department of Mathematics and Statistics, Memorial University of Newfoundland, St. John's, NL, Canada}
\author[C. Jones]{Christopher Jones}\email{cktrj@email.unc.edu}\address{Department of Mathematics, UNC-CH, Chapel Hill, NC, USA}
\author[R. Marangell]{Robert Marangell}\email{robert.marangell@sydney.edu.au}\address{School of Mathematics and Statistics, University of Sydney, NSW, Australia}

\maketitle
\begin{abstract}
We extend the definition of the Maslov index to a broad class of non-Hamiltonian dynamical systems. To do this, we introduce a family of topological spaces\,---\,which we call \emph{Maslov--Arnold spaces}\,---\,that share key topological features with the Lagrangian Grassmannian, and hence admit a similar index theory. This family contains the Lagrangian
Grassmannian, and much more. We construct a family of examples, called \emph{ hyperplane Maslov--Arnold spaces}, that are dense in the Grassmannian, and hence are much larger than the Lagrangian Grassmannian (which is a submanifold of positive codimension). The resulting index is then used to study eigenvalue problems for non-symmetric reaction--diffusion systems. A highlight of our analysis is a topological interpretation of the Turing instability: the bifurcation that occurs as one increases the ratio of diffusion coefficients corresponds to a change in the generalized Maslov index.
\end{abstract}

\tableofcontents

\section{Introduction}
The Maslov index is an powerful tool for distinguishing trajectories in a Hamiltonian system, and provides a natural setting for many well-known results in stability theory, such at the Morse index theorem and the Sturm oscillation theorem. Its definition depends crucially on the topology of the Lagrangian Grassmannian, and the fact that it is invariant under the flow of a Hamiltonian system.

An interesting and difficult question is whether this restriction to systems with an underlying Hamiltonian structure can be weakened in order to open up a greater range of applications. The idea we pursue in this paper is to look for subsets of the Grassmannian that have the needed topological properties. We first clarify what these topological properties are.

We take $\mathbb{R}^{2n}$ as the phase space, and denote the Grassmannian of $n$-dimensional subspaces by $\Gr_n(\bbR^{2n})$. Such an $n$-dimensional subspace is said to be Lagrangian, with respect to a given symplectic form $\omega$, if $\omega$ vanishes on it. The set of Lagrangian subspaces, known as the Lagrangian Grassmannian, is then denoted $\Lambda(n)$. The Maslov index is an integral homotopy invariant defined for continuous paths in the Lagrangian Grassmannian $\Lambda(n)$. This is well defined because $H^1(\Lambda(n);\bbZ) = \bbZ$. Moreover, the generator of $H^1(\Lambda(n);\bbZ)$ can be explicitly identified in such a way that the index of a curve is interpreted as a signed count of its intersections with a fixed Lagrangian subspace. 

Both the index being an integer and its interpretation as an intersection number are important in applications to dynamical systems. We thus seek generalizations with both of these properties. We will define a class of spaces that captures these features and call them {\em Maslov--Arnold Spaces}. First, we recall that the \emph{train} of an $n$-dimensional subspace $P \in Gr_n(\bbR^{2n})$ is the set
\begin{equation}\label{def:train1}
	\cZ_{_P} := \big\{ W \in Gr_n(\bbR^{2n}) : W \cap P \neq \{0\} \big\}
\end{equation}
of all $n$-dimensional subspaces intersecting $P$ non-trivially, and the subset
\[
    \cZ_{_P}^{1} := \big\{ W \in Gr_n(\bbR^{2n}) : \dim(W \cap P)=1 \big\} \subseteq  \cZ_{_P}
\]
is a smoothly embedded hypersurface.

As described above, the Maslov index for curves in the  Lagrangian Grassmannian has two features that make it useful in applications:
\begin{enumerate}
	\item[(1)] There exists a cohomology class $\alpha_0 \in H^1(\Lambda(n);\bbZ)$ such that the Maslov index of any continuous loop $\gamma\colon S^1 \rightarrow \Lambda(n)$ equals the canonical pairing $ \langle \alpha_0, [\gamma] \rangle \in \Z$, where $[\gamma] \in H_1(\Lambda(n); \Z)$ is the homology class represented by $\gamma$.
		\item[(2)] If $\gamma\colon S^1 \rightarrow \Lambda(n)$ is a sufficiently generic loop, then its Maslov index is equal\footnote{In Section \ref{sec:def} we will clarify this statement by giving a precise definition of a ``sufficiently generic loop" and its geometric intersection number with the train.} to its geometric intersection number with the train of a fixed Lagrangian subspace $P$.
\end{enumerate}

Motivated by these properties, we give the following definition.

\begin{define}\label{def:MA1}
A \emph{rank $n$ Maslov--Arnold (MA) space} $(\cM,P, \alpha)$ consists of
\begin{itemize}
\item  a subset $\cM \subseteq \Gr_n(\bbR^{2n})$,
\item  an $n$-dimensional subspace $P \in \Gr_n(\bbR^{2n})$, and
\item a cohomology class $\alpha \in H^1(\cM;\bbZ)$ of infinite order,
\end{itemize}
where  $\cZ_{_P}^1 \cap \cM$ has a co-orientation such that for any sufficiently generic loop $\gamma\colon S^1 \rightarrow \cM$, the geometric intersection number with $\cZ_{_P}$ equals the pairing $\langle \alpha, [\gamma] \rangle$.
\end{define}

Note that $\cM$ is not required to be a manifold, and the distinguished subspace $P$ is not required to be an element of $\cM$. We next define the generalized Maslov index.

\begin{define}
\label{def:genMas}
For any continuous loop $\gamma \colon S^1 \to \cM$, we define the \emph{generalized Maslov index of $\gamma$ with respect to $P$} by
\begin{equation}
\label{def:MI1}
	\Mas(\gamma; P) = \langle \alpha, [\gamma] \rangle.
\end{equation}
\end{define}

The definition of a Maslov--Arnold space guarantees that the generalized Maslov index of any sufficiently generic loop is equal to its geometric intersection number with the train $\cZ_{_P}$.  However, it is important to note that \eqref{def:MI1} defines the generalized Maslov index for \emph{any} continuous loop $\gamma$ in $\cM$, with no genericity assumptions needed.

In this terminology, the main result of Arnold's seminal paper \cite{A67} is that $(\Lambda(n),P,\alpha_0)$ is a Maslov--Arnold space for any $P \in \Lambda(n)$, where $\alpha_0 \in H^1(\Lambda(n);\Z)\cong \Z$ is one of the two generators. The symplectic form defining $\Lambda(n)$ determines a canonical choice of $\alpha_0$, called the \emph{Maslov class}; with this choice of generator we call $(\Lambda(n),P,\alpha_0)$ a \emph{classical Maslov--Arnold space}.
On the other hand, $\Gr_n(\bbR^{2n})$ cannot be an MA space if $n\geq 2$, because $H^1(\Gr_n(\bbR^{2n});\bbZ) \cong \bbZ_2$ contains no cohomology classes of infinite order. 
 In the case  $n=1$, it is easy to see that $\Lambda(1)= Gr_1(\bbR^{2}) =\bbR P^1\cong S^1$. This is the home of classical Sturm--Liouville theory, which is often approached through studying the angle of a path in $S^1$.

To apply our generalized index theory to dynamical systems, we must find non-trivial examples of MA spaces where the index can be computed.
A natural approach would be to ``fatten up" the Lagrangian Grassmannian to obtain a strictly larger Maslov--Arnold space. It is possible to construct such spaces (which we do in Theorem \ref{thm:fatLG}), but we show in Theorem \ref{thmcont} that no MA space exists that properly contains the Lagrangian Grassmannian and is also a smooth submanifold of $\Gr_n(\bbR^{2n})$. Therefore, in constructing MA spaces, we must make a choice: we can have a space that extends the Lagrangian Grassmannian, or is a smooth submanifold of the Grassmannian, but not both.

The main focus of this paper is a resolution to this conundrum through the construction of a large class of Maslov--Arnold spaces,
which we call \emph{Hyperplane Maslov--Arnold Spaces}, that are open, dense subsets (and hence smooth submanifolds) of the Grassmannian, but do not contain the entire Lagrangian Grassmannian. For these spaces the index has a simple geometric interpretation as a winding number in $\bbR P^1$. This gives us a practical method for computing the index, and also allows us to define it for continuous paths with distinct endpoints.

Given a nonzero $n$-form $\omega$ on $\bbR^{2n}$, consider the subset
\begin{equation}
\label{GPlucker1}
	\cH_\omega := \big\{ \spn \{v_1, \ldots ,v_n\} \in Gr_n(\bbR^{2n}) : \omega(v_1, \ldots, v_n) = 0 \big\}
\end{equation}
of the Grassmannian $Gr_n(\bbR^{2n})$. We call this the hyperplane associated to $\omega$. A special case is when $\ker \omega$ is $n$ dimensional. Then it can be shown that the associated hyperplane is the train of the subspace $\ker \omega$, i.e.
\begin{equation}\label{hyperplanetrain1}
    \cH_\omega = \big\{ W \in Gr_n(\bbR^{2n}) : W \cap \ker \omega \neq \{0\} \big\}.
\end{equation}
Another special case is when $n=2$ and $\omega$ is a non-degenerate two-form, i.e. a symplectic form, in which case $\cH_\omega \subset Gr_2(\bbR^4)$ is the corresponding Lagrangian Grassmannian.

\begin{define}
A hyperplane Maslov--Arnold space is a set
\begin{equation}
    \cM = Gr_n(\bbR^{2n}) \setminus \big(\cH_{\omega_1} \cap \cH_{\omega_2}\big),
\end{equation}
where $\omega_1$ and $\omega_2$ are linearly independent $n$-forms and $\ker \omega_1$ is $n$-dimensional.
\end{define}

That is, $\cM$ is obtained from the Grassmannian by removing the intersection of two hyperplanes, at least one of which is the train of an $n$-dimensional subspace. It will be shown below that such an $\cM$ is indeed a Maslov--Arnold space in the sense of Definition \ref{def:MA1}, where $P = \ker \omega_1$ is the distinguished subspace and the cohomology class $\alpha$ is determined by $\cH_{\omega_1}$; see Theorem~\ref{MAspacethm} for a precise statement.

For the hyperplane Maslov--Arnold spaces, our generalized Maslov index has a simple geometric interpretation. Defining a map $\phi \colon \cM \rightarrow \R P^1$ by
\begin{equation}
\label{phidef1}
    \phi\big(\spn \{v_1, \ldots ,v_n\}\big) = \big[ \omega_1(v_1,\ldots,v_n) : \omega_2(v_1,\ldots,v_n) \big],
\end{equation}
we will show that the index of a loop $\gamma \colon S^1 \to \cM$ is equal to the winding number of $\phi \circ \gamma$ in $\bbR P^1$.

From the definition of $\phi$ we see that a subspace $W \in \cM$ is contained in the train  $\cH_{\omega_1}$ if and only if $\phi(W) = [0:1]$. We thus extend our definition of the index to arbitrary paths (possibly having distinct endpoints) by simply defining it to be the winding number through the point $[0:1] \in \bbR P^1$, with a suitable convention chosen for the endpoints. This gives a well-defined index for paths in the hyperplane MA space $\cM$ that detects intersections with the train of $\ker\omega_1$, much like the classical Maslov index does for paths in the Lagrangian Grassmannian.

However, there is an important difference between our hyperplane index and the classical Maslov index. To illustrate this, suppose $\gamma(t)$ is a continuous path in $\cM$ with $\gamma(t_*) \in \cH_{\omega_1}$, i.e. $\gamma(t_*) \cap \ker \omega_1 \neq \{0\}$, for some $t_*$. This means the path $\phi \circ \gamma$ in $\bbR P^1$ has $\phi(\gamma(t_*)) = [0:1]$. Depending on the direction in which this curve passes through the point $[0:1]$, the local contribution to the winding number will be either $0$, $1$ or $-1$. That is, the largest absolute change to the index at each intersection is $1$, regardless of the dimension of the intersection $\gamma(t_*) \cap \ker \omega_1$.

In other words, while our index is sensitive to the direction of crossing, it does not measure the dimension, unlike the classical Maslov index. This point will reoccur throughout the paper; see for instance Remark \ref{rem:card}.

Using our hyperplane index theory, we prove a generalized Morse index theorem (Theorem \ref{thm:cpMorse}) relating conjugate points and positive eigenvalues for a non-selfadjoint boundary value problem. A special case is the Dirichlet eigenvalue problem on a bounded interval $(0,L)$,
\begin{equation}
\label{ex:Dir}
    u_{xx} + V(x)u = \lambda u, \quad u(0) = u(L) = 0,
\end{equation}
where $u \in \bbR^n$ and $V(x)$ is a real (but not necessarily symmetric) $n\times n$ matrix-valued potential. Assuming a certain invariance condition \eqref{xlinv} holds (which is generically the case), we find that
\begin{equation}\label{unstable:conjugate}
	\# \big\{\text{positive real eigenvalues of \eqref{ex:Dir}}  \big\} \geq \# \big\{\text{conjugate points} \big\} - \mathfrak m
\end{equation}
where $\mathfrak m \in \bbZ$ is a topological correction that can be explicitly computed in many cases of interest.

\begin{rem}
\label{rem:card}
The notation $\#$ denotes the cardinality of a set, so the left-hand side of \eqref{unstable:conjugate} is the number of distinct positive real eigenvalues, and similarly for the right-hand side. This is very different from the classical Maslov index, which counts eigenvalues and conjugate points with multiplicity. This means our hyperplane index is much simpler to work with, and is a valuable tool for detecting instability even though it does not capture information about geometric or algebraic multiplicity.
\end{rem}

The inequality \eqref{unstable:conjugate} differs from the classical Morse index theorem, and hence from Sturm--Liouville theory, in three significant ways:
\begin{enumerate}
    \item[(1)] positive eigenvalues and conjugate points are counted without multiplicity;
    \item[(2)] we obtain a lower bound, rather than an exact formula, for the number of positive eigenvalues (though in some cases this can be improved to an equality, see Lemma \ref{lem:lambdamono});
    \item[(3)] there is a topological correction $\mathfrak m$ that is not present in the usual Morse index theorem.
\end{enumerate}

The term $\mathfrak m$, which has no analogue in the classical theory, encodes non-trivial dynamical information about the system. For instance, when $\mathfrak m > 0$ it is possible to have conjugate points but no positive eigenvalues. In general, the presence of a conjugate point will only imply the existence of a positive eigenvalue if $\mathfrak m \leq 0$.

The significance of the index $\mathfrak m$ is illustrated in Section \ref{sec:Turing}. There we study the well-known Turing instability phenomenon, whereby a stable, homogeneous equilibrium of a chemical reaction is counter-intuitively destabilized in the presence of diffusion. We find that such an instability occurs if and only if the index $\mathfrak m$ is nonzero. Moreover, we find a topological mechanism for the onset of the Turing instability (the so-called \emph{Turing bifurcation}) in terms of the topology of the underlying Maslov--Arnold space.

\subsection*{Outline of paper}
In Section \ref{sec:background} we provide further motivation for our construction by describing its application to systems of reaction--diffusion equations, and contrast it with the classical Maslov index. In Section \ref{sec:eLG}, we establish relevant topological properties of Maslov--Arnold spaces, in particular the so-called \emph{hyperplane MA spaces}. In Section \ref{sec:counting} we begin to apply our theory of hyperplane MA spaces by proving a general Morse index theorem (Theorem~\ref{thm:cpMorse}), that relates conjugate points and unstable eigenvalues for non-selfadjoint operators. Next, in Section \ref{sec:app}, we look at two concrete applications of this general result: systems with large diffusion, and systems admitting homogeneous equilibria. We characterize the Turing bifurcation using our generalized index theory, and also describe possible numerical applications of our theory. Finally, in Section \ref{furex} we describe additional examples (and non-examples) of Maslov--Arnold spaces, going beyond the hyperplane spaces that are emphasized in the rest of the paper.

\section{Background and motivation}
\label{sec:background}

Many interesting physical phenomena are described by systems of reaction--diffusion equations. These have the form
\begin{equation}\label{RD}
	u_t = D u_{xx} + F(u),
\end{equation}
where $u(x,t) \in \bbR^n$, $D = \diag(d_1, \ldots, d_n)$ with all $d_i > 0$, and $F \colon \bbR^n \to \bbR^n$. Given a steady state $\bar u(x)$, i.e. a solution to $D \bar u_{xx} + F(\bar u) = 0$, it is natural to ask whether or not it is stable to small perturbations.

The linear stability of $\bar u$ is determined by the spectrum of the linearized operator
\begin{equation}\label{Ldef}
	\cL = D \frac{d^2}{dx^2} + \nabla F(\bar u).
\end{equation}
For the study of traveling waves it is natural to take the real line as the spatial domain, in which case $\cL$ would be a closed, unbounded operator on $L^2(\bbR)$, with domain $H^2(\bbR)$. The resulting eigenvalue problem and index theory on the line are more involved than for a bounded interval, but the difficulties are analytic, rather than topological, in nature. In the Hamiltonian case these issues have been satisfactorily addressed in many places, for instance \cite{BCJLMS18,CH07,HLS2,HLS}, and we expect that similar methods will work here. Therefore, in order to emphasize the relevant topology of the Maslov--Arnold spaces, which is the main purpose of this paper, we restrict our attention to problems on a bounded interval $(0,L)$, and hence will view $\cL$ as an operator on $L^2(0,L)$, with domain in $H^2(0,L)$ depending on the choice of boundary conditions.

The eigenvalue equation $\cL v = \lambda v$ can be written as a $2n \times 2n$ system
\begin{equation}\label{Lsystem}
	\frac{d}{dx} \begin{pmatrix} v \\ w \end{pmatrix} = \begin{pmatrix} 0 & D^{-1} \\ \lambda I - \nabla F(\bar u) & 0 \end{pmatrix} \begin{pmatrix} v \\ w \end{pmatrix}.
\end{equation}
If $F = \nabla G$ for some function $G\colon \bbR^n \to \bbR$, then $\nabla F = \nabla^2 G$ is symmetric, hence $\cL$ is self-adjoint, and the system \eqref{Lsystem} is Hamiltonian. In this case the state $\bar u$ has a well defined Maslov index, which can be shown to equal the number of positive eigenvalues of $\cL$.

The need for the eigenvalue equation $\cL v = \lambda v$ to be expressible as a Hamiltonian system imposes certain restrictions on the PDE \eqref{RD} under consideration. For instance, the system \eqref{Lsystem} is Hamiltonian if and only if $\nabla F(\bar u)$ is symmetric, in which case the linearized operator \eqref{Ldef} is self-adjoint. In the context of reaction--diffusion systems, this means that the nonlinearity must be of gradient type, which rules out many physically relevant models. Reaction--diffusion equations are primarily studied for their propensity to support patterns and other permanent structures, and it was was shown by Turing \cite{T52} (see Section \ref{sec:Turing} for an in-depth
discussion) that a fundamental mechanism for generating such patterns requires that $\nabla F$ has competing terms, thus ensuring that $F$ is not a gradient. In the literature, equations of the form \eqref{RD} for which a stable equilibrium can be destabilized in the presence of diffusion are called \emph{activator--inhibitor} systems.

Of course \eqref{Lsystem} is not the only way of writing the eigenvalue equation as a first order system, and there is no reason to only consider the standard symplectic structure on the phase space $\bbR^{2n}$. Yanagida \cite{Yan02a,Yan02b} initiated the study of a broad class of activator--inhibitor systems called \emph{skew-gradient}, for which $F=Q\nabla G$ with $Q=\text{diag}\{q_i\}$, $q_i=\pm 1$. Such problems can be put into Hamiltonian form by a suitable change of variables. Chen and Hu subsequently showed how to define the Maslov index of a standing wave and how to use it as a tool in stability analysis for the skew-gradient case \cite{CH07,CH14}. Cornwell and
Jones extended these ideas to traveling waves in \cite{corn,CJ18}. In both cases, the parity of the Maslov index is shown to determine the sign of the derivative of the Evans function \cite{AlGaJo90} at $\lambda =0$; cf. \cite{ChardardBridges15, CJ2017}. The results in the aforementioned works hinged on the fact that the eigenvalue equation for $\mathcal{L}$ preserves the manifold
of Lagrangian planes for a non-standard symplectic form. In contrast to the Hamiltonian case, the index might be non-monotone in its parameters, and $\mathcal{L}$ might possess complex eigenvalues. Nonetheless, a nonzero Maslov index can still be used to prove instability; cf. \eqref{unstable:conjugate}. (Jones used the same idea to prove an instability criterion for standing waves in nonlinear
Schr\"{o}dinger-type equations \cite{J88}.) The index can also be used to prove stability in a particular case if the above concerns are addressed. For example, the Maslov index was used to prove stability of both standing and traveling waves in a doubly-diffusive FitzHugh--Nagumo equation \cite{CH14,CJ18}.

In addition to skew-gradient systems, the Maslov index has also been successfully applied to other PDEs that are conservative, such as the Nonlinear Schr\"odinger equation \cite{JMS14,JMS10,J88}, and various water wave problems \cite{ChardardBridges15}. Therefore, there are many cases where the Maslov index is relevant even though the linearized operator $\cL$ is not self-adjoint. In all of these cases, however, it can be shown that there is some hidden Hamiltonian  structure in the linearized problem, see for instance \cite{corn}.

The main achievement of this paper is the definition and subsequent application of a Maslov-like index for very general systems of equations, only requiring a mild invariance condition \eqref{xlinv} to be satisfied. As already described in the introduction, we do this by introducing Maslov--Arnold spaces, and in particular the family of hyperplane Maslov--Arnold spaces. Using these spaces and the resulting indices, we prove generalized Morse index theorems for the non-selfadjoint operator $\cL$ in \eqref{Ldef}, which we use to give sufficient conditions for the instability of the steady state $\bar u$ of \eqref{RD}.

\section{Maslov--Arnold spaces}\label{sec:eLG}
The definition of a Maslov--Arnold space already appeared in the introduction; in this section we clarify some of their topological properties, in particular for the hyperplane MA spaces. In Section~\ref{sec:def} we precisely define the geometric intersection number of a ``sufficiently generic" loop, which appears in the definition of an MA space. In Section~\ref{sec:hyper} we construct the \emph{hyperplane Maslov--Arnold spaces}, which are open, dense subsets of the Grassmannian; these will be our main tool when we study reaction--diffusion systems in Sections \ref{sec:counting} and \ref{sec:app}. In Section \ref{sec:hypindex} we elaborate on the generalized Maslov index for a hyperplane MA space, which we call the \emph{hyperplane index}, and explain how to define it for paths with different endpoints. Finally, in Section \ref{sec:2D} we describe the two-dimensional case in detail.

While the hyperplane spaces suffice for the applications in this paper, we will revisit general Maslov--Arnold spaces in Section \ref{furex}, where we settle some natural theoretical questions by providing further examples (and non-examples) of MA spaces.

\subsection{The intersection number of a sufficiently generic loop}\label{sec:def}
The subset 
\[
    \cZ_{_P}^{1} := \big\{ W \in Gr_n(\bbR^{2n}) : \dim(W \cap P)=1 \big\} \subseteq  \cZ_{_P}
\]
is a smooth submanifold of $Gr_n(\bbR^{2n})$ with one-dimensional normal bundle $\nu$.  We say a map $\gamma \colon S^1 \rightarrow Gr_n(\bbR^{2n}) $ is \emph{sufficiently generic} if \begin{itemize}
    \item  it is smooth  (i.e. $C^{\infty}$), 
    \item all intersections between the image $\gamma(S^1)$ and the train $\cZ_{_P}$ are contained in  $\cZ_{_P}^{1}$, and
    \item all of these intersections are transverse, meaning that if $\gamma(t) \in \cZ_{_P}^{1} $ for some $t$, then the velocity vector $\dot{\gamma}(t) \in T_{\gamma(t)} Gr_n(\R^{2n})$ is not tangent to  $\cZ_{_P}^{1}$. \end{itemize}

Given a subset $\cM \subseteq Gr_n(\bbR^{2n})$ and an $n$-plane $P \in Gr_n(\bbR^{2n})$, we call $\cZ_{_P} \cap \cM$ the train of $P$ in $\cM$. A \emph{co-orientation}\footnote{The existence of a co-orientation is equivalent to the restricted line bundle being trivializable, meaning that there exists an isomorphism of topological line bundles $\nu|_{\cZ_{_P}^{1} \cap \cM} \cong \R \times \left(\cZ_{_P}^{1} \cap \cM\right)$.} of the train is an orientation of the restricted line bundle $\nu|_{\cZ_{_P}^{1} \cap \cM}$, where $\nu$ is the normal bundle of $\cZ_{_P}^{1}$.  Given a sufficiently generic curve $\gamma\colon S^1 \rightarrow \cM \subseteq Gr_n(\bbR^{2n})$ and a co-orientation of
$\cM \cap \cZ_{_P}^{1}$, the \emph{geometric intersection number} of $\gamma$ with the train $ \cZ_{_P} \cap \cM$ is defined to be the finite sum 
\begin{equation}\label{geomintnum}
    \sum_{\substack{ t \in S^1 \\ \gamma(t) \in \cZ_{_P}}} \sgn(t)
\end{equation}
where $\sgn(t) = 1$ (resp. $-1$) if the induced linear isomorphism $ T_t S^1 \rightarrow \nu_{\gamma(t)}$ is orientation preserving (resp. reversing).

\subsection{Hyperplane Maslov--Arnold spaces}
\label{sec:hyper}

Let $V \cong \R^{2n}$ and denote by $\bigwedge^n(V)$ the $n$th degree exterior product of $V$, which is a vector space of dimension $\binom{2n}{n}$. The projective space $P\left(\bigwedge^n(V)\right)$ is the set of the one-dimensional subspaces of $\bigwedge^n(V)$. Given a non-zero $n$-vector $ \xi \in \bigwedge^n(V)$, we denote by $[\xi] \in P(\bigwedge^n(V)) $ the span of
$\xi$. The Pl\"ucker embedding maps $Gr_n(V)$ into $P(\bigwedge^n(V))$, sending $\spn \{v_1, \ldots, v_n\}$ to $[v_1\wedge \cdots \wedge v_n]$. We will sometimes abuse notation and simply identify $Gr_n(V)$ with its image $G \subseteq P(\bigwedge^n(V))$. Observe that $G$ equals the subset of those $[\xi] \in P(\bigwedge^n(V))$ for which $\xi$ is decomposable as a product of vectors in
$V$.

Let $V^*:= \Hom(V, \R)$ denote the dual vector space of $V$. For $k \geq 1$, each $\omega \in \bigwedge^k(V^*)$ corresponds to a skew-symmetric multilinear map $\omega \colon V^k= V \times \cdots \times V \rightarrow \R$. There is a canonical isomorphism $\bigwedge^k(V^*) \cong \bigwedge^k(V)^*$, so elements $\omega \in \bigwedge^k(V^*)$ are in one-to-one correspondence with
linear maps
$  \omega \colon \bigwedge^k(V) \rightarrow \R.$  Both interpretations of $ \bigwedge^k(V^*)$ will be important in what follows.

Each non-zero $n$-form $\omega \in \bigwedge^n(V^*) $ determines a hyperplane 
\begin{equation}
\label{Homega}
	H_\omega := \left\{ [\xi] \in P\Big(\bigwedge\nolimits^{\! n}(V) \Big) : \omega(\xi) = 0 \right\}.
\end{equation}
Conversely, a hyperplane $H \subseteq  P \left(\bigwedge^{n}(V) \right)$ determines, up to multiplication by a non-zero scalar, an $n$-form $\omega$ such that $H = H_{\omega}$. If the hyperplane $H_\omega$ is intersected with $G$, we get 
\begin{equation}\label{GH1}
	G \cap H_\omega  =   \left\{ [v_1 \wedge \cdots  \wedge v_n ] \in P\Big(\bigwedge\nolimits^{\! n}(V) \Big) : \omega(v_1, \ldots ,v_n) = 0 \right\},
\end{equation}
and by the Pl\"ucker embedding this corresponds to
\begin{equation}
\label{GPlucker}
	G \cap H_\omega \cong \big\{ \spn \{v_1, \ldots ,v_n\} \in Gr_n(V) : \omega(v_1, \ldots, v_n) = 0 \big\}.
\end{equation}
For instance, if $n=2$ and $\omega$ is a non-degenerate two-form (i.e. a symplectic form), then $G \cap H_\omega$ is the Lagrangian Grassmannian $\Lambda_\omega$.

\begin{rem}
The preimage of $G \cap H_\omega$ under the Pl\"ucker embedding, which appears on the right-hand side of \eqref{GPlucker}, is precisely the set $\cH_\omega$ defined in \eqref{GPlucker1}. For the rest of the paper we will identify these spaces whenever convenient, and use the notation $G \setminus (H_1 \cap H_2)$ and 
$Gr_n(\bbR^{2n}) \setminus \big(\cH_{\omega_1} \cap \cH_{\omega_2}\big)$ interchangably.
\end{rem}

Another important type of hyperplane, particularly relevant to our theory of Maslov--Arnold spaces, is that corresponding to the train of a fixed subspace, as defined in \eqref{def:train1}. Given 
a vector $v \in V$, the \emph{contraction map} $\iota_v \colon \bigwedge^k(V^*) \rightarrow \bigwedge^{k-1}(V^*)$ is defined for each $k \geq 1$ by $(\iota_v \omega)(w_1, \ldots ,w_{k-1}) := \omega(v, w_1, \ldots ,w_{k-1})$. Define the kernel of $\omega$ by $\ker\omega := \{ v \in V : \iota_v \omega = 0\}$.

\begin{lemma}\label{lemtransverse}
Let $\omega \in \bigwedge^n(V^*)$. If $\ker\omega \subseteq V$ has dimension $n$, then $G \cap H_{\omega}$ is the train of the subspace $\ker\omega$, i.e.
\begin{equation}\label{hyperplanetrain}
	G \cap H_\omega \cong \cZ_{\ker\omega} = \big\{ W \in Gr_n(V) : W \cap \ker \omega \neq \{0\} \big\}.
\end{equation}

Moreover, letting $Z^1 \subseteq G$ denote the image of $\cZ_{\ker\omega}^1$ under the Pl\"ucker embedding, the normal bundle of $Z^1$ in $G$ is naturally isomorphic to the restriction to $Z^1$ of the normal bundle of $H_{\omega}$ in $P(\bigwedge^n(V))$.
\end{lemma}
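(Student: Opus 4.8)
The plan is to prove the two assertions in turn, working concretely with the Plücker picture. For the first claim, I would establish the set equality $G \cap H_\omega \cong \cZ_{\ker\omega}$ by a direct linear-algebra argument: given $W = \spn\{v_1,\dots,v_n\} \in Gr_n(V)$, I claim $\omega(v_1,\dots,v_n) = 0$ if and only if $W \cap \ker\omega \neq \{0\}$. The ``if'' direction is immediate, since if some nonzero $v = \sum a_i v_i$ lies in $\ker\omega$ then $\iota_v\omega = 0$, and after replacing a basis vector by $v$ we get $\omega(v_1,\dots,v_n)$ (up to a nonzero scalar) equal to $(\iota_v\omega)(\dots) = 0$. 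For the ``only if'' direction, I would use the dimension hypothesis: since $\dim\ker\omega = n$ and $\dim V = 2n$, choose a complement $U$ with $V = \ker\omega \oplus U$, $\dim U = n$. Then $\omega$ is (up to scalar) the pullback of a volume form on $U$ along the projection $V \to U$; concretely, if $W \cap \ker\omega = \{0\}$ then the projection $W \to U$ is an isomorphism, so the images of $v_1,\dots,v_n$ in $U$ are linearly independent and $\omega(v_1,\dots,v_n) \neq 0$. This handles the first part.

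For the second claim — that the normal bundle of $Z^1$ in $G$ is naturally isomorphic to the restriction of the normal bundle of $H_\omega$ in $P(\bigwedge^n V)$ — the key geometric input is a \emph{transversality} statement: along $Z^1$, the Grassmannian $G$ meets the hyperplane $H_\omega$ transversally inside $P(\bigwedge^n V)$. Granting this, there is a standard identification: if a submanifold $G$ meets a hypersurface $H$ transversally, then near a point of $G \cap H$ the normal bundle of $G \cap H$ inside $G$ is canonically isomorphic to the restriction to $G \cap H$ of the normal bundle of $H$ inside the ambient space (both are identified with the appropriate quotient $T_x P(\bigwedge^n V) / (T_x G + T_x H)^\perp$-type object; more precisely, $\nu_{G\cap H / G} \cong TG / T(G\cap H) \xrightarrow{\ \sim\ } TP / (TH + \text{nothing}) \cong \nu_{H/P}|_{G\cap H}$ using that $T_x G + T_x H = T_x P$). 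So the whole content reduces to proving the transversality of $G$ and $H_\omega$ along $Z^1$.

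To prove transversality at a point $[\xi] = [v_1\wedge\cdots\wedge v_n] \in Z^1$, I would compute tangent spaces explicitly in Plücker coordinates. The tangent space $T_{[\xi]}G$ to the Grassmannian, viewed inside $T_{[\xi]}P(\bigwedge^n V) \cong \Hom(\text{line } \xi, \bigwedge^n V / \xi)$, is spanned by the ``elementary'' variations $v_1\wedge\cdots\wedge v_{i-1}\wedge u \wedge v_{i+1}\wedge\cdots\wedge v_n$ for $u \in V$ — i.e. it corresponds to $\Hom(W, V/W)$ under the standard isomorphism of $TGr_n(V)$. The hyperplane $H_\omega$, being the zero locus of the linear functional $\omega$ on $P(\bigwedge^n V)$, has tangent space the kernel of $d\omega$. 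Transversality at $[\xi]$ then amounts to: the functional $\omega$ does \emph{not} vanish on all of $T_{[\xi]}G$, i.e. there exists $i$ and $u \in V$ with $\omega(v_1,\dots,v_{i-1},u,v_{i+1},\dots,v_n) \neq 0$. Since $[\xi] \in Z^1$ means $\dim(W \cap \ker\omega) = 1$, exactly one ``direction'' $v_i$ (after a suitable basis change) spans $W \cap \ker\omega$ while the other $n-1$ do not lie in $\ker\omega$; then using the complement decomposition $V = \ker\omega \oplus U$ from the first part, one checks that varying the single ``bad'' basis vector $v_i$ in the $U$-direction produces a nonzero value of $\omega$ (the other $n-1$ vectors have linearly independent images in $U$, so adding a suitable $u \in U$ in the $i$-th slot makes all $n$ images independent). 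This is the heart of the argument, and I expect the main obstacle to be bookkeeping: carefully verifying that the elementary tangent variations really do span $T_{[\xi]}G$ inside $T_{[\xi]}P(\bigwedge^n V)$ and that the natural isomorphism of normal bundles is genuinely \emph{natural} (basis-independent), rather than just a pointwise dimension count. Once transversality is in hand, the normal-bundle isomorphism follows from the general principle above, and one notes it is canonical because at each point it is induced by the inclusion $T_{[\xi]}G \hookrightarrow T_{[\xi]}P(\bigwedge^n V)$ composed with projection modulo $T_{[\xi]}H_\omega$, which requires no auxiliary choices.
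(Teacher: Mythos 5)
Your proposal is correct and follows essentially the same route as the paper: both parts hinge on the observation that $\omega$ factors through a volume form on a complement of $\ker\omega$, and the normal-bundle claim is reduced to transversality of $G$ and $H_\omega$ along $Z^1$, proved by exhibiting a tangent direction to $G$ on which $d\omega$ is nonzero. The only stylistic difference is that the paper exhibits a single explicit path $\gamma(t)=(v_1+tv_{n+1})\wedge v_{n+2}\wedge\cdots\wedge v_{2n}$ in $G$ with $\omega(\gamma(t))=t$, which sidesteps having to invoke the full description of $T_{[\xi]}G$ inside $T_{[\xi]}P(\bigwedge^n V)$ that you rely on.
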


\begin{proof}
Let $v_1, \ldots ,v_n$ be a basis of $\ker\omega$, and extend to a basis $v_1,\ldots , v_{2n} $ of $V$, with dual basis $v_1^*, \ldots , v_{2n} ^*\in V^*$. Expressing $\omega$ in terms of this dual basis, and imposing the condition that $\iota_{v_i} \omega = 0$ for $1 \leq i \leq n$, we deduce that 
\begin{equation}\label{hyperplanetrain2}
\omega = c v^*_{n+1} \wedge \cdots \wedge v^*_{2n}
\end{equation}
for some nonzero $c \in \R$. 
It follows from \eqref{GH1} that $[w_1 \wedge \cdots \wedge w_n] \in G \cap H_{\omega}$ if and only if $\spn\{ w_1, \ldots ,w_n\}$ intersects
$\ker\omega$ non-trivially, proving (\ref{hyperplanetrain}).

Since $Z^1$ is a smooth submanifold of codimension one in $G$ and $H_{\omega}$ is a smooth submanifold of codimension one in $P(\bigwedge^n(V))$, to prove the isomorphism of normal bundles, it suffices to show that $G$ and $H_\omega$ intersect transversely along $Z^1$ in $P(\bigwedge^n(V))$. 

A point in $Z^1$ represents a $n$-dimensional subspace $W$ for which $W \cap \ker\omega$ is one dimensional. We can choose a basis $\{v_1,...,v_{2n}\}$ for $V$ so that 

\begin{itemize}
    \item $\spn\{v_1,...,v_n\} =\ker\omega$,
    \item $\spn\{v_1, v_{n+2},...,v_{2n} \} = W$,
    \item $\spn\{v_1\} = W \cap \ker\omega $.
\end{itemize}

In terms of the dual basis, $\omega$ is of the form (\ref{hyperplanetrain2}). Consider the smooth path in $\bigwedge^n(V)$ defined by $\gamma(t) :=   (v_1 + t v_{n+1})\wedge v_{n+2} \wedge \cdots \wedge v_{2n}$. Notice that $[\gamma(t)]$ defines a path in $G$, that $\gamma(0) = W$, and that $\omega(\gamma(t)) = t$, so $d \omega(\gamma(t))/dt = 1 \neq 0$. It follows that the velocity vector of $[\gamma(t)]$ at $W$ is tangent to $G$, but not to $H_\omega$, so we conclude that the intersection between $G$ and $H_\omega$ is transverse at that point.
\end{proof}

In view of Lemma \ref{lemtransverse}, one can try to construct an MA space using the following strategy. Given an $n$-dimensional subspace $P \subseteq V$, choose an $n$-form $\omega$ for which $\ker \omega = P$, then look for an open subset $U \subseteq P(\bigwedge^n(V))$ such that the normal bundle of $U \cap H_{\omega}$ is orientable. This then determines a cohomology class in $H^1(U;\Z)$ dual to $U \cap H_{\omega}$ (by pulling back the Thom class of the normal bundle, as described in \cite[\S 6]{bott1982differential}). If the restriction of this class to $U \cap G$ has infinite order, then $U \cap G$ is an MA space. Our hyperplane construction can be described in these terms, but it can also be explained in elementary geometric terms that avoids the machinery of Thom classes and allows us to interpret the Maslov index of a loop as the winding number of that loop around a circle.

Given an integer $m \geq 1$, consider the projection map $\pi\colon \R^{m+1} \rightarrow \R^2$ defined by $\pi(x_1,\ldots,x_{m+1}) = (x_m,x_{m+1})$. This is surjective, with kernel naturally isomorphic to $\R^{m-1}$. More generally, for any point $p \in \R^2$, the preimage $\pi^{-1}(p)$ is an affine space modelled on $\R^{m-1}$. 

Consider the open subset $$U:= \big\{ [x_1:\cdots:x_{m+1}] \in \R P^m : (x_m, x_{m+1}) \neq (0,0) \big\}, $$ which can be identified with the complement of a copy of $\R P^{m-2}$ in $\R P^m$.  Then $\pi$ descends to a $C^{\infty}$ map $[\pi]\colon U \rightarrow \R P^1$ defined by the rule 
\begin{equation}\label{pimap}
[\pi]\big([x_1:\dots:x_{m+1}]\big) := [\pi(x_1,\dots,x_{m+1})].
\end{equation}
Notice that $U$ is precisely the subset of $\R P^m$ where (\ref{pimap}) is well-defined.

\begin{lemma}\label{mobstrip}
The map $[\pi] \colon U \to \bbR P^1$ is a smooth fibre bundle, with fibres diffeomorphic to $\R^{m-1}$. In particular, $[\pi]$ is a homotopy equivalence between $U$ and $\R P^1$, and the preimage $[\pi]^{-1}([0:1])$ is an embedded submanifold of codimension one in $U$ with a trivializable normal bundle.
\end{lemma}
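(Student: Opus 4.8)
The plan is to establish $[\pi]\colon U \to \bbR P^1$ as a locally trivial fibre bundle directly, by covering $\bbR P^1$ with its two standard affine charts and exhibiting a diffeomorphism over each chart. Over the chart $\{[a:b] : b \neq 0\} \cong \bbR$ with coordinate $s = a/b$, a point of $[\pi]^{-1}$ is represented by $[x_1:\cdots:x_{m+1}]$ with $x_{m+1} \neq 0$; normalizing $x_{m+1}=1$ identifies this preimage with $\{(x_1,\dots,x_m,1)\} \cong \bbR^m$, and $[\pi]$ becomes the linear projection $(x_1,\dots,x_m)\mapsto x_m$, whose fibres are the affine hyperplanes $\{x_m = \text{const}\} \cong \bbR^{m-1}$; this gives a trivialization $[\pi]^{-1}(\bbR) \cong \bbR \times \bbR^{m-1}$. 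The symmetric argument over the chart $\{[a:b] : a \neq 0\}$ (normalizing $x_m = 1$) gives the other trivialization, and on the overlap the transition map is smooth since it is built from the smooth transition maps of $\bbR P^1$ and the linear structure of the projection $\pi$. Hence $[\pi]$ is a smooth fibre bundle with fibre $\bbR^{m-1}$.

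Once this is in hand, the homotopy equivalence is immediate: the fibre $\bbR^{m-1}$ is contractible, so the long exact sequence of homotopy groups of the fibration forces $\pi_k([\pi])$ to be an isomorphism for all $k$, and since both $U$ and $\bbR P^1$ are CW complexes (being smooth manifolds), Whitehead's theorem upgrades this to a homotopy equivalence. Alternatively, one can write down an explicit deformation retraction of $U$ onto the circle $\{[0:\cdots:0:x_m:x_{m+1}]\}$ by scaling down the first $m-1$ coordinates, which also makes the homotopy equivalence transparent.

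For the final assertion, note $[0:1] \in \bbR P^1$ corresponds to the point with $a=0$, $b\neq 0$; in the chart above with coordinate $s = a/b$, this is the origin $s = 0$, and so $[\pi]^{-1}([0:1])$ is the level set $\{s = 0\}$, i.e. $\{[x_1:\cdots:x_{m}:1] : x_m = 0\}$, which under the chart $[\pi]^{-1}(\bbR)\cong \bbR\times\bbR^{m-1}$ is precisely $\{0\}\times\bbR^{m-1}$. This is visibly a codimension-one embedded submanifold of $U$. Its normal bundle is the pullback under $[\pi]$ of the normal bundle of the point $\{[0:1]\}$ in $\bbR P^1$; the latter is a (trivially trivializable) rank-one bundle over a point, so its pullback is trivializable. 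Equivalently, within the trivialized neighbourhood $[\pi]^{-1}(\bbR)\cong\bbR\times\bbR^{m-1}$ the submanifold $\{0\}\times\bbR^{m-1}$ has trivial normal bundle with the coordinate $s$ furnishing an explicit global trivialization and co-orientation.

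The only mildly delicate point is verifying smoothness of the transition function between the two trivializations; everything else is bookkeeping with homogeneous coordinates. One should check that $U$ is exactly the locus where the chart construction is valid — which is immediate, since the two affine patches of $\bbR P^1$ pull back under $[\pi]$ to $\{x_{m+1}\neq 0\}$ and $\{x_m\neq 0\}$, whose union is $U$ — and that the fibrewise identifications with $\bbR^{m-1}$ are consistent, which follows because on the overlap one is merely rescaling the representative vector by a nonzero scalar, an operation that acts on the last two coordinates exactly as the transition map of $\bbR P^1$ and leaves the affine fibre structure intact.
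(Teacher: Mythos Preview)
Your proof is correct. The paper takes a briefer route: it observes that $[\pi]$ is a submersion (since $\pi$ is), identifies each fibre $[\pi]^{-1}([x])$ with the affine space $\pi^{-1}(x)\cong\bbR^{m-1}$ via the forgetful map, and then invokes the implicit function theorem for the statement about the preimage being an embedded submanifold with trivializable normal bundle. Your approach is more explicit and self-contained: you construct the local trivializations directly over the two affine charts of $\bbR P^1$ by normalizing one of the last two homogeneous coordinates, which makes the fibre-bundle structure completely transparent and gives the trivialization of the normal bundle of $[\pi]^{-1}([0:1])$ for free. The paper's argument is terse and leaves the reader to supply the passage from ``submersion with affine fibres'' to ``locally trivial fibre bundle''; your argument fills in exactly that step, at the cost of a bit more bookkeeping with charts.
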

\begin{proof}
That $[\pi]$ is a submersion follows immediately from the fact that $\pi$ is a submersion. If $x \in \R^2 \setminus \{(0,0)\}$ then it is easy to see that the natural forgetful map from $\pi^{-1}(x)$ to $[\pi]^{-1}([x])$ is a diffeomorphism. By the implicit function theorem, the fibres $[\pi]$ are embedded submanifolds with trivializable normal bundles.
\end{proof}

To understand Lemma \ref{mobstrip} it may be helpful to consider the example when $m=2$, so $\R P^m = \R P^2$ is a real projective plane and $\R P^{m-2} = \R P^0$ is a single point. Then $U$ is $\R P^2$ with a single point removed, which is diffeomorphic to the M\"obius strip, and the $\R^1$-fibre bundle $[\pi]\colon U \rightarrow \R P^1$ is simply the projection of the M\"obius strip onto the base circle. The general case is much like this, but with fibre $\R^{m-1}$.

We now have the ingredients necessary to construct an MA--space. Let $V \cong \R^{2n}$ and let $P \subseteq V $ be an $n$-dimensional subspace. Suppose that $\omega_1, \omega_2 \in \Lambda^n(V^*)$ are linearly independent $n$-forms and that $P = \ker \omega_1$.
Define $\cM \subseteq G$ by
\begin{eqnarray}
    \cM &:=&  G \setminus ( H_{\omega_1} \cap H_{\omega_2})\\
    &= & \big\{ [u_1 \wedge \cdots \wedge u_n] \in G ~:~ \omega_1(u_1,\ldots,u_n) \neq 0 ~\mathrm{ or }~ \omega_2(u_1,\ldots,u_n) \neq 0 \big\}.
 \end{eqnarray}

We therefore have a well-defined continuous map $\phi \colon \cM \rightarrow \R P^1$ given by
\begin{equation}
\label{phidef}
    \phi\big([u_1 \wedge \cdots \wedge u_n]\big) = [ \omega_1(u_1,\ldots,u_n) : \omega_2(u_1,\ldots,u_n) ].
\end{equation}
A choice of orientation on $\R P^1$ determines a generator $\mu \in H^1(\R P^1;\Z) \cong \Z$ and we set 
\begin{equation}\label{maslovclass}
\alpha = \phi^*(\mu) \in H^1(\cM;\Z).
\end{equation}

\begin{theorem}\label{cor:int}\label{MAspacethm}
The triple $(\cM, P, \alpha)$ defined above is an MA space. Moreover, the index of a continuous loop $\gamma\colon S^1 \rightarrow \cM$ is simply the winding number of the composite map $\phi \circ \gamma \colon S^1 \rightarrow \R P^1$. \end{theorem}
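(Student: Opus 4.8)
The plan is to verify the two defining properties of an MA space from Definition~\ref{def:MA1}: first that $\alpha = \phi^*(\mu)$ has infinite order in $H^1(\cM;\bbZ)$, and second that there is a co-orientation of $\cZ_{_P}^1 \cap \cM$ for which the geometric intersection number of any sufficiently generic loop with the train equals $\langle \alpha, [\gamma]\rangle$. The ``moreover" claim about the winding number then drops out essentially for free once the first part is in place.

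I would proceed as follows. First, observe that $\cM = G \setminus (H_{\omega_1}\cap H_{\omega_2})$ sits inside the open set $U \subseteq \bbR P^m$ (with $m+1 = \binom{2n}{n}$) after choosing coordinates on $P(\bigwedge^n V)$ adapted to the line spanned by $\omega_1,\omega_2$ in $\bigwedge^n(V)^*$: concretely, choose a basis of $\bigwedge^n(V)^*$ whose last two elements restrict to $\omega_1,\omega_2$, so that $H_{\omega_1}\cap H_{\omega_2}$ becomes exactly the $\bbR P^{m-2}$ removed in the construction of $U$, and $[\pi]|_{\cM}$ coincides with $\phi$ up to the identification $H^1(\bbR P^1;\bbZ)\cong\bbZ$. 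By Lemma~\ref{mobstrip}, $[\pi]\colon U \to \bbR P^1$ is a homotopy equivalence, so $[\pi]^*\mu$ generates $H^1(U;\bbZ)\cong\bbZ$. Hence $\alpha = \phi^*\mu$ is the restriction of an infinite-order class under $G\cap U = \cM \hookrightarrow U$; what must be checked is that this restriction is still of infinite order, equivalently that $\phi\colon \cM \to \bbR P^1$ is not null-homotopic. For this I would exhibit an explicit loop in $\cM$ on which $\phi$ has nonzero winding number — the natural candidate is built from the path $\gamma(t)$ used in the proof of Lemma~\ref{lemtransverse}, or more simply a loop passing once transversally through $\cZ_{_P}^1\cap\cM$, using that $\cZ_{_P}^1$ is nonempty. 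Since $\langle\alpha,[\gamma]\rangle = \langle\mu, \phi_*[\gamma]\rangle = \deg(\phi\circ\gamma)$ by naturality of the cohomology pairing, and this equals the winding number, the ``moreover" statement is immediate.

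For the intersection-number property, the key point is that by Lemma~\ref{lemtransverse} the normal bundle of $Z^1$ in $G$ is naturally isomorphic to (the restriction of) the normal bundle of $H_{\omega_1}$ in $P(\bigwedge^n V)$, which in turn — after the adapted change of coordinates — is the restriction of the normal bundle of $[\pi]^{-1}([0:1])$ in $U$, a trivializable line bundle by Lemma~\ref{mobstrip}. This trivialization (equivalently, the orientation on $\bbR P^1$ pulled back through $[\pi]$) furnishes the required co-orientation of $\cZ_{_P}^1\cap\cM$, noting $P = \ker\omega_1$ and $\cH_{\omega_1} = \cZ_{_P}$ by \eqref{hyperplanetrain}. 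For a sufficiently generic loop $\gamma$, each transverse intersection of $\gamma$ with the train $\cZ_{_P}$ corresponds under $\phi$ to a transverse passage of $\phi\circ\gamma$ through $[0:1]\in\bbR P^1$, and the sign $\sgn(t)$ from \eqref{geomintnum} matches the local contribution to $\deg(\phi\circ\gamma)$ precisely because the co-orientation was defined by pulling back the orientation of $\bbR P^1$. Summing over intersection points gives geometric intersection number $= \deg(\phi\circ\gamma) = \langle\alpha,[\gamma]\rangle$, which is exactly the MA-space condition.

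The main obstacle I anticipate is the bookkeeping in the adapted-coordinates step: one must be careful that the basis of $\bigwedge^n(V)^*$ can genuinely be chosen with last two vectors equal to $\omega_1$ and $\omega_2$ (this uses only their linear independence), that $\phi$ then literally becomes the restriction of $[\pi]$, and that the hypersurface $\cZ_{_P}^1\cap\cM$ — rather than all of $\cH_{\omega_1}$ — is what meets $[\pi]^{-1}([0:1])$ transversally inside $\cM$; transversality of generic loops with the full train $\cZ_{_P}$ forces all intersections into $\cZ_{_P}^1$ by the genericity definition in Section~\ref{sec:def}, so this is consistent. A secondary subtlety is confirming that the restriction map $H^1(U;\bbZ)\to H^1(\cM;\bbZ)$ does not kill the generator; the explicit loop with nonzero winding number handles this directly and is the cleanest route, avoiding any need to understand the homotopy type of $\cM$ in full.
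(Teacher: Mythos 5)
Your overall plan matches the paper's: use Lemmas~\ref{lemtransverse} and \ref{mobstrip} to identify $\phi^{-1}([0:1])$ with the train in $\cM$ and pull back the orientation of $\R P^1$ to co-orient $\cZ_{_P}^1 \cap \cM$, deduce that the geometric intersection number of a generic loop equals the degree of $\phi\circ\gamma$, and then show $\alpha = \phi^*\mu$ has infinite order by producing a loop with nonzero winding number. The naturality computation $\langle\alpha,[\gamma]\rangle = \deg(\phi\circ\gamma)$ is also exactly as in the paper.

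However, the step you flag as requiring an ``explicit loop'' is where a genuine gap remains, because neither of your two candidates is actually constructed or justified. The path $\gamma(t) = (v_1 + tv_{n+1})\wedge v_{n+2}\wedge\cdots\wedge v_{2n}$ from the proof of Lemma~\ref{lemtransverse} is not a loop; when closed up projectively it passes through $H_{\omega_1}$ at $t=0$, and nothing in that lemma's choice of basis guarantees $\omega_2(v_1, v_{n+2},\dots,v_{2n}) \ne 0$, so the closed path need not lie in $\cM$ at all. Your second candidate, ``a loop passing once transversally through $\cZ_{_P}^1\cap\cM$,'' presupposes $\cZ_{_P}^1\cap\cM \ne \emptyset$, which is not obvious: it is possible a priori that $\omega_2$ vanishes identically on $\cZ_{_P}^1$, in which case $\cZ_{_P}^1\cap\cM = \emptyset$ and no such loop exists. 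Ruling this out requires a genuine linear-algebra argument, and that argument is precisely Lemma~\ref{linalg}, which you never invoke. The paper's proof uses Lemma~\ref{linalg} iteratively to produce $v_1,\dots,v_{n-1}$ with $\iota_{v_1\wedge\cdots\wedge v_{n-1}}\omega_1$ and $\iota_{v_1\wedge\cdots\wedge v_{n-1}}\omega_2$ linearly independent, then chooses $u_1,u_2$ dual to these and sets $\gamma(t) = \spn\{\cos(\pi t) u_1 + \sin(\pi t) u_2, v_1,\dots,v_{n-1}\}$; one reads off $\phi(\gamma(t)) = [\cos\pi t : \sin\pi t]$, so the loop manifestly lies in $\cM$ and has winding number one, with no genericity needed. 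That explicit, self-contained construction is what your proposal is missing.
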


\begin{proof}
Combining Lemmas \ref{lemtransverse} and \ref{mobstrip}, we see that $\phi^{-1}([0:1]) = \cM \cap H_{\omega_1} = \cM \cap \cZ_{P}$ and all points lying in $\cM \cap \cZ_P^1$ are regular values for $\phi$. Therefore the normal bundle of $\cM \cap \cZ_P^1$ in $\cM$ is simply the pull back of the tangent space $T_{[0:1]} \R P^1$. The geometric intersection number of a sufficiently generic loop $\gamma \colon S^1 \rightarrow \cM$ will thus agree with the usual intersection number of the composite $\phi \circ \gamma\colon S^1 \rightarrow \R P^1$ with the point $[0:1]$, which in turn agrees with the winding number of $\phi \circ \gamma$.

It only remains to prove that $\alpha$ has infinite order.  It is enough to produce a loop $\gamma\colon [0,1] \rightarrow \cM$ with non-zero index. 

By iterative application of Lemma \ref{linalg} we can find vectors $v_1, \ldots ,v_{n-1}$ so that the contractions $\iota_{v_1\wedge \cdots \wedge v_{n-1}} \omega_1$ and $\iota_{v_1\wedge \cdots \wedge v_{n-1}} \omega_2$ are linearly independent. Therefore, there exist $u_1,u_2 \in V$ such that $\omega_i( u_j, v_1, \ldots ,v_{n-1}) = \delta_{ij}$. Consequently, the loop
\[
	\gamma(t)  = \spn\{  \cos(\pi t) u_1 + \sin(\pi t) u_2, v_1, \ldots ,v_{n-1} \}
\]
has index one.  
\end{proof}

\begin{lemma}\label{linalg}
Let $V$ be a vector space and $ k\geq 2$.  If $\omega_1, \omega_2 \in \bigwedge^k(V^*)$ are linearly independent, then there exists a vector $v \in V$ such that the contractions $ \iota_v \omega_1, \iota_v \omega_2 \in \bigwedge^{k-1}(V^*)$ are linearly independent. 
\end{lemma}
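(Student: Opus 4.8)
The plan is to argue by contraposition: assuming that $\iota_v\omega_1$ and $\iota_v\omega_2$ are linearly dependent for \emph{every} $v\in V$, I will show that $\omega_1$ and $\omega_2$ must themselves be linearly dependent. Since they are assumed independent, both are nonzero, and the argument rests on one auxiliary fact: \emph{if $\beta\in\bigwedge^k(V^*)$ is nonzero and $k\geq 2$, then the linear map $V\to\bigwedge^{k-1}(V^*)$, $w\mapsto\iota_w\beta$, has image of dimension at least two.} To see this, note the image is nonzero (otherwise $\beta=0$), so if it had dimension one it would equal $\R\eta$ for some $\eta\neq 0$, and we could write $\iota_w\beta=h(w)\,\eta$ with $0\neq h\in V^*$; rescaling $\eta$ so that $h(u_0)=1$ for some $u_0$ gives $\eta=\iota_{u_0}\beta$, whence the alternating property of $\beta$ yields $\iota_{u_0}\eta=\iota_{u_0}\iota_{u_0}\beta=0$ and therefore $\iota_w\eta=\iota_w\iota_{u_0}\beta=-\iota_{u_0}\iota_w\beta=-h(w)\,\iota_{u_0}\eta=0$ for all $w$; since $k-1\geq 1$ this forces $\eta=0$, a contradiction. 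Applying this fact to $\beta=\omega_2$ produces $v_1,v_2\in V$ with $\iota_{v_1}\omega_2$ and $\iota_{v_2}\omega_2$ linearly independent.

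Next I would set $U:=\{v\in V:\iota_v\omega_2\neq 0\}$, which is nonempty, and on $U$ define $\lambda\colon U\to\R$ by the requirement $\iota_v\omega_1=\lambda(v)\,\iota_v\omega_2$; this $\lambda(v)$ exists and is unique by the standing dependence hypothesis together with $\iota_v\omega_2\neq 0$. The crux is that $\lambda$ is constant on $U$. First, if $v,w\in U$ are such that $\iota_v\omega_2$ and $\iota_w\omega_2$ are linearly independent, then $\iota_{v+w}\omega_2=\iota_v\omega_2+\iota_w\omega_2\neq 0$, so $v+w\in U$, and expanding $\iota_{v+w}\omega_1=\lambda(v+w)\,\iota_{v+w}\omega_2$ using linearity of $v\mapsto\iota_v\omega_i$ and matching coefficients against the independent pair $\iota_v\omega_2,\iota_w\omega_2$ forces $\lambda(v)=\lambda(v+w)=\lambda(w)$. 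For a general pair $v,w\in U$: if $\iota_v\omega_2,\iota_w\omega_2$ are already independent we are done, and otherwise they span a single line, so, since the auxiliary fact tells us the contractions of $\omega_2$ span a space of dimension at least two, we may choose $z\in U$ with $\iota_z\omega_2$ off that line and conclude $\lambda(v)=\lambda(z)=\lambda(w)$.

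Writing $c$ for this constant value, I would finish by showing $\iota_v(\omega_1-c\omega_2)=0$ for all $v\in V$: for $v\in U$ this is the definition of $\lambda$, and for $v\notin U$ one picks any $z\in U$, notes $v+z\in U$ because $\iota_{v+z}\omega_2=\iota_z\omega_2\neq 0$, and subtracts the relations at $v+z$ and at $z$. A form (of positive degree) all of whose contractions vanish is zero, so $\omega_1=c\omega_2$, contradicting linear independence; this establishes the contrapositive and hence the lemma. I expect the only genuinely substantive point to be the auxiliary fact of the first paragraph, namely that a nonzero $k$-form with $k\geq 2$ cannot have a one-dimensional contraction image; everything else is linear bookkeeping. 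Note this is exactly where the hypothesis $k\geq 2$ enters: when $k=1$ the target $\bigwedge^0(V^*)\cong\R$ is one-dimensional, so any two contractions $\iota_v\omega_1,\iota_v\omega_2$ are automatically dependent and the lemma fails.
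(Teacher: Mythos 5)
Your argument is correct, but it differs substantially from the paper's. The paper works in coordinates: it fixes a basis, expands $\omega_1 = \sum_I a_I e^*_I$ and $\omega_2 = \sum_I b_I e^*_I$, and uses linear independence to find multi-indices $I,J$ with a nonvanishing $2\times 2$ minor $\det\bigl(\begin{smallmatrix} a_I & b_I \\ a_J & b_J\end{smallmatrix}\bigr)$; it then produces the desired vector explicitly as either $e_i$ (if some such pair has $I\cap J\neq\varnothing$) or $e_i+e_j$ (after a short argument showing certain auxiliary coefficients must vanish when $I\cap J=\varnothing$). You instead argue by contradiction in a coordinate-free way: you establish the auxiliary fact that a nonzero $k$-form with $k\geq 2$ has contraction image of dimension at least two (via the slick computation $\iota_w \eta = \iota_w\iota_{u_0}\beta = -\iota_{u_0}\iota_w\beta = -h(w)\,\iota_{u_0}\eta = 0$), then show that the ratio $\lambda(v)$ defined by $\iota_v\omega_1 = \lambda(v)\,\iota_v\omega_2$ is globally constant on $U = \{v : \iota_v\omega_2 \neq 0\}$, and conclude $\omega_1 = c\,\omega_2$. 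Both proofs are valid and roughly comparable in length. The paper's version is more constructive\,---\,it names a specific $v$ among a finite list of candidates\,---\,which is mildly useful if one wanted an algorithm; your version is basis-free, isolates a reusable structural fact (the lower bound on the contraction image), and would carry over verbatim to infinite-dimensional $V$ where a basis expansion is awkward. One small remark on phrasing: you call the argument a contraposition, but you in fact invoke the lemma's hypothesis that $\omega_1,\omega_2$ are independent (to know both are nonzero) and then reach a contradiction, so it is more naturally read as a proof by contradiction; alternatively you could handle $\omega_2 = 0$ as a trivial degenerate case to get a clean contrapositive.
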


\begin{proof}
Choose a basis $e_1, \ldots , e_n \in V$, with dual basis $e^*_1, \ldots, e^*_n \in V^*$, and expand $ \omega_1 = \sum_I a_I e^*_I$ and $\omega_2 = \sum_I b_I e^*_I$, where $I = \{ i_1 < \cdots < i_k\}$ are multi-indices and $e^*_I := e_{i_1}^* \wedge ... \wedge e_{i_k}^* $. Since $\omega_1$ and $\omega_2$ are linearly independent, there is a pair of multi-indices $I,J$ such that
the minor
\begin{equation}\label{minor2}
\det \begin{pmatrix}
a_I & b_I \\
a_J & b_J
\end{pmatrix} \neq 0.
\end{equation}
If there exists a pair of multi-indices $I,J$ satisfying (\ref{minor2}) and an index $i \in I \cap J$, then $ \iota_{e_i} \omega_1$ and $\iota_{e_i} \omega_2$ are linearly independent and we are done. 

Suppose instead that every pair of multi-indices satisfying (\ref{minor2}) has $I \cap J = \varnothing$. For a particular such pair, select $i \in I$ and $j \in J$ and define $I' := I \cup \{j\} \setminus \{i\}$ and $J' := J \cup \{i\} \setminus \{j\}$. Since $k \geq 2$ it follows that each of $I \cap I'$, $I' \cap J$, $I\cap J'$, and $J \cap J'$ is non-empty. Considering the
corresponding minors, we deduce that $a_{I'} = a_{J'} = b_{I'} = b_{J'} =0$ and consequently that $ \iota_{e_i + e_j} \omega_1$ and $\iota_{e_i+ e_j} \omega_2$ are linearly independent.
\end{proof}

\subsection{The hyperplane index}
\label{sec:hypindex}
Since the generalized Maslov index for loops in a hyperplane space $\cM$ can be interpreted as a winding number, we can easily extend its definition to non-closed paths. This amounts to choosing a convention for the endpoints.

We start by defining the winding number through $[0:1]$ for a continuous path in $\R P^1$. We do this by first mapping to $S^1$ and then looking at the winding through the point $e^{i\pi}$. Viewing $S^1$ as a subset of the complex plane, we define a map $\tau \colon \bbR P^1 \to S^1$ by
\begin{equation}
\label{taudef}
    \tau\big([x:y]\big) = 
    \left( \frac{x-iy}{|x-iy|}\right)^2.
\end{equation}

\begin{define}\label{def:winding}
Let $\eta \colon [a,b] \to \R P^1$ be a continuous path. If $\eta(t_0) = [0:1]$ for some $t_0 \in [a,b]$, then there is a unique lift $\theta \colon [a,b] \to \bbR$ such that $\theta(t_0) = \pi$ and $e^{i \theta(t)} = (\tau \circ \eta)(t)$ for $t \in [a,b]$, and we define
\begin{equation}\label{Wdef}
	\Wind(\eta) = \left\lfloor \frac{\theta(b) - \pi}{2\pi} \right\rfloor - \left\lfloor \frac{\theta(a) - \pi}{2\pi} \right\rfloor.
\end{equation}
If no such $t_0$ exists we set $\Wind(\eta) = 0$.
\end{define}

It is not hard to see that this is well defined (independent of the choice of $t_0$). It is clearly additive under concatenation of paths, and if $\eta(a) = \eta(b)$ it reduces to the usual winding number of a loop, $(\theta(b) - \theta(a))/2\pi$. Some consequences of this definition can be seen in Figure \ref{fig:winding}, where we show the composite path $\tau \circ \eta$ in $S^1$. These illustrate four possible cases: a positive or negative curve, passing through $e^{i\pi}$ at either $t_0=a$ or $t_0=b$. All four examples are parameterized so that $t_0=\pi$.
\begin{itemize}
    \item $e^{i t}$, $\pi/2 \leq t \leq \pi$ has $\theta(t) = t$, so the winding number is $\lfloor 0 \rfloor - \lfloor -1/4 \rfloor = 1$
    \item $e^{i t}$, $\pi \leq t \leq 3\pi/2$ has $\theta(t) = t$, so the winding number is $\lfloor 1/4 \rfloor - \lfloor 0 \rfloor = 0$
    \item $e^{-it}$, $\pi/2 \leq t \leq \pi$ has $\theta(t) = 2\pi - t$, so the winding number is $\lfloor 0 \rfloor - \lfloor 1/4 \rfloor = 0$
    \item $e^{-it}$, $\pi \leq t \leq 3\pi/2$ has $\theta(t) = 2\pi - t$, so the winding number is $\lfloor -1/4 \rfloor - \lfloor 0 \rfloor = -1$
\end{itemize}
These four cases are shown from left to right. Therefore, Definition \ref{def:winding} says that for a positively oriented curve we count crossings at $t_0=b$ but not $t_0=a$, and vice versa for a negatively oriented curve.

\begin{figure}
\begin{tikzpicture}
	\draw[->,>=stealth',thick] (0,1.5) arc[radius=1.5, start angle=90, end angle=180];
	\draw[->,thick] (0,-0.5) to (0,2);
	\draw[thick,dashed] (-2,0) to (0,0);
	\draw[->,thick] (0,0) to (0.5,0);
	\node at (-0.5, -1) {$\Wind=1$};
\end{tikzpicture}
\hfill
\begin{tikzpicture}
	\draw[<-,>=stealth',thick] (0,-1.5) arc[radius=1.5, start angle=-90, end angle=-180];
	\draw[->,thick] (0,-2) to (0,0.5);
	\draw[thick,dashed] (-2,0) to (0,0);
	\draw[->,thick] (0,0) to (0.5,0);
	\node at (-0.5, -2.5) {$\Wind=0$};
\end{tikzpicture}
\hfill
\begin{tikzpicture}
	\draw[->,>=stealth',thick] (0,-1.5) arc[radius=1.5, start angle=-90, end angle=-180];
	\draw[->,thick] (0,-2) to (0,0.5);
	\draw[thick,dashed] (-2,0) to (0,0);
	\draw[->,thick] (0,0) to (0.5,0);
	\node at (-0.5, -2.5) {$\Wind=0$};
\end{tikzpicture}
\hfill
\begin{tikzpicture}
	\draw[<-,>=stealth',thick] (0,1.5) arc[radius=1.5, start angle=90, end angle=180];
	\draw[->,thick] (0,-0.5) to (0,2);
	\draw[thick,dashed] (-2,0) to (0,0);
	\draw[->,thick] (0,0) to (0.5,0);
	\node at (-0.5, -1) {$\Wind=-1$};
\end{tikzpicture}
\caption{Illustrating the winding number, with respect to the point $e^{i\pi}$, or equivalently the negative real axis, for non-closed curves in $S^1$ with crossings at their endpoints. Our convention is to count negative crossings at the beginning of a curve and positive crossings at the end.}
\label{fig:winding}
\end{figure}
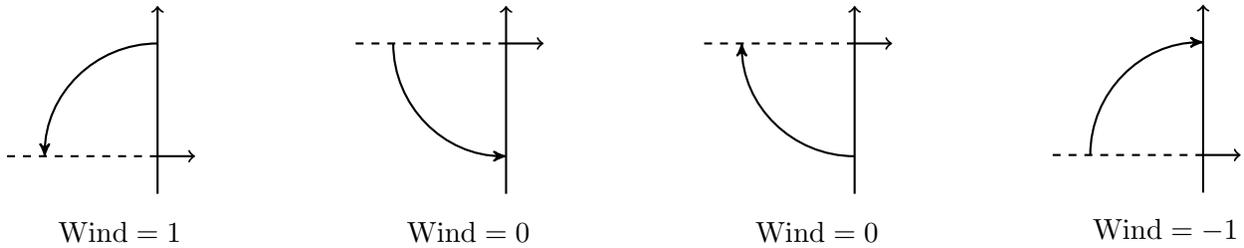

This leads to the following results for monotone paths. If $\eta \colon [a,b] \to \bbR P^1$ is continuously differentiable and has the property that $\theta'(t_*) > 0$ whenever $\eta(t_*) =[0:1]$, then the set $\{t_* \in [a,b] : \eta(t_*) = [0:1] \}$ is finite, and
\begin{equation}\label{eq:pos}
	\Wind(\eta) = \# \big\{t_* \in (a,b] : \eta(t_*) = [0:1] \big\}.
\end{equation}
Similarly, if $\theta'(t_*) < 0$ whenever $\eta(t_*) = [0:1]$, then
\begin{equation}\label{eq:neg}
	\Wind(\eta) = - \# \big\{t_* \in [a,b) : \eta(t_*) = [0:1] \big\}.
\end{equation}
In general, for any continuous path $\eta$ we have
\begin{equation}\label{eq:gen}
	\big|\Wind(\eta)\big| \leq \# \big\{t_* \in [a,b] : \eta(t_*) = [0:1] \big\}.
\end{equation}
To make use of these formulas, we need to find the lift $\theta(t)$ when the path $\eta(t)$ is given in terms of homogeneous coordinates. Suppose $\eta(t) = [x(t) : y(t)]$ has $x(t_0) = 0$, and hence $y(t_0) \neq 0$. 
It follows from \eqref{taudef} that
\[
    \tau \circ \eta = \frac{(x^2 - y^2) - 2ixy}{x^2+y^2}.
\]
In the notation of Definition \ref{def:winding}, this means the lift of $\tau \circ\eta$ is 
$\theta(t) = \pi - \arctan \frac{2x(t)y(t)}{x(t)^2 - y(t)^2}$, and hence
\begin{equation}
\label{thetap}
    \theta'(t_0) = 2 \frac{x'(t_0)}{y(t_0)}.
\end{equation}

That is, the monotonicity of the path is determined by the sign of the ratio $x'(t_0)/y(t_0)$. This simple observation will be used repeatedly in Section \ref{sec:counting}.

Having described the winding number for a path in $\R P^1$ with different endpoints, we are finally ready to extend the definition of the generalized Maslov index from loops to arbitrary paths in a hyperplane MA space.

\begin{define}
\label{def:Hindex}
Let $\gamma \colon [a,b] \to \cM$ be a continuous path in the hyperplane MA space $\cM = G \setminus (H_1 \cap H_2)$. We define its \emph{hyperplane index} to be
\begin{equation}
    \Ind(\gamma;P) := \Wind(\phi \circ \gamma),
\end{equation}
where $\phi \colon \cM \to \R P^1$ is defined in \eqref{phidef}.
\end{define}

For future reference we summarize some important properties of this index, which follow easily from the definition.

\begin{prop}
\label{ind:properties}
The hyperplane index has the following properties:
\begin{enumerate}
    \item (extension) If $\gamma \colon S^1 \to \cM$ is a loop, then $\Ind(\gamma;P)$ is equal to the generalized Maslov index $\Mas(\gamma;P)$ from Definition \ref{def:genMas}.
    \item (nullity) If $\gamma \colon[a,b] \to \cM$ is a path with $\gamma(t) \notin H_1$ for all $t \in [a,b]$, then $\Ind(\gamma;P) = 0$.
    \item (additivity) If $\gamma_1 \colon [a,b] \to \cM$ and $\gamma_2 \colon [b,c] \to \cM$ are paths with $\gamma_1(b) = \gamma_2(b)$, and $\gamma_2 \ast \gamma_1$ denotes their concatenation, then
    \[
        \Ind(\gamma_2 \ast \gamma_1;P) = 
        \Ind(\gamma_1;P) + Ind(\gamma_2;P).
    \]
    \item (homotopy invariance) If $\gamma_1, \gamma_2 \colon [a,b] \to \cM$ are homotopic in $\cM$, with fixed endpoints, then
    \[
        \Ind(\gamma_1;P) = \Ind(\gamma_2;P).
    \]
\end{enumerate}
\end{prop}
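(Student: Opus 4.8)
The plan is to reduce all four properties to elementary statements about the winding number $\Wind$ of paths in $\R P^1$, using that $\Ind(\gamma;P)=\Wind(\phi\circ\gamma)$ with $\phi$ continuous and $\phi^{-1}([0:1])=\cM\cap H_1$ (both established in Section~\ref{sec:hyper}, the latter in the proof of Theorem~\ref{MAspacethm}). The one preliminary I would isolate is a basepoint-free formula for $\Wind$: for any continuous $\eta\colon[a,b]\to\R P^1$ and any lift $\tilde\eta\colon[a,b]\to\R$ of $\tau\circ\eta$ along $\theta\mapsto e^{i\theta}$,
\[
  \Wind(\eta)=\left\lfloor\frac{\tilde\eta(b)-\pi}{2\pi}\right\rfloor-\left\lfloor\frac{\tilde\eta(a)-\pi}{2\pi}\right\rfloor .
\]
This holds because $\eta(t)=[0:1]$ exactly when $\tau(\eta(t))=e^{i\pi}$, i.e. when $\tilde\eta(t)\in\pi+2\pi\Z$; replacing $\tilde\eta$ by $\tilde\eta+2\pi k$ leaves the right-hand side unchanged, so if $\eta$ meets $[0:1]$ we may normalize $\tilde\eta(t_0)=\pi$ and recover Definition~\ref{def:winding}, while if $\eta$ avoids $[0:1]$ then $\tilde\eta$ is confined to a single interval $(\pi+2\pi k,\pi+2\pi(k+1))$ and the right-hand side is $0$, again matching Definition~\ref{def:winding}. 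As a byproduct this re-proves that $\Wind$ is independent of the choice of $t_0$.

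With this in hand, each item is short. For the \emph{nullity} property, if $\gamma(t)\notin H_1$ for all $t$ then $\phi\circ\gamma$ avoids $[0:1]$, so $\Wind(\phi\circ\gamma)=0$ directly from the definition. For \emph{additivity}, fix a single lift $\tilde\eta$ of $\tau\circ\phi\circ(\gamma_2\ast\gamma_1)$ over $[a,c]$ and apply the displayed formula on $[a,b]$, on $[b,c]$, and on $[a,c]$: the floor differences telescope, giving $\Ind(\gamma_2\ast\gamma_1;P)=\Ind(\gamma_1;P)+\Ind(\gamma_2;P)$, and this argument is insensitive to whether either subpath actually meets $[0:1]$. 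For \emph{homotopy invariance}, a homotopy rel endpoints $\gamma_1\simeq\gamma_2$ in $\cM$ pushes forward under $\phi$ and then $\tau$ to a homotopy rel endpoints in $S^1$; lifting it (homotopy lifting for $\R\to S^1$) and using that a lift of a constant path is constant, one obtains lifts $\tilde\eta_1,\tilde\eta_2$ with $\tilde\eta_1(a)=\tilde\eta_2(a)$ and $\tilde\eta_1(b)=\tilde\eta_2(b)$, so the displayed formula gives $\Wind(\phi\circ\gamma_1)=\Wind(\phi\circ\gamma_2)$. Finally, for the \emph{extension} property, when $\gamma$ is a loop a lift satisfies $\tilde\eta(b)-\tilde\eta(a)=2\pi\deg(\tau\circ\phi\circ\gamma)$, so the formula yields $\Wind(\phi\circ\gamma)=\deg(\tau\circ\phi\circ\gamma)$; since $\tau\colon\R P^1\to S^1$ is a homeomorphism, orientation-preserving for the generator $\mu$, this degree equals $\langle\mu,[\phi\circ\gamma]\rangle=\langle\phi^\ast\mu,[\gamma]\rangle=\langle\alpha,[\gamma]\rangle=\Mas(\gamma;P)$ by Theorem~\ref{MAspacethm} and Definition~\ref{def:genMas}.

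I do not expect a genuine obstacle; the content is bookkeeping. The two points that need care — and that the basepoint-free formula is designed to absorb — are (i) making every statement subsume the degenerate case in which $\phi\circ\gamma$ (or one of the subpaths in the additivity statement) never meets $[0:1]$, so that no ad hoc case analysis is required, and (ii) in the extension property, pinning down the orientation of $\R P^1$ used to define $\mu$ (hence $\alpha$) so that it is compatible with the map $\tau$ and the sign conventions of Definition~\ref{def:winding}; with that orientation fixed, the identity $\Ind(\gamma;P)=\Mas(\gamma;P)$ holds exactly rather than merely up to sign.
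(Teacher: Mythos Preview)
Your proposal is correct and complete. The paper does not actually prove this proposition; it simply states that the properties ``follow easily from the definition,'' so your argument is a careful filling-in of details the authors omit, and your basepoint-free formula for $\Wind$ is a clean way to handle all four items uniformly.
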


\subsection{The two-dimensional case}
\label{sec:2D}

We consider in detail the $n=2$ case, where $\cM$ can be described explicitly.  The hyperplanes now come in two types. If $\omega$ is a non-degenerate 2-form, i.e. a sympectic form, then $H_{\omega} \cap G$ is the corresponding Lagrangian Grassmanian. If $\omega$ is degenerate, then $\ker\omega \subseteq V$ is two-dimensional, and $ H_{\omega} \cap G$ is the train of $\ker\omega$.

Given linearly independent forms $\omega_1, \omega_2 \in \bigwedge^2(V)$, they span a pencil of bilinear forms $x \omega_1 + y \omega_2$, $(x,y) \neq (0,0)$. Choose a basis for $V$, so that the $\omega_i$ are represented by skew-symmetric $4 \times 4$-matrices. Consider the homogeneous quadratic polynomial $q(x,y):= \Pf( x \omega_1 + y \omega_2)$, where $\Pf$ denotes the Pfaffian\footnote{Recall that for a skew symmetric $4 \times 4$ matrix $A$, $\Pf(A) := A_{1,2}A_{3,4}-A_{1,3}A_{2,4} +A_{1,4}A_{2,3}$ satisfies $\Pf(A)^2 = \det(A)$.}. The roots of $q$ correspond to the degenerate two-forms in the pencil. There can be zero, one, two, or
infinitely many roots.

\begin{prop}
Up to a change of basis transformation of $V$, there are four possible isomorphism types for $\cM$. They are classified by the number of real roots of $q(x,y):= \Pf(x\omega_1 +y\omega_2)$.
\end{prop}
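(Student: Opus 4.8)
The plan is to reduce the classification of $\cM = G \setminus (H_{\omega_1} \cap H_{\omega_2})$ to the classification of the pencil of skew-symmetric $4\times 4$ matrices $x\omega_1 + y\omega_2$ under simultaneous congruence, since a change of basis of $V$ acts on the pair $(\omega_1,\omega_2)$ by congruence on each form, and hence on the pencil. The key observation is that $H_{\omega_1} \cap H_{\omega_2}$ depends only on the $2$-dimensional subspace of $\bigwedge^2(V^*)$ spanned by $\omega_1$ and $\omega_2$ (since for any $[\xi] \in G$, $\omega_1(\xi) = \omega_2(\xi) = 0$ iff $\omega(\xi) = 0$ for every $\omega$ in the span), so the isomorphism type of $\cM$ is an invariant of the pencil, not of the chosen basis $\{\omega_1,\omega_2\}$ of that pencil. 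Thus it suffices to understand, up to congruence, the possible pencils, and to show that the number of real roots of $q(x,y) = \Pf(x\omega_1 + y\omega_2)$ is a complete invariant, taking the four values $0,1,2,\infty$.

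First I would analyze the quadratic form $q(x,y) = \Pf(x\omega_1 + y\omega_2)$, a homogeneous degree-two polynomial in $(x,y)$. Up to scaling, a binary quadratic form over $\bbR$ falls into exactly four types according to its real projective zero set in $\bbR P^1$: (i) no real roots ($q$ definite, e.g. $x^2 + y^2$); (ii) a double root (e.g. $x^2$), which here I must split depending on whether $q \equiv 0$ or $q = x^2$ — the former being the ``infinitely many roots'' case where every form in the pencil is degenerate, the latter the ``one root'' case; (iii) two distinct simple roots (e.g. $xy$). Since $q$ need not be assumed nonzero, these are exactly the four cases: zero, one, two, or infinitely many real roots. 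The content is then to show each case corresponds to a single congruence orbit of pencils, hence a single isomorphism type of $\cM$. For the zero-root case, after a real linear change of $(x,y)$ we can take $q = x^2 + y^2$; the pencil then contains a symplectic form (in fact all but the real-root locus, which here is empty, so all forms are symplectic), and a Darboux-type normal form argument for a symplectic form together with a second skew form shows the pencil is congruent to a standard model, e.g. $\omega_1 = e_1^* \wedge e_2^* + e_3^* \wedge e_4^*$, $\omega_2 = e_1^* \wedge e_3^* - e_2^* \wedge e_4^*$ (or whichever pair gives $q$ definite). The two-root case reduces to a pencil with two distinct degenerate members, each the train of a $2$-plane, and one normalizes the two kernels into general position. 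The one-root case has a single degenerate member and $q = x^2$ (so generic members are symplectic), and the infinite-root case has $\Pf(x\omega_1+y\omega_2) \equiv 0$, meaning every member is degenerate; since a nonzero degenerate skew form on $\bbR^4$ has a $2$-dimensional kernel, one shows the kernels sweep out a structure rigid enough to pin down the pencil up to congruence.

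The key steps, in order: (1) observe that $\cM$ depends only on the pencil $\spn\{\omega_1,\omega_2\} \subseteq \bigwedge^2(V^*)$ and that a change of basis of $V$ acts by congruence on the pencil, so ``isomorphism type of $\cM$'' is a congruence invariant of pencils; (2) check that $q(x,y) = \Pf(x\omega_1+y\omega_2)$, and in particular its number of real roots in $\bbR P^1$, is intrinsic to the pencil up to a scalar (rescaling $q$ and reparametrizing $(x,y)$ under $\GL(2)$), so the root count is a genuine invariant; (3) enumerate the four possible real-root counts $0, 1, 2, \infty$ of a binary quadratic (including the zero polynomial); (4) for each root count, produce an explicit congruence normal form for the pencil — a short case analysis using Darboux's theorem / Kronecker's classification of matrix pencils specialized to skew-symmetric $4\times 4$ — thereby showing each root count corresponds to exactly one isomorphism type; (5) conclude there are exactly four types and they are distinguished by the root count of $q$.

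The main obstacle I expect is step (4): carefully producing the normal forms and verifying that the root count is a \emph{complete} invariant — i.e. that two pencils with the same number of real roots of $q$ are congruent. This is essentially Kronecker's theory of pencils of skew-symmetric matrices (or, over $\bbR$, a real refinement of it), and the subtlety is entirely real-versus-complex: over $\bbC$ there would be fewer types, and the ``zero roots'' case is genuinely distinct over $\bbR$ precisely because $q$ can be definite. One must also be careful that $q$ is only well-defined up to a nonzero scalar and up to the $\GL(2,\bbR)$-action on $(x,y)$, so the invariant is really the $\GL(2,\bbR)$-orbit of the projective zero set, which for binary quadratics is classified exactly by the four root counts. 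Handling the degenerate $q \equiv 0$ case, where the Pfaffian gives no information and one must argue directly from the common $2$-dimensional kernels, is the most delicate piece.
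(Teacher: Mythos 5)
Your plan is sound, but it follows a genuinely different path from the paper's. The paper works downstairs in the Pl\"ucker space: it identifies $G$ with the Klein quadric in $P(\bigwedge^2 V)$ cut out by the split-signature form $B(\eta,\xi)=\eta\wedge\xi$, observes that $H_{\omega_1}\cap H_{\omega_2}=P(W)$ for a $4$-dimensional subspace $W$, and classifies the possibilities via the restriction $B|_{W^\perp}$ using Sylvester's law of inertia and Witt's extension theorem; the delicate final step is then to show that the orthogonal (or anti-orthogonal) transformation of $\bigwedge^2(V)$ so obtained is actually induced by a linear map of $V$, which requires an argument about the components of $O(B)$ and the image of $SL(V)\to O(B)$. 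Your approach instead works upstairs on $V$: you classify pencils of skew-symmetric $4\times4$ forms under simultaneous congruence (a real Kronecker-type normal form), thereby sidestepping the lifting problem entirely, at the cost of invoking (or reproducing) the somewhat more specialized classification of real skew-symmetric matrix pencils rather than the textbook theory of real quadratic forms. Both routes hinge on the same real-versus-complex subtlety and both yield the same four normal forms; the paper's version is shorter once Witt and the structure of $O(3,3)$ are taken as known, while yours is more self-contained and elementary in spirit, provided you actually carry out step (4). You rightly flag step (4), and in particular the $q\equiv 0$ case, as the real work: the existence and uniqueness of the normal forms is not a one-liner, and the paper avoids this by transferring everything to $B|_{W^\perp}$ where inertia does the classification automatically. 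One small point worth making explicit either way: both your argument and the paper's show that the root count determines the isomorphism type (at most four types); that the four are genuinely distinct is left to the topological description of $X=G\cap H_1\cap H_2$ in the subsequent remark, so you need not prove that here.
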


\begin{proof}
The Pl\"ucker embedding identifies $G \subseteq P(\bigwedge^2(V)) \cong \R P^5$ as a quadric, the so-called Klein quadric, defined by the non-degenerate, split signature symmetric bilinear form 
\begin{align*}
	B \colon  \bigwedge\nolimits^{\!2}(V) \otimes \bigwedge\nolimits^{\!2}(V)  \rightarrow \bigwedge\nolimits^{\!4}(V) \cong \R, && B(\eta, \xi) = \eta \wedge \xi.
\end{align*}
We call a linear transformation $ A \in GL( \bigwedge^2(V))$ \emph{orthogonal} if it leaves $B$ invariant and \emph{anti-orthogonal} if it sends $B$ to $-B$. Observe that both orthogonal and anti-orthogonal transformations preserve $G$.

Let $W \subseteq \bigwedge^2(V)$ be the four-dimensional subspace for which $P(W) = H_{\omega_1} \cap H_{\omega_2}$. Since $B$ is non-degenerate, the $B$-complement of $W$, $W^\perp: = \{ u \in \bigwedge^2(V) : B(u,w)=0, \text{ for all } w \in W\}$, is two dimensional. Consider the restricted bilinear form $B':= B|_{W^{\perp}}$. The associated quadratic form $q'(v) := B(v,v)$ on
$W^{\perp}$ can be identified via duality with $q(x,y)$. By Sylvester's law of inertia, there are six possible isomorphism classes for $B'$ modulo change of basis, and four isomorphism classes modulo multiplication by $\pm 1$. These are classified by the number of roots of $q(x,y)$.

If $W_1, W_2 \subseteq \bigwedge^2(V)$ are four-dimensional subspaces such that $B|_{W^{\perp}_1}$ is isomorphic to $B|_{W_2^{\perp}}$, then by Witt's Theorem (see \cite[Thm 1.2]{M}) there exists an orthogonal transformation of $\bigwedge^2(V)$ sending $W_1$ to $W_2$. Similarly, if $B|_{W^{\perp}_1}$ is isomorphic to $-B|_{W_2^{\perp}}$ then there exists an anti-orthogonal
transformation sending $W_1$ to $W_2$. It follows in either case that $ G \setminus P(W_1)$ is isomorphic to $G \setminus P(W_2)$.

Finally we must show that the orthogonal transformation of $\bigwedge^2(V)$ used above can be induced by a linear transformation of $V$ (the anti-orthogonal case is an easy consequence). Denote by $O(B)$ the group of orthogonal transformations of $(\bigwedge^2(V),B)$. The natural homomorphism $SL(V) \rightarrow O(B)$ has kernel $ \pm I_4$, so since both groups are 15
dimensional, it is a surjection onto the identity component of $O(B)$. It remains to show that for each two-dimensional $ U \subseteq \bigwedge^2(V)$, there exists $A \in O(B)$ in each path component of $O(B)$ such that $A(U) = U$.

Choose a basis $e_1, \ldots ,e_6 \in \bigwedge^2(V)$ so that $ B(e_i, e_j) = (-1)^i\delta_{ij}$, where $\delta_{ij}$ is the Kronecker delta. According to \cite[Cor 1.1]{M}, representatives for the four path components of $O(B)$ are given by the transformations that fix $e_1,e_2,e_3,e_4$ and send $e_5 \mapsto \pm e_5$ and $e_6 \mapsto \pm e_6$. Since every different isomorphism
class of $B|_U$ can be realized by a two-dimensional $ U \subseteq \spn \{e_1, \ldots ,e_4\}$, this completes the proof.
\end{proof}

Up to a change of basis for $V$, the pencil of bilinear forms above is isomorphic to one of four possibilities 
$$
  \begin{pmatrix}
0 & x & y & 0\\
-x & 0 &  0& 0\\
-y & 0&0 & 0 \\
0 &  0 & 0 & 0\\
\end{pmatrix}, \ 
\begin{pmatrix}
0 & x & 0 & 0\\
-x & 0 &  0& 0 \\
 0 & 0 &0 & y\\
0 &  0 & -y & 0\\
\end{pmatrix}, \ 
 \begin{pmatrix}
0 & 0 & x & y\\
0 & 0 &  -y& x \\
-x & y &0 & 0\\
-y &  -x & 0 & 0\\
\end{pmatrix}, \
 \begin{pmatrix}
0 & x & y & 0\\
-x & 0 &  0 & y \\
-y & 0 &  0& 0\\
0 & -y & 0 & 0\\
\end{pmatrix}, $$
which have respective Pfaffians (up to sign)  $q(x,y)=0$, $ xy$, $ x^2+y^2$, and $y^2$. 

\begin{rem}\label{fourcases}

If $n=2$ then $X := G \cap H_1 \cap H_2$ is homeomorphic to one of the following four respective types.
\begin{itemize}
\item[(i)] If $q =0$, then every linear combination  $x \omega_1 + y \omega_2$ is degenerate. In this case $X$ is the intersection of trains for  $\ker\omega_1$ and $\ker\omega_2$, which intersect non-trivially. It follows that $X$ is a union of  $\R P^1 \times \R P^1$ with $\R P^2$ along a wedge sum $\R P^1 \vee \R P^1$.

\item[(ii)] If $q$ has two distinct real roots, then $X$ is the intersection of trains for a pair of two-dimensional subspaces $P_1, P_2 \subseteq V$ which intersect trivially. In this case $X = P(P_1) \times P(P_2) \cong \R P^1 \times \R P^1$ is a torus.

\item[(iii)] If $q$ has one root with multiplicity two, then $X$ can be identified with the intersection of the Lagrangian Grassmannian and the train of a Lagrangian subspace, for some symplectic form $\omega$. Therefore, $X$ is isomorphic to the Maslov cycle described by Arnol'd \cite[\S 3]{A85}; it is homeomorphic to the one point compactification of $S^1 \times \R$. 

\item[(iv)] If $q$ has no real roots, then there exists a quaternionic structure $I,J,K$ on $V$ in which the pencil is spanned by symplectic forms $\omega_I$ and $\omega_J$, and $X$ can be identified with the intersection of their respective Lagrangian Grassmanians, $\Lambda_I \cap \Lambda_J$. Equivalently, $X \cong S^2$ is identified with the complex projective line $P( \C^2_K)$
with respect to the third complex structure $K$. In this case $G \setminus X$ is \emph{not} an MA space, because $H_1$ is not a train.

We note that $G \setminus X$ can also be identified with the homogeneous space $GL_2(\C)/GL_2(\R)$. To see this, consider the action of $GL_2(\C)$ on $G$ determined by a choice of complex basis $v_1,v_2 \in (V,K) \cong \C^2$. This action has two orbits: the orbit $X$ consisting of complex one-dimensional subspaces of $(V,K)$, and its complement $G \setminus X$ consisting of real two-dimensional subspaces that are not invariant under $K$. The stabilizer of the real span of $\{v_1,v_2\}$ is identified with $GL_2(\R)$, whence $G \setminus X \cong GL_2(\C)/GL_2(\R)$ by the orbit-stablizer theorem. This is analogous to the homogeneous space construction of the classical Lagrangian Grassmannian as $U(n)/O(n)$.
\end{itemize} 

\end{rem}

In Section \ref{sec:counting} we construct a hyperplane Maslov--Arnold space for the study of $n\times n$ systems of reaction--diffusion equations. When $n=2$ it is of the type (iii) described above.

\begin{prop}
If $\cM$ is one of the four cases above, then $H^1(\cM;\Z) \cong \Z$ and is generated by the class $\alpha$ defined in (\ref{maslovclass}). 
\end{prop}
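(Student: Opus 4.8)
The plan is to reduce the statement to the computation of a single rational Betti number, and then to obtain that number from the descriptions of $X := G\cap H_1\cap H_2$ in Remark~\ref{fourcases} via Alexander duality in the closed oriented $4$-manifold $G = \Gr_2(\R^4)$.

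For the reduction: since $H_0(\cM;\Z)$ is free, the universal coefficient theorem gives $H^1(\cM;\Z)\cong\Hom\big(H_1(\cM;\Z),\Z\big)$, which is free abelian of rank $b_1(\cM):=\dim_\Q H^1(\cM;\Q)$. The loop $\gamma\colon S^1\to\cM$ constructed in the proof of Theorem~\ref{MAspacethm} satisfies $\langle\alpha,[\gamma]\rangle=\Wind(\phi\circ\gamma)=1$; that construction used only the linear independence of $\omega_1,\omega_2$, so it is available in all four cases of Remark~\ref{fourcases}, including case~(iv) where $G\setminus X$ fails to be an MA space. Hence $\alpha$ is indivisible in $H^1(\cM;\Z)$, and an indivisible element of an infinite cyclic group generates it, so it suffices to prove $b_1(\cM)=1$.

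I would compute $b_1(\cM)$ as follows. The manifold $G$ has $b_1(G)=0$ since $\pi_1(G)\cong\Z_2$; hence $b_3(G)=0$ by Poincar\'e duality, and together with $b_0(G)=b_4(G)=1$ and $\chi(G)=2$ (the orientation double cover of $G$ is $S^2\times S^2$) this forces $b_2(G)=0$. By Remark~\ref{fourcases}, in each case $X$ is a compact CW complex of dimension at most $2$, so $H^k(X;\Q)=0$ for $k\ge 3$. Poincar\'e--Alexander--Lefschetz duality gives $H_k(G,\cM;\Q)\cong H^{4-k}(X;\Q)$, so the segment
\[
	H_2(G;\Q)\longrightarrow H_2(G,\cM;\Q)\longrightarrow H_1(\cM;\Q)\longrightarrow H_1(G;\Q)
\]
of the long exact sequence of the pair $(G,\cM)$ has vanishing outer terms and yields $H_1(\cM;\Q)\cong H_2(G,\cM;\Q)\cong H^2(X;\Q)$. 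Thus $b_1(\cM)=\dim_\Q H^2(X;\Q)=b_2(X)$, and it remains to verify $b_2(X)=1$ in each of the four cases.

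That last verification is done case by case using Remark~\ref{fourcases}. Cases~(ii) and~(iv), where $X$ is a torus and a $2$-sphere respectively, are immediate. In case~(iii), $X$ is the one-point compactification of $S^1\times\R$; decomposing it by Mayer--Vietoris into a contractible neighborhood of the point at infinity and the complement $X\setminus\{\infty\}\cong S^1\times\R\simeq S^1$, whose overlap is homotopy equivalent to $S^1\sqcup S^1$ with both circles mapping onto a generator of $H_1(S^1\times\R;\Q)$, shows that $H_\bullet(X;\Q)$ agrees with that of $S^1\vee S^2$, so $b_2(X)=1$. In case~(i), $X=(\R P^1\times\R P^1)\cup_{\R P^1\vee\R P^1}\R P^2$; a Mayer--Vietoris argument, using $H_k(\R P^2;\Q)=0$ for $k>0$ and that $\R P^1\vee\R P^1\hookrightarrow\R P^1\times\R P^1$ induces an isomorphism on $H_1(\,\cdot\,;\Q)$, gives $H_2(X;\Q)\cong H_2(\R P^1\times\R P^1;\Q)\cong\Q$, so $b_2(X)=1$. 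The main points requiring care are the low-degree bookkeeping in the duality step and, in case~(i), checking from the geometry of $X$ that the wedge $\R P^1\vee\R P^1$ sits in the torus so as to induce an isomorphism on rational $H_1$; beyond that I anticipate no genuine obstacle.
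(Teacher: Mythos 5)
Your proof is correct and rests on the same two pillars as the paper's argument — duality inside the closed oriented $4$-manifold $G=\Gr_2(\R^4)$ together with a long exact sequence of a pair, followed by using the explicit loop from Theorem~\ref{MAspacethm} to identify the generator — but you package the computation differently. The paper applies Lefschetz duality directly to the open manifold $\cM$ to get $H^1(\cM;\Z)\cong H_3(G,X;\Z)$, works in $\Z$ coefficients throughout, and uses the long exact sequence of $(G,X)$ together with the tabulated integral homology of $G$ (namely $H_3(G;\Z)=0$, $H_2(G;\Z)\cong\Z/2$) and the torsion-freeness of $H_2(X;\Z)$ to conclude $H^1(\cM;\Z)\cong H_2(X;\Z)\cong\Z$. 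You instead isolate the torsion-free structure of $H^1(\cM;\Z)$ at the outset via the universal coefficient theorem, pass to $\bbQ$ coefficients, and combine Alexander duality applied to $X$ with the long exact sequence of the complementary pair $(G,\cM)$ to arrive at $b_1(\cM)=b_2(X)$. Your route avoids citing integral torsion data for $G$, replacing it with the clean Euler-characteristic argument via the orientation double cover $S^2\times S^2$, at the modest cost of the extra UCT step and the (easily supplied) finite generation of $H_1(\cM;\Z)$, which follows from your own exact sequence since $H^2(X;\Z)$ and $H_1(G;\Z)$ are finitely generated. The case-by-case verification that $b_2(X)=1$, which you carry out via Mayer--Vietoris in cases (i) and (iii), is asserted without details in the paper; the geometric input you flag for case (i) — that the wedge $\R P^1\vee\R P^1$ sits in the torus so as to give a rational $H_1$-basis — is the one point where either approach relies on an unwritten verification of Remark~\ref{fourcases}.
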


\begin{proof}
By Poincar\'e duality $H^1(\cM;\Z) \cong H_3(G, X;\Z)$. Consider the long exact sequence of the pair
\[ \cdots \rightarrow H_3(G;\bbZ) \rightarrow H_3(G,X;\bbZ) \rightarrow H_2(X;\bbZ) \rightarrow H_2(G;\bbZ) \rightarrow \cdots \]
The homology groups of real Grassmannian have been calculated in \cite[Table IV]{J79}, giving  $H_3(G;\bbZ) = 0$ and $H_2(G;\bbZ) \cong \Z/2$. Since $X$ is isomorphic to a two-dimensional cell complex, $H_2(X;\Z)$ is torsion free. Exactness therefore implies that $H_3(G,X;\Z)$ is isomorphic to $H_2(X;\Z)$. In all four cases above it is straightforward to check $H_2(X;\Z) \cong \Z$, so it follows that $H^1(\cM;\Z) \cong \Z$. In Theorem~\ref{MAspacethm} we constructed a
loop in $\cM$ whose geometric intersection number with $H_1 \cap \cM$ is one, so it must generate $H_3(G,X;\Z) \cong H^1(\cM;\Z)$.
\end{proof}

\section{Counting unstable eigenvalues with the hyperplane index}\label{sec:counting}
We now explain how our theory of Maslov--Arnold spaces applies to the eigenvalue problem $\cL v = \lambda v$ for the operator $\cL$ defined in \eqref{Ldef}, with suitable boundary conditions. In this section we construct a hyperplane MA space that has desirable monotonicity properties for reaction--diffusion systems and hence allows us to relate real unstable eigenvalues to conjugate points, leading to the general result in Theorem \ref{thm:cpMorse}. Specific applications of this theorem will be explored in Section \ref{sec:app}.

We consider a coupled system of eigenvalue equations on a bounded interval $(0,L)$, with separated boundary conditions given by subspaces $P_0, P_1 \in Gr_n(\bbR^{2n})$. That is, we seek solutions to the first-order system 
\begin{equation}\label{Lsystem2}
	\frac{d}{dx} \begin{pmatrix} v \\ w \end{pmatrix} = \begin{pmatrix} 0 & D^{-1} \\ \lambda I - \nabla F(\bar u) & 0 \end{pmatrix} \begin{pmatrix} v \\ w \end{pmatrix}
\end{equation}
satisfying the boundary conditions
\begin{equation}
	 \begin{pmatrix} v(0) \\ w(0) \end{pmatrix} \in P_0, \qquad 
	  \begin{pmatrix} v(L) \\ w(L) \end{pmatrix} \in P_1.
\end{equation}
For instance, Dirichlet and Neumann boundary conditions correspond to the subspaces $P_D = \{(0,p) : p \in \bbR^n\}$ and $P_N = \{(q,0) : q \in \bbR^n\}$, respectively. The Robin boundary condition $D u_x = \Theta u$, where $\Theta$ is a real $n \times n$ matrix, corresponds to $P_R = \{ (q, \Theta q) : q \in \bbR^n\}$. Note that $P_R$ is Lagrangian if and only if $\Theta$ is
symmetric, and the special case $\Theta=0$ yields Neumann boundary conditions.

For each $x \in [0,L]$ and $\lambda \geq 0$ we define the subspace
\begin{equation}\label{mudef}
	W(x,\lambda) = \left\{ \begin{pmatrix} v(x) \\ w(x) \end{pmatrix} : \begin{pmatrix} v \\ w \end{pmatrix} 
	\text{ satisfies \eqref{Lsystem2} and } \begin{pmatrix} v(0) \\ w(0) \end{pmatrix} \in P_0 \right\},
\end{equation}
so that $\lambda$ is an eigenvalue of $\cL$ if and only $W(L,\lambda) \cap P_1 \neq \{0\}$.

Using our theory of hyperplane Maslov--Arnold spaces, we obtain a generalized Morse index theorem that relates unstable eigenvalues of $\cL$ to conjugate points, where $x_*$ is said to be a conjugate point if $W(x_*,0) \cap P_1 \neq \{0\}$.

\begin{theorem}\label{thm:cpMorse}
Assume that $P_1 = P_D$ and $P_0$ is either $P_D$ or $P_R$ for some $\Theta \in M_n(\bbR)$.
\begin{enumerate}
    \item For the path $W(x,\lambda)$ defined by \eqref{mudef}, there exists $\lambda_\infty > 0$ such that $W(x,\lambda) \cap P_1 = \{0\}$ for all $0 < x \leq L$ and $\lambda \geq \lambda_\infty$.
    
    \item If $H_1$ is the hyperplane corresponding to $P_1$, and $H_2 \neq H_1$ is a hyperplane such that
    \begin{equation}
    \label{xlinv}
    W(x,0) \in G \setminus (H_1 \cap H_2) \quad \text{ and } \quad W(L,\lambda) \in G \setminus (H_1 \cap H_2)
\end{equation}
for all $x \in (0,L]$ and all $\lambda \in [0,\lambda_\infty]$, then
\begin{align}
\begin{split}\label{eq:MorseMaslov1}
	\# \big\{\text{nonnegative eigenvalues of } \cL \big\} 
	&\geq 
	\Ind\left(W(x,0)\big|_{x \in [\delta,L]} ; P_1 \right) 
	- \mathfrak{m}
\end{split}
\end{align}
for $0 < \delta \ll 1$, where $\mathfrak{m}$ denotes the generalized Maslov index of the image (under $W$) of the boundary of $[\delta,L] \times [0,\lambda_\infty]$, oriented counterclockwise.

\item There is a hyperplane $H_2$ with the property that if \eqref{xlinv} holds, then
\begin{equation}
	\Ind\left(W(x,0)\big|_{x \in [\delta,L]} ; P_1 \right)  \\
	= \# \big\{\text{conjugate points in $(0,L]$}\big\}
\end{equation}
for $0 < \delta \ll 1$,
hence
\begin{equation}\label{eq:MorseMaslov2}
    \# \big\{\text{nonnegative eigenvalues of } \cL \big\} 
    \geq \# \big\{\text{conjugate points in $(0,L]$}\big\}
    - \mathfrak{m}
\end{equation}
and
\begin{equation}
\label{cpbound}
	\# \big\{\text{positive eigenvalues of } \cL \big\} 
	\geq \# \big\{\text{conjugate points in $(0,L)$} \big\}
	- \mathfrak{m}.
\end{equation}
\end{enumerate}
\end{theorem}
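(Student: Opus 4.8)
The plan is to run a homotopy argument for the image under $W$ of the boundary of the rectangle $[\delta,L]\times[0,\lambda_\infty]$ in the $(x,\lambda)$--plane, exactly as in the self-adjoint Maslov index proofs, but with the classical Maslov index replaced by the hyperplane index of Definition~\ref{def:Hindex} and with a carefully chosen second hyperplane $H_2$ forcing the $\lambda=0$ edge to cross the train $H_1$ transversally and in a single direction; part~(1) positions the rectangle so that its top edge misses the train, and part~(3) supplies the $H_2$ that turns the bottom edge into a signed count of conjugate points. For part~(1) I would use a large-$\lambda$ estimate. In the Dirichlet case ($P_0=P_D$) the quantity $x\mapsto\langle Dv'(x),v(x)\rangle$ attached to any solution of \eqref{Lsystem2} is strictly increasing and vanishes at $x=0$ once $\lambda$ exceeds a constant depending on $D$ and $\nabla F(\bar u)$ (because $\langle Dv'',v\rangle+\langle Dv',v'\rangle = \langle(\lambda I-\nabla F(\bar u))v,v\rangle + \langle Dv',v'\rangle>0$ whenever $(v,v')\neq(0,0)$), so $v$ has no zero in $(0,L]$, i.e.\ $W(x,\lambda)\cap P_1=\{0\}$ there. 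For Robin conditions the same conclusion follows from a rescaling comparison: with $\xi=\sqrt{\lambda}\,x$ the equation becomes a small perturbation of $\tilde v_{\xi\xi}=D^{-1}\tilde v$, whose relevant solution $\cosh(D^{-1/2}\xi)\,v(0)$ has no zero, and a Gronwall argument using the exponential growth of the unperturbed solutions propagates this over the (growing) rescaled interval. Fix the resulting $\lambda_\infty$.

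For part~(2), decompose the counterclockwise boundary of $[\delta,L]\times[0,\lambda_\infty]$ into its four edges. The top edge $x\mapsto W(x,\lambda_\infty)$ misses $H_1$ by part~(1); the left edge $\lambda\mapsto W(\delta,\lambda)$ also misses $H_1$ once $\delta$ is small, since $W(0,\lambda)=P_0$ with $P_0\cap P_1=\{0\}$ when $P_0=P_R$, while when $P_0=P_D$ the leading behaviour $v(\delta)\approx\delta\,D^{-1}w(0)$ gives $W(\delta,\lambda)\cap P_1=\{0\}$ uniformly for $\lambda\in[0,\lambda_\infty]$. These same observations show the whole boundary lands in $\cM$, so it is a loop there, and its generalized Maslov index $\mathfrak m$ is, by the nullity and additivity in Proposition~\ref{ind:properties}, the sum of the index $\Ind\!\big(W(\cdot,0)|_{[\delta,L]};P_1\big)$ of the bottom edge and the index of the right edge $\lambda\mapsto W(L,\lambda)$. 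By \eqref{eq:gen} the latter is at most, in absolute value, the number of $\lambda\in[0,\lambda_\infty]$ with $W(L,\lambda)\cap P_1\neq\{0\}$, which by part~(1) is exactly the number of nonnegative eigenvalues of $\cL$; rearranging gives \eqref{eq:MorseMaslov1}.

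For part~(3) I would take $\omega_1=dq_1\wedge\cdots\wedge dq_n$, so that $H_1=H_{\omega_1}$ is the train of $P_1=P_D=\ker\omega_1$ by Lemma~\ref{lemtransverse}, and set $\omega_2:=\sum_{k=1}^{n}\tfrac1{d_k}\,dq_1\wedge\cdots\wedge dq_{k-1}\wedge dp_k\wedge dq_{k+1}\wedge\cdots\wedge dq_n$, where $(q,p)$ are the coordinates on $\bbR^{2n}$ used in \eqref{Lsystem2}. If $M(x),N(x)$ are the $n\times n$ blocks of the frame for $W(x,0)$ obtained by evolving a fixed frame of $P_0$ under \eqref{Lsystem2} at $\lambda=0$, then $\omega_1$ on this frame equals $\det M(x)$, and because \eqref{Lsystem2} forces $M'=D^{-1}N$, expanding the derivative of the determinant row by row gives $\tfrac{d}{dx}\det M(x)=\omega_2(\mathrm{frame}(x))$; hence $\phi\circ W(\cdot,0)$ has the Pr\"ufer-type homogeneous coordinates $[\det M(x):(\det M)'(x)]$. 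At each conjugate point $x_*$, formula \eqref{thetap} then gives $\theta'(x_*)=2(\det M)'(x_*)/(\det M)'(x_*)=2>0$, the denominator being nonzero because \eqref{xlinv}, together with $W(x_*,0)\in H_1$, forces $W(x_*,0)\notin H_2$, i.e.\ $\omega_2(\mathrm{frame}(x_*))\neq 0$ (concretely \eqref{xlinv} rules out conjugate points of multiplicity $\geq 2$, at which $\omega_1$ and $\omega_2$ would both vanish on the frame). So every crossing is positively transverse, \eqref{eq:pos} applies, and $\Ind\!\big(W(\cdot,0)|_{[\delta,L]};P_1\big)$ equals the number of conjugate points in $(\delta,L]$, hence in $(0,L]$ once $\delta$ is small (the same leading-order analysis near $x=0$ rules out conjugate points in $(0,\delta]$). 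Combined with part~(2) this gives \eqref{eq:MorseMaslov2}, and \eqref{cpbound} follows by subtracting $1$ from both sides of \eqref{eq:MorseMaslov2} when $\lambda=0$ is an eigenvalue — equivalently when $x=L$ is a conjugate point — and leaving it unchanged otherwise.

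The step I expect to be the main obstacle is the choice and verification of $H_2$ in part~(3): one needs a single, $x$--independent $n$--form $\omega_2$ — in particular independent of the potential $\nabla F(\bar u)$ — whose value on the moving frame reproduces $\tfrac{d}{dx}\det M$ at crossings, together with a check that $\omega_1,\omega_2$ are linearly independent and that the triple $(\cM,P_1,\alpha)$ is a hyperplane MA space in the sense of Theorem~\ref{MAspacethm}. The identity $M'=D^{-1}N$ built into the first-order system \eqref{Lsystem2}, with $D$ positive and diagonal, is precisely what makes such an $\omega_2$ available; it is the non-self-adjoint replacement for the classical positivity of the crossing form. Once $H_2$ is in hand, parts~(2)--(3) amount to a bookkeeping of signs using Proposition~\ref{ind:properties} and the monotonicity formulas \eqref{eq:pos}--\eqref{eq:gen}, and part~(1) is a standard large-parameter estimate — the Robin case requiring the only genuinely technical work, because of the growing rescaled interval and the non-symmetric, non-gradient potential.
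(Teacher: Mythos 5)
Your proposal is essentially correct, and parts (2) and (3) take the same route as the paper: the boundary of the rectangle $[\delta,L]\times[0,\lambda_\infty]$ is decomposed into its four edges, nullity kills the top, the paper's Lemma~\ref{lem:delta} (which you reproduce via the leading-order behaviour of $v(\delta)$) kills the left, \eqref{eq:gen} bounds the right edge by the eigenvalue count, and the monotone bottom edge counts conjugate points. Your derivative identity $\frac{d}{dx}\det M=\omega_2(\mathrm{frame})$ is exactly the content of the paper's Lemma~\ref{psiRD} (the paper works with the normalized $\psi_i$, so it appears there as $\psi_1'=\psi_2-\gamma\psi_1$, which collapses to $\psi_1'=\psi_2$ at a crossing), and your use of \eqref{thetap} and \eqref{xlinv} to show every crossing is positively transverse matches Lemma~\ref{lemma:H2mono} word for word. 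The deduction of \eqref{cpbound} from \eqref{eq:MorseMaslov2} by pairing the zero eigenvalue with the conjugate point at $x=L$ is correct and is the same bookkeeping the paper leaves implicit.

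Where you genuinely diverge from the paper is part (1). The paper's Lemma~\ref{lem:linfinity} is a single integration-by-parts estimate: multiply $Dv_{xx}+\nabla F(\bar u)v=\lambda v$ by $\bar v$, integrate, and for the Robin boundary term use $v(x_*)=0$ plus Young's inequality to absorb $|v(0)|^2$ into the kinetic term, giving $\Re\lambda\le K+C^2/d$ uniformly in both boundary cases. You instead use a Sturm-type monotonicity of $\langle Dv'(x),v(x)\rangle$ for Dirichlet (this works and is a clean alternative: $g(0)=0$, $g'>0$, so $v(x)\ne0$ for $x>0$), but then switch to a rescaling-plus-Gronwall argument for Robin. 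The Robin argument is plausible in outline, but it trades the paper's one-line energy estimate for a perturbation analysis over a rescaled interval of length $\sqrt\lambda L$, where one must check that the cumulative Gronwall error $O(L/\sqrt\lambda)\cdot e^{c\sqrt\lambda L}$ stays subordinate to the unperturbed growth $e^{c\sqrt\lambda L}$ \emph{and} that the limiting Neumann frame $\cosh(D^{-1/2}\xi)$ is uniformly invertible along the whole interval; both are true, but neither is immediate, and you have not written the estimate. The paper's energy argument is simpler, treats both boundary conditions at once, and additionally yields the bound $\Re\lambda\le\lambda_\infty$ for \emph{all} eigenvalues (not just along the real path), which is exactly what justifies the equality $\#\{\lambda\in[0,\lambda_\infty]:W(L,\lambda)\cap P_1\neq\{0\}\}=\#\{\text{nonnegative eigenvalues}\}$ used in part~(2).

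One small imprecision: you describe the condition $W(x_*,0)\in H_1\cap H_2$ as ``ruling out conjugate points of multiplicity $\geq 2$.'' That is not what it says: by the identity $\psi_1'=\psi_2$ at a crossing, $W\in H_1\cap H_2$ is equivalent to $\det X$ having a double root \emph{in $x$}, which is a statement about the tangency of the path to the train, not about $\dim(W\cap P_1)\geq 2$. The logical step you actually need — $W(x_*,0)\in H_1$ and \eqref{xlinv} imply $\omega_2\neq0$ on the frame — is correct as you state it, but the parenthetical gloss is misleading and should be dropped.
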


We emphasize, as in Remark \ref{rem:card}, that the hyperplane index counts eigenvalues and conjugate points \emph{without multiplicity}, unlike the classical Maslov index.

The choice of $\lambda_\infty$ and the condition \eqref{xlinv} guarantee that the image of the boundary of $[\delta,L] \times [0,\lambda_\infty]$ remains in the MA space, provided $\delta>0$ is sufficiently small, and so the index $\mathfrak{m}$ is defined. The hypothesis \eqref{xlinv} is significantly weaker than the assumption that $W$ maps the entire rectangle $[\delta,L] \times [0,\lambda_\infty]$ into the MA space. However, if this stronger invariance property holds, then the boundary of the rectangle is null homotopic and hence has zero index.

\begin{cor}
\label{cor:m0}
If, in addition to the hypotheses of Theorem \ref{thm:cpMorse}, $W(x,\lambda) \in G \setminus (H_1 \cap H_2)$ for all $(x,\lambda) \in (0,L] \times [0,\lambda_\infty]$, then $\mathfrak m = 0$, and so
\begin{equation}
	\# \big\{\text{positive eigenvalues of } \cL \big\} 
	\geq \# \big\{\text{conjugate points in $(0,L)$} \big\}.
\end{equation}
\end{cor}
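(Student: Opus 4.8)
The plan is to derive Corollary~\ref{cor:m0} from Theorem~\ref{thm:cpMorse}(3) by showing that the extra hypothesis forces $\mathfrak m = 0$. Recall that $\mathfrak m$ is defined as the generalized Maslov index of the image under $W$ of the boundary of the rectangle $R := [\delta,L] \times [0,\lambda_\infty]$, traversed counterclockwise. The hypothesis of the corollary says precisely that $W$ maps the entire rectangle $R$ into $\cM = G \setminus (H_1 \cap H_2)$ (for $\delta$ small enough that $[\delta, L] \subseteq (0,L]$). Since $R$ is contractible, the loop $\partial R$ is null-homotopic in $R$, and hence its image under $W$ is null-homotopic in $\cM$. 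By homotopy invariance of the index (Proposition~\ref{ind:properties}(4), or equivalently the fact that $\Mas(\cdot;P) = \langle \alpha, [\cdot]\rangle$ depends only on the homology class), the index of a null-homotopic loop is zero, so $\mathfrak m = 0$.

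With $\mathfrak m = 0$ in hand, the inequality \eqref{cpbound} from Theorem~\ref{thm:cpMorse}(3) immediately reduces to
\[
	\# \big\{\text{positive eigenvalues of } \cL \big\}
	\geq \# \big\{\text{conjugate points in $(0,L)$} \big\},
\]
which is exactly the claimed conclusion. One should note that the hypotheses of Theorem~\ref{thm:cpMorse} are assumed to hold (in particular $P_1 = P_D$, $P_0 \in \{P_D, P_R\}$, and $H_2$ is the distinguished hyperplane provided by part~(3)), and that the stronger invariance assumption here a fortiori implies the boundary condition \eqref{xlinv} needed for parts (2) and (3) to apply; so no further verification is required.

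The only genuine subtlety — and the step I would be most careful about — is making sure the definition of $\mathfrak m$ as the index of the boundary loop is legitimate under the weaker hypothesis \eqref{xlinv} but then correctly specializes here: under \eqref{xlinv} alone one only knows the four sides of $\partial R$ land in $\cM$, and $\mathfrak m$ is the index of the resulting (genuine) loop, whereas now the entire filled rectangle maps into $\cM$, which is what supplies the null-homotopy. So the argument is really just: \emph{the loop bounds a disk in $\cM$, hence has trivial index}. This is a two-line deduction, and there is no serious obstacle; the work has all been done in establishing Theorem~\ref{thm:cpMorse} and the homotopy invariance in Proposition~\ref{ind:properties}.
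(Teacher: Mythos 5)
Your proposal is correct and matches the paper's own argument: the paper notes (in the paragraph preceding the corollary) that under the stronger hypothesis the boundary of the rectangle is null-homotopic in $\cM$, hence $\mathfrak m = 0$, and the conclusion then follows from \eqref{cpbound}. Your added care about $\delta$ being small enough that $[\delta,L]\subseteq(0,L]$ and about the distinction between the boundary lying in $\cM$ versus the filled rectangle lying in $\cM$ is well placed but does not change the essential argument.
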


The hyperplane Maslov index only detects real eigenvalues, whereas $\cL$ can have complex eigenvalues, since it is not assumed to be selfadjoint. However, since the number of unstable eigenvalues (i.e. those with positive real part) is bounded below by the number of positive eigenvalues, the existence of an interior conjugate point is a sufficient condition for instability, as long as $\mathfrak m \leq 0$. (In Section \ref{sec:Turing} we will see an example with $\mathfrak m > 0$, where there are interior conjugate points but no unstable eigenvalues.)

\begin{figure}
\begin{tikzpicture}
	\draw[->,thick] (0,-0.5) to (0,4);
	\draw[->,thick] (-0.5,0) to (5,0);
	\node at (5.5,0 ) {$x$};
	\node at (0,4.5 ) {$\lambda$};
	\draw[very thick] (0.25,0) -- (0.25,3) -- (4.5,3) -- (4.5,0) -- (0.25,0);
	\node at (-0.8,1.6) {$W(\delta,\lambda)$};
	\node at (5.5,1.6) {$W(L,\lambda)$};
	\node at (2.3,3.4) {$W(x,\lambda_\infty)$};
	\node at (2.3,0.4) {$W(x,0)$};
	\node at (2,2) {$\times$};
	\draw[thick] (2.4,2) arc (0:180:0.4);
	\draw[->,thick] (1.6,2) arc (180:360:0.4);
	\node at (2.7,1.9) {$\gamma$};
\end{tikzpicture}
\caption{The homotopy argument in Theorem \ref{thm:cpMorse}. If the MA space contains the image of $[\delta,L] \times [0,\lambda_\infty]$ minus a single interior point, then $\mathfrak m$ equals the index of the loop $\gamma$, which can be shrunk to an arbitrarily small neighbourhood of $\times$.}
\label{fig:homotopy2flip}
\end{figure}
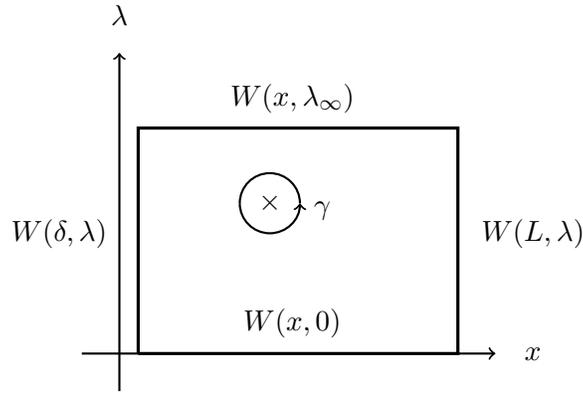

The main restrictions in Theorem \ref{thm:cpMorse} and Corollary \ref{cor:m0} are the invariance conditions on $W(x,\lambda)$. In the Hamiltonian case, this is guaranteed by the invariance of the Lagrangian Grassmannian under the associated flow. For the hyperplane MA spaces we do not know a corresponding family of dynamical systems for which such an invariance result necessarily holds, and in general these seem difficult to characterize. However, this can be checked on a case-by-case basis, as we demonstrate for several classes of examples in Section \ref{sec:app}.

Of particular interest are cases when the hypotheses of Theorem \ref{thm:cpMorse} are satisfied but those of Corollary \ref{cor:m0} are not, meaning the boundary of the rectangle $[\delta,L] \times [0,\lambda_\infty]$ is mapped into the MA space, but some points in its interior are not. Since the rectangle is two-dimensional and the set $H_1 \cap H_2 \subseteq G$ has codimension two, their intersection will generically consist of a finite set of points. If this is the case, the index $\mathfrak m$ can be computed using an arbitrarily small loop around each of these points, as illustrated in Figure \ref{fig:homotopy2flip}. This makes it possible to determine $\mathfrak m$ using purely local information; this is described in Lemma \ref{lem:mgamma}, which will be used repeatedly in Section \ref{sec:app}.

In Section \ref{sec:Turing} we will see that $\mathfrak m$ gives a topological characterization of the Turing bifurcation: as the ratio of diffusion coefficients increases through the critical value, the image of $[\delta,L] \times [0,\lambda_\infty]$ leaves the MA space, after which its boundary has nonzero index.

Most of this section is devoted to the proof of Theorem \ref{thm:cpMorse}. In Section \ref{sec:prelim} we give some preliminary calculations that will be of use here, and also in the applications in Section \ref{sec:app}. In Section \ref{sec:conH} we construct the promised hyperplane space, and in Section \ref{sec:cpproof} we complete the proof by computing the hyperplane indices. Finally, in Section \ref{sec:m}, we explain how the index $\mathfrak m$ can be computed using local information about $W(x,\lambda)$.

\subsection{Preliminary calculations}\label{sec:prelim}
We start by considering the more general system
\begin{equation}\label{Asystem}
	\frac{du}{dx} = A(x) u,
\end{equation}
where $u \in \bbR^{2n}$ and $A(\cdot)$ is a continuous family of real $2n\times2n$ matrices.

We first recall from~\eqref{Homega} that each nonzero $n$-form $\omega$ corresponds to a hyperplane $H_\omega$, whose image under the Pl\"ucker embedding intersects the Grassmannian in the set
\begin{equation}
\label{GPlucker2}
    	\big\{ \spn \{v_1, \ldots ,v_n\} \in Gr_n(V) : \omega(v_1, \ldots, v_n) = 0 \big\}.
\end{equation}
For an oriented $n$-plane $\widetilde W \subseteq \bbR^{2n}$ we then define
\begin{equation}\label{psidef}
	\psi_\omega(\widetilde W) := \frac{\omega(f_1, \ldots, f_n)}{| f_1 \wedge \cdots \wedge f_n|},
\end{equation}
where $(f_1,\ldots, f_n)$ is any positively oriented basis. It follows that $\psi_\omega(\widetilde W) = 0$ if and only if the unoriented subspace $W$ is contained in the set defined in \eqref{GPlucker2}. The denominator of \eqref{psidef} can be computed as $\sqrt{\det G}$, where $G$ denotes the \emph{Gram matrix}, with entries $G_{ij} = \left<f_i, f_j\right>$. For a positive orthonormal basis we have $G_{ij} = \delta_{ij}$ and hence $\psi_\omega(\widetilde W) = \omega(f_1, \ldots, f_n)$.

Since $\omega$ is skew symmetric, we have $\psi_\omega(-\widetilde W) = -\psi_\omega(\widetilde W)$, where $-\widetilde W$ is the oppositely oriented version of $\widetilde W$. For an unoriented subspace $W$, $\psi_\omega(W)$ is therefore only defined up to a sign, but the product and quotient $\psi_1(W) \psi_2(W)$ and $\psi_1(W) / \psi_2(W)$ are both well defined, where $\omega_1$ and $\omega_2$ are any two $n$-forms and we have abbreviated $\psi_i =\psi_{\omega_i}$.

\begin{lemma}\label{lem:psideriv}
Let $W(x)$ be an integral curve of \eqref{Asystem}. If $(f_1,\ldots,f_n)$ is a positive orthonormal basis for $W(x_0)$, then 
\begin{equation}\label{psideriv}
	\frac{d \psi_\omega(\widetilde W) }{dx} \bigg|_{x = x_0} = \sum_{j=1}^n \omega(f_1, \ldots, A(x_0) f_j, \ldots, f_n) - \psi_\omega(\widetilde W) \sum_{j=1}^n \left<A(x_0) f_j, f_j \right>.
\end{equation}
\end{lemma}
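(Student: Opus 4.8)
The plan is to compute the derivative directly from the definition \eqref{psidef}, using the product/quotient rules on the numerator $\omega(f_1(x),\ldots,f_n(x))$ and the denominator $|f_1(x)\wedge\cdots\wedge f_n(x)|=\sqrt{\det G(x)}$, where $f_1(x),\ldots,f_n(x)$ is a smoothly varying frame for $W(x)$ that evolves by the linear flow, i.e. each $f_j$ solves $\dot f_j = A(x) f_j$. Such a frame exists: pick any basis for $W(x_0)$ and propagate it by \eqref{Asystem}; since the flow is linear and invertible, this stays a basis for $W(x)$ for $x$ near $x_0$, and at $x=x_0$ we may take it to be the given positive orthonormal basis.

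First I would differentiate the numerator: since $\omega$ is multilinear, $\frac{d}{dx}\omega(f_1,\ldots,f_n) = \sum_{j=1}^n \omega(f_1,\ldots,\dot f_j,\ldots,f_n) = \sum_{j=1}^n \omega(f_1,\ldots,A(x)f_j,\ldots,f_n)$, which at $x_0$ gives exactly the first term on the right-hand side of \eqref{psideriv}. Next I would differentiate the denominator. Writing $g(x)=\det G(x)$ with $G_{ij}(x)=\langle f_i(x),f_j(x)\rangle$, we have $\dot G_{ij} = \langle A f_i, f_j\rangle + \langle f_i, A f_j\rangle$, and by Jacobi's formula $\dot g = g\,\tr(G^{-1}\dot G)$. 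At $x_0$ the frame is orthonormal, so $G(x_0)=I$, $g(x_0)=1$, and $\dot g(x_0) = \tr(\dot G(x_0)) = \sum_{j=1}^n\big(\langle Af_j,f_j\rangle + \langle f_j, Af_j\rangle\big) = 2\sum_{j=1}^n\langle A f_j, f_j\rangle$. Hence $\frac{d}{dx}\sqrt{g}\big|_{x_0} = \frac{\dot g(x_0)}{2\sqrt{g(x_0)}} = \sum_{j=1}^n\langle A f_j, f_j\rangle$.

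Finally I would assemble the quotient rule: $\frac{d}{dx}\frac{N}{\sqrt g} = \frac{\dot N}{\sqrt g} - \frac{N}{\sqrt g}\cdot\frac{(\sqrt g)\,\dot{}}{\sqrt g}$, and evaluate at $x_0$ where $\sqrt{g(x_0)}=1$ and $N(x_0)/\sqrt{g(x_0)} = \psi_\omega(\widetilde W)$. This yields
\[
\frac{d\psi_\omega(\widetilde W)}{dx}\bigg|_{x_0} = \sum_{j=1}^n \omega(f_1,\ldots,A(x_0)f_j,\ldots,f_n) - \psi_\omega(\widetilde W)\sum_{j=1}^n\langle A(x_0)f_j,f_j\rangle,
\]
as claimed. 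One subtlety worth noting is that $\psi_\omega$ is defined on oriented subspaces, so I should check that propagating the oriented frame by the flow is consistent with the orientation of $\widetilde W(x)$ — but since $\mathrm{GL}(n)$-valued fundamental solutions starting from the identity stay in the identity component for $x$ near $x_0$, the propagated frame remains positively oriented, so no sign issues arise.

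The computation is entirely routine; the only mild obstacle is bookkeeping around the Gram matrix derivative and making sure the orthonormality simplification $G(x_0)=I$ is applied only at the end (i.e. after differentiating, not before). There is no deep difficulty here — the lemma is a direct consequence of the Leibniz rule and Jacobi's formula.
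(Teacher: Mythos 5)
Your proof is correct and follows essentially the same route as the paper: quotient rule applied to $\psi_\omega = N/\sqrt{\det G}$, multilinearity of $\omega$ plus the flow equation for the numerator, and Jacobi's formula with $G(x_0)=I$ for the denominator. The only additions are your explicit construction of the propagated frame and the remark about orientation, both of which the paper leaves implicit.
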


\begin{proof}
Write $ \psi_\omega(\widetilde W) = n/d$ where $n$ and $d$ are the numerator and denominator of the expression \eqref{psidef}. Then
 $$  \frac{d\psi_\omega(\widetilde W)} {dx} \bigg|_{x = x_0} = \frac{d n' -n d'} {d^2} \bigg|_{x=x_0} = n'(x_0)-\psi_\omega(\widetilde W) d'(x_0),$$
where we have substituted $d(x_0) =1$ and $n(x_0) = \psi_\omega(\widetilde W)$. Using the fact that $W(x)$ is an integral curve, one easily calculates
\[
	n'(x_0) =  \sum_{j=1}^n \omega(f_1, \ldots, A(x_0) f_j, \ldots, f_n).
\]
Moreover, since $d(x) = \sqrt{\det G(x)}$ and $G(x_0)$ is the identity matrix, Jacobi's formula for the derivative of the determinant yields
\[
	d'(x_0) = \frac12 \tr\left(\frac{dG}{dx}\Big|_{x=x_0} \right) = \sum_{j=1}^n \left<A(x_0) f_j, f_j \right>,
\]
which completes the proof.
\end{proof}

\subsection{Choosing hyperplanes for a reaction--diffusion system}
\label{sec:conH}
We now restrict our attention to the eigenvalue problem \eqref{Lsystem}, letting
\begin{equation}\label{Adef}
	A(x,\lambda) = \begin{pmatrix} 0 & D^{-1} \\ B(x,\lambda) & 0 \end{pmatrix}
\end{equation}
in \eqref{Asystem}, where $D = \diag(d_1,\ldots,d_n)$.

For Dirichlet boundary conditions it is natural to let $H_1 \cap G$ be the train of the Dirichlet subspace. We thus choose $H_1$ to be the hyperplane corresponding to the degenerate $n$-form
\begin{equation}\label{omega1def}
	\omega_1 = e_1^* \wedge \cdots \wedge e_n^*,
\end{equation}
where $e_1, \ldots, e_{2n}$ denotes the standard orthonormal basis for $\bbR^{2n}$.
Since the resulting index equals the geometric intersection number with $H_1 \cap G$, it will count solutions to the Dirichlet problem, which are (by definition) conjugate points. When $n=2$, the two-form $\omega_1$ corresponds to the matrix
\[
	\Omega_1 = \begin{pmatrix} 0 & 1 & 0 & 0 \\ -1 & 0 & 0 & 0 \\
	0& 0 & 0 & 0 \\ 0 & 0 & 0 & 0 \end{pmatrix},
\]
in the sense that $\omega_1(v,w) = v^T \Omega_1 w$ for any $v, w \in \bbR^4$.

The choice of $H_2$ is less obvious. Motivated by the calculation to follow in Lemma \ref{psiRD}, we let
\begin{equation}\label{omega2def}
	\omega_2 = \sum_{j =1}^n \frac{1}{d_j} e_1^* \wedge \cdots \wedge \widehat{ e_j^*} \wedge e_{j+n}^* \wedge \cdots \wedge e_n^*,
\end{equation}
i.e. the $j$th summand is proportional to $\omega_1$ but with $e_j^*$ replaced by $e_{j+n}^*$. When $n=2$ this is 
\[
	\omega_2 = \frac{1}{d_1} e_3^* \wedge e_2^* + \frac{1}{d_2} e_1^* \wedge e_4^*,
\]
corresponding to the matrix
\begin{equation}\label{Omegadd}
	\Omega_2 = \begin{pmatrix} 0 & 0 & 0 & 1/d_2 \\
	0 & 0 & -1/d_1 & 0 \\
	0 & 1/d_1 & 0 & 0 \\
	-1/d_2 & 0 & 0 & 0
	\end{pmatrix}.
\end{equation}
This choice yields a monotonicity result (Lemma \ref{lemma:H2mono}) that is key to the third part of Theorem~\ref{thm:cpMorse}. Moreover, it will play a prominent role in Section \ref{sec:app}, where we prove a long-time invariance result for reaction--diffusion systems with large diffusivities.

We now apply Lemma \ref{lem:psideriv} to the symplectic forms $\omega_1$ and $\omega_2$. To state the result, we additionally define
\begin{equation}\label{omega3def}
	\omega_3 = \sum_{\substack{j,k=1 \\ j < k}}^n \frac{2}{d_j d_k} e_1^* \wedge \cdots \wedge e_{j+n}^* \wedge \cdots \wedge e_{k+n}^* \wedge \cdots \wedge e_{n}^*.
\end{equation}
That is, the $j$-$k$ summand is obtained from $\omega_1$ by replacing $e_j^*$ and $e_k^*$ by $e_{j+n}^*$ and $e_{k+n}^*$, respectively. For $n=2$ we have
\[
	\omega_3 = \frac{2}{d_1d_2} e_3^* \wedge e_4^*,
\]
which is a degenerate two-form whose corresponding hyperplane is the train of the Neumann subspace.

\begin{lemma}\label{psiRD}
Let $W(x,\lambda)$ be an integral curve of $\frac{du}{dx} = A(x,\lambda) u$, with $A(x,\lambda)$ given by \eqref{Adef}, and define $\omega_1$, $\omega_2$ and $\omega_3$ by \eqref{omega1def}, \eqref{omega2def} and \eqref{omega3def}, respectively. Then
\begin{equation}\label{psi1deriv}
	\frac{d\psi_1}{dx} = \psi_2 - \gamma \psi_1
\end{equation}
and
\begin{equation}\label{psi2deriv}
	\frac{d\psi_2}{dx} = \left(  \frac{b_{11}}{d_1} + \cdots +  \frac{b_{nn}}{d_n}\right)\psi_1 + \psi_3 - \gamma \psi_2,
\end{equation}
where $\gamma = \sum_{j=1}^n \left<A f_j, f_j \right>$ and $b_{ij}$ is the $i$-$j$ component of the matrix $B$. Moreover, if $W(x_0,\lambda) = P_D$ is the Dirichlet subspace, then
\begin{equation}
	\psi_1(x_0) = \psi_1'(x_0) = \cdots = \psi_1^{(n-1)}(x_0) = 0 
\end{equation}
and
\begin{equation}
	\psi_1^{(n)}(x_0) = \frac{n!}{d_1 \cdots d_n} \neq 0.
\end{equation}
\end{lemma}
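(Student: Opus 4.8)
The plan is to derive the two identities \eqref{psi1deriv} and \eqref{psi2deriv} from Lemma \ref{lem:psideriv}, after recognizing the sum appearing there as the evaluation of a single $n$-form. Given a $2n\times 2n$ matrix $A$ and $\omega\in\bigwedge^n((\R^{2n})^*)$, the map $(v_1,\dots,v_n)\mapsto\sum_{j=1}^n\omega(v_1,\dots,Av_j,\dots,v_n)$ is again alternating and $n$-linear; call it $\delta_A\omega$. It depends linearly on $\omega$, and by multilinearity of the determinant a decomposable form $\omega=\alpha_1\wedge\cdots\wedge\alpha_n$ satisfies $\delta_A\omega=\sum_{i=1}^n\alpha_1\wedge\cdots\wedge(A^{\mathrm T}\alpha_i)\wedge\cdots\wedge\alpha_n$, where $(A^{\mathrm T}\alpha)(v):=\alpha(Av)$; so $\delta_A$ is the derivation of $\bigwedge^\bullet((\R^{2n})^*)$ extending $\alpha\mapsto A^{\mathrm T}\alpha$. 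Evaluating both sides of Lemma \ref{lem:psideriv} on a positive orthonormal basis, for which the Gram factor in \eqref{psidef} equals $1$, that lemma reads exactly $\frac{d\psi_\omega}{dx}=\psi_{\delta_A\omega}-\gamma\,\psi_\omega$ along an integral curve; moreover $\psi$ is linear in the form, so $\psi_{\delta_A\omega}$ decomposes along sums. For $A=A(x,\lambda)$ of the block form \eqref{Adef} one reads off $A^{\mathrm T}e_i^*=\frac{1}{d_i}e_{i+n}^*$ for $1\le i\le n$ and $A^{\mathrm T}e_{i+n}^*=\sum_{k=1}^n b_{ik}e_k^*$ for $1\le i\le n$.

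Next comes the exterior-algebra bookkeeping. Applying $\delta_A$ to $\omega_1=e_1^*\wedge\cdots\wedge e_n^*$ and using $A^{\mathrm T}e_i^*=\frac1{d_i}e_{i+n}^*$ gives at once $\delta_A\omega_1=\sum_{i=1}^n\frac1{d_i}\,e_1^*\wedge\cdots\wedge e_{i+n}^*\wedge\cdots\wedge e_n^*=\omega_2$, which together with Lemma \ref{lem:psideriv} is \eqref{psi1deriv}. For \eqref{psi2deriv}, write $\omega_2=\sum_i\frac1{d_i}\eta_i$ with $\eta_i$ equal to $\omega_1$ but with the factor $e_i^*$ replaced by $e_{i+n}^*$, and apply $\delta_A$ to each factor of $\eta_i$ in turn. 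Acting on a factor $e_\ell^*$ with $\ell\neq i$ contributes $\frac1{d_\ell}$ times the $n$-form obtained from $\omega_1$ by simultaneously replacing $e_i^*$ and $e_\ell^*$ by $e_{i+n}^*$ and $e_{\ell+n}^*$; acting on the factor $e_{i+n}^*$ contributes $\sum_k b_{ik}\,e_1^*\wedge\cdots\wedge e_k^*\wedge\cdots\wedge e_n^*$, in which every term with $k\neq i$ repeats a covector and so vanishes, leaving $b_{ii}\,\omega_1$. Summing over the factors of $\eta_i$ and then over $i$ with weights $1/d_i$, the diagonal pieces combine into $\bigl(\frac{b_{11}}{d_1}+\cdots+\frac{b_{nn}}{d_n}\bigr)\omega_1$, while each unordered pair $\{i,\ell\}$ is produced twice (once as $(i,\ell)$, once as $(\ell,i)$), which accounts for the coefficient $2/(d_jd_k)$ appearing in \eqref{omega3def}; hence $\delta_A\omega_2=\bigl(\frac{b_{11}}{d_1}+\cdots+\frac{b_{nn}}{d_n}\bigr)\omega_1+\omega_3$, and Lemma \ref{lem:psideriv} gives \eqref{psi2deriv}.

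For the final assertion, pick solutions $u_1,\dots,u_n$ of $u'=A(x,\lambda)u$ with $u_j(x_0)=e_{n+j}$; then $W(x_0,\lambda)=P_D$, and since $W(\cdot,\lambda)$ is carried by the flow of \eqref{Adef} the vectors $u_j(x)$ form a basis of $W(x,\lambda)$ for $x$ near $x_0$. Writing $u_j=(p_j,q_j)$ in the splitting $\R^{2n}=\R^n\times\R^n$, the system \eqref{Adef} gives $p_j'=D^{-1}q_j$ and $q_j'=Bp_j$, so $p_j(x_0)=0$ and $p_j'(x_0)=D^{-1}q_j(x_0)=\frac1{d_j}e_j$. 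Hence the $n\times n$ matrix $P(x):=\bigl[\,p_1(x)\,|\,\cdots\,|\,p_n(x)\,\bigr]$ has Taylor expansion $P(x)=(x-x_0)\,D^{-1}+O\bigl((x-x_0)^2\bigr)$, so $\det P(x)=\frac{(x-x_0)^n}{d_1\cdots d_n}+O\bigl((x-x_0)^{n+1}\bigr)$. Since $\psi_1$ along the path equals $\det P(x)$ divided by the Gram factor $\sqrt{\det(\langle u_i(x),u_j(x)\rangle)}$, which is smooth and positive near $x_0$ and takes the value $1$ at $x_0$, the Leibniz rule yields $\psi_1^{(k)}(x_0)=0$ for $0\le k\le n-1$ and $\psi_1^{(n)}(x_0)=(\det P)^{(n)}(x_0)=n!/(d_1\cdots d_n)$; the positive sign records the orientation of $W(x,\lambda)$ determined by the frame $(u_1,\dots,u_n)$, whose first-order jet at $x_0$ has top block $D^{-1}$ of positive determinant.

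I expect the only genuine obstacle to be the bookkeeping in the second paragraph: tracking which factor of each wedge the derivation $\delta_A$ hits, discarding the repeated-covector terms produced by $A^{\mathrm T}e_{i+n}^*$, and the symmetrization over pairs $\{i,\ell\}$ that generates the factor $2$ in \eqref{omega3def}. The first identity and the Taylor-expansion argument for the vanishing order of $\psi_1$ at a Dirichlet point are routine once Lemma \ref{lem:psideriv} and the block structure \eqref{Adef} are in hand.
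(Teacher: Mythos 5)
Your derivations of \eqref{psi1deriv} and \eqref{psi2deriv} follow the same route as the paper: Lemma \ref{lem:psideriv} is rewritten in the form $\frac{d\psi_\omega}{dx} = \psi_{\delta_A\omega} - \gamma\psi_\omega$, where your derivation $\delta_A$ (extending $\alpha\mapsto A^{\mathrm T}\alpha$) is exactly the paper's observation that $\omega_1(f_1,\dots,Af_j,\dots,f_n) = (e_1^*\wedge\cdots\wedge e_j^*A\wedge\cdots\wedge e_n^*)(f_1,\dots,f_n)$; the bookkeeping identifying $\delta_A\omega_1=\omega_2$ and $\delta_A\omega_2=\bigl(\sum_j b_{jj}/d_j\bigr)\omega_1+\omega_3$, including the factor of $2$ from unordered pairs, matches the paper. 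For the final assertion you diverge: the paper stays inside the $\psi_\omega$-calculus, iterating the derivative formula and tracking how many covector indices lie in $\{n+1,\dots,2n\}$, so that only the ``maximally shifted'' chain $\omega_1\to\omega_2\to\cdots\to\omega_{n+1}=\frac{n!}{d_1\cdots d_n}\,e_{n+1}^*\wedge\cdots\wedge e_{2n}^*$ survives at a Dirichlet point. You instead expand a concrete frame $u_j$ with $u_j(x_0)=e_{n+j}$ and read off that the top block $P(x)$ is $(x-x_0)D^{-1}+O((x-x_0)^2)$, so $\det P$ vanishes to order exactly $n$ with leading coefficient $1/(d_1\cdots d_n)$, the Gram factor contributing nothing at that order. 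Both are correct; your Taylor-expansion argument is more elementary and self-contained, while the paper's version makes the chain of auxiliary forms $\omega_1,\dots,\omega_{n+1}$ explicit (and this chain is exactly the structure the paper wants to highlight, since $\omega_2$ and $\omega_3$ reappear in the monotonicity and index computations).
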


\begin{proof}
From Lemma \ref{lem:psideriv} we have
\[
	\frac{d\psi_i}{dx} = \sum_{j=1}^n \omega_i(f_1, \ldots, A f_j, \ldots, f_n)  - \gamma \psi_i
\]
for $i = 1,2$. For $\omega_1$ we observe that
\[
	\omega_1(f_1, \ldots, A f_j, \ldots, f_n) = \big(e_1^* \wedge \cdots \wedge e_j^*A \wedge \cdots \wedge e_n^*\big)(f_1, \ldots, f_n).
\]
The composition $e_j^* A \colon V \to \bbR$ is given by $e_j^* A = \sum_{k=1}^{2n} A_{jk} e_k^*$, hence
\[
	e_j^* A = \frac{1}{d_j} e_{j+n}^*, \qquad e_{j+n}^*A = \sum_{k=1}^n b_{jk} e_k^*,
\]
for any $1 \leq j \leq n$. It follows that
\[
	e_1^* \wedge \cdots \wedge e_j^*A \wedge \cdots \wedge e_n^* 
	= \frac{1}{d_j} e_1^* \wedge \cdots  \wedge e_{j+n}^* \wedge \cdots \wedge e_n^*,
\]
which is precisely the $j$th summand in the definition of $\omega_2$. This implies
\begin{equation}\label{omega2comp}
	\sum_{j=1}^n \omega_1(f_1, \ldots, A f_j, \ldots, f_n) = \omega_2(f_1, \ldots, f_n),
\end{equation}
and completes the proof of \eqref{psi1deriv}.

For \eqref{psi2deriv} we need to compute
\[
	\sum_{j=1}^n \omega_2(f_1, \ldots, A f_j, \ldots, f_n) = \sum_{j,k=1}^n \omega_2^k(f_1, \ldots, A f_j, \ldots, f_n),
\]
where $\omega_2^k := d_k^{-1} e_1^* \wedge \cdots \wedge e_{k+n}^* \wedge \cdots \wedge e_n^*$
denotes the $k$th summand in the definition of $\omega_2$. For summands with $j=k$ we have
\begin{align*}
	\frac{1}{d_j} e_1^* \wedge \cdots \wedge e_{j+n}^*A \wedge \cdots \wedge e_n^* &= \frac{1}{d_j} e_1^* \wedge \cdots \wedge \left( \sum_{l=1}^n b_{jl} e_l^* \right) \wedge \cdots \wedge e_n^* \\
	&= \frac{b_{jj}}{d_j} \omega_1.
\end{align*}
For summands with $j \neq k$ we have
\[
	\frac{1}{d_k} e_1^* \wedge \cdots \wedge e_j^* A \wedge \cdots \wedge e_{k+n}^* \wedge \cdots \wedge e_n^* = \frac{1}{d_k} e_1^* \wedge \cdots \wedge \left( \frac{1}{d_j} e_{j+n}^*\right) \wedge \cdots \wedge e_{k+n}^* \wedge \cdots \wedge e_n^*,
\]
which is precisely the $j$,$k$ term in the definition of $\omega_3$, so the proof of \eqref{psi2deriv} is complete.

To prove the final statement, we recall that $P_D = \spn\{e_{n+1}, \ldots, e_{2n}\}$, so an $n$-form $\omega = e_{j_1}^* \wedge \cdots \wedge e_{j_n}^*$ will vanish on $P_D$ unless $\{j_1, \ldots, j_n\} = \{n+1, \ldots, 2n\}$. In general, suppose $m$ of the indices $j_1, \ldots, j_n$ are contained in $\{n+1,\ldots, 2n\}$, with the remaining $n-m$ in $\{1, \ldots, n\}$. Then, as in
the calculations above, the derivative of $\psi_\omega$ will have terms with $m-1$, $m$ and $m+1$ indices in $\{n+1, \ldots, 2n\}$. To find the first nonvanishing derivative of $\psi_\omega$ on $P_D$, we therefore only need to keep track of the $m+1$ term. We thus compute
\begin{align*}
	\frac{d\psi_1}{dx} &= \psi_2 + \cdots, \\
	\frac{d\psi_2}{dx} &= \psi_3 + \cdots, \\
	\frac{d\psi_3}{dx} &= \psi_4 + \cdots, \qquad \omega_4 := \sum_{\substack{j,k,l=1 \\ j < k < l}}^n \frac{3!}{d_j d_k} e_1^* \wedge \cdots \wedge e_{j+n}^* \wedge \cdots \wedge e_{k+n}^* \wedge \cdots \wedge e_{l+n}^* \wedge \cdots \wedge e_{n}^* \\
	&\ \,  \vdots \\
	\frac{d\psi_n}{dx} &= \psi_{n+1} + \cdots, \qquad \omega_{n+1} := \frac{n!}{d_1 \cdots d_n} e_{n+1}^* \wedge \cdots \wedge e_{2n}^*,
\end{align*}
and the result follows.
\end{proof}

\begin{rem}
The form $\omega_2$ was chosen to make the equality \eqref{omega2comp} hold. For a general matrix $A(x,\lambda)$ there is no guarantee that this can be done with a constant form $\omega_2$. The fact that it is possible here is a consequence of the block structure of $A$ in \eqref{Adef}, and the fact that $\omega_1$ only depends on the first $n$ coordinates.
\end{rem}

\subsection{Positive eigenvalues and conjugate points}
\label{sec:cpproof}
We are now ready to begin the proof of Theorem \ref{thm:cpMorse}. We start with the existence of $\lambda_\infty$.

\begin{lemma}\label{lem:linfinity}
Assuming the hypotheses of Theorem \ref{thm:cpMorse}, there exists $\lambda_\infty > 0$ such that $W(x,\lambda) \cap P_1 = \{0\}$ for all $0 < x \leq L$ and $\lambda \geq \lambda_\infty$. Moreover, every eigenvalue $\lambda \in \sigma(\cL)$ has $\Re \lambda \leq \lambda_\infty$.
\end{lemma}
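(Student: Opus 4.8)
The plan is to prove both statements with a single energy estimate for the associated second-order equation. Writing a solution of \eqref{Lsystem2} as $(v,w)$ with $w = Dv'$, the system is equivalent to $Dv'' = (\lambda I - \nabla F(\bar u))v$, and $\nabla F(\bar u(\cdot))$ is continuous, hence bounded, on $[0,L]$; set $C_0 := \sup_{x\in[0,L]}\|\nabla F(\bar u(x))\|$ and $d_{\min} := \min_i d_i > 0$. If $W(x_*,\lambda)\cap P_1 \neq \{0\}$ for some $x_* \in (0,L]$, pick a nonzero solution with $(v(0),w(0))\in P_0$ and $v(x_*) = 0$; since $v\equiv 0$ would force $w = Dv' \equiv 0$, necessarily $v\not\equiv 0$, so $\int_0^{x_*}|v|^2\,dx > 0$. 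Pairing $Dv'' = (\lambda I - \nabla F(\bar u))v$ with $v$ over $[0,x_*]$ and integrating by parts gives
\[
\lambda\int_0^{x_*}|v|^2\,dx = \langle Dv',v\rangle\Big|_0^{x_*} - \int_0^{x_*}\langle Dv',v'\rangle\,dx + \int_0^{x_*}\langle\nabla F(\bar u)v,v\rangle\,dx.
\]
The boundary term at $x_*$ vanishes since $v(x_*) = 0$, and the term at $0$ vanishes when $P_0 = P_D$; when $P_0 = P_R$ it equals $-\langle\Theta v(0),v(0)\rangle \le \|\Theta\|\,|v(0)|^2$.

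The one point needing care is this Robin boundary term, since the obvious bound introduces an $\int_0^{x_*}|v'|^2$ with no a priori control in terms of $\int_0^{x_*}|v|^2$. To handle it I would use the condition $v(x_*) = 0$ at the opposite endpoint: $|v(0)|^2 = -2\int_0^{x_*}\langle v',v\rangle\,dx$, so $|v(0)|^2 \le \varepsilon\int_0^{x_*}|v'|^2\,dx + \varepsilon^{-1}\int_0^{x_*}|v|^2\,dx$ for every $\varepsilon > 0$. Taking $\varepsilon = d_{\min}/\|\Theta\|$ (or skipping this step if $\Theta = 0$) and using $\int_0^{x_*}\langle Dv',v'\rangle\,dx \ge d_{\min}\int_0^{x_*}|v'|^2\,dx$, the bad term is absorbed into $-\int_0^{x_*}\langle Dv',v'\rangle\,dx$, leaving
\[
\lambda\int_0^{x_*}|v|^2\,dx \le C_1\int_0^{x_*}|v|^2\,dx, \qquad C_1 := C_0 + \frac{\|\Theta\|^2}{d_{\min}},
\]
with $C_1$ independent of $x_*$ and of the chosen solution (and $C_1 = C_0$ when $P_0 = P_D$). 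This is impossible once $\lambda > C_1$, so any $\lambda_\infty > C_1$ has the asserted property.

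For the eigenvalue bound, suppose $\cL v = \lambda v$ with $0 \neq v$ in the domain, so that $v$ obeys the boundary conditions at both endpoints; now $\lambda$ and $v$ may be complex, and I would use the Hermitian inner product throughout. The same integration by parts on $[0,L]$ — the boundary term at $L$ vanishing because $v(L) = 0$ — followed by taking real parts, yields
\[
\Re\lambda\int_0^L|v|^2\,dx = \Re\!\Big(\langle Dv',v\rangle\big|_0^L\Big) - \int_0^L\langle Dv',v'\rangle\,dx + \Re\!\int_0^L\langle\nabla F(\bar u)v,v\rangle\,dx,
\]
and the trace estimate (now using $v(L) = 0$) together with the same absorption argument give $\Re\lambda \le C_1$. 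Enlarging $\lambda_\infty$ if necessary so that $\lambda_\infty > C_1$ establishes both claims simultaneously. The main obstacle is thus only the Robin boundary term, and the key observation is that the Dirichlet condition at the far endpoint supplies exactly the control needed to absorb it; everything else is routine, and in the pure Dirichlet case the argument is entirely elementary.
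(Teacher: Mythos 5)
Your argument is correct and follows essentially the same route as the paper's proof: an energy estimate obtained by pairing $Dv'' = (\lambda I - \nabla F(\bar u))v$ with $\bar v$ and integrating by parts, followed by a trace bound $|v(0)|^2 \le \varepsilon\int |v'|^2 + \varepsilon^{-1}\int|v|^2$ (using the Dirichlet condition at the far endpoint) with $\varepsilon = d_{\min}/\|\Theta\|$ to absorb the Robin boundary term. Your constant $C_1 = C_0 + \|\Theta\|^2/d_{\min}$ matches the paper's $K + C^2/d$, and the paper likewise treats both assertions via the same complex-valued integration-by-parts identity.
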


Note that the property $W(x,\lambda) \cap P_1 = \{0\}$ is only guaranteed for $0 < x \leq L$. It is possible for $W(0,\lambda)$ to intersect $P_1$ nontrivially, for instance if $P_0 = P_1$.

\begin{proof}
Suppose there is a (possibly complex-valued) solution $v$ to
\[
	D v_{xx} + \nabla F(\bar u) v = \lambda v
\]
on $[0,x_*]$, satisfying the boundary conditions
\[
	 \begin{pmatrix} v \\ D v_x \end{pmatrix}\Bigg|_{x=0} \in P_0, \qquad 
	  \begin{pmatrix} v \\ D v_x \end{pmatrix}\Bigg|_{x=x_*} \in P_1.
\]
Since $P_1 = P_D$, this means $v(x_*) = 0$. Similarly, at $x=0$ we have either $v(0) = 0$ or $D v_x(0) = \Theta v(0)$, depending on the choice of $P_0$.

Multiplying the eigenvalue equation by the conjugate of $v$ and integrating by parts, using $v(x_*) = 0$, we find that
\begin{equation}\label{eq:IBP}
	\lambda \int_0^{x_*} |v|^2 \,dx =  - \left<D v_x(0), v(0) \right> + \int_0^{x_*} \big(\left<\nabla F(\bar u) v, v\right> - \left<D v_x, v_x \right> \big) \,dx,
\end{equation}
where $\left<\cdot,\cdot\right>$ denotes the $\bbC^n$ inner product. Defining constants
\[
	d = \min\{d_1, \ldots, d_n\}, \qquad K = \sup_{x \in [0,L]} \big\| \nabla F(\bar u(x)) \big\|,
\]
we obtain
\begin{equation}\label{eq:intbound}
	\Re \int_0^{x_*} \big(\left<\nabla F(\bar u) v, v\right> - \left<D v_x, v_x \right> \big) \,dx \leq K \int_0^{x_*} |v|^2\, dx
	- d \int_0^{x_*} |v_x|^2\, dx.
\end{equation}

To deal with the boundary term in \eqref{eq:IBP}, we treat the Dirichlet and Robin cases separately. If $P_0 = P_D$, then the boundary term vanishes, so we get
\[
	\Re \lambda \leq K
\]
and it suffices to choose any $\lambda_\infty > K$. On the other hand, if $P_0 = P_R$, the boundary term satisfies $\left| \left<D v_x(0), v(0) \right> \right| = \left| \left<\Theta v(0), v(0) \right> \right| \leq C |v(0)|^2$ for some positive constant $C$. Moreover, since $v(x_*) = 0$, we have
\begin{align*}
	|v(0)|^2 =& \left| \int_0^{x_*} \frac{d}{dx} |v(x)|^2 dx \right| \\
	& \leq 2 \int_0^{x_*} |v|  |v_x| \,dx \\
	& \leq \epsilon^{-1} \int_0^{x_*} |v|^2\,dx + \epsilon \int_0^{x_*} |v_x|^2\,dx
\end{align*}
for any $\epsilon > 0$. Choosing $\epsilon = d/C$, and combining the above inequality with \eqref{eq:IBP} and \eqref{eq:intbound}, we obtain
\[
	\Re \lambda \leq  K + \frac{C^2}{d},
\]
which completes the proof.
\end{proof}

This proves the first assertion in Theorem~\ref{thm:cpMorse}. Moving onto the second part, we consider the Maslov--Arnold space $\cM = G \setminus (H_1 \cap H_2)$, with $H_1$ as in Section \ref{sec:conH} and any $H_2 \neq H_1$, and consider the path $W(x,\lambda)$ in $Gr_n(\bbR^{2n})$ defined by \eqref{mudef}.

We first show that the image of the boundary of $[\delta,L] \times [0,\lambda_\infty]$ is contained in $\cM$, and hence its hyperplane index $\mathfrak m$ is well defined.

Referring to Figure \ref{fig:homotopy2flip}, the hypothesis \eqref{xlinv} guarantees that the bottom and right side of the rectangle are mapped into $\cM$ for any $\delta > 0$. Lemma \ref{lem:linfinity} guarantees that the top of the rectangle is mapped into $\cM$, and in fact
\begin{equation}
    \Ind\left(W(x,\lambda_\infty)\big|_{x \in [\delta,L]} ; P_1 \right) = 0, \label{eq:infinity}
\end{equation}
for any $\delta > 0$, by Proposition \ref{ind:properties}(ii). The following lemma guarantees that the left side of the rectangle is also mapped into $\cM$, with
\begin{equation}
    \Ind\left(W(\delta,\lambda)\big|_{\lambda \in [0,\lambda_\infty]} ; P_1 \right) = 0, \label{eq:delta0} 
\end{equation}
provided $\delta$ is sufficiently small.

\begin{lemma}
\label{lem:delta}
There exists $\delta_0>0$ such that $W(x,\lambda) \cap P_1 = \{0\}$ for all $\lambda \in [0,\lambda_\infty]$ and $x \in (0,\delta_0]$.
\end{lemma}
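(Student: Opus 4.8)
The plan is to exploit the fact that $W(x,\lambda)$ starts at the fixed subspace $P_0$ at $x=0$ and varies continuously (indeed smoothly, jointly in $x$ and $\lambda$), so that for small $x$ it stays close to $P_0$; the only subtlety is the boundary case $P_0 = P_D = P_1$, where $W(0,\lambda) = P_1$ and we must show the intersection becomes trivial immediately for $x > 0$. First I would treat the easier case $P_0 = P_R$: here $P_0 \cap P_1 = \{0\}$ (a pair $(q,\Theta q)$ lies in $P_D = \{(0,p)\}$ only if $q = 0$), so by continuity of $W$ and compactness of $[0,\lambda_\infty]$ there is a neighbourhood $(0,\delta_0]$ on which $W(x,\lambda)$ remains in the open set $G \setminus \cZ_{P_1}$; more carefully, $\{(x,\lambda) : W(x,\lambda) \cap P_1 \neq \{0\}\}$ is closed and disjoint from the compact set $\{0\} \times [0,\lambda_\infty]$, so it is bounded away from it.

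The substantive case is $P_0 = P_D$. Here I would argue directly with the flow. A solution of \eqref{Lsystem2} with $(v(0),w(0)) \in P_D$ has $v(0) = 0$ and $w(0) = D v_x(0)$ arbitrary; such a solution lies in $W(x,\lambda)$, and $W(x,\lambda) \cap P_1 \neq \{0\}$ precisely when some nontrivial such solution has $v(x) = 0$. So it suffices to show: there is $\delta_0 > 0$ such that for all $\lambda \in [0,\lambda_\infty]$, any solution of $D v_{xx} + \nabla F(\bar u) v = \lambda v$ with $v(0) = 0$ and $v \not\equiv 0$ has $v(x) \neq 0$ for $0 < x \leq \delta_0$. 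This follows from the standard energy/Grönwall estimate: writing the equation in first-order form $\tfrac{d}{dx}(v, Dv_x) = A(x,\lambda)(v, Dv_x)$, with $\|A(x,\lambda)\|$ uniformly bounded on $[0,L] \times [0,\lambda_\infty]$ by some constant $M$, Grönwall gives $|Dv_x(x)| \leq e^{Mx}|Dv_x(0)|$, and integrating $v_x$ from $0$ and using $v(0)=0$ one obtains, for $x$ small, $|v(x) - x D^{-1}(Dv_x(0))| \leq C x^2 |Dv_x(0)|$, so $|v(x)| \geq (d^{-1} - Cx)x|Dv_x(0)| > 0$ once $x < \delta_0 := \min\{1/(2Cd^{-1})^{-1}, \dots\}$; since $v \not\equiv 0$ forces $Dv_x(0) \neq 0$ (by uniqueness for the initial value problem), we are done. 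The bound $\delta_0$ depends only on $M$ and $d = \min d_j$, hence is uniform in $\lambda \in [0,\lambda_\infty]$.

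Finally, the displayed claim \eqref{eq:delta0} follows immediately: on $(0,\delta_0]$ the path $\lambda \mapsto W(\delta,\lambda)$ never meets $\cZ_{P_1} = H_1 \cap G$, so by the nullity property Proposition \ref{ind:properties}(ii) its hyperplane index vanishes. The main obstacle is purely the bookkeeping in the $P_0 = P_D$ case — making the quadratic Taylor estimate uniform in $\lambda$ — but this is routine given the uniform bound on $A(x,\lambda)$; no genuinely new idea is needed.
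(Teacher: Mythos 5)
Your proposal is correct, but it takes a genuinely different route from the paper's own proof in the substantive case $P_0 = P_D$. The paper works entirely at the level of the Pl\"ucker coordinate $\psi_1(x,\lambda)$: using the derivative formulas in Lemma~\ref{psiRD}, it shows that $\eta := \psi_1^2$ vanishes to order exactly $2n$ at $x=0$ with a nonzero leading coefficient $\eta^{(2n)}(0,\lambda) = (n!/(d_1\cdots d_n))^2 > 0$, and then extracts a uniform $\delta$ from compactness of $[0,\lambda_\infty]$. You instead argue directly on solutions of the ODE system: Gr\"onwall gives a uniform bound on the flow, and a first-order Taylor expansion of $v$ about $x=0$ with $v(0)=0$ shows $v(x) \approx x\,v_x(0) \neq 0$ for small $x$. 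Both arguments are valid, and both quantify the same underlying fact (the transversal unfolding of $W(x,\lambda)$ from $P_D$); the paper's version has the advantage of reusing machinery ($\psi_1$ and Lemma~\ref{psiRD}) that it has already built and will use again, while yours is more elementary and self-contained. One small slip: in the inequality $|v(x)| \geq (d^{-1}-Cx)\,x\,|Dv_x(0)|$ the constant should be $1/\max_j d_j$ (a lower bound on $|D^{-1}w|/|w|$) rather than $d^{-1} = 1/\min_j d_j$; this does not affect the validity of the argument.
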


\begin{proof}
Recalling that $W(0,\lambda) = P_0$, there are two cases to consider. If $P_0 = P_R$, then $W(0,\lambda) \cap P_1 = P_R \cap P_D = \{0\}$ for all $\lambda$. Since $W(x,\lambda)$ in continuous in $x$ and $\lambda$, and $[0,\lambda_\infty]$ is compact, there exists $\delta > 0$ such that $W(x,\lambda) \cap P_1 = \{0\}$ for all $\lambda \in [0,\lambda_\infty]$ and $x \in
[0,\delta)$. (Note that $x=0$ is allowed in this case.)

The other case is when $P_0 = P_D$, so $W(0,\lambda) \cap P_1 = P_D \neq \{0\}$. Defining $\eta(x,\lambda) = \psi_1(x,\lambda)^2$, we have
\[
	\eta(0,\lambda) = \cdots = \eta^{(2n-1)}(0,\lambda) = 0 \qquad \text{and} \qquad \eta^{(2n)}(0,\lambda) > 0
\]
from Lemma \ref{psiRD}. Therefore, for fixed $\lambda$ we have $\eta(x,\lambda) > 0$ for sufficiently small $x>0$, and so by compactness there exists $\delta > 0$ such that $\eta(x,\lambda) > 0$ for all $\lambda \in [0,\lambda_\infty]$ and $x \in (0,\delta)$. This completes the proof, since $\eta(x,\lambda) > 0$ implies $\psi_1(x,\lambda) \neq 0$ and hence $W(x,\lambda) \cap P_1 = \{0\}$.
\end{proof}

It follows that the hyperplane index $\mathfrak m$ of the boundary is well defined, and is given by
\begin{equation}
    \mathfrak m = \Ind\left(W(x,0)\big|_{x \in [\delta,L]} ; P_1 \right) + \Ind\left(W(L,\lambda)\big|_{\lambda \in [0,\lambda_\infty]} ; P_1 \right)
\end{equation}
as a result of \eqref{eq:infinity}, \eqref{eq:delta0} and Proposition \ref{ind:properties}(iii). To complete the proof of Theorem \ref{thm:cpMorse}(ii) we use \eqref{eq:gen} to obtain
\begin{align*}
    -\Ind\left(W(L,\lambda)\big|_{\lambda \in [0,\lambda_\infty]} ; P_1 \right) & \leq \# \big\{ \lambda \in [0,\lambda_\infty] : W(1,\lambda) \cap P_1 \neq \{0\} \big\} \\
	&= \# \big\{\text{eigenvalues of } \cL \text{ in } [0,\lambda_\infty] \big\} \\
	&= \# \big\{\text{nonnegative eigenvalues of } \cL \big\},
\end{align*}
where the last equality follows from Lemma \ref{lem:linfinity}.

The following lemma verifies \eqref{eq:MorseMaslov2}, and hence completes the proof of Theorem \ref{thm:cpMorse}. Note that up to this point $H_2$ has been an arbitrary hyperplane different from $H_1$, and did not appear explicitly in any of the preceding calculations.

\begin{lemma}
\label{lemma:H2mono}
If $H_2$ corresponds to the form $\omega_2$ defined in \eqref{omega2def}, then
\[
	\Ind\left(W(x,0)\big|_{x \in [\delta,L]} ; P_1 \right) = \# \big\{\text{conjugate points in $(0,L]$} \big\}
\]
for $0 < \delta \ll 1$.
\end{lemma}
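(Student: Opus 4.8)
The plan is to express the hyperplane index of $W(\cdot,0)\big|_{[\delta,L]}$ as the winding number of $\phi\circ W(\cdot,0)$ through the point $[0:1]\in\R P^1$, to identify each such crossing with a conjugate point, to show that every crossing is \emph{simple} and \emph{positively oriented}, and then to read off the count from the monotonicity formula \eqref{eq:pos}.

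First I would set up the winding-number picture. By the invariance hypothesis \eqref{xlinv} the path $x\mapsto W(x,0)$ maps $[\delta,L]\subseteq(0,L]$ into $\cM=G\setminus(H_1\cap H_2)$, so $\Ind\big(W(x,0)|_{[\delta,L]};P_1\big)=\Wind(\phi\circ W(\cdot,0))$ with $\phi$ as in \eqref{phidef}. Since $\omega_1=e_1^*\wedge\cdots\wedge e_n^*$ from \eqref{omega1def} has $\ker\omega_1=\spn\{e_{n+1},\dots,e_{2n}\}=P_D=P_1$, Lemma~\ref{lemtransverse} identifies $G\cap H_1$ with the train $\cZ_{P_1}$, and the proof of Theorem~\ref{MAspacethm} gives $\phi^{-1}([0:1])=\cM\cap\cZ_{P_1}$; hence $\phi(W(x,0))=[0:1]$ exactly at the conjugate points. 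Propagating a basis of $P_0$ by the fundamental solution of \eqref{Lsystem2} at $\lambda=0$ and applying Gram--Schmidt furnishes a smooth positively oriented orthonormal frame of $W(x,0)$, and hence smooth functions $\psi_1(x),\psi_2(x)$ with $\phi(W(x,0))=[\psi_1(x):\psi_2(x)]$.

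The heart of the argument is to show each crossing is transverse. Fix a conjugate point $x_*\in[\delta,L]$; then $\psi_1(x_*)=0$ since $W(x_*,0)$ meets $P_1=\ker\omega_1$ nontrivially. Because $W(x_*,0)\in\cM$ lies in $H_1$, it cannot lie in $H_2$, so $\psi_2(x_*)\neq0$. A short combinatorial check shows that $\omega_2$ vanishes on any $n$-plane containing a $2$-dimensional subspace of $P_D$ --- each summand of \eqref{omega2def} uses $n-1$ of the covectors $e_1^*,\dots,e_n^*$, so two vectors from $P_D$ force a vanishing minor --- hence if $\dim(W(x_*,0)\cap P_D)\geq2$ we would get $\psi_2(x_*)=0$, a contradiction; so the conjugate point is simple. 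Then \eqref{psi1deriv} of Lemma~\ref{psiRD} gives $\psi_1'(x_*)=\psi_2(x_*)-\gamma(x_*)\psi_1(x_*)=\psi_2(x_*)\neq0$, so by \eqref{thetap} the lift $\theta$ of $\tau\circ\phi\circ W(\cdot,0)$ satisfies $\theta'(x_*)=2\psi_1'(x_*)/\psi_2(x_*)=2>0$.

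With $\theta'>0$ at every crossing, the crossing set is finite and \eqref{eq:pos} yields $\Wind(\phi\circ W(\cdot,0)|_{[\delta,L]})=\#\{x_*\in(\delta,L]:W(x_*,0)\cap P_1\neq\{0\}\}$. By Lemma~\ref{lem:delta} there is $\delta_0>0$ with no conjugate points in $(0,\delta_0]$, so for $0<\delta<\delta_0$ this count equals $\#\{\text{conjugate points in }(0,L]\}$, as claimed. I expect the main obstacle to be precisely the transversality step: it is here that the hypothesis \eqref{xlinv} and the specific choice $H_2=H_{\omega_2}$ are used essentially, since both a tangential crossing (which by $\psi_1'=\psi_2$ would require $\psi_2(x_*)=0$) and a high-dimensional conjugate point (which by the combinatorial vanishing of $\omega_2$ would also give $\psi_2(x_*)=0$) would place $W(x_*,0)$ in $H_1\cap H_2$, contradicting \eqref{xlinv}. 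This rigidity is what promotes the general inequality of Theorem~\ref{thm:cpMorse}(ii) to the exact equality in this lemma.
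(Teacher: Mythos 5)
Your proof is correct and follows essentially the same route as the paper's: at a conjugate point you use \eqref{psi1deriv} to get $\psi_1'(x_*) = \psi_2(x_*) \neq 0$ (the nonvanishing coming from \eqref{xlinv}, i.e. the defining property of $\cM$), feed this into the monotonicity criterion \eqref{thetap}, and conclude via \eqref{eq:pos} and Lemma~\ref{lem:delta}. Your extra combinatorial remark that $\omega_2$ vanishes on any $n$-plane meeting $P_D$ in dimension $\geq 2$ is a correct and pleasant side observation (it recovers the simplicity of the conjugate point), but it is not needed: the only fact the argument requires is $\psi_2(x_*)\neq 0$, which you already obtain directly from $W(x_*,0)\in\cM\cap H_1$.
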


The hyperplane index on the left-hand side is a \emph{signed} count of the $x_* \in [\delta,L]$ for which $W(x_*,0) \cap P_1 \neq \{0\}$. These are conjugate points (by definition) so to prove the lemma we just need to show that they all contribute to the Maslov index with the same sign. This is where the choice of $H_2$ becomes crucial.

\begin{proof}
For simplicity we abbreviate $\psi_i(x) = \psi_i(W(x,0))$.
If $x_* \in [\delta,L]$ is a conjugate point, then $\psi_1(x_*) = 0$, so \eqref{psi1deriv} implies $\psi_1'(x_0) = \psi_2(x_0)$. Substituting this in \eqref{thetap}, we obtain
\[
	\theta'(x_*) = \frac{d}{dx} \frac{\psi_1(v_1,\ldots,v_n)}{\psi_2(v_1,\ldots,v_n)} \bigg|_{x=x_*} = 1 > 0.
\]
Using \eqref{eq:pos}, we conclude that the Maslov index equals the number of conjugate points in $(\delta,L]$, and hence the number of conjugate points in $(0,L]$ if $\delta$ is sufficiently small.
\end{proof}

\subsection{Computing the index $\mathfrak m$}
\label{sec:m}

We now explain how to compute the index $\mathfrak m$ using information about $W(x,\lambda)$ near a point $(x_*,\lambda_*)$ where it leaves the MA space. We first describe what happens when $W$ leaves the MA space.

\begin{lemma}
\label{lem:leave}
Suppose $(x_*,\lambda_*)$ is contained in a neighbourhood $U \subset \bbR^2$ such that $\{(x,\lambda) \in U : \psi_1(x,\lambda) = 0\}$ can be written as the graph of continuously differentiable functions $\lambda_1(x), \ldots, \lambda_n(x)$ with $\lambda_i(x_*)= \lambda_*$ for all $i$. Then $W(x_*,\lambda_*) \in H_1 \cap H_2$ if and only if either
\begin{equation}
\label{dLdetX}
    \frac{\p \psi_1}{\p \lambda}(x_*,\lambda_*) = 0
\end{equation}
or $\lambda_i'(x_*) = 0$ for all $i$.
\end{lemma}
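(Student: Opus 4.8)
The plan is to reduce the statement to a one-line implicit-differentiation identity, after using the structural relation \eqref{psi1deriv} from Lemma~\ref{psiRD} to turn membership in $H_2$ into a condition on $\partial_x\psi_1$. Throughout I abbreviate $\psi_i(x,\lambda):=\psi_{\omega_i}(W(x,\lambda))$ for $i=1,2$, with $\omega_1,\omega_2$ as in \eqref{omega1def} and \eqref{omega2def}; choosing a local continuous positively oriented frame for $W(x,\lambda)$ makes $\psi_1$ a genuine function on $U$, and since $W(x,\lambda)$ depends smoothly on $\lambda$ (the system \eqref{Lsystem2} is linear in $\lambda$), $\psi_1$ is $C^1$ on $U$. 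The residual sign ambiguity of $\psi_i$ is harmless, since everything below concerns the vanishing of $\psi_i$ or of a partial derivative of $\psi_1$. As in Lemma~\ref{lemma:H2mono}, $H_2$ is taken to be the hyperplane associated with $\omega_2$ of \eqref{omega2def}; this is the only place the specific choice of $H_2$ enters.

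First I would observe that $(x_*,\lambda_*)$ automatically lies on $\{\psi_1=0\}$: each graph $\lambda=\lambda_i(x)$ is contained in $\{\psi_1=0\}$ and passes through $(x_*,\lambda_*)$, so $W(x_*,\lambda_*)\in H_1$. Consequently $W(x_*,\lambda_*)\in H_1\cap H_2$ if and only if $\psi_2(x_*,\lambda_*)=0$. Applying \eqref{psi1deriv} to the integral curve $x\mapsto W(x,\lambda_*)$ of \eqref{Lsystem2} gives $\partial_x\psi_1=\psi_2-\gamma\psi_1$, and evaluating at $(x_*,\lambda_*)$, where $\psi_1=0$, yields $\psi_2(x_*,\lambda_*)=\partial_x\psi_1(x_*,\lambda_*)$. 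Hence the lemma is equivalent to the assertion
\[
\partial_x\psi_1(x_*,\lambda_*)=0 \iff \Big[\partial_\lambda\psi_1(x_*,\lambda_*)=0 \ \text{ or }\ \lambda_i'(x_*)=0 \ \text{for all } i\Big].
\]

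The remaining ingredient is implicit differentiation: since $\psi_1(x,\lambda_i(x))\equiv 0$ for $x$ near $x_*$ and $\lambda_i(x_*)=\lambda_*$, the chain rule gives $\partial_x\psi_1(x_*,\lambda_*)+\lambda_i'(x_*)\,\partial_\lambda\psi_1(x_*,\lambda_*)=0$ for every $i$. Both directions follow at once. If \eqref{dLdetX} holds, or if $\lambda_i'(x_*)=0$ for all $i$, then this identity (for any single $i$) forces $\partial_x\psi_1(x_*,\lambda_*)=0$, hence $W(x_*,\lambda_*)\in H_1\cap H_2$. Conversely, if $W(x_*,\lambda_*)\in H_1\cap H_2$ then $\partial_x\psi_1(x_*,\lambda_*)=0$, and if in addition $\partial_\lambda\psi_1(x_*,\lambda_*)\neq 0$ the identity forces $\lambda_i'(x_*)=-\partial_x\psi_1(x_*,\lambda_*)/\partial_\lambda\psi_1(x_*,\lambda_*)=0$ for every $i$; so one of the two alternatives must hold. (This also shows that when $\partial_\lambda\psi_1(x_*,\lambda_*)\neq 0$ the slopes $\lambda_i'(x_*)$ all coincide, so ``for all $i$'' and ``for some $i$'' are interchangeable here.)

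I do not anticipate a genuine obstacle: once \eqref{psi1deriv} is in hand the argument is a two-step computation. The only items requiring a word of care are the local orientation choice and $C^1$-dependence of $\psi_1$ on $(x,\lambda)$ mentioned above, together with the remark that invoking \eqref{psi1deriv} is exactly what pins down the role of the distinguished hyperplane $H_2$.
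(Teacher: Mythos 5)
Your argument is correct and follows essentially the same route as the paper: you invoke \eqref{psi1deriv} to conclude $\partial_x\psi_1=\psi_2$ on $\{\psi_1=0\}$, then differentiate the identity $\psi_1(x,\lambda_i(x))\equiv0$ along each graph to obtain the chain-rule relation from which both implications follow. The only differences are cosmetic: you make the equivalence $W(x_*,\lambda_*)\in H_1\cap H_2 \iff \partial_x\psi_1(x_*,\lambda_*)=0$ a separate intermediate step and add remarks on the local frame/orientation and the ``for all $i$'' versus ``for some $i$'' interchangeability, but the mathematical content matches the paper's proof.
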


For instance, if two eigenvalue curves $\lambda_i(x)$ and $\lambda_j(x)$ intersect transversely at $(x_*,\lambda_*)$, then \eqref{dLdetX} must hold, otherwise the implicit function would be violated. Some examples are shown in Figure~\ref{fig:mgamma}.

\begin{proof}
Recall that $W(x_*,\lambda_*) \in H_1 \cap H_2$ if and only if $\psi_1(x_*,\lambda_*) = \psi_2(x_*,\lambda_*) = 0$. For each $i$ we have $\psi_1(x,\lambda_i(x)) = 0$ when $x$ is close to $x_*$, and hence
\[
    0 = \frac{\p \psi_1}{\p x}(x,\lambda_i(x)) + \frac{\p \psi_1}{\p \lambda}(x,\lambda_i(x)) \lambda_i'(x).
\]
Moreover, \eqref{psi1deriv} implies $\p \psi_1/\p x = \psi_2$ at any point where $\psi_1 = 0$, so we conclude that $W(x_*,\lambda_*) \in H_1 \cap H_2$ if and only if
\begin{equation}
    \frac{\p \psi_1}{\p \lambda}(x_*,\lambda_i(x_*)) \lambda_i'(x) = 0
\end{equation}
for each $i$, and the claim follows.
\end{proof}

We now give a simple rule for computing the index locally, assuming the zero set of $\psi_1$ can be parameterized as in Lemma \ref{lem:leave}.

\begin{lemma}
\label{lem:mgamma}
Suppose $W(x_*,\lambda_*) \in H_1 \cap H_2$, and assume there is a neighbourhood $U$ of $(x_*,\lambda_*)$ such that $\{(x,\lambda) \in U : \psi_1(x,\lambda) = 0 \}$ is given by the graphs of continuous functions $\lambda_1(x), \ldots, \lambda_n(x)$, only intersecting at the point $(x_*,\lambda_*)$, of which $i_-$ are strictly increasing for $x<x_*$ and  $i_+$ are strictly increasing for $x>x_*$, with the rest strictly decreasing. If $H_2$ is chosen as in Theorem \ref{thm:cpMorse}(iii), then
\begin{equation}
\label{ind:mgamma}
    \Ind(W\circ\gamma_*;P_1) = 2 (i_- - i_+),
\end{equation}
where $\gamma_*$ is a sufficiently small loop around $(x_*,\lambda_*)$ in the $(x,\lambda)$-plane, oriented counterclockwise.
\end{lemma}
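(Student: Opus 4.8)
The plan is to compute $\Ind(W\circ\gamma_*;P_1)=\Wind(\phi\circ W\circ\gamma_*)$ directly, using the freedom to replace the (arbitrary) small counterclockwise loop $\gamma_*$ by a conveniently shaped one. Recall that $\phi(W(x,\lambda))=[\psi_1(x,\lambda):\psi_2(x,\lambda)]$, so the winding number through $[0:1]$ is a signed count of the crossings of the loop with the zero set $\{\psi_1=0\}\cap U=\Gamma_1\cup\cdots\cup\Gamma_n$, where $\Gamma_i$ is the graph $\{\lambda=\lambda_i(x)\}$. Since the $\Gamma_i$ pass through $(x_*,\lambda_*)$ and otherwise do not meet, and---shrinking $U$, which is legitimate because $(x_*,\lambda_*)$ is an isolated point of $\{\psi_1=\psi_2=0\}$ (implicit in the assumption that a sufficiently small loop about it exists)---we may assume $\psi_2\neq 0$ on each $\Gamma_i\setminus\{(x_*,\lambda_*)\}$, the map $W$ carries the punctured neighbourhood $U\setminus\{(x_*,\lambda_*)\}$ into $\cM$. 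Hence any two sufficiently small counterclockwise loops about $(x_*,\lambda_*)$ are homotopic in $\cM$, and by Proposition~\ref{ind:properties}(iv) have the same index.

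For the convenient representative I would take $\gamma_*=\partial R$, the counterclockwise boundary of a \emph{wide, thin} rectangle $R=[x_*-\epsilon,x_*+\epsilon]\times[\lambda_*-\epsilon',\lambda_*+\epsilon']$: first fix $\epsilon>0$ small enough that $R\subseteq U$ and each $\lambda_i$ is strictly monotone on $(x_*-\epsilon,x_*)$ and on $(x_*,x_*+\epsilon)$, then fix $\epsilon'>0$ small enough (and generic) that $\epsilon'<\min_i|\lambda_i(x_*\pm\epsilon)-\lambda_*|$ and no corner of $R$ lies on any $\Gamma_i$. Then the two vertical sides of $\partial R$ meet no $\Gamma_i$, hence contribute nothing to the winding number (Proposition~\ref{ind:properties}(ii)), and every crossing of $\partial R$ with $\{\psi_1=0\}$ lies on a horizontal side. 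Along a horizontal side the loop moves in the $\pm x$ direction, so at such a crossing \eqref{psi1deriv} gives $\frac{d}{dt}\psi_1=\pm\partial_x\psi_1=\pm\psi_2$ with $\psi_2\neq 0$, whence \eqref{thetap} gives $\theta'=2\,\frac{d\psi_1/dt}{\psi_2}=\pm2\neq0$; by \eqref{eq:pos} and \eqref{eq:neg}, every crossing on the bottom side (motion in $+x$) counts $+1$ and every crossing on the top side (motion in $-x$) counts $-1$. Thus $\Ind(W\circ\gamma_*;P_1)$ equals the number of crossings on the bottom minus the number on the top.

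It then remains to count, for each $i$, the solutions in $(x_*-\epsilon,x_*+\epsilon)$ of $\lambda_i(x)=\lambda_*-\epsilon'$ and of $\lambda_i(x)=\lambda_*+\epsilon'$. Using $\lambda_i(x_*)=\lambda_*$ and one-sided strict monotonicity, this is an elementary count with four cases: a strict local maximum of $\lambda_i$ at $x_*$ (increasing for $x<x_*$, decreasing for $x>x_*$) yields $2$ crossings on the bottom and $0$ on the top, net $+2$; a strict local minimum yields $0$ and $2$, net $-2$; a curve monotone of the same sign on both sides yields $1$ and $1$, net $0$. Comparing with the definitions of $i_-$ and $i_+$---a local maximum is counted by $i_-$ but not $i_+$, a local minimum by $i_+$ but not $i_-$, an increasing-on-both-sides curve by both, a decreasing-on-both-sides curve by neither---one sees that in every case the net contribution of $\Gamma_i$ equals twice its contribution to $i_--i_+$. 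Summing over $i$ gives $\Ind(W\circ\gamma_*;P_1)=2(i_--i_+)$.

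The bulk of the work is the routine crossing count of the last paragraph. The step needing genuine care is the reduction in the first paragraph: that $U\setminus\{(x_*,\lambda_*)\}$ is carried into $\cM$, so that the index is homotopy-invariant among small loops and one may pass to $\partial R$. This rests on $(x_*,\lambda_*)$ being an isolated zero of $(\psi_1,\psi_2)$, together with its consequence (via \eqref{psi1deriv}) that $\psi_2\neq0$ at every crossing, which is exactly what makes each horizontal crossing a genuine transverse sign change of $\psi_1$ contributing precisely $\pm1$ rather than $0$.
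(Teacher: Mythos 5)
Your proof is correct and follows essentially the same route as the paper: homotope $\gamma_*$ to the boundary of a thin rectangle so that all crossings with $\{\psi_1=0\}$ occur on the horizontal sides, note that each bottom crossing counts $+1$ and each top crossing counts $-1$ (you rederive this from \eqref{psi1deriv} and \eqref{thetap}, whereas the paper cites Lemma~\ref{lemma:H2mono}), and then count. The only cosmetic difference is that you account per curve (a local maximum contributes $+2$, a local minimum $-2$, a one-sided monotone curve $0$) while the paper adds up bottom and top crossings globally; both give $2(i_--i_+)$.
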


Figure \ref{fig:mgamma} illustrates some possibles cases of the theorem, and also shows the idea of the proof.

\begin{proof}
By homotopy invariance we can assume that $\gamma_*$ is a small rectangle centered at $(x_*,\lambda_*)$. Moreover, since every curve $\lambda_i(x)$ is either strictly increasing or strictly decreasing to the left and right of $x_*$, we can assume that they only intersect the top and bottom of the rectangle, and not its sides, by making it sufficiently thin in the $\lambda$ direction.

Since these curves only intersect at $(x_*,\lambda_*)$, it follows that there are $i_- + (n - i_+)$ distinct crossings on the bottom of the rectangle, and $(n - i_-) + i_+$ on the top. Using Lemma \ref{lemma:H2mono}, we conclude that
\[
    \Ind(W \circ\gamma_*; P_1) = \big(i_- + (n - i_+)\big) - \big((n - i_-) + i_+\big)
    = 2(i_- - i_+),
\]
as claimed.
\end{proof}

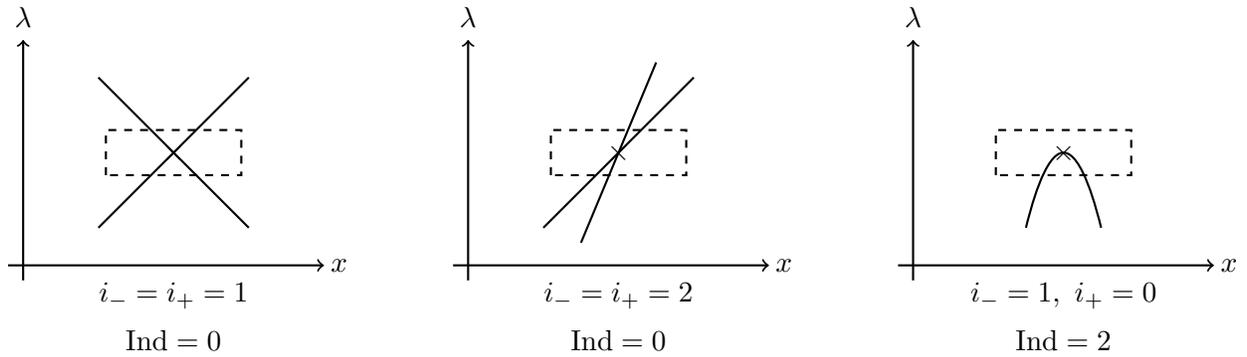
\begin{figure}
\begin{tikzpicture}
	\draw[->,thick] (0,-0.2) to (0,3);
	\draw[->,thick] (-0.2,0) to (4,0);
	\node at (4.2,0) {$x$};
	\node at (0,3.3 ) {$\lambda$};
	\draw[thick,dashed] (1.1,1.2) -- (1.1,1.8) -- (2.9,1.8) -- (2.9,1.2) -- (1.1,1.2);
	\draw[thick] (1,0.5) -- (3,2.5);
	\draw[thick] (3,0.5) -- (1,2.5);
	\node at (2,-0.4) {$i_- = i_+ = 1$};
	\node at (2,-1) {$\Ind = 0$};
	\node at (2,1.5) {$\times$};
\end{tikzpicture}
\hfill
\begin{tikzpicture}
	\draw[->,thick] (0,-0.2) to (0,3);
	\draw[->,thick] (-0.2,0) to (4,0);
	\node at (4.2,0 ) {$x$};
	\node at (0,3.3 ) {$\lambda$};
	\draw[thick,dashed] (1.1,1.2) -- (1.1,1.8) -- (2.9,1.8) -- (2.9,1.2) -- (1.1,1.2);
	\draw[thick] (1,0.5) -- (3,2.5);
	\draw[thick] (1.5,0.3) -- (2.5,2.7);
	\node at (2,-0.4) {$i_- = i_+ = 2$};
	\node at (2,-1) {$\Ind = 0$};
	\node at (2,1.5) {$\times$};
\end{tikzpicture}
\hfill
\begin{tikzpicture}
	\draw[->,thick] (0,-0.2) to (0,3);
	\draw[->,thick] (-0.2,0) to (4,0);
	\node at (4.2,0 ) {$x$};
	\node at (0,3.3 ) {$\lambda$};
	\draw[thick,dashed] (1.1,1.2) -- (1.1,1.8) -- (2.9,1.8) -- (2.9,1.2) -- (1.1,1.2);
	\draw[thick] (2,1.5) parabola (2.5,0.5);
	\draw[thick] (2,1.5) parabola (1.5,0.5);
	\node at (2,-0.4) {$i_- = 1, \ i_+ = 0$};
	\node at (2,-1) {$\Ind = 2$};
	\node at (2,1.5) {$\times$};
\end{tikzpicture}
\caption{Some possible configurations in Lemma \ref{lem:mgamma} and the corresponding indices. The curves $\lambda_1(x), \ldots, \lambda_n(x)$ are denoted by solid lines, and the loop $\gamma_*$ is the dashed box. The index of the loop $W\circ\gamma_*$, given by \eqref{ind:mgamma}, equals the number of crossings on the bottom of the box minus the number of crossings on the top.}
\label{fig:mgamma}
\end{figure}

\section{Applications to reaction--diffusion systems}
\label{sec:app}

We now apply our results to various systems of reaction--diffusion equations. In each case, the main difficulty is verifying the invariance conditions of either Theorem \ref{thm:cpMorse} or Corollary \ref{cor:m0}. In Section~\ref{sec:largeD} we do this under the assumption that one of the diffusion coefficients is large, relative to the size of the domain. In Section \ref{sec:cc} we specialize to the case of homogeneous equilibria, in which case the linearization has constant coefficients and the invariance conditions can be verified by explicit computation. Finally, in Section \ref{sec:Turing} we use our constant-coefficient results to analyze the Turing instability mechanism.

\subsection{Systems with large diffusion}\label{sec:largeD}
Consider the eigenvalue problem with mixed boundary conditions
\begin{equation}\label{mixedBVP}
	D \frac{d^2 u}{dx^2} + V u = \lambda u, \quad u'(0) = u(L) = 0,
\end{equation}
recalling that $D = \diag(d_1, \ldots, d_n)$.
The corresponding boundary subspaces are
\[
	P_0 = \{(q,0) : q \in \bbR^n\}, \quad P_1 = \{(0,p) : p \in \bbR^n\},
\]
and so $x_*$ is a conjugate point if and only if there exists a nontrivial solution to the boundary value problem
\[
	D \frac{d^2 u}{dx^2} + V u = 0, \quad u'(0) = u(x_*) = 0.
\]

Our result is that Corollary \ref{cor:m0} applies to the above system as long as none of the $d_j$ are too small, and all of the products $d_j d_k$ with $j \neq k$ are sufficiently large.

\begin{theorem}\label{thm:largediff}
Fix $L$ and $d_* >0$, and suppose $V \in C[0,L]$. There exists a constant $\Delta>0$ such that if $d_j \geq d_*$ for all $j$ and $d_j d_k \geq \Delta$ for $j \neq k$, then the hypotheses of Corollary \ref{cor:m0} are satisfied, and hence
\begin{equation}\label{unstable:conjugate2}
	\# \big\{\text{positive real eigenvalues of \eqref{mixedBVP}} \big\} \geq \# \big\{\text{conjugate points in } (0,L) \big\}.
\end{equation}
\end{theorem}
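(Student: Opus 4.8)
The plan is to verify the hypotheses of Corollary~\ref{cor:m0}. For the problem \eqref{mixedBVP} the boundary subspaces are $P_0 = P_N$ (which is $P_R$ with $\Theta = 0$) and $P_1 = P_D$, so the structural hypotheses of Theorem~\ref{thm:cpMorse} hold; we take $H_1$ and $H_2$ to be the hyperplanes of the forms $\omega_1$ and $\omega_2$ defined in \eqref{omega1def} and \eqref{omega2def}, so that $H_2$ is the hyperplane singled out in Theorem~\ref{thm:cpMorse}(iii). Writing $\psi_i = \psi_{\omega_i}$ as in \eqref{psidef}, we have $H_1 \cap H_2 = \{\psi_1 = \psi_2 = 0\}$, so everything reduces to showing
\[
    \big(\psi_1(W(x,\lambda)),\, \psi_2(W(x,\lambda))\big) \neq (0,0) \qquad \text{for all } (x,\lambda)\in(0,L]\times[0,\lambda_\infty].
\]
Here we may fix $\lambda_\infty = \sup_{[0,L]}\|V\| + 1$: in the proof of Lemma~\ref{lem:linfinity} the boundary term is killed by the Neumann condition, so this choice works uniformly in $D$.

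The computational input is Lemma~\ref{psiRD}, which along the integral curve $W(x,\lambda)$ gives $\psi_1' = \psi_2 - \gamma\psi_1$ and $\psi_2' = \big(\sum_j b_{jj}/d_j\big)\psi_1 + \psi_3 - \gamma\psi_2$, with $\psi_3 = \psi_{\omega_3}$ and $\gamma = \sum_j\langle A f_j, f_j\rangle$. Evaluating all $\psi_i$ on an orthonormal frame (chosen to vary continuously in $x$, so that the $\psi_i$ are well defined once an orientation is fixed) and applying Hadamard's inequality, one gets $|\psi_1| \le 1$, $|\psi_2| \le n/d_*$, and — crucially — $|\psi_3| \le C_n/\Delta$, because every coefficient of $\omega_3$ has the form $2/(d_jd_k)$ with $j\ne k$, hence is at most $2/\Delta$. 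Similarly $|\gamma|$ and $|\sum_j b_{jj}/d_j|$ are bounded by a constant $C$ depending only on $n$, $d_*$ and $\|V\|_\infty$, uniformly over $\lambda\in[0,\lambda_\infty]$ and $x\in[0,L]$. Finally, since $W(0,\lambda)=P_N=\spn\{e_1,\dots,e_n\}$ for every $\lambda$, the factors $e_{j+n}^*$ appearing in $\omega_2$ vanish there, giving $\psi_1(W(0,\lambda))=1$ and $\psi_2(W(0,\lambda))=0$.

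With these bounds, set $\rho = \psi_1^2 + \psi_2^2$. Differentiating and substituting the equations of Lemma~\ref{psiRD} yields $\rho' = -2\gamma\rho + 2\big(1 + \sum_j b_{jj}/d_j\big)\psi_1\psi_2 + 2\psi_2\psi_3$, and after absorbing the cross terms by Young's inequality this gives a one-sided bound $\rho' \ge -C'\rho - C_n^2/\Delta^2$ for a constant $C'$ of the same type as $C$. A Grönwall estimate started from $\rho(0)=1$ then produces $\rho(x) \ge e^{-C'L}\big(1 - L\,e^{C'L}C_n^2/\Delta^2\big)$ for all $x\in[0,L]$ and $\lambda\in[0,\lambda_\infty]$. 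Choosing $\Delta$ large enough (depending only on $L$, $d_*$ and $V$) that the parenthesis is positive forces $\rho>0$ throughout, which is exactly the invariance required. Corollary~\ref{cor:m0} then yields \eqref{unstable:conjugate2}.

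The conceptual content is minimal; the work is in making the bounds uniform. The delicate point is that $\psi_3$ is small only \emph{after} normalizing by an orthonormal frame — the raw values $\omega_3(f_1,\dots,f_n)$ can grow with $x$ even though the coefficients of $\omega_3$ are $O(1/\Delta)$ — and that every estimate must be uniform in $\lambda\in[0,\lambda_\infty]$, which is precisely why it matters that $\lambda_\infty$ does not depend on $D$. The hypothesis $d_j\ge d_*$ is used to control $\|D^{-1}\|$, hence $\psi_2$ and $\gamma$, while $d_jd_k\ge\Delta$ is used solely to make $\psi_3$ as small as needed.
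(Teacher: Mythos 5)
Your argument is essentially the paper's proof: the paper also takes $\lambda_\infty$ independent of $D$ via Lemma~\ref{lem:linfinity}, forms $\rho = \tfrac12(\psi_1^2+\psi_2^2)$ (the factor $\tfrac12$ is cosmetic), uses Lemma~\ref{psiRD} together with Hadamard/Cauchy--Schwarz to bound $|\gamma|$, the $\psi_1\psi_2$ cross term, and $|\psi_3|\le n(n-1)/\Delta$, arrives at a differential inequality $\rho' \ge -C\rho - O(\Delta^{-1})$, and applies Gr\"onwall starting from $\rho(0)>0$ to force $\rho>0$ on $[0,L]\times[0,\lambda_\infty]$ for $\Delta$ large. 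The only cosmetic difference is that you carry the $O(1/\Delta^2)$ form of the Young-inequality remainder whereas the paper records it as $O(1/\Delta)$; both vanish as $\Delta\to\infty$, so the conclusion is unchanged.
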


The constant $\Delta$ depends on $L$, $d_*$ and $V$, and can be estimated from the proof if desired. In particular, we see that it suffices to choose $\Delta = (2/C)e^{CL}$, where $C$ is a constant depending only on $d_*$ and $V$.

\begin{proof}
From Lemma \ref{lem:linfinity} we see that $\lambda_\infty$ can be any number satisfying
\[
	\lambda_\infty > \sup_{x \in [0,L]} \big\| V(x) \big\|.
\]
In particular, it can be chosen independent of $D$.

We now use Lemma \ref{psiRD}, with $B(x,\lambda) = \lambda I - V(x)$. Define
\begin{equation}
	\rho = \frac12 \big(\psi_1^2 + \psi_2^2),
\end{equation}
so that $\rho(0,\lambda) = 1/2$. It follows that
\[
	\frac{d\rho}{dx} = - 2 \gamma \rho + \left(1 + \frac{b_{11}}{d_1} + \cdots + \frac{b_{nn}}{d_n}\right) \psi_1 \psi_2 + \psi_2 \psi_3.
\]
From the definition of $\gamma$ (in Lemma \ref{psiRD}) we obtain
\[
	|\gamma(x,\lambda) | \leq n \|A(x,\lambda) \| \leq n \left( \| B(x,\lambda) \| +  \|D^{-1} \| \right) \leq n \left( \max \|B(x,\lambda)\| + \frac{1}{d_*} \right) =: C_1
\]
where the maximum is taken over $(x,\lambda) \in [0,L] \times [0,\lambda_\infty]$.
We similarly have
\[
	\left| \left(1 + \frac{b_{11}}{d_1} + \cdots + \frac{b_{nn}}{d_n}\right) \psi_1 \psi_2 \right| \leq \underbrace{\left(1 +  \frac{\max |b_{11}(x,\lambda)|}{d_*} + \cdots + \frac{\max |b_{nn}(x,\lambda)|}{d_*} \right)}_{C_2} \rho.
\]
Moreover, using
\[
	| \psi_3 | \leq \sum_{\substack{j,k=1 \\ j < k}}^n \frac{2}{d_j d_k} \leq \frac{n(n-1)}{\Delta},
\]
we obtain
\[
	| \psi_2 \psi_3 | \leq |\psi_2| \frac{n(n-1)}{\Delta}  
	\leq \underbrace{ \frac12 \left(\frac{n(n-1)}{d_*} \right)^2}_{C_3} \rho + \frac{1}{\Delta},
\]
and hence
$
	\frac{d\rho}{dx} \geq -C \rho -\Delta^{-1},
$
where $C = 2C_1 + C_2 + C_3$ depends only on $d_*$ and $V$. This is equivalent to
\[
	\frac{d}{dx} \left( e^{Cx} \rho(x)\right) \geq - \frac{e^{Cx}}{\Delta}
\]
so we have
\[
	e^{Cx} \rho(x) - \frac12 \geq - \frac{1}{\Delta} \int_0^x e^{Ct} dt = \frac{1 - e^{Cx}}{C \Delta}.
\]
Therefore, we will have $\rho(x,\lambda) > 0$ for $\lambda \in [0,\lambda_\infty]$ provided
\[
	e^{Cx} < 1 + \frac{C \Delta}{2}.
\]
This equality will hold for all $(x,\lambda) \in [0,L] \times [0,\lambda_\infty]$ if it holds when $x=L$. Therefore, we need $e^{CL} < 1 + \frac{C \Delta}{2}$. This is satisfied for a sufficiently large choice of $\Delta$,  depending only on $L$ and $C$ (i.e. on $L$, $d_*$ and $V$).
\end{proof}

\subsection{Stability of homogeneous equilibria}
\label{sec:cc}
If the steady state $\bar u$ is homogeneous (constant in $x$) and $D=I$, then the linearized operator \eqref{Ldef} has the form 
\[
	\cL = \frac{d^2}{dx^2} + V
\]
where $V = \nabla F(\bar u) \in M_n(\bbR)$ is a constant real matrix. The case of unequal diffusivities, $D \neq I$, can be treated by similar methods but the calculations are more involved; see Section \ref{sec:Turing} for an example.

Consider the Dirichlet problem on $(0,L)$,
\begin{equation}
	\cL v = \lambda v, \quad v(0) = v(L) = 0 \in \bbR^2,
\end{equation}
with constant potential $V \in M_2(\bbR)$. It follows from a direct computation that
\begin{equation}\label{cc:Morse}
	\# \big\{\text{positive eigenvalues of } \cL \big\} = \# \big\{\text{conjugate points in $(0,L)$} \big\}.
\end{equation}

Here we reconsider this problem using the machinery developed in the previous section, to see if the same conclusion can be obtained using our hyperplane index. We first require a definition.

\begin{define}
\label{def:Vgeneric}
We say that $V \in M_2(\bbR)$ is \emph{non-generic for $(0,L)$} if either of the following conditions hold:
\begin{enumerate}
\item its eigenvalues $\nu_1$ and $\nu_2$ are positive and satisfy
\begin{equation}\label{b12ratio}
	\frac{\nu_1}{\nu_2} = \left( \frac{m}{n} \right)^2
\end{equation}
for some integers $m$ and $n$ with
\begin{equation}\label{mnrange}
	1 \leq m \leq \frac{\sqrt{\nu_1} L}{\pi}, \quad 1 \leq n \leq \frac{\sqrt{\nu_2} L}{\pi};
\end{equation}
\item $\nu_1$ and $\nu_2$ satisfy
\begin{equation}
\label{nu21equal}
    \nu_1 - \nu_2 = (m^2 - n^2)(\pi/L)^2
\end{equation}
for some integers $m$ and $n$.
\end{enumerate}
Otherwise $V$ is said to be \emph{generic for $(0,L)$}.
\end{define}

For each $L$ the set of generic matrices is clearly open and dense in  $M_2(\bbR)$. For a problem on an unbounded domain it is natural to approximate by a sequence of of bounded domains, for instance $(0,\infty)$ by $(0,N)$ for $N \to \infty$. While we do not consider such problems here, we note in passing that the set
\begin{equation}
    \big\{ V \in M_2(\bbR) : V \text{ is generic for } (0,N) \text{ for every } N \in \bbN \big\}
\end{equation}
is a countable intersection of open, dense sets, and hence is residual. Finally, we note that for any value of $L$, \eqref{nu21equal} forbids the possibility that $\nu_1 = \nu_2$.

Our result is the following.

\begin{theorem}\label{thm:ccMorseMaslov}
If $V$ is generic for $(0,L)$, in the sense of Definition \ref{def:Vgeneric},     then Theorem \ref{thm:cpMorse} applies, and we have
\begin{align*}
	\# \big\{\text{nonnegative eigenvalues of } \cL \big\} &= - \Ind\left(W(L,\cdot)\big|_{\lambda \in [0,\lambda_\infty]} ; P_1 \right) \\
	& = - \Ind\left(W(\cdot,0)\big|_{x \in [\delta,L]} ; P_1 \right) \\
	&= \# \big\{\text{conjugate points in $(0,L]$} \big\}
\end{align*}
and
\[
	\# \big\{\text{positive eigenvalues of } \cL \big\} = \# \big\{\text{conjugate points in $(0,L)$} \big\}.
\]
\end{theorem}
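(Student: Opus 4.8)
The plan is to reduce the whole statement to the invariance hypothesis \eqref{xlinv} of Theorem~\ref{thm:cpMorse}, with $H_1$ and $H_2$ the hyperplanes attached to the forms $\omega_1$ of \eqref{omega1def} and $\omega_2$ of \eqref{omega2def} (taking $d_1=d_2=1$, since here $P_0=P_1=P_D$). Once \eqref{xlinv} is verified, part~(iii) of that theorem already gives $\Ind\big(W(\cdot,0)|_{[\delta,L]};P_1\big)=\#\{\text{conjugate points in }(0,L]\}$, and it remains to evaluate $\Ind\big(W(L,\cdot)|_{[0,\lambda_\infty]};P_1\big)$ and to show the correction term $\mathfrak m$ vanishes. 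What makes this tractable is that $D=I$ and $V$ is constant, so \eqref{Lsystem2} integrates explicitly. First I would diagonalize: a generic $V$ has the form $S\,\diag(\nu_1,\nu_2)\,S^{-1}$ (a repeated real eigenvalue, hence a Jordan block, is ruled out by \eqref{nu21equal} with $m=n$), and in the coordinates where $V$ is diagonal the subspace $W(x,\lambda)$ of \eqref{mudef} is spanned by $(s_1,0,s_1',0)$ and $(0,s_2,0,s_2')$, where $s_j''=(\lambda-\nu_j)s_j$, $s_j(0)=0$, $s_j'(0)=1$; thus $s_j(x,\lambda)=\sin\!\big(\sqrt{\nu_j-\lambda}\,x\big)/\sqrt{\nu_j-\lambda}$ when $\lambda<\nu_j$, with the obvious linear or $\sinh$ form otherwise. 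Transporting back to the standard basis, \eqref{psidef} with $\omega_1$ as in \eqref{omega1def} shows that $\psi_1$ vanishes exactly on $\{s_1s_2=0\}$, while \eqref{psi1deriv} gives $\psi_2=\psi_1'$ wherever $\psi_1=0$; since the sinusoidal $s_j$ has only simple zeros, it follows that $\psi_1=\psi_2=0$ precisely when $s_1(x,\lambda)=s_2(x,\lambda)=0$, and that $\{\psi_1=0\}$ is the locally finite union of the strictly increasing graphs $\lambda=\nu_j-k^2\pi^2/x^2$, $j\in\{1,2\}$, $k\in\bbN$.

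With this in hand I would verify \eqref{xlinv}. By the above, $W(x_*,0)\in H_1\cap H_2$ for some $x_*\in(0,L]$ iff $\sqrt{\nu_1}\,x_*=m\pi$ and $\sqrt{\nu_2}\,x_*=n\pi$ for positive integers with $x_*\le L$, i.e.\ iff $\nu_1/\nu_2=(m/n)^2$ with $m,n$ in the range \eqref{mnrange} --- which is clause~(i) of Definition~\ref{def:Vgeneric}; likewise $W(L,\lambda)\in H_1\cap H_2$ for some $\lambda\ge 0$ iff $\sqrt{\nu_1-\lambda}\,L$ and $\sqrt{\nu_2-\lambda}\,L$ are both integer multiples of $\pi$, i.e.\ iff $\nu_1-\nu_2=(m^2-n^2)(\pi/L)^2$, which is clause~(ii). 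Genericity of $V$ excludes both, so \eqref{xlinv} holds and Theorem~\ref{thm:cpMorse} applies. (When $\nu_1,\nu_2$ are a non-real conjugate pair a direct check gives $s_1s_2=|s_1|^2>0$ for $x>0$, so $\psi_1$ never vanishes and all three quantities in the statement equal $0$; such a $V$ is always generic.)

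Then I would evaluate the remaining indices. For the right edge $\lambda\mapsto W(L,\lambda)$, at an eigenvalue $\lambda_*$ one has $\psi_1(L,\lambda_*)=0$, so \eqref{thetap} combined with $\psi_2=\psi_1'$ gives $\theta'(\lambda_*)=2\,\p_\lambda\psi_1/\psi_2 = 2\,\p_\lambda s_j/\p_x s_j$ at $\lambda_*$ for the index $j$ with $s_j(L,\lambda_*)=0$; substituting the explicit $s_j$ yields $\p_\lambda s_j/\p_x s_j=-L/\big(2(\nu_j-\lambda_*)\big)<0$, so this path crosses $[0:1]$ monotonically in the negative sense, and \eqref{eq:neg} together with Lemma~\ref{lem:linfinity} gives $\Ind\big(W(L,\cdot)|_{[0,\lambda_\infty]};P_1\big)=-\#\{\text{nonnegative eigenvalues of }\cL\}$. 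For $\mathfrak m$, recall from the discussion following Lemma~\ref{lem:delta} that $\mathfrak m=\Ind\big(W(\cdot,0)|_{[\delta,L]};P_1\big)+\Ind\big(W(L,\cdot)|_{[0,\lambda_\infty]};P_1\big)$; by Lemmas~\ref{lem:linfinity} and~\ref{lem:delta} the points where $W$ leaves $\cM$ all lie in the open rectangle and are finite in number (they are the crossings of two of the increasing curves $\lambda=\nu_j-k^2\pi^2/x^2$), so by homotopy invariance and additivity $\mathfrak m$ is the sum of the local indices around these crossings. At each such crossing both branches are strictly increasing in $x$ on either side, so Lemma~\ref{lem:mgamma} gives local index $2(i_--i_+)=0$, whence $\mathfrak m=0$. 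Assembling the three evaluations yields the displayed chain of equalities, and the final line --- positive eigenvalues $=$ conjugate points in $(0,L)$ --- follows the same way on the open interval, consistent with the elementary formula \eqref{cc:Morse}.

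I expect the crux to be the verification of \eqref{xlinv}: one must match the two clauses of Definition~\ref{def:Vgeneric} \emph{exactly} with the two ways $W(x,\lambda)$ can meet $H_1\cap H_2$ on the bottom and right edges of the rectangle, and separately handle the non-diagonalizable and complex-eigenvalue cases. That becomes bookkeeping once the closed-form solution is available, but it is precisely what dictates the shape of the genericity hypothesis; the subsequent index computations, including $\mathfrak m=0$, are then routine applications of Lemma~\ref{lem:mgamma}, Lemma~\ref{lem:linfinity}, and the monotonicity identity \eqref{thetap}.
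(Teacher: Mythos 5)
Your proposal reproduces the paper's proof in structure and essentially all details: verify the invariance hypothesis \eqref{xlinv} from the explicit solution of the constant-coefficient system (the paper's Corollary \ref{cor:ccinv_v2}, built on Proposition \ref{prop:inv}), establish monotonicity of $\lambda\mapsto W(L,\lambda)$ via \eqref{thetap}, obtaining the same ratio $\psi_1'(\lambda_*)/\psi_2(\lambda_*)=L/\big(2(\lambda_*-\nu_j)\big)$ (the paper's Lemma \ref{lem:lambdamono}), and conclude $\mathfrak m=0$ by applying Lemma \ref{lem:mgamma} at crossings of the strictly increasing curves $\lambda=\nu_j-k^2\pi^2/x^2$ (the paper's Lemma \ref{lem:m0}). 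The only presentational difference is that you diagonalize $V$ to reach $\det X\propto s_1s_2$, which forces separate treatment of the Jordan-block and non-real-spectrum cases, whereas the paper's Proposition \ref{prop:inv} writes the frame directly as $X=B^{-1/2}\sinh(\sqrt{B}x)$, $Y=\cosh(\sqrt{B}x)$ via the matrix exponential, so the same product formula for $\det X$ in terms of the eigenvalues $\beta_j$ of $B(\lambda)$ emerges uniformly without a change of basis.
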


The proof consists of three steps. First, we show that $W(x,\lambda)$ satisfies the invariance condition \eqref{xlinv} in Theorem \ref{thm:cpMorse}, and hence
\begin{align*}
	\# \big\{\text{nonnegative eigenvalues of } \cL \big\} &\geq - \Ind\left(W(L,\cdot)\big|_{\lambda \in [0,\lambda_\infty]} ; P_1 \right) \\
	& =  \Ind\left(W(\cdot,0)\big|_{x \in [\delta,L]} ; P_1 \right) - \mathfrak m \\
	&= \# \big\{\text{conjugate points in $(0,L]$} \big\}
	- \mathfrak m.
\end{align*}
Next, we show that the path $W(L,\lambda)$ is monotone in $\lambda$, which implies
\[
	\# \big\{\text{nonnegative eigenvalues of } \cL \big\}  = - \Ind\left(W(L,\lambda)\big|_{\lambda \in [0,\lambda_\infty]} ; P_1 \right).
\]
Finally, we use Lemma \ref{lem:mgamma} to show that $\mathfrak m=0$, which completes the proof.

We write the eigenvalue problem in the general form
\begin{align}\label{eq:constant}
	\frac{d}{dx} \begin{pmatrix} v \\ w \end{pmatrix} = \begin{pmatrix} 0 & I \\ B(\lambda) & 0 \end{pmatrix} \begin{pmatrix} v \\ w \end{pmatrix},
\end{align}
where $B(\lambda) \in M_2(\bbR)$ does not depend on $x$. Later we will set $B(\lambda) = \lambda I - V$. As above, we define a family of two-dimensional subspaces
\begin{equation}\label{ccmudef}
	W(x,\lambda) = \left\{ \begin{pmatrix} v(x) \\ w(x) \end{pmatrix} : \begin{pmatrix} v \\ w \end{pmatrix} 
	\text{ satisfies \eqref{eq:constant} and } v(0) = 0 \right\} \subseteq \bbR^4
\end{equation}
for $x \geq 0$.

The system \eqref{eq:constant} is of the form considered in Section \ref{sec:counting}, with $d_1 = d_2 = 1$, so we choose $\omega_1$ and $\omega_2$ corresponding to the matrices
\begin{equation}\label{Omega1d}
	\Omega_1 = \begin{pmatrix} 0 & 1 & 0 & 0 \\ -1 & 0 & 0 & 0 \\
	0& 0 & 0 & 0 \\ 0 & 0 & 0 & 0 \end{pmatrix}, \qquad 
	\Omega_2 = \begin{pmatrix} 0 & 0 & 0 & 1 \\
	0 & 0 & -1 & 0 \\
	0 & 1 & 0 & 0 \\
	-1 & 0 & 0 & 0
	\end{pmatrix}.
\end{equation}
Let $H_1$ and $H_2$ denote the corresponding hyperplanes, and $\cM = G \setminus (H_1 \cap H_2)$ the resulting Maslov--Arnold space.

\begin{prop}\label{prop:inv}
Let $W(x,\lambda)$ be defined by \eqref{ccmudef}. For $x \neq 0$ we have $W(x,\lambda) \in H_1 \cap H_2$ if and only if the eigenvalues $\beta_1,\beta_2$ of $B(\lambda)$ are real and negative and satisfy
\[
	\sin\sqrt{-\beta_1} x = \sin\sqrt{-\beta_2} x = 0.
\]
\end{prop}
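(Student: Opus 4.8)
The plan is to reduce the condition $W(x,\lambda)\in H_1\cap H_2$ to a statement about a double zero of a single entire function, which can then be read off from the spectrum of $B(\lambda)$. First I would describe $W(x,\lambda)$ explicitly: the solutions of \eqref{eq:constant} with $v(0)=0$ are exactly $(v(x),w(x))=(M(x)p,M'(x)p)$, $p\in\bbR^2$, where $M(x)=\sum_{k\ge 0}\frac{B(\lambda)^k x^{2k+1}}{(2k+1)!}$ is the $2\times 2$ matrix solution of $M''=B(\lambda)M$ with $M(0)=0$, $M'(0)=I$. Since the fundamental matrix of the associated first-order system is invertible for all $x$, the map $p\mapsto(M(x)p,M'(x)p)$ is injective, so $\{(M(x)e_1,M'(x)e_1),(M(x)e_2,M'(x)e_2)\}$ is a basis of the two-dimensional space $W(x,\lambda)$.

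Evaluating the bilinear forms with matrices $\Omega_1,\Omega_2$ from \eqref{Omega1d} on this basis gives, after a short computation, that $\omega_1$ equals $\det M(x)$ and $\omega_2$ equals $\frac{d}{dx}\det M(x)$; hence $\phi(W(x,\lambda))=[\det M(x):(\det M)'(x)]$. Consequently $W(x,\lambda)\in H_1$ precisely when $\det M(x)=0$, and $W(x,\lambda)\in H_1\cap H_2$ precisely when $x$ is a zero of order at least two of the entire function $h(x):=\det M(x)$. (One can also obtain the relation $\omega_2=\frac{d}{dx}\omega_1$ along this curve directly from Lemma \ref{psiRD}, which gives $\psi_1'=\psi_2$ wherever $\psi_1=0$.)

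Next I would identify $h$. Writing $M(x)=x\,G(B(\lambda)x^2)$ with $G(z)=\sum_{k\ge 0}\frac{z^k}{(2k+1)!}$ entire, and using that a $2\times 2$ matrix $A$ with eigenvalues $a_1,a_2$ (with multiplicity) satisfies $\det G(A)=G(a_1)G(a_2)$, we get $h(x)=x^2 G(\beta_1 x^2)G(\beta_2 x^2)=f_{\beta_1}(x)f_{\beta_2}(x)$, where $\beta_1,\beta_2$ are the eigenvalues of $B(\lambda)$ and $f_\beta(x):=x\,G(\beta x^2)$ is the solution of $f_\beta''=\beta f_\beta$, $f_\beta(0)=0$, $f_\beta'(0)=1$, entire in $\beta$; explicitly $f_\beta(x)=x$ for $\beta=0$, $f_\beta(x)=\sinh(\sqrt{\beta}\,x)/\sqrt{\beta}$ for $\beta>0$, and $f_\beta(x)=\sin(\sqrt{-\beta}\,x)/\sqrt{-\beta}$ for $\beta<0$. (The same identity can be obtained without entire-function calculus by putting $B(\lambda)$ in Jordan form over $\bbC$ and computing the determinant block by block.) From $\sinh z=0\iff z\in i\pi\bbZ$ one sees that for $\beta\notin\bbR_{<0}$ the function $f_\beta$ has no zero with $x\ne 0$, while for $\beta<0$ its zeros with $x\ne 0$ are exactly the solutions of $\sin\sqrt{-\beta}\,x=0$, and these are all simple because $f_\beta'(x)=\cos\sqrt{-\beta}\,x=\pm 1\ne 0$ there.

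Finally I would conclude: since each factor $f_{\beta_i}$ is either nowhere zero on $x\ne 0$ or has only simple zeros, the product $h=f_{\beta_1}f_{\beta_2}$ has a zero of order at least two at some $x_0\ne 0$ if and only if both factors vanish at $x_0$, which forces $\beta_1,\beta_2<0$ and $\sin\sqrt{-\beta_1}\,x_0=\sin\sqrt{-\beta_2}\,x_0=0$; conversely these conditions plainly give $h(x_0)=h'(x_0)=0$. Together with the second paragraph this is exactly the claimed equivalence. The main obstacle I anticipate is establishing the case-free formula $\det M(x)=f_{\beta_1}(x)f_{\beta_2}(x)$ — the spectral-mapping argument disposes of the complex-eigenvalue and non-diagonalizable cases at one stroke and is the step most worth writing carefully — after which the only remaining point requiring care is the multiplicity bookkeeping, namely that each $f_{\beta_i}$ has only simple zeros, so that a double zero of the product can arise only when both factors vanish.
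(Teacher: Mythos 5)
Your proof is correct and follows essentially the same route as the paper: compute a frame $(M(x),M'(x))$ via the matrix exponential, identify $\omega_1$ with $\det M(x)$ and $\omega_2$ with $\det(X+Y)-\det X-\det Y$, factor through the eigenvalues of $B(\lambda)$, and conclude by noting that the relevant ``$\cosh$'' factors never vanish where the corresponding ``$\sinh$'' factors do. Your reformulations\,---\,that $\omega_2=\frac{d}{dx}\det M$, so membership in $H_1\cap H_2$ is exactly a double-zero condition on one entire function, and that the spectral-mapping identity $\det G(Bx^2)=G(\beta_1 x^2)G(\beta_2 x^2)$ disposes of the non-diagonalizable case uniformly\,---\,are tidier packagings of the same computation, not a different argument.
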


\begin{proof}
We first compute a frame for $W(x,\lambda)$. A frame for a two-dimensional subspace $W$  is (by definition) a $4\times2$ matrix whose columns span $W$. Writing this as
\[
	\begin{pmatrix} X \\ Y \end{pmatrix} = \begin{pmatrix} x_{11} & x_{12} \\ x_{21} & x_{22} \\
	y_{11} & y_{12} \\ y_{21} & y_{22} \end{pmatrix},
\]
and denoting the columns by $u_1$ and $u_2$, we compute
\begin{align*}
    \omega_1(u_1,u_2) &= x_{11} x_{22} - x_{12} x_{21} \\
    &= \det X
\end{align*}
and
\begin{align*}
    \omega_2(u_1,u_2) &= x_{11} y_{22} - x_{21} y_{12} + y_{11} x_{22} - y_{21} x_{12} \\
    &= \det(X + Y) - \det X - \det Y.
\end{align*}
It follows that
\[
	W \in H_1 \ \Longleftrightarrow \ \det X = 0
\]
and
\[
	W \in H_2 \ \Longleftrightarrow \ \det (X+Y) = \det X + \det Y.
\]

Note that $W(x,\lambda)$ is spanned by the last two columns of the fundamental solution matrix $e^{Ax}$, where $A = \left( \begin{smallmatrix} 0 & I \\ B & 0 \end{smallmatrix} \right)$.
We thus compute
\begin{align*}
	e^{Ax}
	&= \sum_{m=0}^\infty \frac{1}{(2m)!} \begin{pmatrix} B^m x^{2m} & 0 \\ 0 & B^m x^{2m} \end{pmatrix} 
	+ \sum_{m=0}^\infty \frac{1}{(2m+1)!} \begin{pmatrix} 0 & B^{m} x^{2m+1}  \\ B^{m+1} x^{2m+1} & 0 \end{pmatrix}
\end{align*}
to conclude that a frame for $W(x,\lambda)$ is given by
\begin{align}\label{frameXY}
	\begin{pmatrix} X \\ Y \end{pmatrix} = 
	\sum_{m=0}^\infty \begin{pmatrix}
	 \frac{B^{m} x^{2m+1} }{(2m+1)!} \\
	\frac{B^m x^{2m}}{(2m)!}
	\end{pmatrix}
	= \begin{pmatrix}
	B^{-1/2} \sinh(\sqrt B x) \\
	\cosh(\sqrt{B} x)
	\end{pmatrix}.
\end{align}
The functions on the right-hand side are defined by their power series, which converge for all numbers $x$ and matrices $B$.

Letting $\beta_1$ and $\beta_2$ denote the eigenvalues of $B(\lambda)$, it follows that $W(x,\lambda) \in H_1 \cap H_2$ if and only if
\begin{align}\label{detXY}
	\det X = \frac{\sinh(\sqrt\beta_1 x) }{\sqrt\beta_1} \frac{\sinh(\sqrt\beta_2 x)}{\sqrt\beta_2} 
	= 0
\end{align}
and
\begin{align}\label{traceXY}
	\det(X+Y) - \det X - \det Y = \frac{\sinh(\sqrt\beta_1 x) }{\sqrt\beta_1} \cosh(\sqrt\beta_2 x)
	+ \frac{\sinh(\sqrt\beta_2 x)}{\sqrt\beta_2} \cosh(\sqrt\beta_1 x) = 0.
\end{align}
As in \eqref{frameXY}, the functions $\beta^{-1/2} \sinh(\sqrt\beta x)$ and $\cosh(\sqrt\beta x)$ are defined by power series which converge for all values of $x$ and $\beta$. In particular, when $\beta=0$ we obtain $\beta^{-1/2} \sinh(\sqrt\beta x) = x$, and when $x = 0$ we obtain $\beta^{-1/2} \sinh(\sqrt\beta x) = 0$ for any value of $\beta$.

Now suppose that $W(x,\lambda) \in H_1 \cap H_2$, so both \eqref{detXY} and \eqref{traceXY} are satisfied. If $\beta_1^{-1/2} \sinh(\sqrt\beta_1 x) = 0$, then $\beta_1 \neq 0$, hence $\sinh(\sqrt\beta_1 x) = 0$ and so $\cosh(\sqrt\beta_1 x) \neq 0$. Then \eqref{traceXY} implies $\beta_2^{-1/2} \sinh(\sqrt\beta_2 x) = 0$, hence $\beta_2 \neq 0$ and
$\sinh(\sqrt\beta_2 x) = 0$. Therefore, $W(x,\lambda) \in H_1 \cap H_2$ if and only if $\sinh(\sqrt\beta_1 x) = \sinh(\sqrt\beta_2 x) = 0$, which is possible if and only if $\beta_1$ and $\beta_2$ are both real and negative and satisfy $\sin(\sqrt{-\beta_1} x) = \sin(\sqrt{-\beta_2} x) = 0$.
\end{proof}

\begin{rem}\label{rem:3_cases}
The above calculations show that $x_*$ is a conjugate point if and only if at least one of the eigenvalues of $X(x_*,0)$ is zero, whereas $W(x_*,0) \in H_1 \cap H_2$ if and only if both eigenvalues are zero, so there are three possibilities:
\begin{enumerate}
	\item The eigenvalues of $X$ do not vanish for any $x_* \in (0,L]$, so the index is zero.
	\item For some $x_* \in (0,L]$ both eigenvalues of $X$ vanish, so the index is not defined.
	\item For some $x_* \in (0,L]$ exactly one eigenvalue of $X$ vanishes, so the index is nonzero.
\end{enumerate}
All three cases will arise in Section \ref{sec:Turing}, when we use our index theory to characterize the Turing instability.
\end{rem}

Proposition \ref{prop:inv} implies the following.

\begin{cor}\label{cor:ccinv_v2}
If $B(\lambda) = \lambda I - V$ and $V$ is generic, then 
\begin{equation}
    \label{xlinv2}
    W(x,0) \in G \setminus (H_1 \cap H_2) \quad \text{ and } \quad W(L,\lambda) \in G \setminus (H_1 \cap H_2)
\end{equation}
for all $x \in (0,L]$ and all $\lambda \in \bbR$.
\end{cor}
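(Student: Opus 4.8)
The plan is to show that violating either condition in \eqref{xlinv2} would force $V$ to be non-generic in the sense of Definition \ref{def:Vgeneric}. By Proposition \ref{prop:inv}, the subspace $W(x,\lambda)$ lies in $H_1 \cap H_2$ (for $x \neq 0$) precisely when the eigenvalues $\beta_1, \beta_2$ of $B(\lambda) = \lambda I - V$ are real and negative and satisfy $\sin\sqrt{-\beta_1}\,x = \sin\sqrt{-\beta_2}\,x = 0$. The first step is to note that the eigenvalues of $B(\lambda)$ are simply $\lambda - \nu_1$ and $\lambda - \nu_2$, where $\nu_1, \nu_2$ are the eigenvalues of $V$. (A subtlety to address: $V$ may have complex eigenvalues, but then $B(\lambda)$ has complex eigenvalues for all real $\lambda$, so the condition of Proposition \ref{prop:inv} cannot be met and both parts of \eqref{xlinv2} hold trivially; so we may assume $\nu_1, \nu_2 \in \bbR$. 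Note also that clause (ii) of Definition \ref{def:Vgeneric} with $m=n$ rules out $\nu_1 = \nu_2$ among generic $V$, so generic $V$ has distinct real eigenvalues, or else a complex-conjugate pair — either way the following argument applies.)

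For the first condition in \eqref{xlinv2}, set $\lambda = 0$: we need $W(x,0) \notin H_1 \cap H_2$ for all $x \in (0,L]$. If this fails at some $x_* \in (0,L]$, then $-\nu_1, -\nu_2 < 0$ (so $\nu_1, \nu_2 > 0$) and $\sin\sqrt{\nu_1}\,x_* = \sin\sqrt{\nu_2}\,x_* = 0$, i.e. $\sqrt{\nu_1}\,x_* = m\pi$ and $\sqrt{\nu_2}\,x_* = n\pi$ for positive integers $m, n$. Dividing gives $\nu_1/\nu_2 = (m/n)^2$, and the constraint $x_* \le L$ translates into $m = \sqrt{\nu_1}\,x_*/\pi \le \sqrt{\nu_1}\,L/\pi$ and similarly for $n$, which is exactly \eqref{b12ratio}–\eqref{mnrange}. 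Hence $V$ is non-generic, a contradiction.

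For the second condition, fix $\lambda \in \bbR$ and consider $W(L,\lambda)$. If $W(L,\lambda) \in H_1 \cap H_2$, then $\lambda - \nu_1, \lambda - \nu_2 < 0$ and $\sqrt{\nu_1 - \lambda}\,L = m\pi$, $\sqrt{\nu_2 - \lambda}\,L = n\pi$ for positive integers $m, n$. Squaring and subtracting eliminates $\lambda$: $\nu_1 - \nu_2 = (m^2 - n^2)(\pi/L)^2$, which is precisely \eqref{nu21equal}. So again $V$ is non-generic, contradiction. This finishes the proof.

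The main obstacle — really the only non-routine point — is handling the degenerate and complex cases cleanly: one must confirm that Proposition \ref{prop:inv} indeed applies to $B(\lambda) = \lambda I - V$ (it is stated for general $B(\lambda)$, so this is immediate), and that when $V$ has complex or equal eigenvalues the claim still holds, either vacuously (complex case) or because such $V$ is already excluded by genericity (equal case). Everything else is the elementary observation that the spectral condition of Proposition \ref{prop:inv}, written out in terms of $\nu_1, \nu_2$, is the logical negation of Definition \ref{def:Vgeneric}.
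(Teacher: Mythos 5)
Your proof is correct and matches the paper's argument essentially verbatim: both proceed by contraposition, invoking Proposition \ref{prop:inv} to convert membership of $W(x,0)$ or $W(L,\lambda)$ in $H_1 \cap H_2$ into the spectral conditions $\nu_1/\nu_2 = (m/n)^2$ (with the range constraint \eqref{mnrange}) and $\nu_1 - \nu_2 = (m^2 - n^2)(\pi/L)^2$, which are precisely the two clauses of non-genericity in Definition \ref{def:Vgeneric}. Your additional remarks on complex and repeated eigenvalues are correct but redundant, since in the contrapositive the hypothesis $W \in H_1 \cap H_2$ already forces the $\beta_i$ (hence the $\nu_i$) to be real, so those cases never arise in the argument.
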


\begin{proof}
The eigenvalues of $B(\lambda)$ are given by $\beta_i(\lambda) = \lambda - \nu_i$. If $W(x,0) \in H_1 \cap H_2$ for some $0 < x \leq L$, then $\nu_1$ and $\nu_2$ are both positive and satisfy $\sin \sqrt{\nu_1}x = \sin \sqrt{\nu_2}x = 0$. This implies $\nu_1 = (m\pi/x)^2$ and $\nu_2 = (n\pi/x)^2$, where
\[
    1 \leq m =\frac{\sqrt{\nu_1} x}{\pi} \leq \frac{\sqrt{\nu_1} L}{\pi}
\]
and likewise for $n$, which is not possible if $V$ is generic. Similarly, if $W(L,\lambda) \in H_1 \cap H_2$ for some $\lambda$, we have $\nu_1 = \lambda + (m\pi/L)^2$ and $\nu_2 = \lambda + (n\pi/L)^2$, hence $\nu_1 - \nu_2 = (m^2 - n^2)(\pi/L)^2$, which is not possible if $V$ is generic.
\end{proof}

This first part of the corollary can be visualized as in Figure \ref{fig:lattice}. The condition \eqref{b12ratio} is satisfied if and only if the line through $(0,0)$ and $(\sqrt{\nu_1}, \sqrt{\nu_2})$ intersects one of the indicated lattice points.

\begin{figure}[!tbp]
	\begin{tikzpicture}[scale=0.9]
		\draw[very thick,->] (0,0) -- (7,0); 
		\node at (7.5,0) {$m$};
		\draw[very thick] (6,-0.15) -- (6,0.15);
		\node at (6,-0.7) {$\lfloor \frac{\sqrt{\nu_1} L}{\pi} \rfloor$};
		\draw[very thick,->] (0,0) -- (0,5); 
		\node at (0,5.4) {$n$};
		\draw[very thick] (-0.15,4) -- (0.15,4);
		\node at (-1,4) {$\lfloor \frac{\sqrt{\nu_2} L}{\pi} \rfloor$};
		\draw[thick,dashed] (0,0) -- (5.5,4.6);
		\fill (3,2.5) circle[radius=3pt];
		\draw (1,1) circle[radius=2pt];
		\draw (1,2) circle[radius=2pt];
		\draw (1,3) circle[radius=2pt];
		\draw (1,4) circle[radius=2pt];
		\draw (2,1) circle[radius=2pt];
		\draw (2,2) circle[radius=2pt];
		\draw (2,3) circle[radius=2pt];
		\draw (2,4) circle[radius=2pt];
		\draw (3,1) circle[radius=2pt];
		\draw (3,2) circle[radius=2pt];
		\draw (3,3) circle[radius=2pt];
		\draw (3,4) circle[radius=2pt];
		\draw (4,1) circle[radius=2pt];
		\draw (4,2) circle[radius=2pt];
		\draw (4,3) circle[radius=2pt];
		\draw (4,4) circle[radius=2pt];
		\draw (5,1) circle[radius=2pt];
		\draw (5,2) circle[radius=2pt];
		\draw (5,3) circle[radius=2pt];
		\draw (5,4) circle[radius=2pt];
		\draw (6,1) circle[radius=2pt];
		\draw (6,2) circle[radius=2pt];
		\draw (6,3) circle[radius=2pt];
		\draw (6,4) circle[radius=2pt];
	\end{tikzpicture}
\caption{Illustrating the result of Corollary \ref{cor:ccinv_v2}: \eqref{b12ratio} is satisfied if and only if the line through $(0,0)$ and $(\sqrt{\nu_1}, \sqrt{\nu_2})$ intersects a lattice point $(m,n)$ with $m$ and $n$ as in \eqref{mnrange}.}
\label{fig:lattice}
\end{figure}
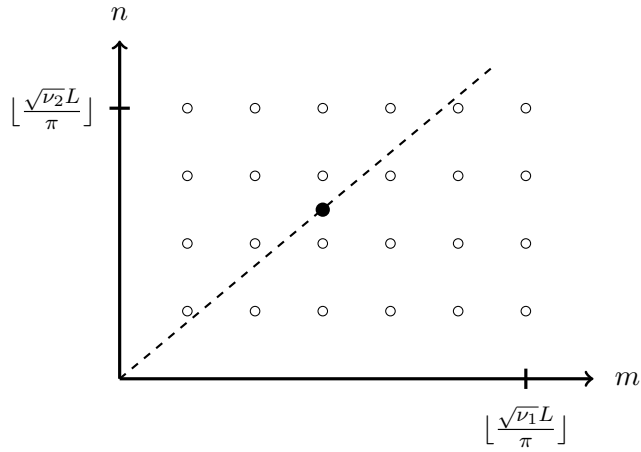

Given the invariance result of Corollary \ref{cor:ccinv_v2}, we can now apply Theorem \ref{thm:cpMorse} to obtain
\begin{align*}
	\# \big\{\text{nonnegative eigenvalues of } \cL \big\} &\geq - \Ind\left(W(L,\cdot)\big|_{\lambda \in [0,\lambda_\infty]} ; P_1 \right) \\
	&= \# \big\{\text{conjugate points in $(0,L]$} \big\}
	-\mathfrak m.
\end{align*}
We next show that the above inequality is in fact an equality.

\begin{lemma}
\label{lem:lambdamono}
Assuming the hypotheses of Theorem \ref{thm:ccMorseMaslov}, we have
\[
	\# \big\{\text{nonnegative eigenvalues of } \cL \big\} = - \Ind\left(W(L,\cdot)\big|_{\lambda \in [0,\lambda_\infty]} ; P_1 \right) 
\]
for sufficiently large $\lambda_\infty$.
\end{lemma}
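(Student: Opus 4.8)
The plan is to track the path $\lambda \mapsto W(L,\lambda)$ for $\lambda \in [0,\lambda_\infty]$ and compute its winding number through $[0:1]$ directly from the explicit frame obtained in the proof of Proposition~\ref{prop:inv}. Fix $\lambda_\infty$ larger than $\sup_{x\in[0,L]}\|V(x)\|$ and larger than every real nonnegative eigenvalue of $\cL$; by Lemma~\ref{lem:linfinity} and discreteness of the spectrum this is possible, and we may also arrange that no eigenvalue equals $\lambda_\infty$. By Corollary~\ref{cor:ccinv_v2} the path stays in $\cM$ for $\lambda \in [0,\lambda_\infty]$, so by Definition~\ref{def:Hindex} its index equals $\Wind(\phi \circ W(L,\cdot))$. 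Since $\lambda \in \sigma(\cL)$ iff $W(L,\lambda) \cap P_1 \neq \{0\}$ iff $\phi(W(L,\lambda)) = [0:1]$, the crossings of the composite path through $[0:1]$ are exactly the nonnegative eigenvalues of $\cL$, all of which lie in $[0,\lambda_\infty)$. It therefore suffices to show, via the monotonicity formula \eqref{eq:neg}, that every such crossing is negatively oriented.

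To compute the orientation, I would use the frame \eqref{frameXY}: diagonalizing $B(\lambda) = \lambda I - V$, whose eigenvalues $\beta_i(\lambda) = \lambda - \nu_i$ are distinct by genericity, the computation in the proof of Proposition~\ref{prop:inv} gives $\phi(W(L,\lambda)) = [\,s_1 s_2 : s_1 c_2 + c_1 s_2\,]$ with $s_i = g(\beta_i)$, $c_i = G(\beta_i)$, where $g(\beta) = \sinh(\sqrt\beta L)/\sqrt\beta$ and $G(\beta) = \cosh(\sqrt\beta L)$ are the entire functions appearing in \eqref{frameXY} (note $g \geq L > 0$ on $[0,\infty)$ and $G^2 = 1 + \beta g^2$). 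At a crossing $\lambda_*$ we have $s_1(\lambda_*)s_2(\lambda_*) = 0$; genericity, i.e. the fact that $W(L,\lambda_*) \in \cM$ from Corollary~\ref{cor:ccinv_v2}, forbids both factors from vanishing, so say $s_1(\lambda_*) = 0$, which forces $\beta_1^* := \beta_1(\lambda_*) < 0$ and $c_1(\lambda_*)^2 = 1$ (in particular $c_1(\lambda_*) \neq 0$), while $s_2(\lambda_*) \neq 0$. Applying \eqref{thetap} with the homogeneous coordinates $s_1 s_2$ and $s_1 c_2 + c_1 s_2$, and using $s_1(\lambda_*) = 0$, one obtains $\theta'(\lambda_*) = 2 s_1'(\lambda_*)/c_1(\lambda_*) = 2 g'(\beta_1^*)/G(\beta_1^*)$. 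The identity $g'(\beta) = \tfrac{1}{2\beta}(L\,G(\beta) - g(\beta))$, which is checked term by term on the power series, gives $g'(\beta_1^*)/G(\beta_1^*) = L/(2\beta_1^*)$, hence $\theta'(\lambda_*) = L/\beta_1^* < 0$.

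Since every crossing is negatively oriented, \eqref{eq:neg} yields $\Wind(\phi\circ W(L,\cdot)) = -\#\{\lambda \in [0,\lambda_\infty) : \phi(W(L,\lambda)) = [0:1]\}$, and by the identification above this count is the number of nonnegative eigenvalues of $\cL$; this is the asserted identity, and it clearly persists for all larger $\lambda_\infty$. The main obstacle is the crossing-derivative computation in the second step: one must extract from \eqref{frameXY} the precise relation $g' = \tfrac{1}{2\beta}(LG - g)$ (or otherwise pin down the sign of $g'/G$ at a zero of $g$), and one must invoke the genericity hypothesis exactly where it is needed, namely through Corollary~\ref{cor:ccinv_v2}, to ensure that $s_1$ and $s_2$ never vanish simultaneously. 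The remaining bookkeeping — the choice of $\lambda_\infty$, and the observation that the half-open convention in \eqref{eq:neg} correctly includes a possible crossing at $\lambda = 0$ (a conjugate point at $x = L$) — is routine.
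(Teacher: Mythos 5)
Your proof is correct and follows essentially the same route as the paper: reduce to showing every crossing of the path $\lambda\mapsto W(L,\lambda)$ through $[0:1]$ is negatively oriented, use \eqref{thetap} together with the Pl\"ucker computation from Proposition~\ref{prop:inv}, and extract the sign from the derivative of $\beta\mapsto\sinh(\sqrt\beta L)/\sqrt\beta$ at a zero. Your intermediate identity $g'=\tfrac{1}{2\beta}(LG-g)$ and the resulting $\theta'(\lambda_*)=L/\beta_1^*<0$ match the paper's $\psi_1'(\lambda_*)/\psi_2(\lambda_*)=L/(2(\lambda_*-\nu_1))<0$ exactly, only with slightly different bookkeeping.
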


\begin{proof}
It is enough to show that the curve $\lambda \mapsto W(L,\lambda)$ is negative.
Using \eqref{eq:neg}, this will imply
\begin{align*}
	\Ind\left(W(L,\lambda)\big|_{\lambda \in [0,\lambda_\infty]} ; P_1 \right) & = -\# \big\{ \lambda \in [0,\lambda_\infty) : W(L,\lambda) \cap P_1 \neq \{0\} \big\} \\
	&= -\# \big\{\text{eigenvalues of } \cL \text{ in } [0,\lambda_\infty) \big\} \\
	&= -\# \big\{\text{nonnegative eigenvalues of } \cL \big\}
\end{align*}
and hence complete the proof.

We prove monotonicity using \eqref{thetap}. For convenience we abbreviate $\psi_i(\lambda) = \psi_i(W(L,\lambda))$. From the computations in Proposition \ref{prop:inv} we have
\[
	\frac{\psi_1(\lambda)}{\psi_2(\lambda)} = \frac{\det X}{\det(X+Y) - \det X - \det Y}
\]
and so
\[
	\frac{\psi_1'(\lambda_*)}{\psi_2(\lambda_*)} = \frac{\frac{d}{d\lambda} \det X}{\det(X+Y) - \det Y}
\]
at any point $\lambda_*$ where $\det X = 0$.

To differentiate $\det X$, as given by \eqref{detXY}, we first observe that
\[
	\frac{d}{d\lambda} \frac{\sinh(\sqrt{\lambda - \nu} x)}{\sqrt{\lambda - \nu}} = \frac{1}{2(\lambda - \nu)} \left( x \cosh(\sqrt{\lambda - \nu} x) - \frac{\sinh(\sqrt{\lambda - \nu} x)}{\sqrt{\lambda - \nu}} \right).
\]
If $\sinh(\sqrt{\lambda_* - \nu_1} L) = 0$, then
\[
	\frac{d}{d\lambda} \det X \Big|_{\lambda = \lambda_*} = \frac{L}{2(\lambda_* - \nu_1)} \cosh(\sqrt{\lambda_* - \nu_1} L)   \frac{\sinh(\sqrt{\lambda_* - \nu_2} L)}{\sqrt{\lambda_* - \nu_2}}.
\]
Similarly, using \eqref{traceXY} we obtain
\[
	\big(\det(X+Y) - \det Y\big) \Big|_{\lambda = \lambda_*} = \frac{\sinh(\sqrt{\lambda_* - \nu_2} L)}{\sqrt{\lambda_* - \nu_2}} \cosh(\sqrt{\lambda_* - \nu_1} L)
\]
and hence
\[
	\frac{\psi_1'(\lambda_*)}{\psi_2(\lambda_*)}= \frac{L}{2(\lambda_* - \nu_1)} < 0
\]
where $\lambda_* - \nu_1 < 0$ because $\sinh(\sqrt{\lambda - \nu_1} L) = 0$. The case $\sinh(\sqrt{\lambda_* - \nu_2} L) = 0$ is identical.
\end{proof}

Finally, we prove that $\mathfrak m = 0$.

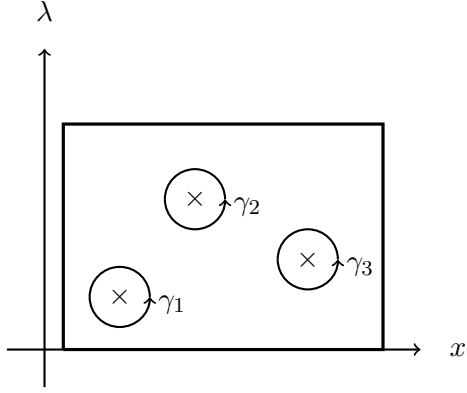
\begin{figure}
\begin{tikzpicture}
	\draw[->,thick] (0,-0.5) to (0,4);
	\draw[->,thick] (-0.5,0) to (5,0);
	\node at (5.5,0 ) {$x$};
	\node at (0,4.5 ) {$\lambda$};
	\draw[very thick] (0.25,0) -- (0.25,3) -- (4.5,3) -- (4.5,0) -- (0.25,0);
 	\node at (1,0.7) {$\times$};
	\draw[thick] (1.4,0.7) arc (0:180:0.4);
	\draw[->,thick] (0.6,0.7) arc (180:360:0.4);
	\node at (1.7,0.6) {$\gamma_1$};
	\node at (2,2) {$\times$};
	\draw[thick] (2.4,2) arc (0:180:0.4);
	\draw[->,thick] (1.6,2) arc (180:360:0.4);
	\node at (2.7,1.9) {$\gamma_2$};
    \node at (3.5,1.2) {$\times$};
	\draw[thick] (3.9,1.2) arc (0:180:0.4);
	\draw[->,thick] (3.1,1.2) arc (180:360:0.4);
	\node at (4.2,1.1) {$\gamma_3$};
\end{tikzpicture}
\caption{Illustrating the proof of Lemma \ref{lem:m0}: if $W(x,\lambda)$ leaves the MA space at three points, then $\mathfrak m$ is sum of the indices of the loops $\gamma_1$, $\gamma_2$ and $\gamma_3$.}
\label{fig:homotopy3}
\end{figure}

\begin{lemma}
\label{lem:m0}
Assuming the hypotheses of Theorem \ref{thm:ccMorseMaslov}, we have $\mathfrak m = 0$.
\end{lemma}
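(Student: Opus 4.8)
The plan is to show that $W(x,\lambda)$ can leave the Maslov--Arnold space $\cM = G \setminus (H_1 \cap H_2)$ only at isolated points in the interior of the rectangle $[\delta,L] \times [0,\lambda_\infty]$, and that each such point contributes zero to $\mathfrak m$ via Lemma~\ref{lem:mgamma}. By Corollary~\ref{cor:ccinv_v2}, the invariance condition \eqref{xlinv2} holds, so the only points where $W$ can leave $\cM$ lie in the open rectangle, away from the edges $x=\delta$, $x=L$ (the latter for $\lambda \in [0,\lambda_\infty]$) and $\lambda = 0$; the top edge $\lambda = \lambda_\infty$ is handled by choosing $\lambda_\infty$ large, using Lemma~\ref{lem:linfinity} and Proposition~\ref{prop:inv} to ensure $W(x,\lambda_\infty) \notin H_1 \cap H_2$. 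By homotopy invariance and additivity of the index (Proposition~\ref{ind:properties}), $\mathfrak m$ is the sum of the indices of small counterclockwise loops $\gamma_j$ around each such interior point, as illustrated in Figure~\ref{fig:homotopy3}.

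The next step is to identify where $W(x,\lambda)$ leaves $\cM$ and to parameterize the zero set of $\psi_1$ near such a point so that Lemma~\ref{lem:mgamma} applies. By Proposition~\ref{prop:inv}, $W(x_*,\lambda_*) \in H_1 \cap H_2$ precisely when the eigenvalues $\beta_1(\lambda) = \lambda - \nu_1$ and $\beta_2(\lambda) = \lambda - \nu_2$ of $B(\lambda)$ are both real, negative, and satisfy $\sin\sqrt{-\beta_1}\,x = \sin\sqrt{-\beta_2}\,x = 0$; equivalently $\sqrt{\nu_i - \lambda}\,x = k_i \pi$ for positive integers $k_1, k_2$. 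So the conjugate-point set $\{\psi_1 = 0\}$ in the $(x,\lambda)$-plane is the union of the curves $\lambda = \nu_i - (k\pi/x)^2$ over $i \in \{1,2\}$ and $k \geq 1$, and a crossing point $(x_*,\lambda_*)$ occurs where one $\nu_1$-curve meets one $\nu_2$-curve. These curves are smooth graphs over $x$ on the relevant region (each $\lambda = \nu_i - (k\pi/x)^2$ is strictly increasing in $x$), so the hypotheses of Lemma~\ref{lem:mgamma} are satisfied with exactly one $\nu_1$-curve and one $\nu_2$-curve through $(x_*,\lambda_*)$, unless several curves happen to coincide there.

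The crucial computation is then the count $i_- - i_+$ from Lemma~\ref{lem:mgamma}. Each of the two curves $\lambda = \nu_i - (k\pi/x)^2$ is \emph{strictly increasing} in $x$ throughout the region $x > 0$, $\lambda < \nu_i$ (its derivative is $2k^2\pi^2/x^3 > 0$). Hence, in the generic situation where exactly two curves pass through $(x_*,\lambda_*)$, both are increasing on each side, giving $i_- = i_+ = 2$ and therefore $\Ind(W \circ \gamma_*; P_1) = 2(i_- - i_+) = 0$ by \eqref{ind:mgamma}. More generally, if $r$ distinct branches pass through the point, they are all strictly increasing on both sides, so $i_- = i_+ = r$ and the local index still vanishes. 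Summing over all interior crossing points gives $\mathfrak m = 0$.

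The main obstacle is the bookkeeping needed to apply Lemma~\ref{lem:mgamma} cleanly: one must confirm that each branch of $\{\psi_1 = 0\}$ near a crossing point is indeed a \emph{strictly monotone} graph over $x$ (so that the "increasing for $x < x_*$" / "increasing for $x > x_*$" dichotomy of Lemma~\ref{lem:mgamma} is well-posed with no vertical tangencies), and that only finitely many such crossing points occur in the compact rectangle. Both follow from the explicit form $\lambda = \nu_i - (k\pi/x)^2$ — monotonicity is immediate, and finiteness holds because only finitely many values of $k$ keep the curve inside $[\delta,L] \times [0,\lambda_\infty]$ — but this must be stated carefully. A secondary subtlety is the degenerate case where the zero set of $\psi_1$ is not locally a union of such graphs (e.g. if $\nu_1 = \nu_2$, which genericity excludes via \eqref{nu21equal}); one should note that genericity of $V$ rules out the problematic coincidences, so Lemma~\ref{lem:mgamma} always applies.
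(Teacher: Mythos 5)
Your proof is correct and follows essentially the same route as the paper's: parameterize $\{\psi_1 = 0\}$ as the union of the curves $\lambda = \nu_i - (k\pi/x)^2$, observe that each is strictly increasing in $x$, and conclude via Lemma~\ref{lem:mgamma} that $i_- = i_+$ and hence each local contribution to $\mathfrak m$ vanishes. The paper further uses genericity to observe that $\nu_1 \neq \nu_2$ makes the crossing set discrete and to fix that exactly one $\nu_1$-curve and one $\nu_2$-curve meet at each crossing (since two curves with the same $\nu_i$ never intersect), but these are the same observations you make, just phrased more explicitly.
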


\begin{proof}
From \eqref{detXY} we see that the set $\{(x,\lambda) : \psi_1(x,\lambda)=0\}$ is the union of the curves
\begin{equation}
\label{lambdan}
    \lambda_{n,i}(x) = \nu_i - \left(\frac{n\pi}{x}\right)^2
\end{equation}
for all $n \in \bbN$ and $i=1,2$. Moreover, Proposition \ref{prop:inv} show that $W(x_*,\lambda_*) = 0$ if and only if $\lambda_{m,1}(x_*) = \lambda_{n,2}(x_*) = \lambda_*$ for some $m$ and $n$. This occurs when
\[
    x_* = \pi \sqrt{\frac{m^2-n^2}{\nu_1 - \nu_2}},
\]
where we recall that $\nu_1 \neq \nu_2$ because $V$ is generic. The set of all such $x_*$ is discrete, and so it suffices to consider a single point $(x_*,\lambda_*) \in (\delta,L) \times (0,\lambda_\infty)$ at which $W$ leaves the MA space; see Figure~\ref{fig:homotopy3}.

For a suitable neighbourhood $U$ of $(x_*,\lambda_*)$, it follows from \eqref{lambdan} that the set $\{(x,\lambda) : \psi_1(x,\lambda) = 0\} \cap U$ is the union of two strictly increasing curves, $\lambda_{m,1}(x)$ and $\lambda_{n,2}(x)$, as shown in the middle of Figure~\ref{fig:mgamma}. In this case we have $i_- = i_+ = 2$, and hence $\Ind(W \circ \gamma_*; P_1) = 0$ by Lemma \ref{lem:mgamma}.
\end{proof}

\subsection{The Turing instability}\label{sec:Turing}
In the previous section, where $D=I$, we saw that $\mathfrak m = 0$ even if the image of $[\delta,L] \times [0,\lambda_\infty]$ left the MA space at a finite set of points. In this section we show that $\mathfrak m$ can be nonzero when the diffusion coefficients $d_1$ and $d_2$ are not equal.
The setting is a two-component reaction--diffusion system (\ref{RD}) with a so-called \emph{Turing instability}. This phenomenon\,---\,first discovered by A.M. Turing \cite{T52}\,---\,refers to a stable, homogeneous equilibrium of a chemical reaction that is counter-intuitively destabilized in the presence of diffusion.

A necessary and sufficient condition for this destabilization to occur is that the ratio $d_2/d_1$ be sufficiently far from $1$. The main result of this section is that the index $\mathfrak m$ is nonzero if and only if this condition is satisfied. The non-vanishing of the hyperplane index therefore gives a topological criterion for the Turing bifurcation. We give a precise statement below, in Theorem \ref{thm:Turing}, after describing the general framework.

Assume that there exists $\bar{u}\in\bbR^2$ such that $F(\bar{u})=0$, and the eigenvalues $\nu_1,\nu_2$ of $\nabla F(\bar{u})$ have negative real part. In other words, $\bar{u}$ is a stable equilibrium of the dynamical system
\begin{equation}\label{kinetics}
    u_t=F(u).
\end{equation}
Setting
\begin{equation}
    \nabla F(\bar u) :=A = \begin{pmatrix}
    a_{11} & a_{12}\\a_{21} & a_{22}
    \end{pmatrix},
\end{equation}
we thus have
\begin{equation}\label{Acondition}
    \det A > 0 , \qquad \tr A < 0.
\end{equation}
By rescaling the independent variable, we may assume that the matrix $D$ takes the form $D = (1, d)$, where $d = d_2/d_1$. We further assume that $\bar{u}$ undergoes a Turing bifurcation, which is to say that $d$ is chosen so that \eqref{Ldef} has positive spectrum, hence $\bar u$ is unstable.
It is well known (see, for instance, \cite[\S 2.3]{Murray}) that a Turing instability exists in this setting if and only if
\begin{equation}\label{Turing condition}
    a_{22} + d a_{11}>2\sqrt{d\det A}.
\end{equation}
It is worth noting that a necessary condition for a Turing instability is that $a_{12}a_{21}<0$, so in particular $F(u)$ cannot be a gradient. Moreover, \eqref{Acondition} and \eqref{Turing condition} together imply that $d \neq 1$, so Theorem \ref{thm:ccMorseMaslov} does not apply.

The condition \eqref{Turing condition} is derived for the stability problem on $\bbR$, whereas our results are only formulated for finite intervals.
Nonetheless, the problem on $\bbR$ can shed light on what is happening on large enough intervals \cite{Sas00}. With this in mind, the statement of our theorem is natural: \emph{the hyperplane index detects the Turing instability as long as we take $L$ sufficiently large.}

\begin{theorem}
\label{thm:Turing}
Assume that $A$ satisfies \eqref{Acondition}. For $d>0$, the condition \eqref{Turing condition} holds if and only if there exists a number $L_0 > 0$ for which the index of $[\delta,L_0] \times [0,\lambda_\infty]$ is nonzero.
\end{theorem}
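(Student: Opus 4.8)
The plan is to reduce the statement to a comparison of two integer counts that are controlled by the dispersion relation of $A-sD$. First I would record the constant-coefficient analogue of Proposition~\ref{prop:inv} for the rescaled diffusion $D=\diag(1,d)$. Writing $B(\lambda)=\lambda I-A$ and $M(\lambda)=D^{-1}B(\lambda)$, exponentiating $\left(\begin{smallmatrix}0 & D^{-1}\\ B & 0\end{smallmatrix}\right)$ shows that a frame for $W(x,\lambda)$ is $\left(\begin{smallmatrix}X\\Y\end{smallmatrix}\right)$ with $X=(D^{-1}B)^{-1/2}\sinh(\sqrt{D^{-1}B}\,x)$ and $D^{-1}Y=\cosh(\sqrt{D^{-1}B}\,x)$. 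Since $X$ and $D^{-1}Y$ are both functions of the single matrix $M$, the identities $\omega_1(u_1,u_2)=\det X$ and $\omega_2(u_1,u_2)=\det(X+D^{-1}Y)-\det X-\det(D^{-1}Y)$ — for $\omega_1,\omega_2$ the $n=2$ forms of Section~\ref{sec:conH}, cf.\ \eqref{Omegadd} — factor through the eigenvalues $\mu_1,\mu_2$ of $M$. Exactly as in Proposition~\ref{prop:inv} one obtains: $W(x,\lambda)\in H_1$ iff $M(x,\lambda)$ has an eigenvalue of the form $-(k\pi/x)^2$ for some integer $k\ge1$, and $W(x,\lambda)\in H_1\cap H_2$ iff \emph{both} eigenvalues of $M$ are of this form. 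The crucial translation is that $-s$, $s>0$, is an eigenvalue of $M(x,\lambda)=D^{-1}(\lambda I-A)$ iff $\lambda\in\sigma(A-sD)$; hence the conjugate points at parameter $\lambda$ are the $x$ with $\lambda\in\sigma(A-(k\pi/x)^2D)$ for some $k\ge1$.

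The next ingredient relates this to \eqref{Turing condition}. Since $\tr(A-sD)=\tr A-s(1+d)<0$ for $s>0$ by \eqref{Acondition}, the matrix $A-sD$ has a nonnegative eigenvalue iff $\det(A-sD)\le0$; and because $\det(A-sD)=ds^2-(a_{22}+da_{11})s+\det A$ is a quadratic in $s$ with positive leading coefficient and positive constant term, it attains a negative value at some $s>0$ iff \eqref{Turing condition} holds, in which case $\det(A-sD)<0$ precisely for $s\in(s_1,s_2)$, where $0<s_1<s_2$ are the (necessarily positive) eigenvalues of $D^{-1}A$.

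For the ``only if'' direction I would argue by contraposition: if $\mathfrak m\neq0$ for some $L_0$ for which the index is defined (i.e.\ \eqref{xlinv} holds), then by homotopy invariance of the index (Proposition~\ref{ind:properties}(iv)), equivalently Corollary~\ref{cor:m0}, $W$ cannot map all of $[\delta,L_0]\times[0,\lambda_\infty]$ into $\cM$, so $W(x_*,\lambda_*)\in H_1\cap H_2$ for some $(x_*,\lambda_*)$ in the rectangle. By \eqref{xlinv} this point lies on neither the bottom nor the right edge, and Lemmas~\ref{lem:linfinity} and \ref{lem:delta} exclude the top edge and a neighbourhood of $x=0$; hence $x_*\in(\delta,L_0)$ and $\lambda_*\in(0,\lambda_\infty)$. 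Then $\lambda_*>0$ lies in $\sigma(A-(k\pi/x_*)^2D)$ for some $k\ge1$, which forces $\det(A-(k\pi/x_*)^2D)<0$ (the second eigenvalue being negative since the trace is negative), and by the previous paragraph \eqref{Turing condition} holds.

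For the ``if'' direction, assume \eqref{Turing condition}, so the eigenvalues $0<s_1<s_2$ of $D^{-1}A$ exist. I would take $L_0>\pi/\sqrt{s_1}$ generic: for all but countably many such $L_0$ there is no conjugate point at $x=L_0$, no eigenvalue of $\cL$ equal to $\lambda_\infty$, and neither $W(\,\cdot\,,0)|_{(0,L_0]}$ nor $W(L_0,\,\cdot\,)|_{[0,\lambda_\infty]}$ meets $H_1\cap H_2$, so that \eqref{xlinv} holds and $\mathfrak m$ is defined and equals $\Ind\!\left(W(x,0)\big|_{[\delta,L_0]};P_1\right)+\Ind\!\left(W(L_0,\lambda)\big|_{[0,\lambda_\infty]};P_1\right)$ by the computation in the proof of Theorem~\ref{thm:cpMorse}. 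By Lemma~\ref{lemma:H2mono} the first summand equals the number of conjugate points in $(0,L_0]$, which by the description above is $\lfloor\sqrt{s_1}\,L_0/\pi\rfloor+\lfloor\sqrt{s_2}\,L_0/\pi\rfloor$; by \eqref{eq:gen} the absolute value of the second summand is at most the number of nonnegative eigenvalues of $\cL$ (for $\lambda_\infty$ chosen as in Lemma~\ref{lem:linfinity}), which is $\#\{k\ge1:\sqrt{s_1}\,L_0/\pi\le k\le\sqrt{s_2}\,L_0/\pi\}$. Subtracting these counts gives $\mathfrak m\ge 2\lfloor\sqrt{s_1}\,L_0/\pi\rfloor\ge 2$, so the index is nonzero. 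The routine parts are the frame computation and the analogue of Proposition~\ref{prop:inv}; the only genuinely delicate point is the genericity argument securing an admissible $L_0$ — in the exceptional case where $\sqrt{s_2/s_1}$ is an integer one cannot avoid $W(\,\cdot\,,0)$ meeting $H_1\cap H_2$ on $(0,L_0]$ once $L_0>\pi/\sqrt{s_1}$, and this case has to be treated separately (by a perturbation/limiting argument, or by noting it is non-generic in $A$).
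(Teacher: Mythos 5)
Your ``only if'' direction is correct and is essentially a reorganization of the paper's argument (Corollary~\ref{cor:TuringCase1} and the lemma following it): you run it by contraposition and translate ``$W(x_*,\lambda_*)\in H_1\cap H_2$ with $\lambda_*>0$'' directly into a statement about the dispersion relation $\det(A-sD)$, which is clean and valid. The trace argument showing $\det(A-sD)<0$ (strictly) is correct and does handle the borderline case $a_{22}+da_{11}=2\sqrt{d\det A}$.

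The ``if'' direction, however, takes a genuinely different route from the paper, and there is a real gap. You estimate $\mathfrak m$ via the Morse-type counting inequality: $\mathfrak m \geq \#\{\text{conjugate points in }(0,L_0]\} - \#\{\text{nonnegative eigenvalues}\} = 2\lfloor\sqrt{s_1}L_0/\pi\rfloor$, which is positive once $L_0>\pi/\sqrt{s_1}$. This requires $L_0\geq\pi/\sqrt{s_1}$ and, simultaneously, that no $x\in(0,L_0]$ be a double conjugate point. When $\sqrt{s_2/s_1}\in\mathbb Z$, the point $x=\pi/\sqrt{s_1}$ \emph{is} a double conjugate point, so \eqref{xlinv} fails for every $L_0\geq\pi/\sqrt{s_1}$, and for $L_0<\pi/\sqrt{s_1}$ your counting inequality yields only $\mathfrak m\geq 0$ (one conjugate point minus at least one eigenvalue). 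You flag this but do not resolve it, and since Theorem~\ref{thm:Turing} carries no genericity hypothesis on $A$ or $d$ (unlike Corollary~\ref{cor:Turing2}), waving it off as ``non-generic in $A$'' is not adequate. The paper avoids the issue entirely by choosing $L_0$ in the interval $(x_{\rm max},x_{\rm int})$, where $x_{\rm max}=\pi/\sqrt{-\beta_1(\lambda_c)}$ satisfies $\pi/\sqrt{s_2}<x_{\rm max}<\pi/\sqrt{s_1}$; for such $L_0$ there is exactly one point of $H_1\cap H_2$ in the interior (the first maximum of an eigenvalue curve at $\lambda=\lambda_c$) and no intersections, and Lemmas~\ref{lem:mgamma} and~\ref{lem:betaindex} give $\mathfrak m=2$ by a purely local computation, independent of any rationality of $\sqrt{s_2/s_1}$. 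Your global counting argument cannot see this contribution because in the regime $L_0<\pi/\sqrt{s_1}$ the conjugate-point count and the eigenvalue count both equal one, and they cancel. To close the gap you would need to import the local index computation of Lemma~\ref{lem:mgamma}, which is precisely the paper's route; alternatively, a careful perturbation argument in $d$ together with continuity of $\mathfrak m$ off of the exceptional set could work, but as written the ``if'' direction does not cover all $A$, $d$ satisfying \eqref{Acondition} and \eqref{Turing condition}.
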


We have thus given a topological condition that is both necessary and sufficient for the Turing instability to occur: the condition \eqref{Turing condition} is satisfied if and only if the hyperplane index is non-zero for some $L$. 

In fact, if \eqref{Turing condition} is satisfied, then the index of $[\delta,L] \times [0,\lambda_\infty]$ will be nonzero for almost all $L \geq L_0$. Precisely formulating this requires some care, however, since larger values of $L$ make it more likely that the path $W(x,0)$ will leave the MA space for some $0 < x_* \leq L$. We can prevent this by excluding a small set of $d$ values.

\begin{cor}
\label{cor:Turing2}
Assume that $A$ satisfies \eqref{Acondition}. There is a countable set $\Delta_* \subseteq (0,\infty)$ with the following property:
\begin{enumerate}
    \item If $a_{22} + d a_{11} < 2\sqrt{d\det A}$, then the index of $[\delta,L] \times [0,\lambda_\infty]$ is equal to zero for every $L >0$.
    \item If $a_{22} + d a_{11}>2\sqrt{d\det A}$ and $d \notin \Delta_*$, then the index of $[\delta,L] \times [0,\lambda_\infty]$ is defined and nonzero for all $L$ in an open, dense subset of $[L_0,\infty)$.
\end{enumerate}
\end{cor}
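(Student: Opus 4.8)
The plan is to reduce both claims to arithmetic statements about the roots of $h(\mu) := \det(A - \mu D) = d\mu^2 - (a_{22} + d a_{11})\mu + \det A$. Since $D = \diag(1,d)$ and $B(\lambda) = \lambda I - A$, the $D \neq I$ analogue of Proposition~\ref{prop:inv} (obtained by replacing $B(\lambda)$ with $D^{-1}B(\lambda)$ in the frame computation) says that $W(x,\lambda) \in H_1$ iff $-(k\pi/x)^2 \in \sigma(D^{-1}B(\lambda))$ for some $k \in \bbN$, and $W(x,\lambda) \in H_1 \cap H_2$ iff \emph{both} eigenvalues of $D^{-1}B(\lambda)$ are of this form; and $-(k\pi/x)^2 \in \sigma(D^{-1}B(\lambda))$ iff $\lambda \in \sigma(A - (k\pi/x)^2 D)$. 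Everything is thus governed by $\chi(\lambda,\mu) := \det(\lambda I - A + \mu D) = \lambda^2 + \lambda\big((1+d)\mu - \tr A\big) + h(\mu)$: for $\mu \ge 0$, using $\tr A < 0$, the matrix $A - \mu D$ has a nonnegative eigenvalue iff $h(\mu) \le 0$, and $0 \in \sigma(A - \mu D)$ iff $h(\mu) = 0$; moreover \eqref{Turing condition} holds iff $h$ has two distinct positive real roots $0 < \mu_- < \mu_+$. Throughout I fix $\lambda_\infty > \|A\|$, which by the integration-by-parts estimate behind Lemma~\ref{lem:linfinity} (valid for $D = \diag(1,d)$) exceeds $\Re\lambda$ for every eigenvalue of $\cL$ and every eigenvalue of $A-\mu D$ with $\mu \ge 0$, \emph{uniformly in $L$}, and I fix $\delta \in (0,\delta_0)$ with $\delta_0$ as in Lemma~\ref{lem:delta}, so $W(x,\lambda)\cap P_D = \{0\}$ for $x\in(0,\delta]$, $\lambda\in[0,\lambda_\infty]$. \emph{Part (1).} If $a_{22} + d a_{11} < 2\sqrt{d\det A}$ then $h(\mu) > 0$ for all $\mu \ge 0$ (either $h$ has no real root, or, since $\det A/d>0$, two negative ones); hence there are no conjugate points and $\cL$ has no nonnegative eigenvalues, so $W(\cdot,0)$, $W(\cdot,\lambda_\infty)$, $W(\delta,\cdot)$ and $W(L,\cdot)$ all avoid $H_1$ on the relevant ranges. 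Thus $\phi\circ W$ maps the whole boundary of $[\delta,L]\times[0,\lambda_\infty]$ into $\bbR P^1 \setminus\{[0:1]\}$, so its winding number through $[0:1]$ is $0$; the index is $0$ for every $L>0$.

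\emph{Part (2).} Assume \eqref{Turing condition}, so $h$ has roots $0 < \mu_-(d) < \mu_+(d)$, with $\mu_+\mu_- = \det A/d$ and $\mu_+ + \mu_- = (a_{22}+d a_{11})/d$. Set $\Delta_* := \big\{ d > 0 : h \text{ has two positive roots with } \sqrt{\mu_+(d)/\mu_-(d)} \in \bbQ \big\}$. For a rational $q>1$, $\sqrt{\mu_+/\mu_-}=q$ means $\tfrac{\mu_++\mu_-}{\sqrt{\mu_+\mu_-}} = q+q^{-1}$, i.e.\ $a_{22}+d a_{11} = (q+q^{-1})\sqrt{d\det A}$, which after squaring is a quadratic in $d$; hence $\Delta_*$ is countable. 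Fix $d \notin \Delta_*$. The eigenvalues of $D^{-1}B(0) = -D^{-1}A$ are $-\mu_-,-\mu_+$, so the conjugate points are exactly the distinct points $k\pi/\sqrt{\mu_-}$ and $k\pi/\sqrt{\mu_+}$ ($k\in\bbN$), and $W(x,0) \in H_1\cap H_2$ would force $\sqrt{\mu_-}x/\pi$ and $\sqrt{\mu_+}x/\pi$ to be simultaneously integers, impossible for $d\notin\Delta_*$. So the bottom edge lies in $\cM$ for every $L$, and, exactly as in Part (1), so do the top and left edges; only the right edge can leave $\cM$. Let $S := \{L>\delta : W(L,\lambda) \in H_1\cap H_2 \text{ for some } \lambda\in[0,\lambda_\infty]\}$; a point of $S$ corresponds to $\lambda$ and integers $k>l\ge1$ with the two $\mu$-roots of $\chi(\lambda,\cdot)$ equal to $(k\pi/L)^2$ and $(l\pi/L)^2$. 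Eliminating $\lambda$ via $\mu_2 = (l/k)^2\mu_1$ turns the defining equations into a line meeting the conic $\{\chi=0\}$ in at most two points, so each $(k,l)$ contributes at most two values of $L$; and since $0 < \mu_i(\lambda) \le \mu_1(\lambda)+\mu_2(\lambda) = \big((a_{22}+d a_{11}) - (1+d)\lambda\big)/d \le (a_{22}+d a_{11})/d$ whenever the roots are real and positive, any $L\le M$ in $S$ forces $k,l \le M\sqrt{(a_{22}+d a_{11})/d}/\pi$, so only finitely many pairs contribute below a given bound. Hence $S$ is closed and discrete in $(\delta,\infty)$.

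Consequently, for any $L_0>\delta$ the set $V := [L_0,\infty)\setminus S$ is open and dense in $[L_0,\infty)$, and $\mathfrak m(L)$ is defined for every $L\in V$. As in the proof of Theorem~\ref{thm:cpMorse} (using \eqref{eq:infinity} and \eqref{eq:delta0}) we have $\mathfrak m(L) = \Ind(W(x,0)|_{x\in[\delta,L]};P_1) + \Ind(W(L,\lambda)|_{\lambda\in[0,\lambda_\infty]};P_1)$; by Lemma~\ref{lemma:H2mono} the first term is $\#\{\text{conjugate points in }(0,L]\} = \lfloor L\sqrt{\mu_-}/\pi\rfloor + \lfloor L\sqrt{\mu_+}/\pi\rfloor$, while \eqref{eq:gen} bounds the second below by $-\#\{\text{nonnegative eigenvalues of }\cL\}$. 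For $L\in V$ the nonnegative eigenvalues occur one per Fourier mode $k$ with $(k\pi/L)^2\in[\mu_-,\mu_+]$ and are pairwise distinct (distinct modes giving a common eigenvalue would put $W(L,\lambda)$ in $H_1\cap H_2$), so that count is $\#\{k\in\bbN : \sqrt{\mu_-}L/\pi \le k \le \sqrt{\mu_+}L/\pi\} \le L(\sqrt{\mu_+}-\sqrt{\mu_-})/\pi + 1$. Therefore
\[
  \mathfrak m(L) \ge \lfloor L\sqrt{\mu_-}/\pi\rfloor + \lfloor L\sqrt{\mu_+}/\pi\rfloor - \Big(\tfrac{L(\sqrt{\mu_+}-\sqrt{\mu_-})}{\pi} + 1\Big) \ge \frac{2L\sqrt{\mu_-}}{\pi} - 3,
\]
which is $\ge 1$ once $L \ge L_0 := 2\pi/\sqrt{\mu_-}$ (chosen $>\delta$). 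Thus $\mathfrak m(L)$ is defined and nonzero for all $L$ in the open dense subset $V\subseteq[L_0,\infty)$.

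\emph{Main obstacle.} The load-bearing step is the local finiteness of $S$: it rests on the elementary algebraic geometry of the ``common eigenvalue of two Fourier modes'' condition together with the a priori bound on the roots $\mu_i(\lambda)$ that keeps only finitely many modes relevant below any height. One also needs the $D \neq I$ versions of Proposition~\ref{prop:inv} and Lemmas~\ref{lem:linfinity}, \ref{lem:delta} and \ref{lemma:H2mono}, which follow the one-variable arguments verbatim with $B$ replaced by $D^{-1}B$. Finally, the growth estimate $\mathfrak m(L) \ge 2\sqrt{\mu_-}\,L/\pi - 3$ is what simultaneously forces nonvanishing and lets us dispense with any $\lambda$-monotonicity argument.
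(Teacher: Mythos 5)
Your proof is correct, and for the nontrivial part (showing $\mathfrak m\neq 0$) it takes a genuinely different route from the paper's. The paper proves part (ii) by applying Lemma~\ref{lem:betaindex} (which rests on the local index computation of Lemma~\ref{lem:mgamma}) to conclude that $\mathfrak m$ equals twice the number of maxima of the eigenvalue curves in the Maslov box, and then observes that the first such maximum occurs at $x_{\max}=\pi/\sqrt{-\beta_1(\lambda_c;d)} < x_{\mathrm{int}}$. You instead bypass the local-contribution machinery entirely: after decomposing $\mathfrak m$ into the sum of the bottom-edge and right-edge indices (via \eqref{eq:infinity} and \eqref{eq:delta0}, as in the proof of Theorem~\ref{thm:cpMorse}), you compute the bottom edge exactly from Lemma~\ref{lemma:H2mono} as a conjugate-point count $\lfloor L\sqrt{\mu_-}/\pi\rfloor + \lfloor L\sqrt{\mu_+}/\pi\rfloor$, bound the right edge below by $-\#\{\text{nonnegative eigenvalues}\}$ using \eqref{eq:gen}, and observe that the unstable band has width $\sqrt{\mu_+}-\sqrt{\mu_-}$, so the difference grows like $2L\sqrt{\mu_-}/\pi$. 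This is more elementary (no appeal to Lemma~\ref{lem:mgamma} or \ref{lem:betaindex}) and has the added benefit of yielding the explicit growth estimate $\mathfrak m(L)\geq 2L\sqrt{\mu_-}/\pi - 3$ and a concrete $L_0 = 2\pi/\sqrt{\mu_-}$, whereas the paper only concludes nonvanishing. Your $\Delta_*$ agrees with the paper's (the condition $\beta_1(0;d)/\beta_2(0;d) = q^2$ is the same as $\sqrt{\mu_+/\mu_-}\in\bbQ$), and your discreteness argument for the bad set $S$ of $L$-values, via the a~priori bound on the roots of $\chi(\lambda,\cdot)$ limiting the relevant mode pairs below any height, is a workable substitute for the paper's appeal to strict monotonicity of $\beta_1,\beta_2$ on $[0,\lambda_c)$. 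Two small inaccuracies that do not affect the argument: the pairs contributing to $S$ should be $k\geq l\geq 1$ (the case $k=l$, corresponding to $\lambda=\lambda_c$, also produces points of $S$), and the ``$D\neq I$ versions'' you invoke are in fact already available verbatim, since the Turing section rewrites \eqref{LsystemTuring} in the form \eqref{eq:constant} with $D_{\mathrm{eff}}=I$ and $B(\lambda)=D^{-1}(\lambda I - A)$, so Proposition~\ref{prop:inv} and Lemma~\ref{lemma:H2mono} apply directly.
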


We will see below that the index is not defined at the critical value $d_*$ where \eqref{Turing condition} is an equality. This is to be expected, since $\mathfrak m$ is a homotopy invariant but its value changes as $d$ passes through $d_*$. In other words, the images of the boundary of $[\delta,L] \times [0,\lambda_\infty]$ for $d<d_*$ and $d>d_*$ are not homotopic in $\cM$, though they are homotopic in the Grassmannian. We therefore have a topological characterization of the Turing bifurcation: it occurs at the value of $d$ for which the boundary of $[\delta,L] \times [0,\lambda_\infty]$ first leaves the MA space.

To prove the above results we will use Theorem~\ref{thm:cpMorse} as well as the calculations in Section \ref{sec:cc} for systems with constant coefficients. We therefore write the eigenvalue equation in the form \eqref{eq:constant}, i.e.
\begin{equation}\label{LsystemTuring}
\frac{d}{dx} \begin{pmatrix} v \\ w \end{pmatrix} = \begin{pmatrix} 0 & I \\ D^{-1}(\lambda I - A) & 0 \end{pmatrix} \begin{pmatrix} v \\ w \end{pmatrix},
\end{equation}
so that $B(\lambda) = D^{-1}(\lambda I-A)$. To use Proposition~\ref{prop:inv} we need to understand how the eigenvalues of $B(\lambda)$ depends on $\lambda$ and $d$. When we want to make this dependence explicit we will denote these by $\beta_i(\lambda;d)$. Recalling from \eqref{detXY} that the determinant of $X$ (or equivalently the function $\psi_1$) can only vanish when either $\beta_1$ or $\beta_2$ is a negative real number, we prove the following result.

\begin{prop}
\label{prop:betacurve}
Assume that $A$ satisfies \eqref{Acondition}. The eigenvalues $\beta_1(\lambda;d)$ and $\beta_2(\lambda;d)$ of $B(\lambda) = D^{-1}(\lambda I - A)$ have the following properties:
\begin{enumerate}
    \item If $a_{22} + d a_{11} < 2\sqrt{d\det A}$, then $\beta_1(\lambda;d)$ and $\beta_2(\lambda;d)$ are either  non-real complex conjugates or positive real numbers for every $\lambda \geq 0$;
    \item If $a_{22} + d a_{11} = 2\sqrt{d\det A}$, then
    $\beta_1(0;d) = \beta_2(0;d)$ is a negative real number, and $\beta_1(\lambda;d)$ and $\beta_2(\lambda;d)$ are either non-real complex conjugates or positive real numbers for every $\lambda > 0$;
    \item If $a_{22} + d a_{11} > 2\sqrt{d\det A}$, then there exists a number $\lambda_c > 0$ such that $\beta_1(\lambda;d)$ and $\beta_2(\lambda;d)$ are distinct negative real numbers for $0 \leq \lambda < \lambda_c$, are equal and negative for $\lambda = \lambda_c$, and are non-real complex conjugates or positive real numbers for $\lambda > \lambda_c$.
\end{enumerate}

For (iii) we can assume that $\beta_1(\lambda;d) < \beta_2(\lambda;d)$ for all $0 \leq \lambda < \lambda_c$, in which case
\begin{equation}
\label{beta:deriv}
    \beta_1'(\lambda;d) > 0 >  \beta_2'(\lambda;d).
\end{equation}
\end{prop}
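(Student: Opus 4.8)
The plan is to read everything off the characteristic polynomial of $B(\lambda)=D^{-1}(\lambda I-A)$. Writing $D=\diag(1,d)$, I would set
\[
 T(\lambda):=\tr B(\lambda)=\Bigl(1+\tfrac1d\Bigr)\lambda-\Bigl(a_{11}+\tfrac{a_{22}}{d}\Bigr),\qquad
 P(\lambda):=\det B(\lambda)=\tfrac1d\bigl(\lambda^2-(\tr A)\lambda+\det A\bigr),
\]
so that $\beta_1,\beta_2$ are the roots of $\beta^2-T(\lambda)\beta+P(\lambda)=0$. The first observation is that $P(\lambda)>0$ for every $\lambda\ge 0$, since $\lambda^2-(\tr A)\lambda+\det A\ge\det A>0$ once $\tr A<0$. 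Hence for $\lambda\ge 0$ the pair $\beta_1,\beta_2$ is either a non-real conjugate pair or two real numbers of the same sign, the sign being that of $T(\lambda)$; in particular $\beta_1,\beta_2$ are real and negative precisely when $T(\lambda)<0$ and $T(\lambda)^2\ge 4P(\lambda)$, i.e. when $-T(\lambda)\ge 2\sqrt{P(\lambda)}$.

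This suggests introducing the scalar function
\[
 h(\lambda):=-T(\lambda)-2\sqrt{P(\lambda)}=\Bigl(a_{11}+\tfrac{a_{22}}{d}\Bigr)-\Bigl(1+\tfrac1d\Bigr)\lambda-\tfrac{2}{\sqrt d}\sqrt{\lambda^2-(\tr A)\lambda+\det A},
\]
defined on all of $[0,\infty)$. By the previous paragraph, $\beta_1,\beta_2$ are real negative exactly when $h(\lambda)\ge 0$ (this already forces $-T(\lambda)>0$), distinct when $h(\lambda)>0$, and equal when $h(\lambda)=0$. Two elementary facts then do all the work: $h$ is \emph{strictly decreasing} on $[0,\infty)$, since $h'(\lambda)=-(1+\tfrac1d)-\bigl(\sqrt d\,\sqrt{\lambda^2-(\tr A)\lambda+\det A}\bigr)^{-1}(2\lambda-\tr A)<0$ because $2\lambda-\tr A>0$; and a direct computation gives $h(0)=\tfrac1d\bigl(a_{22}+d\,a_{11}-2\sqrt{d\det A}\bigr)$, so the sign of $h(0)$ is exactly the trichotomy in \eqref{Turing condition}, while plainly $h(\lambda)\to-\infty$ as $\lambda\to\infty$.

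With $h$ in hand the three cases are immediate. In case (i), $h(0)<0$ gives $h<0$ on all of $[0,\infty)$, so $\beta_1,\beta_2$ are never real negative; combined with $P>0$ this leaves only non-real conjugates or positive reals (if real, $T(\lambda)=0$ is impossible since $P(\lambda)>0$, hence $T(\lambda)>0$). Case (ii) is identical for $\lambda>0$, while $h(0)=0$ gives $\beta_1(0)=\beta_2(0)=T(0)/2$, which is negative because $a_{22}+d\,a_{11}=2\sqrt{d\det A}>0$ forces $T(0)<0$. In case (iii), $h(0)>0$ together with strict monotonicity and $h\to-\infty$ produces a unique $\lambda_c>0$ with $h>0$ on $[0,\lambda_c)$, $h(\lambda_c)=0$, and $h<0$ on $(\lambda_c,\infty)$; translating through the dictionary above yields precisely the stated description ($h(\lambda_c)=0$ forces $T(\lambda_c)<0$, so the double eigenvalue is negative, and for $\lambda>\lambda_c$ any real eigenvalues are positive), and on $[0,\lambda_c)$ the roots are distinct, hence smooth in $\lambda$ by the implicit function theorem.

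For the final estimate \eqref{beta:deriv}, I would label $\beta_1(\lambda)<\beta_2(\lambda)<0$ on $[0,\lambda_c)$ and differentiate $\beta_i^2-T\beta_i+P=0$, which gives
\[
 \beta_i'(\lambda)=\frac{T'(\lambda)\,\beta_i(\lambda)-P'(\lambda)}{2\beta_i(\lambda)-T(\lambda)}=\frac{T'(\lambda)\,\beta_i(\lambda)-P'(\lambda)}{\beta_i(\lambda)-\beta_j(\lambda)},\qquad\{i,j\}=\{1,2\}.
\]
Here $T'(\lambda)=1+\tfrac1d>0$, $P'(\lambda)=\tfrac1d(2\lambda-\tr A)>0$, and $\beta_i<0$, so the numerator $T'\beta_i-P'$ is negative for both $i$; since $\beta_1-\beta_2<0$ and $\beta_2-\beta_1>0$ this forces $\beta_1'>0>\beta_2'$. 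I expect the only real obstacle to be bookkeeping: arranging $h$ so that the single sign computation $h(0)=\tfrac1d(a_{22}+d\,a_{11}-2\sqrt{d\det A})$, together with monotonicity, resolves all three cases at once. The derivative bound \eqref{beta:deriv}, which might look like the delicate step, in fact drops out of implicit differentiation as soon as one knows $\beta_1,\beta_2<0$ on $[0,\lambda_c)$.
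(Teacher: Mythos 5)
Your proof is correct, and it takes a genuinely different route from the paper's. The paper works directly with the polynomial $d^2\Delta B(\lambda) = (d-1)^2\lambda^2 + 2(d-1)(a_{22}-da_{11})\lambda + \bigl((a_{22}+da_{11})^2 - 4d\det A\bigr)$, determines its sign via its roots, and then separately tracks the sign of $\tr B(\lambda)$ to conclude; this forces a case split on $d=1$ vs. $d\neq 1$, a separate case split on the sign of $a_{22}+da_{11}$ in part (i), and a sign lemma $(d-1)(a_{22}-da_{11})<0$ needed to see $\Delta B$ is decreasing at zero in parts (ii) and (iii). You instead package trace and discriminant into the single scalar
\[
h(\lambda) = -T(\lambda) - 2\sqrt{P(\lambda)},
\]
observe that ``both eigenvalues real and negative'' is exactly $h(\lambda)\ge 0$ (since $P>0$ already forces same-sign roots), verify $h'(\lambda)<0$ on $[0,\infty)$ by one computation (which uses $\tr A<0$ so that $2\lambda - \tr A>0$), and note $h(0) = \tfrac1d(a_{22}+da_{11}-2\sqrt{d\det A})$ is exactly the Turing expression. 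The three cases then fall out of the sign of $h(0)$ and monotonicity, with $\lambda_c$ being the unique zero of $h$. This is cleaner: it avoids all the special-case splitting, and it automatically handles the point that the paper treats somewhat tersely, namely that real eigenvalues occurring after the conjugate pair regime are positive (your dictionary never lets real negative pairs arise when $h<0$, so no separate tracking of $\tr B$'s sign is needed). For the derivative bound the paper uses Vieta-style identities $\beta_1'+\beta_2' = T'>0$ and $\beta_1'\beta_2 + \beta_1\beta_2' = P'>0$ plus a short contradiction, while you differentiate the characteristic polynomial implicitly and read off the signs from $2\beta_i - T = \pm(\beta_i-\beta_j)$; both are sound and roughly equally quick. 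Overall your $h$-function argument is a modest but real simplification of the paper's proof.
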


We will see below that these correspond to the three cases in Remark \ref{rem:3_cases}. Case (iii) is the most interesting: the eigenvalues $\beta_1$ and $\beta_2$ increases and decrease, respectively, until they collide at $\lambda = \lambda_c$ and move off of the real axis in a complex conjugate pair, as shown in Figure \ref{fig:Blambda}. They later rejoin on the real axis, at which point they are both positive. It is similarly possible to describe the eigenvalues of $B(\lambda)$ for $\lambda<0$, but we omit this as it plays no role in our analysis.

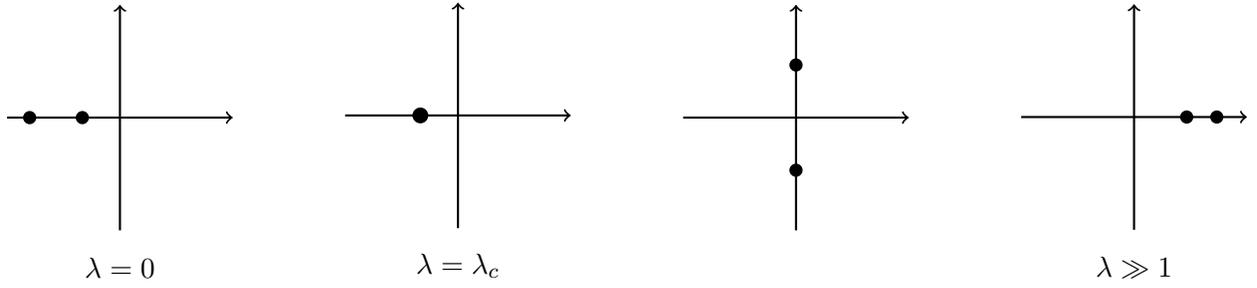
\begin{figure}
\begin{tikzpicture}
	\draw[->,thick] (0,-1.5) to (0,1.5);
	\draw[->,thick] (-1.5,0) to (1.5,0);
	\fill (-1.2,0) circle[radius=2.5pt];
	\fill (-0.5,0) circle[radius=2.5pt];
	\node at (0, -2) {$\lambda = 0$};
\end{tikzpicture}
\hfill
\begin{tikzpicture}
    \draw[->,thick] (0,-1.5) to (0,1.5);
	\draw[->,thick] (-1.5,0) to (1.5,0);
	\fill (-0.5,0) circle[radius=3pt];
	\node at (0, -2) {$\lambda = \lambda_c$};
\end{tikzpicture}
\hfill
\begin{tikzpicture}
    \draw[->,thick] (0,-1.5) to (0,1.5);
	\draw[->,thick] (-1.5,0) to (1.5,0);
	\fill (0,-0.7) circle[radius=2.5pt];
	\fill (0,0.7) circle[radius=2.5pt];
	\node at (0, -2) {$\phantom{\lambda=0}$};
\end{tikzpicture}
\hfill
\begin{tikzpicture}
	\draw[->,thick] (0,-1.5) to (0,1.5);
	\draw[->,thick] (-1.5,0) to (1.5,0);
	\fill (0.7,0) circle[radius=2.5pt];
	\fill (1.1,0) circle[radius=2.5pt];
	\node at (0, -2) {$\lambda \gg 1$};
\end{tikzpicture}
\caption{Case (iii) of Proposition \ref{prop:betacurve}: the eigenvalues of $B(\lambda)$ are negative and distinct when $\lambda < \lambda_c$. At $\lambda = \lambda_c$ they collide and leave the real axis as a complex conjugate pair, crossing the imaginary axis into the right-half plane, later rejoining on positive real axis.}
\label{fig:Blambda}
\end{figure}

\begin{proof}
We first compute
\begin{align}
\begin{split}\label{TuringDetTr}
    \det B(\lambda) & =\frac{1}{d}\big( \lambda^2 -\lambda\tr A +\det A\big)\\
    \tr B(\lambda) & = \frac{1}{d}\big( \lambda(1+d)- (a_{22} + d a_{11}) \big). 
\end{split}
\end{align}
It follows from \eqref{Acondition} that $\det B(\lambda) > 0$ for all $\lambda \geq 0$. Therefore, the eigenvalues of $B(\lambda)$ are either complex conjugates or real numbers of the same sign. In particular, they are never zero. An elementary calculation shows that the discriminant $\Delta B(\lambda)   = \big(\tr B(\lambda) \big)^2-4\det B(\lambda)$ satisfies
\begin{equation}
\label{Bdisc}
    d^2 \Delta B(\lambda) = (d-1)^2 \lambda^2 + 2(d-1)(a_{22} - d a_{11})\lambda + \big((a_{22} + d a_{11})^2 - 4d \det A\big).
\end{equation}

We divide the proof of (i) into two cases. If $a_{22} + d a_{11} \leq 0$, then $\tr B(\lambda) \geq 0$ for all $\lambda \geq 0$, and the claim follows. For the case $0 < a_{22} + d a_{11} < 2\sqrt{d\det A}$ we must have $d \neq 1$, so the discriminant is quadratic in $\lambda$, with $\Delta B(0) < 0$. Therefore, there exists a number $\lambda_+>0$ such that $\Delta B(\lambda) < 0$ for $0 \leq \lambda < \lambda_+$ and $\Delta B(\lambda) > 0$ for $\lambda > \lambda_+$. It follows that $B(\lambda)$ has complex conjugate eigenvalues for $0 \leq\lambda < \lambda_+$ and real eigenvalues for $\lambda \geq \lambda_+$. For $\lambda \geq \lambda_+$ we have $\tr B(\lambda) \geq 2\sqrt{\det B(\lambda)} > 0$, and so the real eigenvalues are both positive.

We now consider (ii), where $a_{22} + d a_{11} = 2\sqrt{d\det A}$. We again must have $d \neq 1$, so the discriminant is quadratic, with $\Delta B(0) = 0$. We claim that $(d-1)(a_{22} - d a_{11})$ is negative. If $a_{11} > 0$, then \eqref{Acondition} implies $a_{22} < 0$, and the fact that $a_{22} + d a_{11} > 0$ implies $d>1$, hence $a_{22} - d a_{11} < 0$ and $(d-1)(a_{22} - d a_{11})<0$. The proof when $a_{11} < 0$ is similar. Therefore $\Delta B(\lambda)$ has a zero and is decreasing at $\lambda=0$, so there exists a number $\lambda_+>0$ such that $\Delta B(\lambda) < 0$ for $0 < \lambda < \lambda_+$ and $\Delta B(\lambda) > 0$ for $\lambda > \lambda_+$. The conclusion follows as in the proof of (i).

For (iii) we similarly find that $d \neq 1$ and $\Delta B(0) > 0$, with $\Delta B(\lambda)$ decreasing at $\lambda=0$. Therefore, there exists numbers $0 < \lambda_- < \lambda_+$ such that $\Delta B(\lambda)$ is positive on $(0,\lambda-) \cup (\lambda_+,\infty)$ and negative on $(\lambda_-,\lambda_+)$. Setting $\lambda_c = \lambda_-$ completes the proof.

For the last part of the proof, we abbreviate $\beta_i = \beta_i(\lambda;d)$ and use \eqref{TuringDetTr} to compute
\[
    \beta_1' + \beta_2' = (\tr B)' > 0,
\]
which implies at least one of $\beta_1'$ and $\beta_2'$ is positive. If the other was non-negative we would have $\beta_1'\beta_2 + \beta_1\beta_2' < 0$, since $\beta_1$ and $\beta_2$ are both negative for $0 \leq \lambda < \lambda_c$. However, this contradicts the fact that
\[
    \beta_1'\beta_2 + \beta_1\beta_2' = (\det B)'(\lambda) > 0
\]
and thus completes the proof.
\end{proof}

Using this result, we easily obtain the following.

\begin{cor}
\label{cor:TuringCase1}
If $A$ satisfies \eqref{Acondition} and
\begin{equation}
    a_{22} + d a_{11} < 2\sqrt{d\det A},
\end{equation}
then $W(x,\lambda) \notin H_1$ for all $x>0$ and $\lambda \geq 0$, hence the boundary of $[\delta,L] \times [0,\lambda_\infty]$ has index $\mathfrak m = 0$ for every $L>0$.
\end{cor}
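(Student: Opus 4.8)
The plan is to combine the explicit formula \eqref{detXY} for $\det X$ with Proposition~\ref{prop:betacurve}(i) to show that the image of $W$ avoids the hyperplane $H_1$ entirely, and then to invoke the nullity property of the hyperplane index. First I would recall from the proof of Proposition~\ref{prop:inv} that, for $x \neq 0$, the plane $W(x,\lambda)$ lies in $H_1$ precisely when $\det X = 0$, where by \eqref{detXY}
\[
\det X = \frac{\sinh(\sqrt{\beta_1}\,x)}{\sqrt{\beta_1}}\cdot\frac{\sinh(\sqrt{\beta_2}\,x)}{\sqrt{\beta_2}},
\]
with $\beta_1 = \beta_1(\lambda;d)$ and $\beta_2 = \beta_2(\lambda;d)$ the eigenvalues of $B(\lambda) = D^{-1}(\lambda I - A)$, each factor being the entire function of the corresponding $\beta_i$ defined by the convergent power series in \eqref{frameXY}. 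The key elementary point is that, for fixed $x > 0$, this entire function of $\beta$ equals $x \neq 0$ at $\beta = 0$, is strictly positive for $\beta > 0$, and, being $\beta^{-1/2}\sinh(\sqrt\beta\,x)$, vanishes only where $\sqrt\beta\,x \in i\pi\bbZ$, i.e.\ only when $\beta = -(k\pi/x)^2$ for some $k \in \bbN$. In particular, $\det X = 0$ with $x > 0$ forces at least one of $\beta_1, \beta_2$ to be a negative real number.

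Next I would apply Proposition~\ref{prop:betacurve}(i): the hypothesis $a_{22} + d\,a_{11} < 2\sqrt{d\det A}$ guarantees that for every $\lambda \geq 0$ the eigenvalues $\beta_1(\lambda;d), \beta_2(\lambda;d)$ are either non-real complex conjugates or positive real numbers, hence never negative reals. Combined with the previous step, this yields $W(x,\lambda) \notin H_1$ for all $x > 0$ and $\lambda \geq 0$. Consequently the image under $W$ of the (counterclockwise) boundary loop of $[\delta,L]\times[0,\lambda_\infty]$ avoids $H_1$, so in particular it lies in $\cM = G \setminus (H_1 \cap H_2)$ and $\mathfrak m$ is well defined; by Proposition~\ref{ind:properties}(i) this $\mathfrak m$ equals the hyperplane index of the loop, and by the nullity property Proposition~\ref{ind:properties}(ii) that index is $0$. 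Since this holds for every $L > 0$, the corollary follows.

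I do not anticipate a genuine obstacle: the corollary is essentially bookkeeping on top of Propositions~\ref{prop:inv} and~\ref{prop:betacurve}. The only place needing a moment of care is the complex-analytic observation that $\beta \mapsto \beta^{-1/2}\sinh(\sqrt\beta\,x)$ has no zeros off the negative real axis (and there only at $-(k\pi/x)^2$); this is immediate from the fact that $\sinh z = 0$ only for $z \in i\pi\bbZ$, but it is worth stating explicitly since it is precisely what rules out the complex-conjugate branch allowed by Proposition~\ref{prop:betacurve}(i).
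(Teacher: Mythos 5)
Your proof is correct and follows essentially the same route as the paper's: apply Proposition~\ref{prop:betacurve}(i) to rule out negative real eigenvalues of $B(\lambda)$ for $\lambda \geq 0$, observe from \eqref{detXY} that $\det X = 0$ forces a negative real eigenvalue, conclude $W(x,\lambda) \notin H_1$, and then invoke the nullity property of the index. Your explicit remark that $\beta \mapsto \beta^{-1/2}\sinh(\sqrt\beta\,x)$ vanishes only at $\beta = -(k\pi/x)^2$ is a worthwhile clarification of a step the paper leaves implicit, but does not change the substance of the argument.
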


This corresponds to case (i) of Remark \ref{rem:3_cases}\,---\,the frame matrix $X(x,\lambda)$ is always invertible, so the index is well defined but necessarily zero.

\begin{proof}
Proposition \ref{prop:betacurve}(i) implies $\beta_1(\lambda;d)$ and $\beta_2(\lambda;d)$ are either non-real or positive for any $\lambda \geq 0$. It follows from \eqref{detXY} that $\det X(x,\lambda) \neq 0$, and hence $W(x,\lambda) \notin H_1$, for all $x > 0$ and $\lambda \geq 0$.
\end{proof}

This means if the index of $[\delta,L_0] \times [0,\lambda_\infty]$ is defined and nonzero for some $L_0 > 0$, then $ a_{22} + d a_{11} \geq 2\sqrt{d\det A}$. We next show that this is in fact a strict inequality; in the critical case of equality the index is either equal to zero or is not defined.

\begin{lemma}
If $d = d_*$ is chosen so that \eqref{Turing condition} is an equality, then there exists $L_*>0$ such that the index of $[\delta,L] \times [0,\lambda_\infty]$ is zero for $L < L_*$ and is not defined for $L \geq L_*$.
\end{lemma}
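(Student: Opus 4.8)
The plan is to pin down precisely which points of the rectangle $[\delta,L]\times[0,\lambda_\infty]$ the path $W$ sends into $H_1\cap H_2$, using Propositions~\ref{prop:inv} and~\ref{prop:betacurve}, and then read off the threshold $L_*$. First I would note that $d=d_*$ is exactly the equality case of \eqref{Turing condition}, so Proposition~\ref{prop:betacurve}(ii) applies: writing $\beta_0:=\beta_1(0;d_*)=\beta_2(0;d_*)$ we have $\beta_0<0$, while for every $\lambda>0$ the eigenvalues $\beta_1(\lambda;d_*),\beta_2(\lambda;d_*)$ are non-real complex conjugates or positive reals. I would then set $L_*:=\pi/\sqrt{-\beta_0}>0$. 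By Proposition~\ref{prop:inv} (with $B(\lambda)=D^{-1}(\lambda I-A)$), $W(x,\lambda)\in H_1\cap H_2$ exactly when both eigenvalues of $B(\lambda)$ are real and negative and $\sin(\sqrt{-\beta_1}\,x)=\sin(\sqrt{-\beta_2}\,x)=0$; combined with Proposition~\ref{prop:betacurve}(ii), the only way this occurs for $x>0$ and $\lambda\in[0,\lambda_\infty]$ is $\lambda=0$ together with $\sin(\sqrt{-\beta_0}\,x)=0$, i.e.\ $x\in\{L_*,2L_*,3L_*,\dots\}$.

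Having localized the exits from the MA space, I would fix $\delta$ small enough that $\delta<L_*$ (legitimate since $0<\delta\ll1$ and $L_*$ is a fixed positive number) and argue by cases. If $L<L_*$, then every $(x,\lambda)\in(0,L]\times[0,\lambda_\infty]$ has $W(x,\lambda)\in\cM=G\setminus(H_1\cap H_2)$, since the offending $x$-values all exceed $L$; hence $W$ carries the whole rectangle $[\delta,L]\times[0,\lambda_\infty]$ into $\cM$, the image of its boundary is a loop in $\cM$ bounding a disk mapped into $\cM$, and so by homotopy invariance (Proposition~\ref{ind:properties}(iv); equivalently this is Corollary~\ref{cor:m0}, whose remaining hypotheses hold because the Dirichlet problem has $P_0=P_1=P_D$) the index is defined and equals $0$. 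If instead $L\geq L_*$, the point $(L_*,0)$ lies on the bottom edge $[\delta,L]\times\{0\}$ of the rectangle (because $\delta<L_*\leq L$) and $W(L_*,0)\in H_1\cap H_2$, so the image of the boundary is not contained in $\cM$ and the index of $[\delta,L]\times[0,\lambda_\infty]$ is undefined. This yields the desired $L_*$.

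I do not anticipate a real obstacle: once Propositions~\ref{prop:inv} and~\ref{prop:betacurve} are available, the argument is essentially bookkeeping with the zero set of $\psi_1$. The two points needing a little care are the identification of $d=d_*$ with case (ii) of Proposition~\ref{prop:betacurve} (immediate from the statement) and the harmless choice of $\delta$ below the fixed threshold $L_*$, which is what guarantees that for $L\geq L_*$ the bad point $(L_*,0)$ genuinely sits on the boundary of the rectangle rather than in its interior.
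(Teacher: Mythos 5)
Your proposal is correct and follows essentially the same route as the paper: invoke Proposition~\ref{prop:betacurve}(ii) to see that both eigenvalues of $B(\lambda)$ are negative reals only at $\lambda=0$, combine with Proposition~\ref{prop:inv} to localize the exits from $\cM$ to the discrete set $\{(mL_*,0):m\in\bbN\}$ with $L_*=\pi/\sqrt{-\beta_1(0;d_*)}$, and then split into cases on whether $L<L_*$. The paper is slightly terser about why the index vanishes when $L<L_*$ (it simply notes that no boundary point ever lands in $H_1$), whereas you route through Corollary~\ref{cor:m0}/homotopy invariance, but these are the same observation.
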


This corresponds to case (ii) in Remark \ref{rem:3_cases}.

\begin{proof}
From Proposition~\ref{prop:betacurve}(ii) we know that $\beta_1(\lambda;d_*)$ and $\beta_2(\lambda;d_*)$ are either positive or non-real for all $\lambda>0$, and so $\det X(x,\lambda) \neq 0$ for all $x>0$ and $\lambda>0$. On the other hand, for $\lambda=0$ we have that $\beta_1(0;d_*) = \beta_2(0;d_*)$ is negative, and so \eqref{detXY} implies that the following equivalence:
\begin{align*}
    W(x,0) \in H_1 \quad&\Longleftrightarrow\quad 
    \det X(x,0) = 0 \\
    \quad&\Longleftrightarrow\quad   
    x = \frac{m\pi}{\sqrt{-\beta_1(0;d_*)}} \
    \text{ for some } m \in \bbN \\
    \quad&\Longleftrightarrow\quad
    W(x,0) \in H_1 \cap H_2.
\end{align*}
The index is therefore defined and equal to zero for $L < L_* := \pi / \sqrt{-\beta_1(0;d_*)}$ and is not defined for $L \geq L_*$.
\end{proof}

Combined with Corollary \ref{cor:TuringCase1}, this lemma proves one direction of Theorem \ref{thm:Turing}: if the index of $[\delta,L_0] \times [0,\lambda_\infty]$ is defined and nonzero for some $L_0 > 0$, then $a_{22} + d a_{11} > 2\sqrt{d\det A}$. To complete the proof, we must show the reverse implication.

The idea of the proof is shown in Figure \ref{fig:manycrossings}. Parameterizing the set $\{(x,\lambda) : \det X(x,\lambda) = 0\}$ as a union of smooth curves $\{\lambda_n(x)\}$, we will show that $W(x_*,\lambda_*) \in H_1 \cap H_2$ if and only if $\lambda_* = \lambda_n(x_*)$ is either a maximum of a single eigenvalue curve or a transverse intersection of two distinct curves. Moreover, each maximum contributes $+2$ to the index $\mathfrak m$, while the intersections contribute nothing. Therefore, it suffices to choose $L_0$ larger than the $x$ coordinate of the first maximum but smaller than the $x$ coordinate of the first intersection. This will guarantee that the index is well defined but not equal to zero.

\begin{lemma}
\label{lem:betaindex}
Suppose that $\beta_1(\lambda_*;d)$ and $\beta_2(\lambda_*;d)$ are both negative and satisfy
\begin{equation}
\label{betamn}
    \frac{\beta_1(\lambda_*;d)}{\beta_2(\lambda_*;d)} = \left( \frac mn \right)^2
\end{equation}
for some $m,n \in \bbN$, so that $W(x_*,\lambda_*) \in H_1 \cap H_2$ at the point
\begin{equation}
\label{x*leave}
    x_* = \frac{m\pi}{\sqrt{-\beta_1(\lambda_*;d)}} = 
    \frac{n\pi}{\sqrt{-\beta_2(\lambda_*;d)}}.
\end{equation}
Letting $\gamma_*$ denote a small loop around $(x_*,\lambda_*)$, we have
\begin{equation}
    \Ind(W\circ\gamma_*;P_1) = \begin{cases} 2 & \text{if }\ m=n, \\ 0 & \text{otherwise.} \end{cases}
\end{equation}
\end{lemma}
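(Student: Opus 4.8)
The plan is to reduce the statement to an application of Lemma~\ref{lem:mgamma}. The key point is that near $(x_*,\lambda_*)$, the zero set $\{\psi_1 = 0\}$ is, by \eqref{detXY} and \eqref{lambdan}-style reasoning, the union of the eigenvalue curves $\lambda_{m,1}(x) = \beta_1^{-1}(\cdot;d)$-level sets; more precisely, writing $\beta_1(\lambda;d)$ and $\beta_2(\lambda;d)$ for the eigenvalues of $B(\lambda) = D^{-1}(\lambda I - A)$, the set $\{(x,\lambda):\det X(x,\lambda)=0\}$ is by \eqref{detXY} the union over $k,\ell\in\bbN$ of the curves $\{x\sqrt{-\beta_i(\lambda;d)} = k\pi\}$ for $i=1,2$, which near $(x_*,\lambda_*)$ are graphs $\lambda = \sigma_{k,i}(x)$ by the implicit function theorem (using that $\beta_i'(\lambda_*;d)\neq0$, which holds because in case (iii) of Proposition~\ref{prop:betacurve} we have the strict inequalities \eqref{beta:deriv}).

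First I would identify exactly which of these curves pass through $(x_*,\lambda_*)$. By \eqref{x*leave} and \eqref{betamn}, the only curves through that point are $\sigma_{m,1}$ (from $\beta_1$) and $\sigma_{n,2}$ (from $\beta_2$), together with any others forced by coincidences $k\pi/\sqrt{-\beta_1(\lambda_*;d)} = x_* = \ell\pi/\sqrt{-\beta_2(\lambda_*;d)}$, i.e. $k/m = \sqrt{-\beta_1/-\beta_2}\cdot(\ell/m) \cdots$; a short number-theoretic bookkeeping shows the curves through $(x_*,\lambda_*)$ are exactly $\sigma_{jm,1}$ and $\sigma_{jn,2}$ for those $j\geq1$ with $jm \leq$ (some bound) — in any case \emph{each is a graph near $x_*$}, so there are finitely many, and I can apply Lemma~\ref{lem:mgamma}. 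For the generic nondegenerate case it suffices to take $\gamma_*$ small enough that only $\sigma_{m,1}$ and $\sigma_{n,2}$ appear.

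Second, I would compute the monotonicity of each such curve to the left and right of $x_*$. Differentiating $x\sqrt{-\beta_i(\sigma_{k,i}(x);d)} = k\pi$ implicitly gives
\[
    \sqrt{-\beta_i} + x\cdot\frac{-\beta_i'(\sigma_{k,i}(x);d)}{2\sqrt{-\beta_i}}\,\sigma_{k,i}'(x) = 0,
\]
so $\sigma_{k,i}'(x)$ has the sign of $2(-\beta_i)/(x\beta_i'(\sigma_{k,i}(x);d)) = \sgn(\beta_i'(\lambda_*;d))^{-1}$ at $x=x_*$ — i.e.\ it is strictly increasing precisely when $\beta_i'(\lambda_*;d) > 0$. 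By \eqref{beta:deriv}, $\beta_1' > 0$ and $\beta_2' < 0$, so $\sigma_{m,1}$ (and all the $\sigma_{jm,1}$) are strictly increasing near $x_*$, while $\sigma_{n,2}$ (and all the $\sigma_{jn,2}$) are strictly decreasing. In the notation of Lemma~\ref{lem:mgamma}: if $m=n$ the curves through $(x_*,\lambda_*)$ come in pairs, one increasing and one decreasing on each side, so $i_- = i_+$ is the number of increasing curves and $i_- - i_+ = 0$ — wait, that gives $\Ind = 0$, contradicting the claimed value $2$. So the correct accounting must be: when $m=n$, $x_* = m\pi/\sqrt{-\beta_1(\lambda_*;d)} = n\pi/\sqrt{-\beta_2(\lambda_*;d)}$ forces only $\sigma_{m,1}$ and $\sigma_{m,2}$ through the point (higher multiples $\sigma_{jm,i}$ occur at larger $x$), with $\sigma_{m,1}$ increasing and $\sigma_{m,2}$ \emph{decreasing}; hence $i_- = 1$ (only $\sigma_{m,1}$ increasing for $x<x_*$) and $i_+ = 0$ (only... no — $\sigma_{m,1}$ is increasing for $x > x_*$ too). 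Let me instead note $\sigma_{m,1}$ is increasing on both sides so it contributes to both $i_-$ and $i_+$, $\sigma_{m,2}$ is decreasing on both sides so it contributes to neither; thus $i_- = i_+ = 1$ and $\Ind = 0$. The resolution: when $m=n$ the two curves are \emph{tangent} at $(x_*,\lambda_*)$ (since $-\beta_1 = -\beta_2$ there and both pass through with the constraint $x^2(-\beta_i) = m^2\pi^2$), so $\sigma_{m,1}$ and $\sigma_{m,2}$ are not transverse and Lemma~\ref{lem:mgamma} does not apply directly — this is the main obstacle. I expect one must instead argue that when $m=n$ the zero set $\{\psi_1=0\}\cap U$ is, to leading order, a single curve (a maximum, as in the right panel of Figure~\ref{fig:mgamma}) because $\sigma_{m,1}$ increasing and $\sigma_{m,2}$ decreasing with equal values at $x_*$ means they form an "$\times$"-free tangency that looks like a downward parabola after perturbation; then $i_-=1, i_+=0$ and $\Ind = 2(i_- - i_+) = 2$. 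When $m\neq n$, $\sigma_{m,1}$ and $\sigma_{n,2}$ are distinct (non-tangent) curves through $(x_*,\lambda_*)$ — increasing and decreasing respectively, hence transverse — giving $i_-=1, i_+=1$ and $\Ind = 0$, as claimed.

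I would carry this out by: (1) parameterize $\{\psi_1=0\}$ near $(x_*,\lambda_*)$ via \eqref{detXY} and the implicit function theorem, using \eqref{beta:deriv} to guarantee the hypotheses; (2) classify the local geometry into the "tangent, $m=n$" case versus the "transverse, $m\neq n$" case, the crux being a careful second-order analysis (or a direct substitution $\lambda = \lambda_* + s$, $x = x_* + t$ into $\sinh(\sqrt{\beta_1}x)\sinh(\sqrt{\beta_2}x) = 0$) showing that the $m=n$ level set is locally a single smooth arc with a strict interior maximum in $\lambda$ while the $m\neq n$ level set is two transverse arcs; (3) invoke Lemma~\ref{lem:mgamma} with $(i_-,i_+) = (1,0)$ in the first case and $(1,1)$ in the second. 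The main obstacle, as indicated, is step (2): verifying that the two eigenvalue curves meeting tangentially at $(x_*,\lambda_*)$ genuinely produce a "maximum"-type configuration (Figure~\ref{fig:mgamma}, right) rather than a crossing; this requires checking the sign of the second derivative of the defining function, which should follow from $\beta_1'(\lambda_*;d) > 0 > \beta_2'(\lambda_*;d)$ together with $\beta_1(\lambda_*;d) = \beta_2(\lambda_*;d)$.
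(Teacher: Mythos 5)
You reach the correct conclusion and use the same overall strategy as the paper (reduce to Lemma~\ref{lem:mgamma} by analyzing the local geometry of $\{\psi_1=0\}$), but there is a genuine gap in your treatment of the $m=n$ case, which you yourself flag as ``the main obstacle.'' You notice that treating $\Gamma_{1,m}$ and $\Gamma_{2,m}$ as two separate graphs passing through $(x_*,\lambda_*)$ on both sides gives $i_-=i_+=1$ and hence $\Ind=0$, then recover by positing a ``tangency that looks like a downward parabola after perturbation'' and proposing a second-order analysis to verify it. Both the diagnosis (tangency) and the proposed repair (second-order expansion) miss the much simpler structural fact that the paper uses: when $m=n$, equation~\eqref{betamn} forces $\beta_1(\lambda_*;d)=\beta_2(\lambda_*;d)$, which by Proposition~\ref{prop:betacurve}(iii) is possible only at $\lambda_*=\lambda_c$. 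But each $\Gamma_{i,n}$ is parameterized only for $\lambda\le\lambda_c$ (by \eqref{detXY}, since $\det X\neq 0$ once the eigenvalues leave the negative real axis), so neither curve extends past $\lambda_*$. With \eqref{beta:deriv} giving $\beta_1'>0>\beta_2'$, the branch $\Gamma_{1,n}$ covers only $x\le x_*$ (increasing in $x$) and $\Gamma_{2,n}$ covers only $x\ge x_*$ (decreasing in $x$); they terminate at the common endpoint $(x_*,\lambda_c)$ and together form the graph of a single function $\lambda_n(x)$ with a strict maximum there. Lemma~\ref{lem:mgamma} then applies directly with $i_-=1$, $i_+=0$, so $\Ind=2$; no perturbation or second-order computation is needed. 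Your analysis of the $m\neq n$ case, where $\lambda_*<\lambda_c$ and both curves genuinely cross $(x_*,\lambda_*)$ transversally, is correct as stated.
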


Geometrically, points with $m=n$ correspond to maxima of the eigenvalue curves, whereas points with $m \neq n$ correspond to transverse intersections of different curves; see Figures \ref{fig:mgamma} and \ref{fig:manycrossings}.

\begin{figure}
\begin{center}
    \includegraphics[scale=0.295]{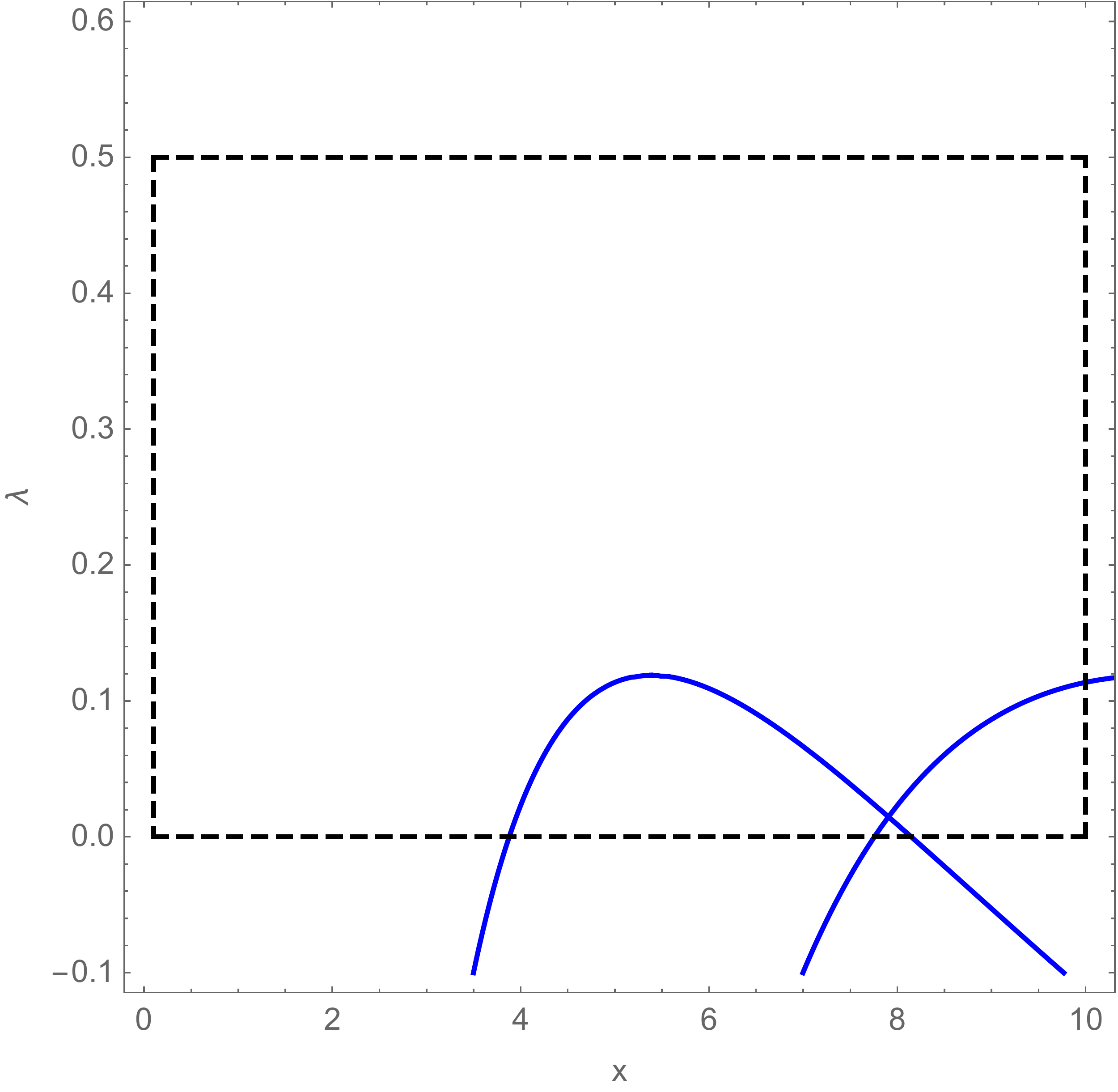} 
    \hfill
    \includegraphics[scale=0.3]{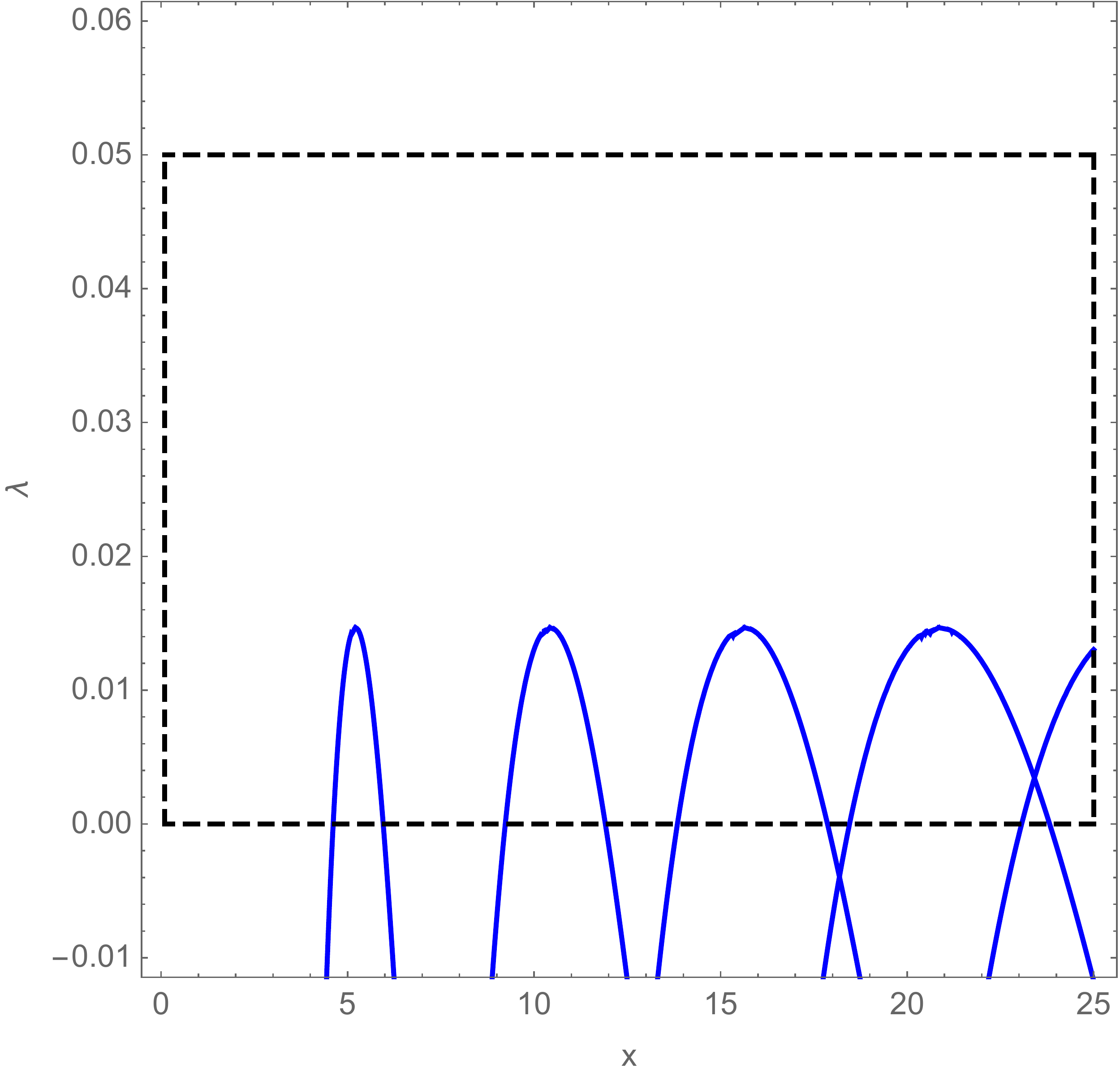}
\end{center}
\caption{A plot of the eigenvalue curves $\lambda_n(x)$ in Remark \ref{rem:curves}. The left figure shows a maximum of $\lambda_1$ (meaning \eqref{betamn} is satisfied with $m=n=1$) and an intersection between $\lambda_1$ and $\lambda_2$, at which point \eqref{betamn} is satisfied with $m=2$ and $n=1$. Similarly, the right figures shows maxima of the first four eigenvalue curves, as well as intersections with $m/n = 4/3$ (where $\lambda_*<0$) and $m/n = 5/4$ ($\lambda_* >0)$.}
\label{fig:manycrossings}
\end{figure}

\begin{proof}
Using \eqref{detXY} and Proposition \ref{prop:betacurve}, we can write the set $\{\det X = 0\}$ as a countable union of smooth curves
\begin{equation}
   \bigcup_{n=1}^\infty 
   \underbrace{\left\{ \left(\frac{n\pi}{\sqrt{-\beta_1(\lambda;d)}},\lambda \right) : \lambda \leq \lambda_c \right\}}_{= \Gamma_{1,n}} \cup \underbrace{ \left\{ 
   \left(\frac{n\pi}{\sqrt{-\beta_2(\lambda;d)}},\lambda \right) : \lambda \leq \lambda_c  \right\}}_{ = \Gamma_{2,n}}.
\end{equation}
The condition \eqref{betamn} holds precisely when $\Gamma_{1,m}$ and $\Gamma_{2,n}$ intersect at the point $(x_*,\lambda_*)$. There are two cases to consider.

If $m=n$, then \eqref{beta:deriv} implies that the curve $\Gamma_{1,n} \cup \Gamma_{2,n}$ has a strict maximum at $(x_*,\lambda_*)$, so it follows from Lemma \ref{lem:mgamma} that $\Ind(W\circ\gamma_*;P_1) = 2$, as in Figure \ref{fig:mgamma}(right). On the other hand, if $m \neq n$, then \eqref{x*leave} implies $\beta_1(\lambda_*;d) \neq \beta_2(\lambda_*;d)$, and hence $\lambda_* < \lambda_c$ by Proposition \ref{prop:betacurve}(iii). If follows that the curves $\Gamma_{1,m}$ and $\Gamma_{1,n}$ are strictly increasing and strictly decreasing, respectively, so Lemma \ref{lem:mgamma} implies that $\Ind(W\circ\gamma_*;P_1) = 0$, as in Figure \ref{fig:mgamma} (left).
\end{proof}

\begin{rem}
\label{rem:curves}
For each $n$ the set $\Gamma_{1,n} \cup \Gamma_{2,n}$ is given by the graph of a single function $\lambda_n(x)$, as in the hypotheses of Lemmas \ref{lem:leave} and \ref{lem:mgamma}. In terms of this parameterization, the condition \eqref{betamn} with $m=n$ corresponds to a maximum of $\lambda_n$, and $m>n$ corresponds to a transverse intersection of $\lambda_m$ and $\lambda_n$.
\end{rem}

We are now ready to prove the main result in this section.

\begin{proof}[Proof of Theorem \ref{thm:Turing}]
From Proposition \ref{prop:betacurve}(iii) and the definition of $\lambda_\infty$ we know that $0 < \lambda_c < \lambda_\infty$, so any maxima of eigenvalue curves that occur will be contained in the rectangle $[\delta,L] \times [0,\lambda_\infty]$. The first maximum will occur at
\[
    x_{\rm max} = \frac{\pi}{\sqrt{-\beta_1(\lambda_c;d)}} = 
    \frac{\pi}{\sqrt{-\beta_2(\lambda_c;d)}}.
\]
whereas the first intersection will occur at
\[
    x_{\rm int} = \frac{2\pi}{\sqrt{-\beta_1(\lambda_*;d)}} = 
    \frac{\pi}{\sqrt{-\beta_2(\lambda_*;d)}}.
\]
Since $\beta_2(\lambda;d)$ is strictly decreasing for $0 \leq \lambda < \lambda_c$ and $\lambda_* < \lambda_c$, we have
\[
    \frac{\pi}{\sqrt{-\beta_2(\lambda_c;d)}}
    < \frac{\pi}{\sqrt{-\beta_2(\lambda_*;d)}}
\]
and hence $x_{\rm max} < x_{\rm int}$. Choosing $L_0 \in (x_{\rm max}, x_{\rm int})$ completes the proof.
\end{proof}

With the tools developed above, we can easily prove Corollary \ref{cor:Turing2}.

\begin{proof}[Proof of Corollary \ref{cor:Turing2}]
Part (i) was already shown in Corollary \ref{cor:TuringCase1}. To prove (ii) we first check the invariance conditions \eqref{xlinv}. To ensure $W(x,0) \in \cM$ for all $x>0$, it suffices, by Proposition \ref{prop:inv}, to know that the ratio
\[
    \frac{\beta_{1}(0;d)}{\beta_{2}(0;d)} = \frac{-a_{22}-d a_{11} - \sqrt{4d a_{21}a_{21} + (a_{22}-d a_{11})^{2}}}{-a_{22}-d a_{11} + \sqrt{4d a_{21}a_{21} + (a_{22}-d a_{11})^{2}}}
\]
is not the square of a rational number. Asking that this be equal to $q^{2}$ for some $q \in \mathbb{Q}$ gives 
\[
    d(q^{2}) = \frac{(1+q^4) \det A  - 2a_{21}a_{12}q^{2} + \sqrt{\det (A)(1+q^{2})^{2} \big((1+q^4) \det A + 2(a_{11}a_{22}+a_{21}a_{12})q^{2} \big)}
}{2a_{11}q^{2}},
\]
so we define the set $\Delta_* = \{d(q^2) : q \in \bbQ\}$.

On the other hand, we have that $W(L,\lambda_*) \in H_1 \cap H_2$ for some $\lambda_* \geq 0$ if and only if \eqref{x*leave} is satisfied for $x_*=L$. The fact that $\beta_1$ and $\beta_2$ are strictly monotone implies that this equality can only hold for a discrete set of $L$ values. Therefore, assuming $d \notin \Delta_*$, the index of $[\delta,L] \times [0,\lambda_\infty]$ is defined for an open, dense set of $L > \delta$. For any such $L$, Lemma \ref{lem:betaindex} says that the index $\mathfrak m$ is twice the number of local maxima of the eigenvalue curves (i.e. intersections for which $m=n$) between $x=\delta$ and $x=L$, and hence is nonzero for $L \geq L_0$.
\end{proof}

For an explicit example, consider the matrix
\begin{align}\label{eq:Amat}
    A:= \begin{pmatrix} 1 & -2 \\ 3 & -4 \end{pmatrix},
\end{align} 
for which we compute
$$
    d_{*} =4\big(\sqrt{2} + 3\big) \approx 14.92. 
$$
The matrix $B(\lambda)$ is thus 
$$ 
B(\lambda) = \begin{pmatrix} -1 + \lambda & 2 \\ -\frac{3}{d} & \frac{4+ \lambda}{d} \end{pmatrix},
$$ 
and for $d = \frac{31}{2} > d_{*}$ we find that
\begin{equation}
\label{betaexample}
\frac{\beta_{1}\big(0;\frac{31}{2}\big)}{\beta_{2}\big(0;\frac{31}{2}\big)} = \frac{281 + 23\sqrt{33}}{248}.
\end{equation}
This is clearly not the square of a rational number, and so the path $W(x,0)$ remains in the MA space for all $x>0$. That is, $d = \frac{31}2$ is not in the set $\Delta_*$ from Corollary \ref{cor:Turing2}. However, it is unavoidable that the path $W(L,\lambda)$ will leave the MA space for a discrete set of $L$ values. Some examples are shown in Figures~\ref{fig:manycrossings} and~\ref{fig:littlebigbox}.

\begin{figure}
\begin{center}
\includegraphics[scale=0.3]{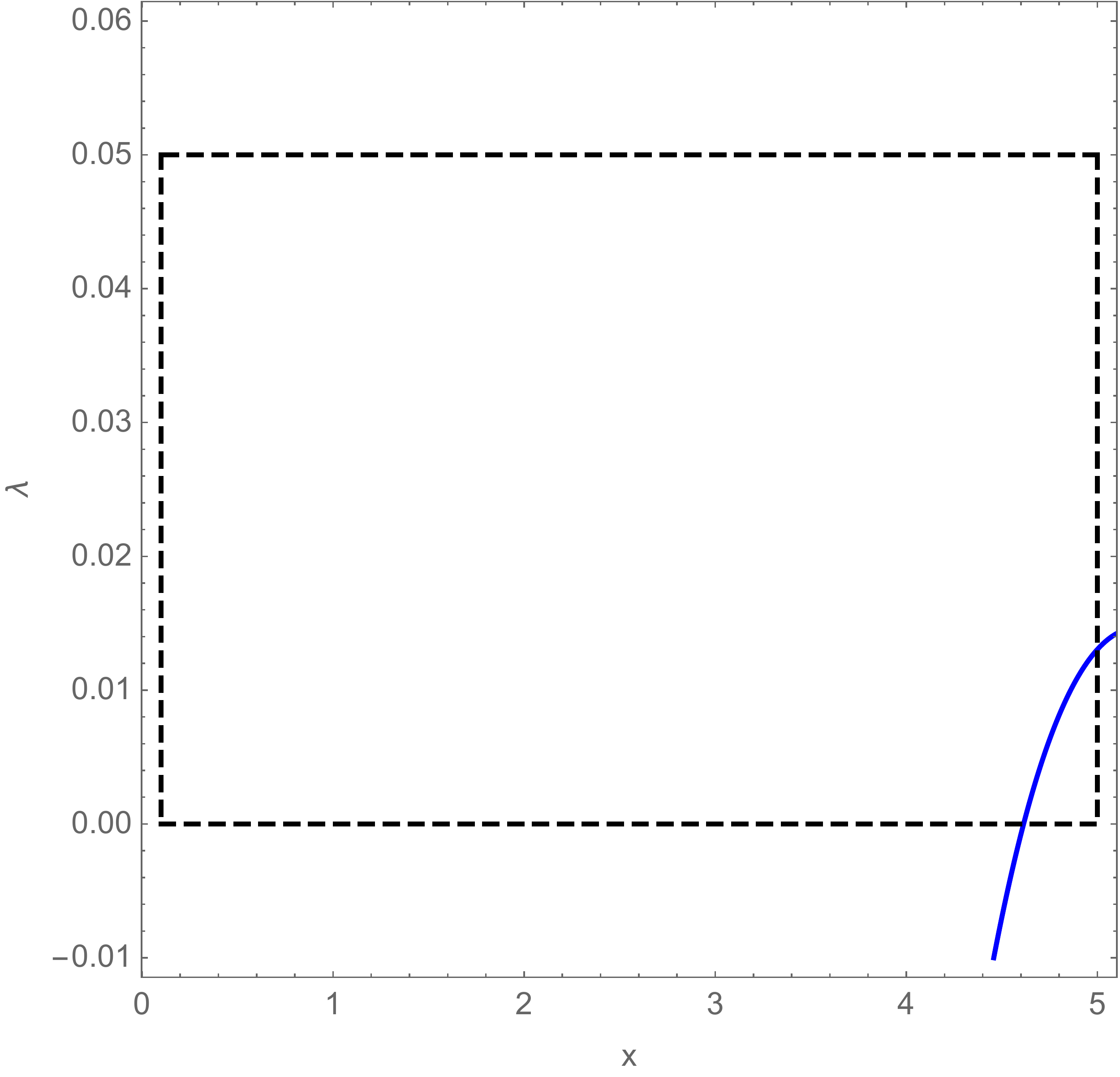}
\hfill
\includegraphics[scale=0.3]{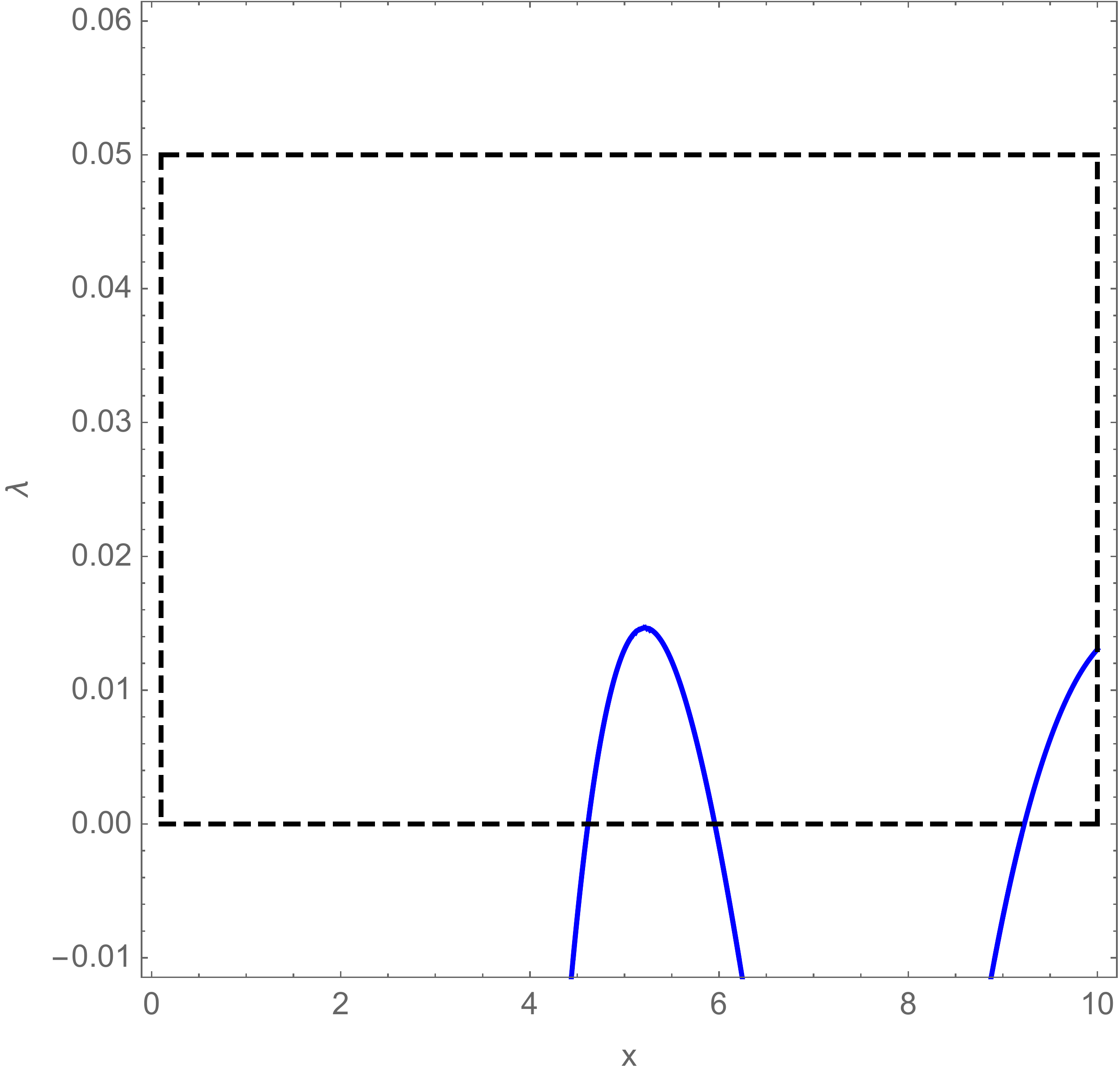}
\end{center}
\caption{A plot of $\{\det X = 0\}$ (solid curve - blue online) in the region $[0,L]\times [0,0.05]$, as well as the Maslov box (dashed), with $L=5$ on the left and $L=10$ on the right. Here $d = 15.5$, and $A$ is as in \eqref{eq:Amat}. Intersections of the solid curve with the line $\lambda =0$, are conjugate points, while intersections with the line $x = L$ represent solutions to the eigenvalue problem \eqref{LsystemTuring} with Dirichlet boundary conditions at $x=0$ and $x=L$. For $L=5$ the entire rectangle is mapped into the MA space, but for $L=10$ it leaves at the point $(x_*,\lambda_*) \approx (5.4,0.015)$, and the boundary has index $\mathfrak m = 2$ as a result.}
\label{fig:littlebigbox}
\end{figure}

\subsection{Numerical prospects}
The classical Maslov index has seen many successful numerical treatments; see for instance \cite{BM14,CDB1,CDB2}. We expect that the theory developed in this paper will be equally amenable to numerical applications, if not more so. 

To justify this, we recall from Theorem \ref{thm:cpMorse}(ii) that
\begin{equation}
	\# \big\{\text{nonnegative eigenvalues of } \cL \big\} 
	\geq \Ind\left(W(x,0)\big|_{x \in [\delta,L]} ; P_1 \right)
\end{equation}
as long as $W(x,\lambda) \in \cM = G \setminus (H_1 \cap H_2)$ for all $(x,\lambda) \in (0,L] \times [0,\lambda_\infty]$, where $H_1$ is the hyperplane corresponding to the train of the Dirichlet subspace and $H_2$ is arbitrary.

The particular choice of $H_2$ in the third part of Theorem \ref{thm:cpMorse} guaranteed monotonicity in $x$, but this is not important if the index is to be computed numerically\,---\,for any choice of $H_2$ the Maslov index computation simply becomes a winding number calculation in $\bbR P^1$. This is numerically robust, due to the homotopy invariance of the index. For instance, the curves
\begin{align*}
    \eta(t) = \begin{cases}
    e^{it}, & -\pi/2 \leq t \leq 0 \\
    e^{-it}, & 0 \leq t \leq \pi/2 \end{cases}, \qquad 
    \eta_+(t) = e^{i\epsilon} \eta(t), \qquad
    \eta_-(t) = e^{-i\epsilon} \eta(t)
\end{align*}
are $\epsilon$-close, pass through the point $1 \in S^1$ one, two and zero times, respectively, and all have zero winding number. That is, the signed count of conjugate points (i.e. the generalized Maslov index) is stable under small perturbations, while the unsigned count is not.

Therefore, a small approximation error in the calculation of the path $W(x,\lambda)$ (i.e. in the numerical solution of an initial-value problem) will not change the numerically computed winding number. The only possible complication is the presence of a conjugate point near $x=L$. If there is a conjugate point near (but not exactly at) the endpoint, it will be possible to determine so with sufficiently accurate numerics. Indeed, this can be established rigorously using validated numerics; see \cite{BBL18} for an overview of rigorous numerical methods applied to dynamical systems.

The case of a conjugate point at $x=L$ is more subtle, since it cannot be distinguished from a conjugate point that is very close (within some numerical tolerance) to $x=L$. Generically the endpoint is not a conjugate point, and when it is, this is usually a consequence of an underlying symmetry of the system. If we know a priori that $x=L$ \emph{is} a conjugate point, and understand the mechanism that causes this to happen, then we can (rigorously) find a neighbourhood around it containing no other conjugate points, and hence the discussion in the previous paragraph applies.

We finally note an equivalent invariance condition that may be easier to verify in practice.
While the condition that the image of $W$ remains in $\cM$ depends on both $\psi_1$ and $\psi_2$, which are proportional to $\det X$ and $\det(X+Y) - \det X - \det Y$, respectively, it is possible to describe the invariance just in terms of $X$. We recall that $W(x_*,\lambda_*) \in H_1$ if and only if $\psi_1(x_*,\lambda_*) = 0$.
At such a point $(x_*,\lambda_*)$, \eqref{psi1deriv} implies $\p \psi_1/\p x = \psi_2$, so we conclude that
\begin{equation}
    W(x_*,\lambda_*) \in H_1 \cap H_2 
    \quad\Longleftrightarrow\quad
    \psi_1(x_*,\lambda_*) = 
    \frac{\p \psi_1}{\p x}(x_*,\lambda_*) = 0.
\end{equation}
Since $\psi_1$ is proportional to $\det X$, this is equivalent to
\begin{equation}
\label{doubleroot}
    W(x_*,\lambda_*) \in H_1 \cap H_2 
    \quad\Longleftrightarrow\quad
    \det X(x_*,\lambda_*) = 
    \frac{\p \det X}{\p x}(x_*,\lambda_*) = 0.
\end{equation}
That is, $W$ leaves the MA space at the point $(x_*,\lambda_*)$ if and only if $\det X$ vanishes at least to first order in $x$. A numerical example, corresponding to the Turing system in \eqref{LsystemTuring}, is shown in Figure~\ref{fig:masx}. In this example, it can be seen by inspection that no double roots occur.

\begin{figure}
\begin{center}
\includegraphics[scale=0.3]{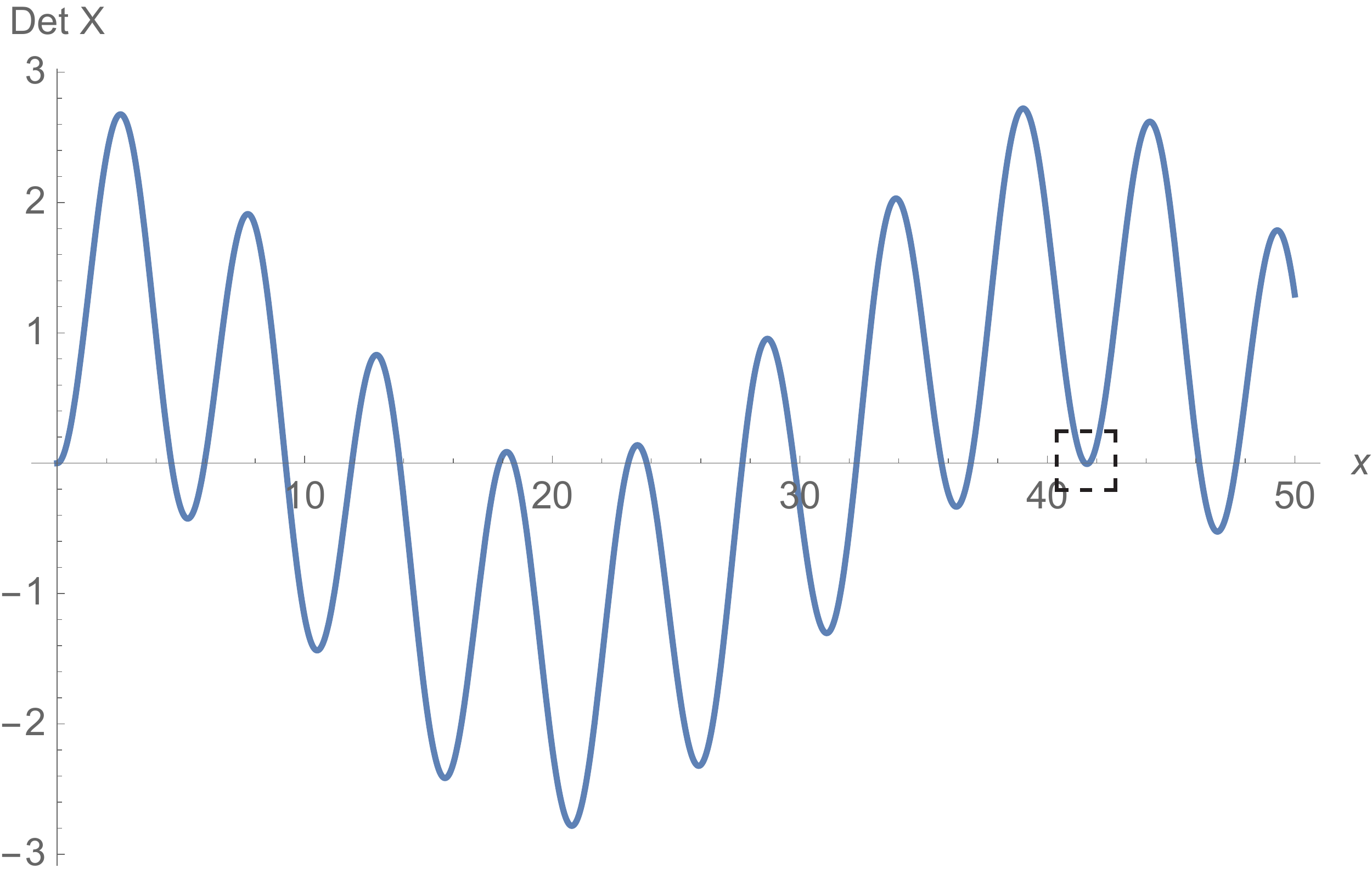}
\hfill
\includegraphics[scale=0.3]{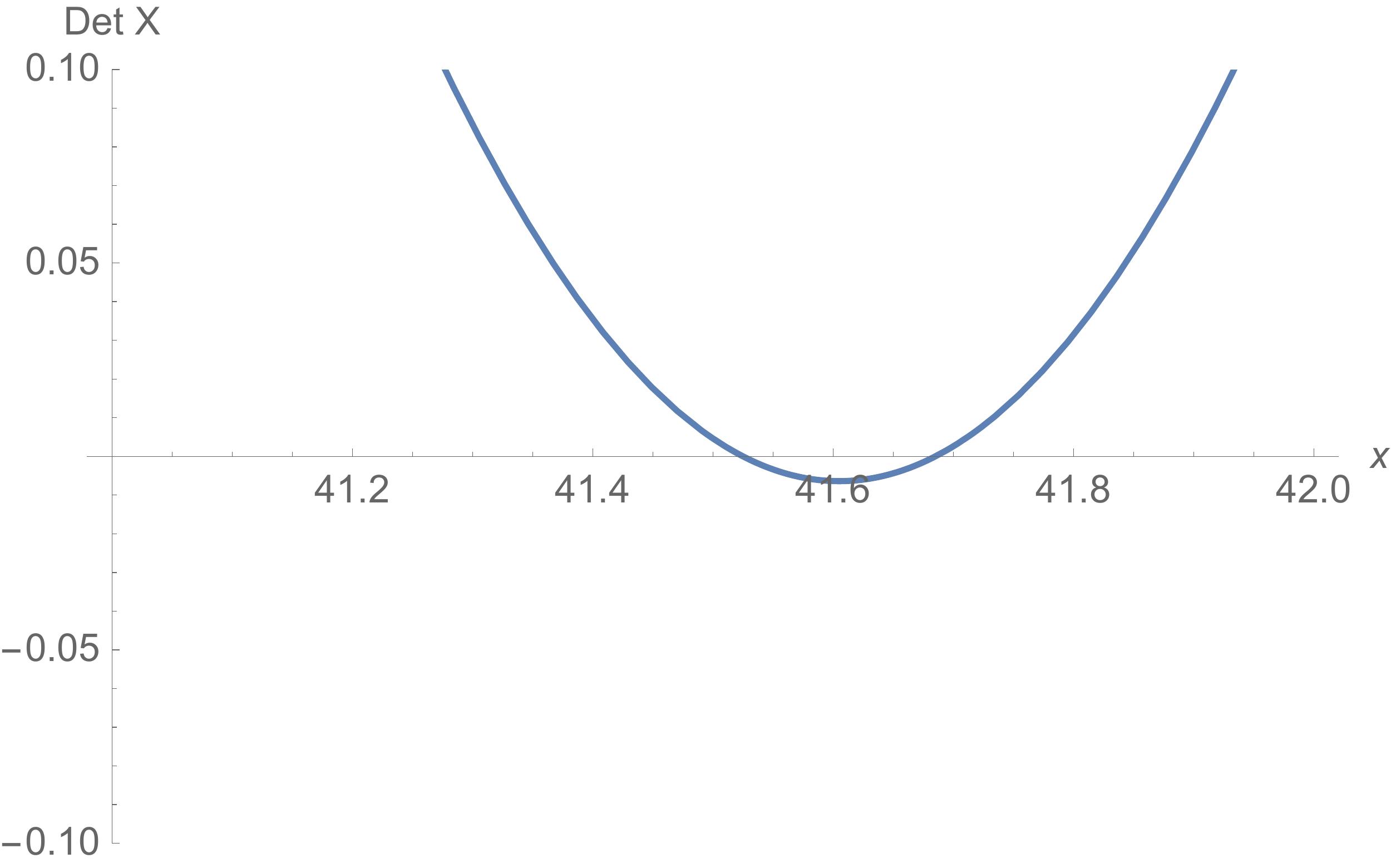}
\end{center}
\caption{A plot of $\det X(x,0)$ vs $x$ for the system \eqref{LsystemTuring} with $A$ from \eqref{eq:Amat} and $d = 15.5$. The calculation \eqref{doubleroot} says that $W(x_*,0) \in H_1 \cap H_2$ if and only if the determinant vanishes to (at least) first order at $x_*$, which never happens for this choice of $A$ and $d$, according to
\eqref{betaexample}. The right figure is a magnification of the dashed box in the left figure, clarifying that there is not a double root near $x \approx 41.6$.
}
\label{fig:masx}
\end{figure}

\section{Further examples (and non-examples) of Maslov--Arnold spaces}\label{furex}

In Section \ref{sec:eLG} we focused on the MA spaces of hyperplane type, as those have proven most useful in applications so far. We now return to the general concept of a Maslov--Arnold space, as given in Definition \ref{def:MA1}, and construct additional examples of MA spaces. We also describe some spaces that do not satisfy the definition. This sheds additional light onto the general definition, and motivates our use of the hyperplane Maslov--Arnold spaces, which do not contain all of $\Lambda(n)$. We begin with a definition.

\begin{define} 
Given a pair of equal rank Maslov--Arnold spaces, we say $(\cM_2, P_2, \alpha_2)$ extends $(\cM_1, P_1, \alpha_1)$ if $\cM_1 \subseteq \cM_2$,  $P_1 = P_2$, and $i^*(\alpha_2) =\alpha_1$, where $i \colon\cM_1 \hookrightarrow \cM_2$ is subspace inclusion. The extension is said to be proper if $\cM_1 \neq \cM_2$.
\end{define}

It is natural to look for extensions of the classical Maslov--Arnold space $(\Lambda(n),P,\alpha_0)$. A proper extension does exist when $n=2$.

\begin{theorem}\label{thm:fatLG}
There exists a rank two Maslov--Arnold space $(\cF,P,\alpha)$, with $\cF$ dense in $\Gr_2(\R^4)$, that extends the classical Maslov--Arnold space $(\Lambda(2), P, \alpha_0)$.
\end{theorem}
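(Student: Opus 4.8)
The plan is to take $\cF$ to be the Grassmannian with the train of a Lagrangian plane deleted, but with the part of that train already lying in $\Lambda(2)$ put back in. Fix the symplectic form $\omega$ so that $\Lambda(2)=\Lambda_\omega = G\cap H_\omega$, fix a Lagrangian plane $P\in\Lambda_\omega$, and let $\cZ_P = G\cap H_{\omega_P}$ be the train of $P$, where $\omega_P$ is a degenerate $2$-form with $\ker\omega_P=P$ as in Lemma~\ref{lemtransverse}. Set
\[
\cF:=\big(G\setminus\cZ_P\big)\cup\big(\Lambda_\omega\cap\cZ_P\big)=\big\{W\in\Gr_2(\R^4):W\cap P=\{0\}\big\}\cup\Lambda_\omega .
\]
First I would dispatch the easy structural facts: $\Lambda_\omega\subseteq\cF$; $\cF$ is dense in $\Gr_2(\R^4)$ because it contains the open dense chart $G\setminus\cZ_P$; and $\cF$ is \emph{not} a submanifold, since the deleted set $\cZ_P\setminus\Lambda_\omega$ is not closed in $G$ (its closure is all of $\cZ_P$, as $\Lambda_\omega\cap\cZ_P$ is nowhere dense in $\cZ_P$), so $\cF$ is not open; this is forced by Theorem~\ref{thmcont}. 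Away from $\Lambda_\omega$, however, $\cF$ \emph{is} a manifold, being the complement inside the open set $G\setminus\Lambda_\omega$ of the closed hypersurface $\cZ_P^1\setminus\Lambda_\omega$; the singular point $\cZ_P^2=\{P\}$ of the train is harmless because it lies in $\Lambda_\omega$.

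Next I would build the co-orientation of $\cZ_P^1\cap\cF=\cZ_P^1\cap\Lambda_\omega$ required by Definition~\ref{def:MA1}. Working in the Klein quadric model of Section~\ref{sec:2D}, the tangent hyperplanes of $H_{\omega_P}$ and $H_\omega$ at a point $[\xi]\in\cZ_P\cap\Lambda_\omega$ agree only when the $B$-duals $\omega_P^{\#}$ and $\omega^{\#}$ are proportional, which is impossible since $\omega_P$ is degenerate and $\omega$ is symplectic; and the non-transversality locus of $\cZ_P$ and $\Lambda_\omega$ in $G$ is exactly $G\cap P(\langle\omega_P^{\#},\omega^{\#}\rangle)$, which (because $P\in\Lambda_\omega$ forces $B(\omega_P^{\#},\omega^{\#})=0$) reduces to the single point $P\in\cZ_P^2$. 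Hence $\cZ_P^1$ and $\Lambda_\omega$ meet transversally in $G$ along $\cZ_P^1\cap\Lambda_\omega$, and the standard normal-bundle bookkeeping for a transverse intersection gives a natural isomorphism
\[
\nu_{\cZ_P^1}(G)\big|_{\cZ_P^1\cap\Lambda_\omega}\;\cong\;\nu_{\cZ_P^1\cap\Lambda_\omega}(\Lambda_\omega).
\]
The right-hand side carries the symplectic crossing-form co-orientation used to define the classical Maslov class $\alpha_0$; transporting it across the isomorphism co-orients $\cZ_P^1\cap\cF$ inside $\cF$. This is precisely the normal direction along which a generic loop in $\cF$ can cross the train, and one observes that every such crossing is forced to occur inside $\Lambda_\omega$.

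The construction of the class $\alpha\in H^1(\cF;\Z)$ is the heart of the matter and the main obstacle, because $\cF$ fails to be a manifold along the re-attached cycle $Z:=\cZ_P^1\cap\Lambda_\omega$. The strategy is a Thom/Gysin argument adapted to this mild singularity. In suitable local coordinates $\cF$ has the form $\{t_1\ne 0\}\cup\{t_1=t_2=0\}\subseteq\R^4$ with $Z=\{t_1=t_2=0\}$, $\cZ_P=\{t_1=0\}$, $\Lambda_\omega=\{t_2=0\}$; the transverse slice $\{t_1\ne 0\}\cup\{0\}$ of $Z$ is contractible (so $\cF$ is locally contractible there), and $H^1$ of this slice relative to its punctured version is $\Z$, canonically generated once the co-orientation is fixed. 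Patching these local generators yields a relative class $\tau\in H^1(\cF,\cF\setminus Z;\Z)$, and I would check — via the long exact sequence of the pair, excision to a tubular-type neighbourhood of $Z$, and the fact that $Z\simeq S^1$ — that $H^1(\cF,\cF\setminus Z;\Z)\cong\Z\cdot\tau$ injects into $H^1(\cF;\Z)$; define $\alpha$ to be its image. Then $\alpha$ has infinite order, and it is dual to the co-oriented $\cZ_P^1\cap\cF$ by construction. Restricting along $j\colon\Lambda_\omega\hookrightarrow\cF$ sends $\tau$ to the Thom class of $Z$ in the manifold $\Lambda_\omega$ (the co-orientations were matched in the previous step), whence $j^*\alpha=\alpha_0$. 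If a cleaner statement is wanted, a Mayer--Vietoris argument splitting $\cF$ into a neighbourhood of $\Lambda_\omega$ and the contractible chart $G\setminus\cZ_P$ should pin down $H^1(\cF;\Z)\cong\Z$ with generator $\alpha$.

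Finally I would verify the axioms of Definition~\ref{def:MA1}: for a sufficiently generic loop $\gamma\colon S^1\to\cF$ all intersections with $\cZ_P$ lie in $\cZ_P^1\cap\Lambda_\omega$, and the duality of $\alpha$ to this co-oriented locus gives $\langle\alpha,[\gamma]\rangle$ equal to the signed crossing count, i.e.\ the geometric intersection number; equivalently one homotopes $\gamma$ into $\Lambda_\omega$ and invokes Arnold's theorem for $(\Lambda(2),P,\alpha_0)$. Thus $(\cF,P,\alpha)$ is a rank-two Maslov--Arnold space; it contains $\Lambda(2)=\Lambda_\omega$, keeps the same $P$, and satisfies $j^*\alpha=\alpha_0$, so it extends the classical Maslov--Arnold space, and the extension is proper since $\dim\cF=4>3=\dim\Lambda(2)$ (infinite order of $\alpha$ also follows immediately from the existence of the standard Maslov loop in $\Lambda_\omega$ with nonzero index). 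The one genuinely delicate point is the cohomology/Thom-class step of the third paragraph, where the non-manifold behaviour of $\cF$ along the re-attached Maslov cycle must be handled with care; everything else is either a transversality computation in the Klein quadric or a direct consequence of the classical theory.
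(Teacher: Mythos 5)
Your $\cF$ is a genuinely different space from the paper's. The paper starts from two symplectic forms $\omega_I,\omega_J$, lets $S\subseteq H_{\omega_J}$ be the closed semialgebraic set $\{p_{14}-p_{23}=0,\ p_{12}p_{34}\ge 0\}$, sets $\cU:=G\setminus S$, and defines the Fat Lagrangian Grassmannian to be $\cF:=\cU\cup\{P,Q\}$. Thus the paper removes a region of the \emph{other} Lagrangian Grassmannian $\Lambda_J$, and $\cF$ is an open manifold $\cU$ with just two isolated non-manifold points glued back. You instead remove $\cZ_P\setminus\Lambda_\omega$, so your $\cF=(G\setminus\cZ_P)\cup(\cZ_P\cap\Lambda_\omega)$ fails to be a manifold along an entire two-dimensional set. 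The Klein-quadric transversality computation in your second paragraph is correct: the non-transversality locus of $\cZ_P$ and $\Lambda_\omega$ is indeed $G\cap P\bigl(\langle\omega_P^{\#},\omega^{\#}\rangle\bigr)=\{P\}$, since $\omega_P^{\#}\wedge\omega^{\#}=0$ (as $P\in\Lambda_\omega$) and $\omega^{\#}\wedge\omega^{\#}\ne 0$ (as $\omega$ is symplectic). The identification of normal bundles along $\cZ_P^1\cap\Lambda_\omega$ and the transport of the classical co-orientation are also fine.

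The real issue is the cohomology/Thom-class step, which you flag but do not carry out, and which is harder than you indicate. Your $Z=\cZ_P^1\cap\Lambda_\omega$ is \emph{not} closed in $\cF$: its closure adds the singular point $P=\cZ_P^2$, where $\cF$ locally has the form $\{\det A\ne 0\}\cup\{A=A^T\}$ rather than your smooth local model $\{t_1\ne 0\}\cup\{t_1=t_2=0\}$. Consequently the ``tubular-type neighbourhood of $Z$'' and the excision step are genuinely problematic. Moreover your claim ``$Z\simeq S^1$'' is only true for the non-closed $Z$; the relevant closed set $Z\cup\{P\}$ is the full Maslov cycle, with $H_1\cong H_2\cong\Z$ (it is homeomorphic to the one-point compactification of $S^1\times\R$, cf.\ Remark~\ref{fourcases}(iii)), and $\cF\setminus Z=\cU_1\cup\{P\}$ is itself not contractible (it has $H^1\cong\Z$ from a Mayer--Vietoris around $P$). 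None of this shows your construction \emph{fails}, but each of these facts is a place where the one-line plan ``LES of the pair, excision, $Z\simeq S^1$'' breaks down and requires a substitute argument; and since the Thom-class mechanism is exactly the step that would make $\cF$ an MA space, the proof is incomplete at its load-bearing point.

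This is precisely the difficulty the paper's construction is engineered to avoid. There, the set $S$ is chosen so that $\cU=G\setminus S$ is a smooth, open, orientable $4$-manifold in which the train $\cZ_P^1\cap\cU$ is a closed, co-orientable hypersurface; Poincar\'e duality then gives the class in $H^1(\cU;\Z)$ directly, its infinite order is seen by restriction to $\Lambda_I\setminus\{P,Q\}$, and the passage to $\cF=\cU\cup\{P,Q\}$ is a short Mayer--Vietoris around two isolated points whose local links are explicitly identified ($\simeq S^2$, Lemma~\ref{homeqs2}). If you want to push your construction through, you would need a cohomological argument for the non-manifold locus $Z\cup\{P\}$ that replaces Poincar\'e duality; that is possible in principle but is not a routine adaptation, and it is the step you should actually carry out before claiming the theorem.
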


A generalized Maslov index is therefore defined for each loop in $\cF$; for a sufficiently generic loop it is given by the geometric intersection number with the train of $P$, and for a loop contained entirely in $\Lambda(2)$ it coincides with the classical Maslov index. This index is much more broadly defined than the classical Maslov index, since $\cF$ is dense in $\Gr_2(\bbR^{4})$, whereas $\Lambda(2)$ is a hypersurface.

However, the space $\cF$ given by Theorem \ref{thm:fatLG} is not a submanifold of the Grassmannian. It will be seen in the proof (which we give in Section \ref{app:fLG}) that it does not contain an open neighbourhood of $\Lambda(2)$. This makes it difficult to use in practice\,---\,although $\cF$ is left invariant by the flow of any Hamiltonian system with Lagrangian initial data
(because $\Lambda(2)$ is), an arbitrarily small perturbation of the system may cause its trajectories to leave $\cF$, in which case the index is no longer defined.

It turns out this undesirable behaviour is inevitable for extensions of the classical Maslov--Arnold spaces.

\begin{theorem}\label{thmcont}
There does not exist a proper extension $(\cM, P, \alpha)$ of the classical Maslov--Arnold space $(\Lambda(n),P,\alpha_0)$ for which $\cM$ is a connected, smoothly embedded submanifold of $Gr_n( \R^{2n})$.
\end{theorem}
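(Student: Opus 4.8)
The plan is to derive a contradiction from the topological tension between the Maslov class and the fact that $\pi_1(\Gr_n(\R^{2n})) \cong \Z/2$ for $n\geq 2$; the case $n=1$ is trivial since $\Lambda(1)=\Gr_1(\R^2)$ admits no proper extension at all. First I would reduce to $\dim\cM>\dim\Lambda(n)$: if $\dim\cM=\dim\Lambda(n)$, then $\Lambda(n)$ is open in $\cM$ (equidimensional submanifolds) and closed in $\cM$ (being compact), so connectedness forces $\cM=\Lambda(n)$, contradicting properness. Thus $\cM$ genuinely extends $\Lambda(n)$ in some normal directions.

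Next, fix a generator $\ell$ of $\pi_1(\Lambda(n),W_0)\cong\Z$ with $\Mas(\ell;P)=1$ (the standard rotation loop through a Lagrangian transverse to $P$, crossing $\cZ_{_P}$ exactly once). Since $i^*\alpha=\alpha_0$, the doubled loop $2\ell$, viewed in $\cM$, satisfies $\langle\alpha,[2\ell]\rangle=\langle\alpha_0,[2\ell]\rangle=2\neq 0$, so $2\ell$ is not null-homologous, hence not null-homotopic, in $\cM$. On the other hand the composite $\pi_1(\Lambda(n))\to\pi_1(\Gr_n(\R^{2n}))\cong\Z/2$ sends $\ell$ to the nontrivial element, so $2\ell$ \emph{is} null-homotopic in $\Gr_n(\R^{2n})$. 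The whole argument then reduces to showing that this null-homotopy can be chosen to lie inside $\cM$. Concretely I would realize $2\ell$ as the boundary of an embedded $2$-disc: choosing coordinates so that $W_0=\spn\{e_1,\dots,e_n\}$ and $P=\spn\{e_{n+1},\dots,e_{2n}\}$, the assignment $L\mapsto L\oplus\spn\{e_2,\dots,e_n\}$ embeds $\Gr_1(\spn\{e_1,e_{n+1},e_{n+2}\})\cong\R P^2$ into $\Gr_n(\R^{2n})$, in which $2\ell$ is a loop representing twice the generator of $\pi_1(\R P^2)$ and hence bounds a disc $D$ (the image of a hemisphere under $S^2\to\R P^2$). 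One checks $D$ meets $\Lambda(n)$ cleanly, exactly along $\partial D=2\ell$, because the interior of $D$ tilts into the anti-symmetric (non-Lagrangian) direction $e_1\mapsto e_{n+2}$. Using that $\cM$ is a smooth embedded submanifold strictly containing $\Lambda(n)$ — so that near $2\ell$ it carries extra normal directions — together with the freedom to conjugate $\ell$ by elements of $Sp(2n)$ and to rotate the auxiliary $3$-plane, one produces by a transversality/extension argument a disc $D'\subseteq\cM$ with $\partial D'=2\ell$. Then $[2\ell]=0$ in $H_1(\cM;\Z)$, contradicting $\langle\alpha,[2\ell]\rangle=2$.

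The main obstacle is precisely this last step. The difficulty is that a submanifold $\cM\supsetneq\Lambda(n)$ need not contain a full neighbourhood of $\Lambda(n)$ (it may extend in only some normal directions, which moreover vary along $\Lambda(n)$), so one cannot simply invoke a tubular neighbourhood. The crux is to exploit the homogeneity of $\Lambda(n)=U(n)/O(n)$: all Maslov-index-one loops are $Sp(2n)$-equivalent, and the orbit of any single normal direction under the symplectic stabilizer spans the full normal bundle of $\Lambda(n)$; hence whatever normal directions $\cM$ happens to contain suffice, after conjugating $\ell$, to build the required disc. This homogeneity input is exactly what forces the conclusion, and it is where the \emph{submanifold} hypothesis (rather than the weaker \emph{subset} hypothesis of Definition~\ref{def:MA1}) is essential; indeed Theorem~\ref{thm:fatLG} shows the statement fails for general subsets.
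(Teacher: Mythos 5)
Your strategy diverges from the paper's. You aim to realize the loop $2\ell$ as the boundary of a $2$-disc inside $\cM$, whereas the paper works entirely locally near the single point $P$: it identifies $T_P\cM$ with a linear subspace of $\Hom(P,P)$ strictly containing the symmetric matrices, picks a nonzero antisymmetric $B\in T_P\cM$, and constructs two small paths $\epsilon A_1(t)$ and $\epsilon A_2(t)$ with common endpoints and whose concatenation is contractible in $\cM$ via the exponential map. The path $A_1$ stays symmetric and has spectral flow $\pm 1$ (one transverse crossing of $\cZ_{_P}$), while $A_2$ uses the extra direction $B$ to remain non-degenerate. So one obtains a contractible, sufficiently generic loop with nonzero geometric intersection number with $\cZ_{_P}$, directly contradicting the MA axiom. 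This is a purely local argument and never needs to control $\cM$ away from a neighbourhood of $P$.

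The gap in your argument is exactly the step you flag as the crux, and I don't think your proposed resolution closes it. To push the bounding disc for $2\ell$ into $\cM$ you invoke conjugation by $Sp(2n)$ and the homogeneity of the normal bundle of $\Lambda(n)$, but $\cM$ is a \emph{fixed} submanifold and is not assumed to be invariant under $Sp(2n)$ (or any subgroup of it); conjugating $\ell$ by $g\in Sp(2n)$ gives a loop in $g\Lambda(n)=\Lambda(n)\subseteq\cM$, but the would-be disc bounding $2(g\cdot\ell)$ need not lie in $\cM$ any more than the original one did. The fact that the isotropy orbit of one normal vector spans the full normal bundle tells you nothing about \emph{which} normal directions $\cM$ actually contains at each point of $\Lambda(n)$; these may be a strict, non-invariant sub-bundle, and they vary along $\Lambda(n)$. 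Moreover even if $\cM$ did carry the needed normal direction at every point of $2\ell$, the disc you want still has an interior far from $\Lambda(n)$, where you have no information about $\cM$ at all. By contrast, the paper uses the submanifold hypothesis in the one place it is genuinely needed and sufficient: to guarantee a single antisymmetric vector in the \emph{tangent space at one point}. Your argument, as written, uses the hypothesis only to assert ``extra normal directions along $2\ell$,'' which is not enough to produce the disc; the ``transversality/extension argument'' is the missing content, not a routine step.

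If you want to salvage your strategy, the natural fix is to shrink the disc into a chart: instead of the global loop $\ell$ of Maslov index one, take a small contractible loop near $P$ that crosses $\cZ_{_P}$ and is pushed off of the train by the extra direction $B\in T_P\cM$. At that point you have essentially reconstructed the paper's proof.
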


In other words, the only smooth, connected MA space that extends $\Lambda(n)$ is $\Lambda(n)$ itself. Compare Remark \ref{fourcases}(iv), which is smooth and contains a Lagrangian Grassmanian, but is not an MA space.

The remainder of this section is devoted to the proof of these two theorems.

\subsection{The Fat Lagrangian Grassmannian}\label{app:fLG}
In this section we prove Theorem \ref{thm:fatLG} by constructing a rank two Maslov--Arnold space $(\cF,P)$ that extends the classical Maslov--Arnold space $(\Lambda(2), P)$ for any $P \in \Lambda(2)$.

As described above, $\cF$ has the desirable property of being a large MA space that contains the entire Lagrangian Grassmannian, and the undesirable property of not being a smooth manifold. The lack of smoothness follows directly from the construction given below, but also from Theorem \ref{thmcont}, which demonstrates that this problem is essential, and does not depend on the
particular details of our construction.

Let $v_1,v_2,v_3, v_4 \in V \cong \R^4$ be a basis, with dual basis $v^*_1,v^*_2,v^*_3,v^*_4 \in V^*$. Define symplectic forms 
\begin{align*}
	\omega_I &:= v^*_1 \wedge v^*_3 + v^*_2 \wedge v^*_4 & \omega_J & := v^*_1\wedge v^*_4 - v^*_2\wedge v^*_3,
\end{align*}  
with corresponding Lagrangian Grassmannians  
\begin{align*}
\Lambda_I &:= G \cap H_{\omega_I} & \Lambda_J &:= G \cap H_{\omega_J},
\end{align*}
and observe that both $Q := [v_1\wedge v_2]$ and $P:= [v_3 \wedge v_4]$ lie in the intersection $\Lambda_I \cap \Lambda_J$.

Denote Pl\"ucker coordinates by $p_{ij} = v^*_i \wedge v^*_j$, regarded as linear functions $p_{ij}\colon \bigwedge^2(V) \rightarrow \R$. The image of the Pl\"ucker embedding, $G \subseteq P(\bigwedge^2(V))$, is defined by the homogeneous quadratic equation $$p_{12}p_{34} - p_{13}p_{24} + p_{14} p_{23} = 0.$$

Consider the closed subset $S \subseteq P(\bigwedge^2(V))$ defined by the linear equation $ p_{14} -p_{23}= 0$ and the inequality $p_{12} p_{34} \geq 0$. The inequality makes sense in $P(\bigwedge^2(V))$ because given $\xi \in \Lambda^2(V)$ and $c \in \R$, we have $p_{12} p_{34}(c \xi) = c^2 p_{12} p_{34}(\xi)$, so the sign of $p_{12} p_{34}$ is well-defined.

\begin{lemma}\label{intpq}
The intersection $ \Lambda_I \cap S$ consists of the two points $P, Q \in G$.
\end{lemma}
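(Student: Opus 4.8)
The plan is to do everything in the Pl\"ucker coordinates $p_{ij}$ and to extract all the mileage from the sign condition $p_{12}p_{34}\ge 0$ that is part of the definition of $S$. First I would observe that the linear equation $p_{14}-p_{23}=0$ cutting out $S$ is exactly the equation of the hyperplane $H_{\omega_J}$: regarding $\omega_J=v_1^*\wedge v_4^*-v_2^*\wedge v_3^*$ as a linear functional on $\bigwedge^2(V)$, it is literally $p_{14}-p_{23}$. Consequently $S\subseteq H_{\omega_J}$, and since $\Lambda_I=G\cap H_{\omega_I}$ is contained in $G$, we obtain the containment
\[
	\Lambda_I\cap S\ \subseteq\ \big(G\cap H_{\omega_I}\cap H_{\omega_J}\big)\cap\{p_{12}p_{34}\ge 0\}\ =\ \big(\Lambda_I\cap\Lambda_J\big)\cap\{p_{12}p_{34}\ge 0\}.
\]

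Next I would describe $\Lambda_I\cap\Lambda_J$ in coordinates. On this set $\omega_I=p_{13}+p_{24}$ and $\omega_J=p_{14}-p_{23}$ both vanish, so $p_{24}=-p_{13}$ and $p_{23}=p_{14}$. Substituting these into the Pl\"ucker relation $p_{12}p_{34}-p_{13}p_{24}+p_{14}p_{23}=0$ yields $p_{12}p_{34}+p_{13}^2+p_{14}^2=0$, hence
\[
	p_{12}p_{34}\ =\ -\big(p_{13}^2+p_{14}^2\big)\ \le\ 0 \qquad\text{on }\Lambda_I\cap\Lambda_J.
\]
(This is the incarnation of the fact, noted in Remark \ref{fourcases}(iv), that $\Lambda_I\cap\Lambda_J$ is an $S^2$ sitting inside $\R P^3$.) The key step is then purely formal: combining $p_{12}p_{34}\le 0$ with the constraint $p_{12}p_{34}\ge 0$ from $S$ forces $p_{12}p_{34}=0$ and simultaneously $p_{13}^2+p_{14}^2=0$; since the coordinates are real this gives $p_{13}=p_{14}=0$, and therefore also $p_{24}=p_{23}=0$. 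Thus any point of $\Lambda_I\cap S$ has all Pl\"ucker coordinates zero except possibly $p_{12}$ and $p_{34}$, which cannot both vanish (that would not represent a point of $P(\bigwedge^2 V)$) and cannot both be nonzero (else $p_{12}p_{34}\ne 0$). Hence exactly one of them is nonzero: $p_{12}=0$ gives $[v_3\wedge v_4]=P$, and $p_{34}=0$ gives $[v_1\wedge v_2]=Q$.

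For the reverse inclusion I would simply check that $P$ and $Q$ are in $\Lambda_I\cap S$: both lie in $\Lambda_I\cap\Lambda_J$ by the observation already made in the text, and for each of them one has $p_{14}=p_{23}=0$ and $p_{12}p_{34}=0\ge 0$, so both lie in $S$. This proves $\Lambda_I\cap S=\{P,Q\}$. I do not expect any real obstacle here; the only thing that needs care is the bookkeeping that identifies the $2$-forms $\omega_I,\omega_J$ with the linear functionals $p_{13}+p_{24}$ and $p_{14}-p_{23}$, and the observation that the "hard" content is just that a quantity which is both $\le 0$ and $\ge 0$ must vanish, collapsing the sphere $\Lambda_I\cap\Lambda_J$ onto the two coordinate points $P$ and $Q$.
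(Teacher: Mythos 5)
Your proof is correct and follows essentially the same line as the paper's: both substitute the linear constraints $p_{24}=-p_{13}$ (from $\Lambda_I$) and $p_{23}=p_{14}$ (from $S$) into the Pl\"ucker relation and conclude from $p_{12}p_{34}\geq 0$ that $p_{13}=p_{14}=p_{23}=p_{24}=0$, leaving only $P$ and $Q$. Your framing via the observation that $p_{14}-p_{23}=0$ is precisely the equation of $H_{\omega_J}$, and your explicit check of the converse inclusion, are nice additions, but they do not change the underlying computation.
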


\begin{proof}
The intersection is determined by the system of inequalities
\begin{eqnarray*}
p_{12}p_{34} - p_{13}p_{24} +p_{14} p_{23} &=& 0\\
p_{13} &=& -p_{24} \\
p_{14} &=&p_{23} \\
p_{12}p_{34} &\geq&0
\end{eqnarray*}
where the first two equations determine $\Lambda_I$ and the second two inequalities determine $S$. Substituting the first three equalities into the inequality yields $ p_{13}^2 + p_{24}^2 \leq 0$, which is only possible if $p_{13} = p_{24} =0$. We are thus reduced to the equivalent equations
\begin{equation*}
	p_{12}p_{34} = p_{13} = p_{24} = p_{14} = p_{23}  =0,
\end{equation*}
which have only two solutions:  $Q = [ v_1\wedge v_2]$  and $P= [v_3\wedge v_4]$.
\end{proof}

Define $\cU:= G \setminus S$.  This is an open, dense subset of $G$, hence it is a non-compact, orientable 4-manifold, so by Poincar\'e duality $H^1(\cU;\Z)$ is naturally isomorphic to the relative homology group $H_3(G,S;\Z)$ (alternatively, the Borel--Moore homology group $H_3^{BM}(\cU;\Z)$). The train of $P$ in $\cU$ is the intersection $\cU \cap H_{v^*_3 \wedge v^*_4}$.

\begin{lemma}\label{lem cU}
The train of $\cZ_{_P} \cap \cU$ in $\cU$ is a smooth, closed, co-orientable submanifold of $\cU$. 
\end{lemma}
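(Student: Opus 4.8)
The plan is to show that $\cZ_{_P} \cap \cU$ is exactly the non-singular locus of the train $\cZ_{_P}$ inside $G$, and that the removed set $S$ contains precisely the ``bad'' points that would otherwise obstruct smoothness and co-orientability. Recall from \eqref{def:train1} that $\cZ_{_P}$ is the train of $P = [v_3 \wedge v_4]$, and it is stratified as $\cZ_{_P} = \cZ_{_P}^1 \sqcup \cZ_{_P}^2$, where $\cZ_{_P}^2 = \{P\}$ is the single point where the intersection with $P$ is two-dimensional. By Lemma~\ref{lemtransverse} (applied to the degenerate two-form $\omega = v_3^* \wedge v_4^*$, whose kernel is $P$), the smooth part $\cZ_{_P}^1$ is a smooth embedded hypersurface in $G$, and its normal bundle in $G$ is the restriction of the normal bundle of the hyperplane $H_{v_3^* \wedge v_4^*}$ in $P(\bigwedge^2 V)$. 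Since $P \in S$ by Lemma~\ref{intpq}, passing to $\cU = G \setminus S$ removes the singular point, so $\cZ_{_P} \cap \cU = \cZ_{_P}^1 \cap \cU$ is automatically a smooth submanifold of $\cU$.

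First I would verify that $\cZ_{_P} \cap \cU$ is \emph{closed} in $\cU$: this follows because $\cZ_{_P}$ is closed in $G$ (it is the zero set of the section $W \mapsto$ restriction of $v_3^* \wedge v_4^*$, or equivalently $G \cap H_{v_3^* \wedge v_4^*}$, an algebraic subvariety), so $\cZ_{_P} \cap \cU$ is closed in $\cU$, and it coincides with the smooth part $\cZ_{_P}^1 \cap \cU$ because the only singular point $P$ has been excised. Next, for smoothness I would invoke Lemma~\ref{lemtransverse} directly: the hypersurface $\cZ_{_P}^1$ is smooth in $G$, so its intersection with the open set $\cU$ is smooth in $\cU$.

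The main point is co-orientability, i.e. that the normal line bundle $\nu|_{\cZ_{_P}^1 \cap \cU}$ is trivial. By Lemma~\ref{lemtransverse} this normal bundle is the restriction of the normal bundle of $H_{v_3^* \wedge v_4^*}$ in $P(\bigwedge^2 V) \cong \R P^5$. The normal bundle of a hyperplane $H_\omega \subseteq \R P^5$ is isomorphic to $\mathcal{O}(1)|_{H_\omega}$ (equivalently $\mathcal{O}(2)$ twisted appropriately over $\R P^5$), which is \emph{not} globally trivial over all of $H_\omega \cong \R P^4$; the obstruction is the generator of $H^1(\R P^4; \Z/2) \cong \Z/2$. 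The key observation, then, is that this class restricts to zero on $\cZ_{_P}^1 \cap \cU$ precisely because $\cU$ was built to kill it. Concretely, I would exhibit an explicit nonvanishing section: on $\cZ_{_P}^1$ the normal direction is detected by the Pl\"ucker function $p_{12}$ (the coefficient dual to $v_1 \wedge v_2$, which pairs with $v_3^* \wedge v_4^*$), and the sign of $p_{12}$ is well-defined away from its zero locus. The condition defining $S$, namely $p_{14} = p_{23}$ together with $p_{12} p_{34} \geq 0$, is designed so that on $\cU \cap \cZ_{_P}^1$ one of $p_{12}$, $p_{34}$ has a well-defined sign — more precisely, I expect that on $\cZ_{_P}^1$ (where, after a linear-algebra normalization, one can take $p_{34} \neq 0$ in a chart adapted to $W$ transverse to $Q$) the removal of $S$ guarantees $p_{12}$ never vanishes, giving a global co-orientation. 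I would carry this out by choosing an affine chart on $G$ around a point of $\cZ_{_P}^1$, writing the defining equation of $\cZ_{_P}^1$, and checking that the function providing the normal coordinate is sign-definite on the complement of $S$.

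The hard part will be the last step: making the co-orientation argument clean and chart-independent, rather than a case analysis. The subtlety is that $\cZ_{_P}^1$ is connected but its normal bundle could still be twisted along a loop, so I need a genuinely global nonvanishing section of $\nu$, not just a local one. I anticipate the cleanest route is to produce such a section directly from the algebra: identify $\nu|_{\cZ_{_P}^1}$ with a line bundle whose fibre over $W$ is spanned by the value of $v_3^* \wedge v_4^*$ on a suitably normalized basis of $W$ (as in the function $\psi_\omega$ of \eqref{psidef}), note this is only well-defined up to sign in general, and then show that the constraint imposed by deleting $S$ pins down the sign consistently over all of $\cZ_{_P}^1 \cap \cU$. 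Once the nonvanishing section is in hand, triviality of $\nu|_{\cZ_{_P}^1 \cap \cU}$, hence co-orientability, is immediate, and closedness plus smoothness have already been dealt with above.
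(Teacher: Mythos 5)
Your identification of the main issues is sound: smoothness follows from Lemma~\ref{lemtransverse} once the singular point is excised, closedness is clear because $\cZ_{_P}$ is a closed algebraic subset of $G$, and the real work is co-orientability. However, the co-orientability argument is both incomplete and, in the one concrete claim you make, incorrect.

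The concrete error: you assert that on $\cZ_{_P}^1 \cap \cU$ the removal of $S$ ``guarantees $p_{12}$ never vanishes.'' This cannot be the relevant quantity. The train is $G \cap H_{v^*_3 \wedge v^*_4} = G \cap \{p_{34}=0\}$, and on $\{p_{34}=0\}$ the inequality $p_{12}p_{34} \geq 0$ is \emph{automatically satisfied}, so the condition $p_{12} p_{34} \geq 0$ places no constraint whatever on $p_{12}$. The function that becomes nonvanishing after removing $S$ is $p_{14} - p_{23}$: since the inequality is vacuous on $\{p_{34}=0\}$, one has $S \cap H_{v^*_3 \wedge v^*_4} = \{p_{14}-p_{23}=0\} \cap H_{v^*_3 \wedge v^*_4}$, hence
\[
  \cU \cap H_{v^*_3 \wedge v^*_4} = \bigl(G \setminus H_{\omega_J}\bigr) \cap H_{v^*_3 \wedge v^*_4},
\]
where $\omega_J = v^*_1\wedge v^*_4 - v^*_2\wedge v^*_3$. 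Your proposed chart (``take $p_{34}\neq 0$'') is also incompatible with being on $\cZ_{_P}^1$, where $p_{34}=0$ identically.

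Beyond the misidentification, you never actually complete the argument --- the phrases ``I expect,'' ``I anticipate the cleanest route,'' and ``I would carry this out'' announce a plan rather than a proof, and the plan hinges on a false claim. What the paper does instead is cleaner than an explicit-section computation: once one sees that deleting $S$ from the train is the same as deleting the hyperplane $H_{\omega_J}$, the train in $\cU$ is recognized as an affine hyperplane $\R^4$ sitting inside the affine space $P(\bigwedge^2 V) \setminus H_{\omega_J} \cong \R^5$, and by Lemma~\ref{lemtransverse} the normal bundle of $\cZ_{_P}^1 \cap \cU$ in $\cU$ is the pullback of the normal bundle of $\R^4 \subseteq \R^5$, which is trivially trivial. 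Your instinct that a linear Pl\"ucker function should serve as a global co-orienting section is not far off --- $p_{14}-p_{23}$ can be read as a section of $\mathcal{O}(1)$ restricted to the train, and its nonvanishing on $\cU$ trivializes the bundle --- but the argument must actually run through the definition of $S$ to see that this is the correct function, and you have to verify that this linear function really does give a section of the normal bundle rather than merely a nonvanishing function on the train.
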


\begin{proof}

The intersection $G \cap H_{v^*_3 \wedge v^*_4}$ is transverse except at $P = [ v_3 \wedge v_4]$.  By Lemma \ref{intpq} we see $P \not \in \cU$, so the intersection $\cU \cap H_{v^*_3 \wedge v^*_4}$ is transverse, hence it is a smoothly embedded codimension one  submanifold. 

The intersection $\cU \cap H_{v^*_3 \wedge v^*_4}$ is determined in Pl\"ucker coordinates by 
\begin{eqnarray*}
 \cU \cap H_{v^*_3 \wedge v^*_4} &=&  G   \cap \left( \{p_{14} -p_{23} =0\} \cap  \{ p_{12}p_{34} \geq 0\}  \right)^c   \cap \{p_{34}=0\}  \\
 & =& G  \cap  \{p_{34}=0\} \cap  \{p_{14} -p_{23} =0\} ^c  \\
 & = &  (G \setminus H_{\omega_J}) \cap H_{v^*_3 \wedge v^*_4}
 \end{eqnarray*}
where we have applied de Morgan's law and the fact that $ \{p_{34}=0\} \subseteq \{ p_{12}p_{34} \geq 0\}$. Therefore, the normal bundle of $\cU \cap H_{v^*_3 \wedge v^*_4}$ in $\cU$ is the pullback of the normal bundle of the affine space $ \left( P( \bigwedge^2(V)) \setminus H_{\omega_J} \right) \cap H_{v^*_3 \wedge v^*_4} \cong \R^4$ in the affine space $P( \bigwedge^2(V))
\setminus H_{\omega_J} \cong \R^5$. But this is clearly co-orientable, so we are done.
\end{proof}

\begin{rem}
One might expect, based on the above argument, that since the linear inclusion $\R^4 \subseteq \R^5$ has a trivial Poincar\'e dual in $H^1(\R^5) \cong \{0\}$, the same must be true of $\cU \cap H_{v^*_3 \wedge v^*_4}$ in $\cU$. However, since $\cU$ is not a subset of $P(\bigwedge^2(V) \setminus H_{\omega_J} \cong \R^5$, there is no natural map in cohomology from $H^1(\R^5)$ to
$H^1(\cU)$.
\end{rem}

\begin{cor}\label{NUP}
The open set $\cU \subseteq G$ is a Maslov--Arnold space with respect to $P$.
\end{cor}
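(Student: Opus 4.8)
The plan is to define the cohomology class $\alpha$ as the Poincar\'e dual of the co-oriented train produced by Lemma~\ref{lem cU}, for which the intersection property of Definition~\ref{def:MA1} is automatic, and then to establish that $\alpha$ has infinite order by restricting to a Lagrangian Grassmannian sitting inside $\cU$. Concretely, I would first fix a co-orientation of the hypersurface $\cZ_{_P}\cap\cU=\cU\cap H_{v^*_3\wedge v^*_4}$ (possible by Lemma~\ref{lem cU}) and pull back the Thom class of its oriented normal bundle, exactly as in \cite[\S 6]{bott1982differential}, to obtain $\alpha\in H^1(\cU;\Z)$. By construction, for every loop $\gamma\colon S^1\to\cU$ that is smooth and transverse to $\cZ_{_P}\cap\cU$, the pairing $\langle\alpha,[\gamma]\rangle$ is the signed count of intersections of $\gamma$ with the co-oriented train. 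Since the unique point of $\cZ_{_P}\setminus\cZ_{_P}^1$ — namely $P$ itself, the only point of $G$ at which $G$ and $H_{v^*_3\wedge v^*_4}$ meet non-transversely by Lemma~\ref{lemtransverse} — lies in $S$ by Lemma~\ref{intpq}, hence not in $\cU$, every intersection of a loop in $\cU$ with the train automatically lies in $\cZ_{_P}^1$; thus ``sufficiently generic'' in the sense of Section~\ref{sec:def} means precisely ``smooth and transverse to $\cZ_{_P}\cap\cU$'', and the pairing condition in Definition~\ref{def:MA1} holds for $(\cU,P,\alpha)$.

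It then remains to show that $\alpha$ has infinite order, for which it suffices to exhibit one sufficiently generic loop $\gamma$ in $\cU$ with $\langle\alpha,[\gamma]\rangle\neq0$. The key input is Lemma~\ref{intpq}: because $\Lambda_I\cap S=\{P,Q\}$ we have $\Lambda_I\cap\cU=\Lambda_I\setminus\{P,Q\}$, and deleting two points from the $3$-manifold $\Lambda_I\cong\Lambda(2)$ does not change the fundamental group, so $\pi_1(\Lambda_I\setminus\{P,Q\})\cong\pi_1(\Lambda_I)\cong\Z$. I would take a loop in $\Lambda_I\setminus\{P,Q\}$ representing a generator of $\pi_1$ (obtained by putting the standard index-one loop of $\pi_1(\Lambda(2))$ into general position with respect to the two removed points), and then perturb it within $\cU$ to be transverse to $\cZ_{_P}\cap\cU$. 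Its classical Maslov index with respect to $P$ (which is $\omega_I$-Lagrangian) is $\pm1$. One then argues that this classical Maslov index equals the geometric intersection number computed in $\cU$: since $\omega_I$ is nondegenerate, $\Lambda_I=G\cap H_{\omega_I}$ is an everywhere-transverse intersection, and a short computation on the Klein quadric shows that the only points of $G$ at which the hyperplanes $H_{\omega_I}$, $H_{v^*_3\wedge v^*_4}$ and $G$ fail to be in general position are the center points $P$ and $Q$, already removed; hence $\Lambda_I$ meets $\cZ_{_P}^1\cap\cU$ transversely inside $\cU$, the normal bundle of the Maslov cycle in $\Lambda_I$ is the restriction of the normal bundle of $\cZ_{_P}^1\cap\cU$ in $\cU$, and equivalently $i^*\alpha$ is the Poincar\'e dual of the restricted Maslov cycle, which under $H^1(\Lambda_I\setminus\{P,Q\};\Z)\cong H^1(\Lambda_I;\Z)$ is, up to sign, the classical Maslov class $\alpha_0$ of \cite{A67} — an element of infinite order. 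Hence $\langle\alpha,[\gamma]\rangle=\pm1$, $\alpha$ has infinite order, and $(\cU,P,\alpha)$ is a Maslov--Arnold space.

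The step I expect to be the main obstacle is precisely this last compatibility of co-orientations: checking that $\Lambda_I$ meets the train $\cZ_{_P}\cap\cU$ transversely inside the $4$-dimensional space $\cU$ (away from the removed points), so that the geometric intersection number computed in $\cU$ agrees up to one overall sign with the classical Maslov index computed in the $3$-dimensional space $\Lambda_I$. This comes down to a short linear-algebra computation on the Klein quadric, verifying that the only failures of general position among $H_{\omega_I}$, $H_{v^*_3\wedge v^*_4}$ and $G$ occur at the two points singled out by Lemma~\ref{intpq}. Everything else is a direct appeal to the Thom-class formalism or an elementary general-position argument.
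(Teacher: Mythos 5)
Your proposal is correct and takes essentially the same route as the paper: define $\alpha$ as the Poincar\'e dual of the co-oriented train (Lemma~\ref{lem cU}), then establish infinite order by restricting to $N=\Lambda_I\setminus\{P,Q\}=\cU\cap\Lambda_I$ and observing that $i^*\alpha$ corresponds, under $H^1(N;\Z)\cong H^1(\Lambda_I;\Z)\cong\Z$, to the classical Maslov class. The paper performs the restriction at the level of cohomology classes rather than via a specific loop, and does not spell out the transversality of $\Lambda_I$ with $\cZ_{_P}\cap\cU$ inside $\cU$ that you correctly flag as the point needing verification; your version is somewhat more explicit about both this compatibility and the fact that removing $P\in S$ forces every crossing of a loop in $\cU$ with the train to lie in $\cZ_{_P}^1$.
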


\begin{proof}
Let $N := \Lambda_I \setminus \{P,Q\}$.  By Lemma \ref{intpq} we know $N = \cU \cap \Lambda_I$.  Since $\Lambda_I$ is a 3-manifold and $N$ is the complement of two isolated points in $\Lambda_I$, the inclusion determines an isomorphism $H^1(N;\Z) \cong H^1(\Lambda_I;\Z) \cong \Z$, which is generated in both cases by the Poincar\'e dual of the train of $P$ (with a chosen co-orientation).   

It follows from Lemma \ref{lem cU} that the train $\cZ_{_P}  \cap \cU$, equipped with a chosen co-orientation, represents a well-defined cohomology class in $H^1(\cU;\Z) \cong H_3^{BM}(\cU;\Z)$. This cohomology class must have infinite order, because it is sent to a generator of $H^1(N;\Z)$ under restriction to $N \subseteq \cU$.
\end{proof}

We now define the \emph{Fat Lagrangian Grassmannian}
\begin{align}
	\cF:= \cU \cup \Lambda(2) = \cU \cup \{P,Q\}.
\end{align}
Note that $\cF$ is not a manifold. However, it is a semialgebraic set, since $\cU$ is defined by polynomial inequalities. 

Consider the coordinate neighbourhood of $P \in G$ by $$U_P = \{ gr(A) : A \in \Hom(P,Q)\} 
$$ consisting of all $2$-planes that intersect $Q$ trivially, and hence can be realized as graphs of linear maps from $P$ to $Q$. Denote by $J\colon P \rightarrow Q$ the complex structure with $J(v_3) = - v_2$ and $J(v_4) = v_1$. As in the proof of Theorem \ref{thmcont}, we have
$$U_P = \{ gr(JA) : A \in \Hom(P,P)\} \cong \Hom(P,P).$$ 
Using the matrix representation with respect to the basis $\{v_3,v_4\}$ of $P$ determines a coordinate chart 
$$U_P \cong \R^4  = \left\{  A = \begin{pmatrix} 
x & y \\
z & w 
\end{pmatrix}  :  x,y,z,w \in \R \right\}. $$  

Under this identification
\begin{eqnarray*}
\Lambda_J \cap U_P &=& \{ gr(JA) : A = A^T \},\\
H_{v^*_3 \wedge v^*_4} \cap U_P &=& \{ gr(JA) : \det(A) = 0 \}.
\end{eqnarray*}
Similarly, we have a coordinate neighbourhood of $Q \in G$,  $$U_Q = \{gr(A) : A \in \Hom(Q,P)\} = \{ gr(AJ^{-1}) : A \in \Hom(P,P)\} \cong \R^4.$$

\begin{lemma}\label{homeqs2}
The spaces  $U_Q \setminus S$ and $U_P \setminus S$ are both homeomorphic to $ \R \times ( \R^3 \setminus \{0\})$, and are therefore homotopy equivalent to $S^2$. 
\end{lemma}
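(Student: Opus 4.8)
The plan is to make $S\cap U_P$ fully explicit in the affine chart $U_P\cong\R^4$, and then to recognise its complement by a conical decomposition together with a cellularity argument; the case of $U_Q$ will then follow by symmetry. Concretely, I would first compute $S$ in the chart $U_P\cong\{A=\left(\begin{smallmatrix}x&y\\z&w\end{smallmatrix}\right)\}\cong\R^4$, $A\mapsto gr(JA)$. Writing out the frame $\{v_3+JA(v_3),\,v_4+JA(v_4)\}$ and reading off Pl\"ucker coordinates gives $p_{12}=\det A$, $p_{34}=1$, and $p_{14}-p_{23}=z-y$. Since $S=\{p_{14}-p_{23}=0\}\cap\{p_{12}p_{34}\ge0\}$, this yields
\[
  S\cap U_P=\{z=y,\ \det A\ge0\}=\{(x,y,z,w):\ z=y,\ xw-y^2\ge0\},
\]
equivalently $\{A=A^T,\ \det A\ge0\}$, consistent with the descriptions of $\Lambda_J\cap U_P$ and of the train of $P$ in $U_P$ recorded above. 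After the linear change of variables $u=\tfrac12(x+w)$, $v=\tfrac12(x-w)$, $\tau=z-y$ (leaving $y$ unchanged), the set $S\cap U_P$ becomes the solid double cone $R:=\{\tau=0,\ u^2\ge v^2+y^2\}$ sitting inside the coordinate hyperplane $\{\tau=0\}\subseteq\R^4$. So it suffices to show $\R^4\setminus R\cong\R\times(\R^3\setminus\{0\})$.

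The key observation is that $R$ is a cone with apex the origin: it is invariant under $x\mapsto\lambda x$ for every $\lambda>0$. Identifying $\R^4\setminus\{0\}\cong S^3\times(0,\infty)$ by $x\mapsto(x/|x|,|x|)$, the set $R\setminus\{0\}$ corresponds to $L\times(0,\infty)$, where $L:=R\cap S^3=\{\tau=0,\ u^2+v^2+y^2=1,\ u^2\ge\tfrac12\}$ is a disjoint pair of round polar caps on the equatorial $2$-sphere $S^3\cap\{\tau=0\}$; in particular $L\cong D^2\sqcup D^2$ is a pair of disjoint, locally flat $2$-disks in $S^3$. Hence
\[
  \R^4\setminus R\ \cong\ (S^3\setminus L)\times(0,\infty)\ \cong\ (S^3\setminus L)\times\R .
\]

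To identify $S^3\setminus L$ I would use cellularity: a compact, locally flat $2$-disk in a $3$-manifold has a regular neighbourhood that is a $3$-ball, hence is cellular, so collapsing it produces $S^3$ again and the quotient map restricts to a homeomorphism $S^3\setminus D^2\cong S^3\setminus\{\mathrm{pt}\}\cong\R^3$. Since the second cap is disjoint from the first, its image under this homeomorphism is a compact, locally flat (hence cellular) $2$-disk in $\R^3$, and collapsing it in turn gives $S^3\setminus L\cong\R^3\setminus D^2\cong\R^3\setminus\{\mathrm{pt}\}=\R^3\setminus\{0\}$. Therefore $\R^4\setminus R\cong(\R^3\setminus\{0\})\times\R$, i.e. $U_P\setminus S\cong\R\times(\R^3\setminus\{0\})$. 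For $U_Q\setminus S$, I would invoke the symmetry of the construction: the linear involution of $V$ exchanging $v_1\leftrightarrow v_3$ and $v_2\leftrightarrow v_4$ fixes $\omega_J$, sends $\omega_I$ to $-\omega_I$, fixes the functional $p_{14}-p_{23}$ and the sign of $p_{12}p_{34}$ (so it preserves $S$), and swaps $P$ and $Q$; the induced homeomorphism of $\Gr_2(V)$ carries $(U_Q,\,S\cap U_Q)$ onto $(U_P,\,S\cap U_P)$, whence $U_Q\setminus S\cong U_P\setminus S\cong\R\times(\R^3\setminus\{0\})$. The final assertion is then immediate, since $\R\times(\R^3\setminus\{0\})$ deformation retracts onto $\{0\}\times S^2$ and so is homotopy equivalent to $S^2$.

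I expect the delicate part to be the middle step: pinning down $S\cap U_P$ via the Pl\"ucker bookkeeping, and then justifying the conical decomposition together with the cellularity argument — in particular that $L$ genuinely consists of two disjoint, locally flat (hence cellular) $2$-disks — which is exactly what promotes the routine homotopy-equivalence statement to the claimed homeomorphism.
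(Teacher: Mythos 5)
Your proof follows the same strategy as the paper's: compute $S\cap U_P$ explicitly in the affine chart, observe that it is a solid double cone through the origin inside a three-dimensional linear subspace, pass to the spherical link, and then identify the complement of two disjoint $2$-disks in $S^3$ with $\R^3\setminus\{0\}$. Two remarks are in order. First, your Pl\"ucker computation giving $S\cap U_P=\{A=A^{T},\ \det A\ge 0\}$ (i.e.\ $z=y$ and $\det A\ge 0$) is the \emph{correct} one, and it actually corrects a slip in the paper's own proof, which instead writes $S\cap U_P=\{\tr A=x+w=0,\ \det A\ge 0\}$. Since $S\subset H_{\omega_J}$ and the paper itself records $\Lambda_J\cap U_P=\{A=A^T\}$, the trace condition is inconsistent with what precedes it; the symmetry condition $y=z$ is what the frame $\{v_3+JA(v_3),\,v_4+JA(v_4)\}$ actually forces. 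Fortunately this is harmless for the paper's argument, because both descriptions cut out a solid double cone in a $3$-plane, so the cone/link analysis goes through unchanged. Second, your proof supplies rigor that the paper leaves implicit: the paper simply asserts that the complement of two closed $2$-disks in $S^3$ is (diffeomorphic to) $\R^3\setminus\{0\}$, whereas you justify the homeomorphism via cellularity of locally flat disks. Given that the two caps here sit as round polar caps of an equatorial great $S^2\subset S^3$, an explicit stereographic/rotational argument would also work and might be more elementary, but the cellularity route is perfectly valid. Finally, your explicit involution $v_1\leftrightarrow v_3$, $v_2\leftrightarrow v_4$ showing $U_Q\setminus S\cong U_P\setminus S$ is a clean way to make precise the paper's one-line ``the case $U_Q\setminus S$ is similar.''
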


\begin{proof}
Under the identification $U_P \cong \Hom(P,P) \cong \R^4$, the intersection $S \cap U_P$ is defined by the equations $\tr(A) = x+w = 0$ and $\det(A) = xw -yz \geq 0$. These describe a solid, closed double cone in the three-dimensional subspace $\{x+w=0\}$. The complement $U_P \setminus S$ is therefore invariant under multiplication by the positive scalar $\R_+$ and intersects the
unit sphere $S^3$ in the complement of two closed $2$-disks, which is diffeomorphic to $\R^3 \setminus \{0\}$. The case $U_Q \setminus S$ is similar.
\end{proof}

\begin{prop}\label{FLGMaslov}
The inclusion $\cU \subseteq \cF$ defines an isomorphism $H^1(\cF;\Z) \cong H^1(\cU;\Z)$.  Consequently, $\cF$ is an MA space that extends $\Lambda_I$ and is dense in $G$.
\end{prop}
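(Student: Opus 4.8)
The plan is to obtain the isomorphism $H^1(\cF;\Z)\cong H^1(\cU;\Z)$ by adjoining the two missing points $P$ and $Q$ to $\cU$ one at a time and running Mayer--Vietoris, and then to transport the Maslov--Arnold structure of $\cU$ (Corollary \ref{NUP}) across it.

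\emph{Step 1: the cohomology isomorphism.} Put $\cF'=\cU\cup\{Q\}$. Because $Q\notin U_P$ and $P\notin U_Q$ we have $\cF\cap U_P=(U_P\setminus S)\cup\{P\}=:A_P$ and $\cF\cap U_Q=(U_Q\setminus S)\cup\{Q\}=:A_Q$, both open in $\cF$ since $U_P,U_Q$ are open in $G$; note $\cF'=\cU\cup A_Q$ and $\cF=\cF'\cup A_P$. The crucial observation is that $A_P$ is contractible: in the linear chart $U_P\cong\Hom(P,P)\cong\R^4$ the point $P$ is the origin and, by the computation in the proof of Lemma \ref{homeqs2}, $S\cap U_P$ is cut out by the \emph{homogeneous} conditions $\operatorname{tr}A=0$ and $\det A\ge 0$; hence $A_P$ is star-shaped about $P$ and the dilation homotopy $(v,s)\mapsto(1-s)v$ retracts it onto $P$. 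The identical argument gives $A_Q$ contractible (and star-shaped about $Q$). Moreover $\cF'\cap A_P=\cU\cap U_P=U_P\setminus S$ and $\cU\cap A_Q=U_Q\setminus S$ are each homotopy equivalent to $S^2$ by Lemma \ref{homeqs2}, so they have vanishing $\widetilde H^0$ and $\widetilde H^1$. Feeding this into the reduced Mayer--Vietoris sequences of $\cF=\cF'\cup A_P$ and $\cF'=\cU\cup A_Q$ makes each collapse to an isomorphism on $\widetilde H^1$ induced by inclusion; since $\widetilde H^1$ and $H^1$ agree, composing gives the inclusion-induced isomorphism $H^1(\cF;\Z)\xrightarrow{\ \sim\ }H^1(\cU;\Z)$.

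\emph{Step 2: the Maslov--Arnold structure.} Let $\alpha_\cU\in H^1(\cU;\Z)$ be the infinite-order class of Corollary \ref{NUP} and $\alpha\in H^1(\cF;\Z)$ its preimage under the isomorphism of Step 1, which again has infinite order. Since $Q\cap P=\{0\}$ while $P\subseteq P$, we have $\cZ_P\cap\cF=(\cZ_P\cap\cU)\cup\{P\}$, and as $P\notin\cZ_P^1$ the hypersurface $\cZ_P^1\cap\cF=\cZ_P^1\cap\cU$ inherits the co-orientation defining $\alpha_\cU$. Now let $\gamma\colon S^1\to\cF$ be sufficiently generic. Genericity requires every intersection of $\gamma$ with $\cZ_P$ to lie in $\cZ_P^1$, which forces $\gamma$ to avoid $P$. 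The point $Q$ lies in the open set $\cF\setminus\cZ_P$, and for a small ball $B$ about $Q$ the set $A_Q\cap B$ is a star-shaped (hence contractible) neighbourhood of $Q$ in $\cF$ disjoint from $\cZ_P$; a homotopy supported near $\gamma^{-1}(\{Q\})$ and carried out inside $A_Q\cap B$ pushes $\gamma$ to a sufficiently generic loop $\widetilde\gamma$ with image in $\cU$, homotopic to $\gamma$ in $\cF$, and with the same geometric intersection number with $\cZ_P$. Applying the Maslov--Arnold property of $\cU$ to $\widetilde\gamma$ and using $i^*\alpha=\alpha_\cU$ yields the required identity: the geometric intersection number of $\gamma$ with $\cZ_P\cap\cF$ equals $\langle\alpha,[\gamma]\rangle$. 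Hence $(\cF,P,\alpha)$ is an MA space. It extends $(\Lambda_I,P,\alpha_0)$, since $\Lambda_I\subseteq\cF$ by Lemma \ref{intpq} ($\Lambda_I\cap S=\{P,Q\}$), the distinguished subspace is unchanged, and upon restricting to the $3$-manifold $\Lambda_I\setminus\{P,Q\}$ — whose inclusion into $\Lambda_I$ induces an isomorphism on $H^1$ — both $\alpha|_{\Lambda_I}$ and the Maslov class $\alpha_0$ equal the Poincar\'e dual of the train of $P$ for the chosen co-orientation. Finally $\cF\supseteq\cU$ is dense in $G$ because $\cU$ is.

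\emph{Main obstacle.} The cohomology computation is routine once the local pieces are identified, the only real geometric input being the conicity of $S$ in the charts $U_P,U_Q$, which makes $A_P,A_Q$ contractible. The delicate point is the last half of Step 2: since $\cF$ fails to be a manifold exactly at $P$ and $Q$, a generic loop cannot be freely perturbed inside $G$ without leaving $\cF$, so one must instead use the genericity hypothesis itself to bar the loop from the singular point $P$ and push it off the harmless singular point $Q$ by a homotopy confined to the contractible neighbourhood $A_Q$, thereby neither creating nor destroying intersections with the train.
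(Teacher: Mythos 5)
Your argument follows the paper's blueprint — a Mayer--Vietoris computation showing $H^1(\cF;\Z)\cong H^1(\cU;\Z)$, then transport of the MA structure from $\cU$ (Corollary \ref{NUP}) across the isomorphism — but packages the two steps somewhat differently, and is more careful in one place where the paper is terse. For the cohomology, the paper intersects $\cF$ with two small balls centered at $P$ and $Q$ and does a single Mayer--Vietoris, appealing to the local cone picture in Lemma \ref{homeqs2} to see that this neighborhood retracts onto $\{P,Q\}$. You instead use the full chart traces $A_P=\cF\cap U_P$, $A_Q=\cF\cap U_Q$ and observe that the homogeneity of the defining conditions $\operatorname{tr}A=0$, $\det A\ge 0$ for $S$ makes $A_P$, $A_Q$ globally star-shaped about $P$, $Q$; running Mayer--Vietoris twice (adjoining one point at a time) then collapses to the same isomorphism. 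Both versions rest on the same two facts — conicity of $S$ in the charts and Lemma \ref{homeqs2} — so this is a cosmetic difference.

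The substantive difference is in Step 2. The paper disposes of the MA property with the single sentence ``any sufficiently generic loop $\gamma\colon S^1\to\cF$ is contained in $\cU$,'' after which the defining identity is inherited from $\cU$. Under the explicit definition of ``sufficiently generic'' in Section \ref{sec:def} this is not automatic: the three listed conditions (smoothness, intersections with $\cZ_P$ in $\cZ_P^1$, transversality) do rule out passing through $P$ (since $P\in\cZ_P\setminus\cZ_P^1$), but they place no constraint at $Q$ because $Q\notin\cZ_P$, and one can write down smooth curves in $G$ with image in $\cF$ passing through the cone point $Q$ (e.g., a coordinate axis through the origin of the chart $U_Q$ that misses the double cone $S$). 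You address exactly this: genericity excludes $P$, and a small homotopy in the contractible, $\cZ_P$-disjoint, star-shaped neighbourhood $A_Q\cap B$ pushes the loop off $Q$ without changing either its class in $H_1(\cF;\Z)$ or its geometric intersection number with $\cZ_P$, after which the MA property of $\cU$ applies. This is the right fix for a real (if small) gap in the paper's phrasing, and it makes explicit why the non-manifold point $Q$ is ``harmless'' whereas $P$ is protected by the genericity hypothesis itself. Your handling of the co-orientation ($\cZ_P^1\cap\cF=\cZ_P^1\cap\cU$ since neither $P$ nor $Q$ lies in $\cZ_P^1$) and of the extension of $\Lambda_I$ (restricting to $N=\Lambda_I\setminus\{P,Q\}$ and matching Poincar\'e duals) mirrors the paper's Corollary \ref{NUP} and is correct.
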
 

\begin{proof}
By definition $\cU = \cF \setminus \{P,Q\}$. Let $\cU'$ be the union of two small open balls around $P$ and $Q$ in $U_P$ and $U_Q$, respectively, intersected with $\cF$. From the local picture described in the proof of Lemma \ref{homeqs2}, it is clear that $\cU'$ deformation retracts onto the two point set $\{Q, P\}$ and that $\cU\cap \cU'$ deformation retracts onto $S^2 \coprod
S^2$. The isomorphism follows from the Mayer--Vietoris long exact sequence
$$  H^0(\cU) \oplus H^0(\cU')  \twoheadrightarrow H^0(\cU \cap \cU') \rightarrow H^1(\cF) \rightarrow H^1(\cU)  \oplus H^1(\cU') \rightarrow H^1(\cU \cap \cU') $$
since $H^1(\cU') \cong H^1(\cU \cap \cU') \cong \{0\}$ and $H^0(\cU) \oplus H^0(\cU')  \twoheadrightarrow H^0(\cU \cap \cU')$ is surjective.

Any sufficiently generic loop $\gamma\colon S^1 \rightarrow \cF$ is contained in $\cU$, so $\cF$ is an MA space extending $\cU$. Finally, following the proof of Corollary \ref{NUP}, subspace inclusions determine a commuting diagram of isomorphisms
$$ \xymatrix{  H^1(\cF;\Z) \ar[r]^{\cong} \ar[d]^{\cong}&  H^1(\Lambda_I, \Z) \ar[d]^{\cong} \\  H^1(\cU;\Z) \ar[r]^\cong & H^1(N;\Z)  }$$ 
so $\cF$ also extends $\Lambda_I$. 
\end{proof}

\subsection{Non-existence of smooth extensions}\label{Proof of Theorem}

We now prove Theorem~\ref{thmcont}, on the non-existence of smooth extensions of the classical Maslov--Arnold spaces. 

\begin{proof}[Proof of Theorem \ref{thmcont}]
Suppose that there exists a proper extension $\Lambda(n) \subsetneq \cM$ for which $\cM$ is a connected, smoothly embedded submanifold of $Gr_n(\R^{2n})$. This implies $\dim \cM > \dim \Lambda(n)$. Using these extra degrees of freedom, we will construct a sufficiently generic loop in $\cM$ that is contractible but has a non-zero geometric intersection number with the train (see Section \ref{sec:def}), producing a contradiction. 

Our construction takes place within a coordinate neighbourhood of $Gr_n(\R^{2n})$, wherein the classical Maslov index may be interpreted as a spectral flow, as described by Robbin and Salamon \cite{RS93}. Equip $\R^{2n}$ with the standard inner product product $\left<\cdot,\cdot \right>$ and define a complex structure $J\colon \R^{2n} \rightarrow \R^{2n}$ by $\omega(v,w) = \left<v, J w \right>$. The Lagrangian subspace $P$ has a Lagrangian complement $Q := J(P) = P^\perp$, so $\R^{2n} = Q \oplus P$. We define the coordinate neighbourhood $ U_P \subseteq Gr_n(\bbR^{2n})$ to be the set of
$n$-dimensional subspaces in $\R^{2n}$ that intersect $Q$ trivially and can therefore be represented as graphs of linear maps from $P$ to $Q$. We have an identification
\begin{equation}\label{corhood}
  U_P =  \{ gr(B) :  B \in \Hom(P,Q) \}  =  \{ gr(JA) : A \in \Hom(P,P) \} \cong  \Hom(P,P),
\end{equation}
where we have abused notation and denoted by $J\colon P \rightarrow Q$ the restriction of $J$. In this coordinate neighbourhood we have
\begin{eqnarray}\label{locallambda}
\Lambda(n) \cap U_P &\cong&  \{ A \in \Hom(P,P) : A = A^T\},\\ \cZ \cap U_P &\cong& \{ A \in \Hom(P,P) : \det A =0\}. \label{localz}
\end{eqnarray} 
The co-orientation of the train in $\Lambda(n)$ is such that under the identification (\ref{corhood}), the index of a path $\gamma\colon [0,1] \rightarrow U_P \cap \Lambda(n)$ counts the difference between the number of positive eigenvalues of the symmetric matrices $\gamma(1)$ and $\gamma(0)$. That is, the Maslov index of $\gamma$ equals the \emph{spectral flow} of the corresponding
family of symmetric matrices; see \cite[Theorem 2.3]{RS93}.

While $\Lambda(n)$ and $\cZ$ have a simple description in the coordinate chart $U_P$, the same is not true of the supposed extension $\cM$; we only know that it contains $\Lambda(n)$ and has strictly larger dimension. Therefore, we will give our construction in the tangent space $T_P\cM$, then transfer it to $\cM$ using the exponential map of a suitably chosen Riemannian metric.

Let us regard the tangent space $T_P\cM$ as a subspace of $T_0 \Hom(P,P)= \Hom(P,P)$. Since $T_P\cM$ must strictly contain $T_P\Lambda(n)$, i.e. the subspace of symmetric operators, it must also contain some non-zero $B \in \Hom(P,P)$ with $B = -B^T$. Such a $B$ is diagonalizable, and must have at least one non-zero, purely imaginary eigenvalue $\lambda_0$, with eigenvector $v_0$. 
Let $\Pi$ be the orthogonal projection onto $\spn\{v_0\}$ and let $\Pi':= I_P - \Pi$. The paths in $T_P\cM$ defined by
\begin{align*}
	A_1(t) := \Pi' + \cos(\pi t) \Pi, \qquad A_2(t) := \Pi' + \cos(\pi t) \Pi + \sin(\pi t) B
\end{align*}
for $t \in [0,1]$ have the same endpoints, $I_P$ and $I_P - 2\Pi$, which are both symmetric and have different numbers of positive eigenvalues ($n$ and $n-1$ respectively). We claim that $A_2(t)$ is non-degenerate for all $t \in [0,1]$.
Assuming $A_2(t)v = 0$ for some non-zero $v \in P$, we obtain
\[
    \cos(\pi t) \Pi v + \sin(\pi t)B(\Pi v) = 
    \Pi' v + \sin(\pi t)B(\Pi' v) = 0,
\]
using the fact that $\Pi$ and $\Pi'$ commute with $B$. If $\sin(\pi t)=0$, these equations imply $\Pi v = \Pi' v = 0$, a contradiction. If $\sin(\pi t)\neq0$, the second equation implies $\Pi' v=0$, because $B$ does not have any non-zero real eigenvalues. Then $\Pi v = v$, and hence $Bv = \lambda_0 v$, but the first equation reduces to $Bv = -\cot(\pi t) v$, which is a contradiction because $\lambda_0$ is not real.

To obtain the desired loop in $\cM$, we choose a flat, translation invariant Riemannian metric on $\Hom(P,P) \cong \R^{n^2}$, so that the geodesic through $0 \in \Hom(P,P)$ with initial velocity $A \in T_0\Hom(P,P)$ is just the line $\gamma_A(t) = tA$. Restricting this metric to $\cM$, we obtain an exponential map $\exp\colon V_P \rightarrow \cM \cap U_P$, where $V_P$ is a open ball around $0 \in T_P\cM$ and can be chosen small enough to ensure $\exp$ is a diffeomorphism from $V_P$ onto its image. Identifying $\cM \cap U_P$ with a subset of $\Hom(P,P)$, we see that any line contained in $\cM \cap U_P$ is a geodesic\footnote{More generally, any geodesic in $\Hom(P,P)$ that is contained in $\cM \cap U_P$ will also be a geodesic with respect to the induced metric on $\cM \cap U_P$. However, there may be geodesics in $\cM \cap U_P$ that are not geodesics in $\Hom(P,P)$, as there is no reason to expect that $\cM \cap U_P \subseteq \Hom(P,P)$ is a totally geodesic submanifold.}
in $\cM$, because it locally minimizes length in $U_P$, and hence also in $\cM$. Therefore, if $A \in T_P\cM$ is symmetric, the line $\gamma_A(t)= tA$ is a geodesic in $\cM \cap U_P$, which means $\exp(A) = \gamma_A(1) = A$ is symmetric, and hence corresponds to a Lagrangian subspace via the identification \eqref{locallambda}.

Choosing $\epsilon>0$ small enough that $\epsilon A_1(t)$ and $\epsilon A_2(t)$ are in $V_P$ for all $t \in [0,1]$, we define smooth paths
\begin{align*}
    \gamma_1(t) := \exp(\epsilon A_1(t)) = \epsilon A_1(t), \qquad 
    \gamma_2(t) := \exp(\epsilon A_2(t)) ,
\end{align*}
and then let $\gamma$ denote the concatenation of $\gamma_1$ with the reverse of $\gamma_2$. Since $\epsilon A_1$ is homotopic to $\epsilon A_2$ in $V_P$ and $\exp$ is a local diffeomorphism, we immediately see that $\gamma$ is contractible, so its generalized Maslov index vanishes, $\Mas(\gamma;P) = 0$.

However, recalling that we have identified $\cM \cap U_P$ with a subset of $\Hom(P,P)$, Taylor's theorem gives the uniform estimate $\exp(A) = A + \mathcal{O}(|A|^2)$ for small $A \in T_P \cM$. This implies the loop $\gamma$ is sufficiently generic for small enough $\epsilon>0$, and $\gamma_2$ does not intersect the train $\cZ$ (because $A_2$ is always non-degenerate). Definition~\ref{def:MA1} then says that $\Mas(\gamma;P)$ equals the geometric intersection number of $\gamma$ (and hence of $\gamma_1$) with $\cZ$. On the other hand, since the matrices $A_1(t)$ are symmetric, $\gamma_1(t)$ is a path in the Lagrangian Grassmannian, and its geometric intersection number with the train equals its Maslov index. By the spectral flow interpretation of the Maslov index, this is non-zero, because $\gamma_1(0) = \epsilon A_1(0)$ and $\gamma_2(0) = \epsilon A_2(0)$ have different numbers of positive eigenvalues. This contradiction finishes the proof.
\end{proof}

\subsection*{Acknowledgements}
The authors acknowledge the support of the BIRS FRG program \emph{Stability Indices for Nonlinear Waves and Patterns in Many Space Dimensions}, where some of this work was done. We also thank the anonymous referees for their helpful comments. T.B. acknowledges support of NSERC grant RGPIN-2016-05382. G.C. acknowledges the support of NSERC grant RGPIN-2017-04259. C.J. was supported by ONR grant N00014-18-1-2204. R.M. acknowledges the support of the ARC under grant DP200102130.

\bibliographystyle{amsplain}
\bibliography{nonHam}

\end{document}